\newcolumntype{C}{>{\centering\arraybackslash}X}
\def\cdot{\text{\scriptsize\textbullet}}
\theoremstyle{plain}
\newtheorem{theorem}{Theorem}[section]
\newtheorem{lemma}[theorem]{Lemma}
\newtheorem{proposition}[theorem]{Proposition}
\theoremstyle{definition}
\newtheorem{definition}[theorem]{Definition}
\newtheorem{example}[theorem]{Example}
\theoremstyle{remark}
\NewDocumentCommand{\mpr@label}{ m }{%
  \def\@currentlabelname{#1}%
  \phantomsection %
}%
\DeclareDocumentCommand{\inferdef}{m m m}{%
   \inferrule*[vcenter,right=#1]{#2}{#3}%
   \mpr@label{\textsc{#1}}%
}%
\newcommand{\ruleref}[1]{Rule \nameref{#1}}%
\def\calign@preamble{%
   &\hfil\strut@
    \setboxz@h{\@lign$\m@th\displaystyle{##}$}%
    \ifmeasuring@\savefieldlength@\fi
    \set@field
    \hfil
    \tabskip\alignsep@
}
\let\cmeasure@\measure@
\patchcmd\cmeasure@{\divide\@tempcntb\tw@}{}{}{}
\patchcmd\cmeasure@{\divide\@tempcntb\tw@}{}{}{}
\patchcmd\cmeasure@{\ifodd\maxfields@
  \global\advance\maxfields@\@ne
  \fi}{}{}{}
\newenvironment{calign}
{%
  \let\align@preamble\calign@preamble
  \let\measure@\cmeasure@
  \align
}
{%
  \endalign
}
\def\cleartheorem#1{%
    \expandafter\let\csname#1\endcsname\relax
    \expandafter\let\csname c@#1\endcsname\relax
}
\def\sm{\mbox{{\footnotesize--}}}
\def\sp{\mbox{{\footnotesize+}}}
\def\sm{\text{--}}
\def\sp{\text{+}}
\lstdefinestyle{cattstyle}{
  basicstyle={\ttfamily},
  keywordstyle={\color{black}\ttfamily\bfseries},
}
\lstdefinelanguage{catt}{
  sensitive=false,
  keywords={catt,coh,let,check,benchmark},
  otherkeywords={@}
}
\renewcommand\prescript[2]{\,\ensuremath{{}^{#1}_{#2}}}
\renewcommand\paragraph[1]{

\vspace{5pt}
\noindent
\emph{#1.}}
\tikzset{Rightarrow/.style={double equal sign distance,>={Implies},->},
triple/.style={-,preaction={draw,Rightarrow}},
quadruple/.style={preaction={draw,Rightarrow,shorten >=0pt},shorten >=1pt,-,double,double
distance=0.2pt}}
\begin{document}

\title{
    \textbf{Naturality for Higher-Dimensional\\ Path Types}
}

\newcommand{\email}[1]{\texttt{\href{mailto:#1}{#1}}}
\author{
  Thibaut Benjamin\footnote{\email{thibaut.benjamin@cl.cam.ac.uk}}\and
  Ioannis Markakis\footnote{\email{ioannis.markakis@cl.cam.ac.uk}}\and
  Wilfred Offord\footnote{\email{wgbo2@cam.ac.uk}}\and
  Chiara Sarti\footnote{\email{cs2197@cam.ac.uk}}\and
  Jamie Vicary\footnote{\email{jamie.vicary@cl.cam.ac.uk}} \and \\
  Department of Computer Science and Technology,\\
  University of Cambridge,
  Cambridge, UK
  }

\maketitle

\thispagestyle{plain}
\pagestyle{plain}

\begin{abstract}

\noindent We define a naturality construction for the operations of weak
$\omega$\=/categories, as a meta-operation in a dependent type theory.
Our construction has a geometrical motivation as a local tensor product with a directed interval, and behaves logically as a globular analogue of Reynolds parametricity. Our construction operates as a ``power tool'' to support construction of terms with geometrical structure,
and we use it to define composition operations for cylinders and
cones in $\omega$\=/categories. The machinery can generate terms of high complexity,
and we have implemented our construction in a proof assistant, which
verifies that the generated terms have the correct type. All our
results can be exported to homotopy type theory, allowing the explicit computation of complex path type inhabitants. \newline

\noindent\emph{Keywords:} dependent type theory, identity types, higher categories, naturality

\end{abstract}


\section{Introduction}\label{sec:introduction}
\tikzset{tikzpic/.style={font=\scriptsize, inner sep=2pt, scale=1.1}}

\paragraph{Overview} Higher categories are algebraic tools that describe composites and coherences
of $n$\=/cells in all dimensions, which today find wide application in physics~\cite{bartlett_modular_2015, schommer_classification_2011, baez_higherdimensional_1995}, geometry~\cite{lurie_derived_2004, lurie_higher_2009} and type theory~\cite{lumsdaine_weak_2009, vandenberg_types_2011,
  altenkirch_syntactical_2012}.
For applications in computer science, these $n$\=/cells may  represent inhabitants of identity types in Martin-L\"of type theory, paths in a space, or rewriting relations~\cite{seely_modelling_1987, mimram_3dimensional_2014}.

In this article we work with the dependent type theory \(\catt\) due to Finster and Mimram~\cite{finster_typetheoretical_2017}, models of which are \textit{weak globular $\omega$\=/categories}.\footnote{These models are known to be equivalent to the $\omega$\=/categories of Grothendieck-Maltsiniotis with an adequate choice of coherator~\cite{benjamin_globular_2024}, and also correspond to the $\omega$\=/categories of Batanin-Leinster~\cite{ara_infty_2010,bourke_iterated_2020} defined via contractible operads.} The theory \catt is an attractive foundation for formal work on $\omega$\=/categories, as it has a relatively simple definition, which is highly amenable to proof by induction. We use a type-theoretic notation to denote an $n$\-cell $a$, writing $a: \obj$ for a 0\=/cell (that is, an object), and for $n>0$ writing \(a : \partial^{-}a \to \partial^{+}a\) to denote the source and target $(n\!-\!1)$\=/cells, which must  have equal type. We also write $\partial_i^\pm a$ for the source or target boundary of dimension $i$.  An $\omega$\-category has a rich compositional structure; in particular, given an $n$\-cell $a$ and $m$\-cell $b$, and a chosen composition direction $0 \leq k < \min(n,m)$ with $\partial_k^+ a = \partial_k ^- b$, we may form their \textit{$k$\=/composite} $a \s_k b$.

Weak higher categories contain  coherence cells, such as \textit{associators} $\alpha_{f,g,h}: (f \s_0 g) \s_0 h \to f \s_0 (g \s_0 h)$ and \textit{unitors} $\lambda_f : \id \s_0 f \to f$, which are not in general equal to the identity. Correct handling of these coherences  can become  intricate as the dimension increases, and construction of  large proofs can therefore become intractable,
underscoring the need for more powerful tools for term construction and manipulation.

The goal of the present work is to describe a new meta-operation on $\omega$\=/categories, called the \textit{naturality construction},  producing cells which can be regarded as witnesses for the naturality of certain $\omega$\-category operations. The constructed cells have a straightforward type, which makes them easy to incorporate into larger proofs. However, the witnesses themselves can be highly complex, and in many cases intractable  to compute  manually.

\tikzset{twoarr/.style={-{Implies}, double distance=1.5pt, shorten <=2pt, shorten >=2pt}}
In the theory \catt, contexts comprise lists of free variables, each considered an $n$\=/cell with an assigned type. A valid judgement $\Gamma \vdash t : A$ then represents an $n$\=/cell $t : A$  in the free $\omega$\=/category generated by the variables of $\Gamma$. We may visualise an example context as follows:
\begin{align*}
\Gamma \quad&= \quad
\begin{aligned}
\begin{tikzpicture}[tikzpic]
\node (x) at (0,0) {$x$};
\node (y) at (1,0) {$y$};
\node (z) at (2,0) {$z$};
\node (u) at (3,0) {$u$};
\node (v) at (4,0) {$v$};
\draw [->] (x) to [bend left=55] node (f) [above] {$f$} (y);
\draw [->] (x) to [bend right=55] node (g) [below] {$g$} (y);
\draw [->] (y) to node [above] {$h$} (z);
\draw [->] (z) to node [above] {$j$} (u);
\draw [->] (u) to node [above] {$k$} (v);
\draw [-{Implies}, double distance=1.5pt, shorten <=2pt, shorten >=2pt] (.5,.3) to node [right, inner sep=2pt] {$a$} (.5,-.3);
\end{tikzpicture}
\end{aligned}
\intertext{Our construction requires a chosen subset $X \subseteq \Var(\Gamma)$
of the free variables of $\Gamma$,  with the property that $X$ is \emph{up-closed}, meaning that if any $x \in X$ appears in the source or target of some  variable $y$, then we also have  $y \in X$. In this case, an example of an up-closed set is $X = \{h, z, j, k\}$. We then construct a new context $\Gamma \uparrow X$ in which the variables of $X$ have been duplicated, and filler variables appropriately inserted, as follows:}
\Gamma \uparrow X
\quad &= \quad
\begin{aligned}
\begin{tikzpicture}[tikzpic]
\node (x) at (0,0) {$x$};
\node (y) at (1,0) {$y$};
\node (z0) at (2,.5) {$z^-$};
\node (z1) at (2,-.5) {$z^+$};
\node (u) at (3,0) {$u$};
\node (v) at (4,0) {$v$};
\draw [->] (x) to [bend left=55] node (f) [above] {$f$} (y);
\draw [->] (x) to [bend right=55] node (g) [below] {$g$} (y);
\draw [->] (y) to node [above, pos=0.4] {$h^-$} (z0);
\draw [->] (z0) to node [above, pos=0.6] {$j^-$} (u);
\draw [->] (y) to node [below, pos=0.4] {$h^+$} (z1);
\draw [->] (z1) to node [below, pos=0.6] {$j^+$} (u);
\draw [->] (u) to [bend left=55] node (k0) [above] {$k^-$} (v);
\draw [->] (u) to [bend right=55] node (k1) [below] {$k^+$} (v);
\draw [twoarr] (.5,.3) to node [right, inner sep=2pt] {$a$} (.5,-.3);
\draw [twoarr] (3.5,.3) to node [right, inner sep=2pt, font=\scriptsize] {$\fun k$} (3.5,-.3);
\draw [->] (z0) to node [right, inner sep=0pt, pos=0.4] {$\fun z$} (z1);
\draw [twoarr] (1.6,.3) to node [right, inner sep=2pt] {$\fun h$} +(-0,-.6);
\draw [twoarr] (2.4,.3) to node [right, inner sep=2pt] {$\fun j$} +(-0,-.6);
\end{tikzpicture}
\end{aligned}
\end{align*}
This sketch makes clear the necessity of the up-closure property: for example, since we are duplicating $z$ to yield $\fun z : z^- \to z^+$, the variable $h:y \to z$ must also be modified, since $z$ no longer exists in $\Gamma \uparrow X$.

There are an obvious pair of substitutions $\inj^-$, $\inj^+$ that map variables of $\Gamma$ to variables of $\Gamma \uparrow X$, mapping to the upper or lower duplicates respectively.\footnote{That is, we have a pair of substitutions $\Gamma \uparrow X \vdash \inj^\pm: \Gamma$, acting on any $x \in \Var(\Gamma)$ as $\inj ^\pm(x) = x ^\pm$ if $x \in X$, and $\inj^\pm(x) = x$ otherwise.} Given some valid term \mbox{$\Gamma \vdash t : A$}, we may apply these substitutions to yield a pair of terms $t[\inj^-]$, $t[\inj^+]$ valid in $\Gamma \uparrow X$. A natural question then arises:
\emph{can we  find a term $t \uparrow X$ valid in $\Gamma \uparrow X$,  which is suitably bounded by the injected terms $t[\inj ^\pm]$?}

This procedure has an intuitive geometrical motivation, where we consider $\Gamma \uparrow X$ as a ``local tensor product'' of $X \subseteq \Var(\Gamma)$ with the directed interval. In the case $X = \Var(\Gamma)$, this
construction should yield a path object construction  for weak  $\omega$\=/categories, a result which has been long-sought as the missing component for  a model structure on weak  $\omega$\=/categories~\cite{lanari_globular_2020,
henry_algebraic_2016}, analogous to the folk model structure on strict $\omega$\=/categories~\cite{lafont_folk_2010}. It would also likely be a major advance towards  the definition of a weak $\omega$\=/category of weak $\omega$\=/categories, for which the tensor product is expected to give a monoidal structure by analogy to the strict case~\cite{ara_folk_2020}.%

Our main result takes a step towards this  construction, showing how to build a term $t \uparrow X$ valid in $\Gamma \uparrow X$, under certain conditions. First, some brief definitions. For a variable $x \in \Var(t)$, the \emph{depth of $x$ in $t$} is given by $\dim(t) - \dim(x)$. Furthermore, for a context $\Gamma$, its \emph{boundary poset} has  elements $\Var(\Gamma)$, and an edge \mbox{$x < y$} when $x$ appears in some boundary of $y$; the \emph{depth of $x$ in $\Gamma$} is then defined as the length of the longest upwards chain starting at $x$, a notion which we illustrate in Fig.~\ref{fig:contextdepth}.
Our conditions can now be given.
\begin{itemize}
\item Every $x \in X \cap \Var(t)$ must have depth at most 1 in $t$.
\item Every $x \in X$ must have depth at most $1$ in $\Gamma$.
\end{itemize}
We will see that in the depth\=/1 case, the term $t \uparrow X$  can be interpreted as a {naturality witness} for the operation $t$. This allows the description of a wide range of interesting geometric and algebraic phenomena, some of which have previously been out of reach. The depth\=/0 case, which our construction subsumes, can be understood as a  functoriality construction, which has already been described~\cite{benjamin_type_2020}. The higher-depth cases, which are beyond our present work, would likely govern the behaviour of weak higher transfors~\cite{crans_localizations_2003}.

\begin{figure}[t]
\[
\begin{aligned}
\begin{tikzpicture}[tikzpic, scale=1.5, font=\normalsize]
\node (x) at (0,0) {3};
\node (y) at (1,0) {3};
\node (z) at (2,0) {1};
\draw [->] (x) to [bend left=65, looseness=1.2] node (f) [above] {2} (y);
\draw [->] (x) to [bend right=65, looseness=1.2] node (g) [below] {2} (y);
\draw [->] (y) to node [above] {0} (z);
\draw [-{Implies}, double distance=1.5pt, shorten <=2pt, shorten >=2pt] (.6,.35) to [bend left] node [right, inner sep=2pt] {1} +(0,-.7);
\draw [-{Implies}, double distance=1.5pt, shorten <=2pt, shorten >=2pt] (.4,.35) to [bend right] node [left, inner sep=2pt] {1} +(0,-.7);
\node (w) at (3,0) {2};
\node (u) at (4,0) {2};
\draw [->] (w) to [bend left=75, looseness=1.2] node (f) [above, pos=0.5] {1} (u);
\draw [->] (w) to [bend right=75, looseness=1.2] node (f) [below, pos=0.5] {1} (u);
\draw [->] (w) to [] node (f) [above, pos=0.25] {1} (u);
\draw [->] (z) to [] node (f) [above] {0} (w);
\draw [triple] (.4,0) to node [above=1pt, pos=0.45] {0} +(.2,0);
\draw [-{Implies}, double distance=1.5pt, shorten <=2pt, shorten >=2pt] (3.5,.4) to [] node [right, inner sep=2pt, pos=0.4] {0} +(0,-.4);
\draw [-{Implies}, double distance=1.5pt, shorten <=2pt, shorten >=2pt] (3.5,-.02) to [] node [right, inner sep=2pt, pos=0.4] {0} +(0,-.4);
\end{tikzpicture}
\end{aligned}
\]
\caption{\label{fig:contextdepth}Depths of variables in a context.}
\end{figure}

We have implemented our naturality construction as a meta-operation in an existing proof assistant for \catt, and all of our examples have been demonstrated in the implementation, which we present in Section~\ref{sec:implementation}. Our implementation verifies that the generated terms have the expected type, which is a significant check on the correctness of our results, since some of the generated terms are of high complexity, while the corresponding types take a much simpler form. Our naturality construction  is conceptually similar to \textit{parametricity translations},
studied in particular by Bernady et al.~\cite{bernardy_proofs_2012} to internalise Reynolds parametricity~\cite{reynolds_types_1983}.

Terms in the type theory \catt can be exported as functions that compute inhabitants of identity types in homotopy type theory (\hott)~\cite{benjamin_generating_2024}. Thus all of our examples also immediately apply to  higher dimensional path types. Defining our construction in \catt, rather than in \hott directly, has a number of advantages. Since we are doing meta-programming, our technical development is considerably simplified, as \hott is a far richer language. We also obtain greater generality, since our setting is directed and fully weak.

\paragraph{Examples} We now give a  range of examples that illustrate the scope of our construction. In each case, we suppose we have some term $t$ valid in $\Gamma$, and we describe $t \uparrow X$ as a term valid in $\Gamma \uparrow X$. For each example we indicate the corresponding source file in the supplementary material; see Sec.~\ref{sec:implementation} for more information about the implementation.

\begin{example}[Functoriality of Composition]
\label{ex:func}
~
\\*[0pt]
{
\footnotesize
\noindent
\verb|./examples/example_1.catt|
}

\vspace{1pt}
\noindent
For our first example we choose $\Gamma$ to be the  context built from  two composable 1\=/morphisms. Choosing $X = \{ f \}$, which is clearly up-closed, our construction produces the following context:
\begin{calign}
\begin{aligned}
\begin{tikzpicture}[tikzpic]
\node (y) at (1,0) {$x$};
\node (z) at (2,0) {$y$};
\node (u) at (3,0) {$z$};
\draw [->] (y) to node [above] {$f$} (z);
\draw [->] (z) to node [above] {$g$} (u);
\end{tikzpicture}
\end{aligned}
&
\begin{aligned}
\begin{tikzpicture}[tikzpic]
\node (x) at (0,0) {$x$};
\node (y) at (1,0) {$y$};
\node (z) at (2,0) {$z$};
\draw [->] (x) to [bend left=55] node (f) [above] {$f^-$} (y);
\draw [->] (x) to [bend right=55] node (g) [below] {$f^+$} (y);
\draw [->] (y) to node [above] {$h$} (z);
\draw [-{Implies}, double distance=1.5pt, shorten <=2pt, shorten >=2pt] (.5,.3) to node [right, inner sep=2pt] {$\fun f$} (.5,-.3);
\end{tikzpicture}
\end{aligned}
\nonumber
\\[-4pt]
\Gamma & \Gamma \uparrow \{ f \}
\nonumber
\end{calign}
We define $t = f \s_0 g$, the composite 1\=/morphism, and it can readily be checked that $X = \{ f \}$ is depth\=/0 in both $\Gamma$ and $t$. Applying our construction, we obtain the following result:
$$(f \s_0 g) \uparrow \{ f \} = \fun f \s_0 g : f^- \s_0 g \to f^+ \s_0 g$$
This term witnesses functoriality of $f \s_0 g$ with respect to the first argument. We could choose $X = \{ g \}$ to yield the functoriality property for the second argument, or $X = \{ f,g \}$ for the joint functoriality property. As a depth\=/0 instance, this construction was already known~\cite{benjamin_type_2020}, but serves well here as an initial example.
\end{example}

\begin{example}[Naturality of Composition]
\label{ex:natcomp}
~
\\[0pt]
{
\footnotesize
\noindent
\verb|./examples/example_2.catt|
}

\vspace{1pt}
\noindent
Here  $\Gamma$ is chosen in the same way as for Example~\ref{ex:func}, but for the subset we choose
the full set of free variables $X = \{ x, f, y, g, z \}$:
\begin{calign}
\begin{aligned}
\begin{tikzpicture}[tikzpic]
\node (y) at (1,0) {$x$};
\node (z) at (2,0) {$y$};
\node (u) at (3,0) {$z$};
\draw [->] (y) to node [above] {$f$} (z);
\draw [->] (z) to node [above] {$g$} (u);
\end{tikzpicture}
\end{aligned}
&
\begin{aligned}
\begin{tikzpicture}[tikzpic]
\node (y0) at (1,0) {$x^-$};
\node (z0) at (2,0) {$y^-$};
\node (u0) at (3,0) {$z^-$};
\node (y1) at (1,-1) {$x^+$};
\node (z1) at (2,-1) {$y^+$};
\node (u1) at (3,-1) {$z^+$};
\draw [->] (y0) to node [above] {$f^-$} (z0);
\draw [->] (z0) to node [above] {$g^-$} (u0);
\draw [->] (y1) to node [below] {$f^+$} (z1);
\draw [->] (z1) to node [below] {$g^+$} (u1);
\draw [->] (y0) to node [left, pos=.4] {$\fun x$} (y1);
\draw [->] (z0) to node [left, pos=.4] {$\fun y$} (z1);
\draw [->] (u0) to node [left, pos=.4] {$\fun z$} (u1);
\draw [twoarr] (1.75,-.25) to node [above left, inner sep=0pt] {$\fun f$} +(-.5,-.5);
\draw [twoarr] (2.75,-.25) to node [above left, inner sep=0pt] {$\fun g$} +(-.5,-.5);
\end{tikzpicture}
\end{aligned}
\nonumber
\\[-3pt]
\Gamma & \Gamma \uparrow \{ x, f, y, g, z\}
\nonumber
\end{calign}
Here $X$ is at most depth 1, so we expect a naturality construction. For our term, again we choose $t = f \s_0 g$. Our construction produces a term of the following type:$$(f \s_0 g) \uparrow \{x,
f, y, g, z\} : (f^- \s_0 g^-) \s_0 \fun{z} \to \fun{x} \s_0 (f^+ \s_0 g^+)$$ This term has the following composite form, where $\alpha_{}, \alpha'_{}$ are suitable associator coherences:
\settowidth{\arrlen}{\scriptsize{(associatord)}}
\[
  \begin{split}
    (f^- \s_0 g^-) \s_0 \fun{z}
    & \xrightarrow{\makebox[\arrlen]{\scriptsize $\alpha_{f^-, g^-, \fun z}$}}
      f^- \s_0 (g^- \s_0 \fun{z})\\
    &\xrightarrow{\makebox[\arrlen]{\scriptsize{\(f^- \s_0 \fun{g}\)}}}
      f^- \s_0 (\fun{y} \s_0 g^+)\\
    & \xrightarrow{\makebox[\arrlen]{\scriptsize $\alpha'_{f^-, \fun y, g^+}$}}
      (f^- \s_0 \fun{y}) \s_0 g^+ \\
    & \xrightarrow{\makebox[\arrlen]{\scriptsize{\(\fun{f} \s_0 g^+\)}}}
      (\fun{x} \s_0 f^+) \s_0 g^+ \\
    & \xrightarrow{\makebox[\arrlen]{\scriptsize $\alpha_{\fun x, f^+, g^+}$}}
      \fun{x} \s_0 (f^+ \s_0 g^+)
  \end{split}
\]
We recognize this as the  2\=/dimensional part of the vertical composition formula for natural transformations of pseudofunctors~\cite[Figure~8.5.4]{benabou_introduction_1967}. In this example, we see the first evidence of the complex structure that our construction can produce.
\end{example}

\begin{example}[Naturality of the Associator]\label{ex:nat-assoc}
~
\\*[0pt]
{
\footnotesize
\noindent
\verb|./examples/example_3.catt|
}

\vspace{1pt}
\noindent
For the next example we work with a context $\Gamma$ containing three composable 1\=/morphisms $f,g,h$, and for the term we choose $t = \alpha_{f,g,h} : (f \s_0 g) \s_0 h \to f \s_0 (g \s_0 h)$, the 2\=/dimensional associator coherence. For the subset we choose $X = \{f \}$, and we illustrate $\Gamma$ and $\Gamma \uparrow \{ f \}$ as follows:
\begin{calign}
\begin{aligned}
\begin{tikzpicture}[tikzpic]
\node (y) at (1,0) {$x$};
\node (z) at (2,0) {$y$};
\node (u) at (3,0) {$z$};
\draw [->] (y) to node [above] {$f$} (z);
\draw [->] (z) to node [above] {$g$} (u);
\node (w) at (4,0) {$w$};
\draw [->] (u) to node [above] {$h$} (w);
\end{tikzpicture}
\end{aligned}
&
\begin{aligned}
\begin{tikzpicture}[tikzpic]
\node (x) at (0,0) {$x$};
\node (y) at (1,0) {$y$};
\node (z) at (2,0) {$z$};
\draw [->] (x) to [bend left=55] node (f) [above] {$f^-$} (y);
\draw [->] (x) to [bend right=55] node (g) [below] {$f^+$} (y);
\draw [->] (y) to node [above] {$g$} (z);
\draw [-{Implies}, double distance=1.5pt, shorten <=2pt, shorten >=2pt] (.5,.3) to node [right, inner sep=2pt] {$\fun f$} (.5,-.3);
\node (w) at (3,0) {$w$};
\draw [->] (z) to node [above] {$h$} (w);
\end{tikzpicture}
\end{aligned}
\nonumber
\\[-3pt]
\Gamma & \Gamma \uparrow \{ f \}
\nonumber
\end{calign}
For the term $t \uparrow \{ f \}$, we obtain a coherence that fills the
following square:
\[
\begin{tikzpicture}[tikzpic, xscale=2.8, yscale=1.2]
\path [use as bounding box] (0,-.25) rectangle (1,1.25);
\node (bl) at (0,0) {$(f^{+} \s_{0} g) \s_{0} h$};
\node (br) at (1,0) {$f^{+} \s_{0} (g \s_{0} h)$};
\node (tl) at (0,1) {$(f^{-} \s_{0} g) \s_{0} h$};
\node (tr) at (1,1) {$ f^{-} \s_{0} (g \s_{0} h)$};
\draw [->] (tl) to node [above] {$\alpha_{f^{-},g,h}$} (tr);
\draw [->] (bl) to node [below] {$\alpha_{f^{+},g,h}$} (br);
\draw [->] (tl) to node [left] {$(\fun{f} \s_{0} g) \s_{0} h$} (bl);
\draw [->] (tr) to node [right] {$\fun{f} \s_{0} (g \s_{0} h)$} (br);
\draw [twoarr] (.7,.75) to node [above left, inner sep=1pt] {$\alpha_{f,g,h} \! \uparrow \!\{f\}$} +(-.2,-.5);
\end{tikzpicture}
\]
We recognize this as a witness for naturality of the associator in the first argument. A different subset $X$ would yield a different naturality property; for example, $X = \{ f,g,h \}$ would yield a witness for naturality in all arguments simultaneously.
\end{example}

\begin{example}[Cylindrical Composites]\label{ex:cylinder-nat}
~
\\*[0pt]
{
\footnotesize
\noindent
\verb|./examples/example_4.catt|
}

\vspace{1pt}
\noindent
Our examples so far have been in dimension 2, but in fact our construction works in arbitrary higher dimension. To illustrate this, we consider the following pair of contexts:

\begin{calign}
  \begin{aligned}
    \begin{tikzpicture}[tikzpic, yscale=2]
      \node (00) at (0,0) {$\cdot$};
      \node (10) at (1,0) {$\cdot$};
      \node (20) at (2,0) {$\cdot$};
      \node (01) at (0,1) {$\cdot$};
      \node (11) at (1,1) {$\cdot$};
      \node (21) at (2,1) {$\cdot$};
      \draw [->] (00) to node [below] {$p$} (10);
      \draw [->] (10) to node [below] {$q$} (20);
      \draw [->] (01) to node [above] {$f$} (11);
      \draw [->] (11) to node [above] {$g$} (21);
      \draw [->] (01) to node [left] {$h$} (00);
      \draw [->] (11) to node [left] {$j$} (10);
      \draw [->] (21) to node [right] {$l$} (20);
      \draw [twoarr] (.75,.75) to node [above left, inner sep=1pt] {$a$} +(-.5,-.5);
      \draw [twoarr] (1.75,.75) to node [above left, inner sep=1pt] {$b$} +(-.5,-.5);
    \end{tikzpicture}
  \end{aligned}
&
\begin{aligned}
\begin{tikzpicture}[tikzpic, yscale=2, xscale=1.1]
      \node (00) at (0,0) {$\cdot$};
      \node (10) at (1,0) {$\cdot$};
      \node (20) at (2,0) {$\cdot$};
      \node (01) at (0,1) {$\cdot$};
      \node (11) at (1,1) {$\cdot$};
      \node (21) at (2,1) {$\cdot$};
      \def\bd{27}
      \draw [->] (00) to [bend left=\bd] node [above=1pt, pos=0.86] {$p^{\sm}$} (10);
      \draw [->] (00) to [bend right=\bd] node [below, pos=0.8] {$p^{\sp}$} (10);
      \draw [->] (10) to [bend left=\bd] node [above=1pt, pos=.8] {$q^{\sm}$} (20);
      \draw [->] (10) to [bend right=\bd] node [below, pos=.8] {$q^{\sp}$} (20);
      \draw [->] (01) to [bend left=\bd] node [above, pos=0.2] {$f^{\sm}$} (11);
      \draw [->] (01) to [bend right=\bd] node [below, pos=0.2] {$f^{\sp}$} (11);
      \draw [->] (11) to [bend left=\bd] node [above, pos=.2] {$g^{\sm}$} (21);
      \draw [->] (11) to [bend right=\bd] node [below, pos=.2] {$g^{\sp}$} (21);
      \draw [->] (01) to node [left] {$h$} (00);
      \draw [->] (11) to node [right] {$j$} (10);
      \draw [->] (21) to node [right] {$l$} (20);
      \draw [twoarr] (.9,.8) to [bend left] node [below right=-1pt, inner sep=1pt, pos=0.4] {$a^{\sp}$} (.2,.2);
      \draw [twoarr] (.8,.8) to [bend right] node [above left=-1.5pt, inner sep=1pt] {$a^{\sm}$} (.1,.2);
      \draw [twoarr] (1.9,0.8) to [bend left] node [below right=-0pt, inner sep=1pt, pos=0.4] {$b^{\sp}$} (1.2,.2);
      \draw [twoarr] (1.8,.8) to [bend right] node [above left=-1.5pt, inner sep=1pt] {$b^{\sm}$} (1.1,.2);
\draw [triple] (.4,.55) to node [above right=-2pt, pos=0.2] {$\fun a$} +(.2,-.1);
\draw [triple] (1.4,.55) to node [above right=-2pt, pos=0.3] {$\fun {\normalfont b}$} +(.2,-.1);
\draw [twoarr] (.5,.15) to node [right, inner sep=2pt] {$\fun p$} +(0,-.3);
\draw [twoarr] (1.5,.15) to node [right, inner sep=2pt] {$\fun q$} +(0,-.3);
\draw [twoarr] (.5,1.15) to node [right, inner sep=2pt] {$\fun f$} +(0,-.3);
\draw [twoarr] (1.5,1.15) to node [right, inner sep=2pt] {$\fun g$} +(0,-.3);
    \end{tikzpicture}
    \end{aligned}
    \nonumber
\\
\Gamma
&
\Gamma \uparrow \{ f, a, p, g, b, q \}
\nonumber
\end{calign}
Here $\Gamma$ contains horizontally adjacent squares. Choosing ${X = \{ f, a, p, g, b, q \}}$ yields  $\Gamma \uparrow X$ containing horizontally adjacent 3\=/dimensional {cylinders}, which we call \emph{3\=/cylinders}, whose volumes are filled with 3\=/cells $\fun a,\fun b$. We choose $t$ in $\Gamma$ to be an appropriate horizontal square composite of  the fillers  $a,b$, either constructed by hand or generated automatically with the method of Example~\ref{ex:natcomp}. Our construction then yields a term $t \uparrow X$ representing the horizontal 3\=/cylinder composite.

In Section~\ref{sec:cylinders} we show how this idea can be iterated to define a  composition and stacking operations on $n$\-cylinders in any dimension. Evaluating these operations in our implementation, we observe that the sizes of the resulting proof artifacts grow rapidly. Here we give output artifact sizes in bytes for the horizontal composition operation of $n$\-cylinders:
\begin{align*}
n=2&&n=3&&n=4 && n=5
\\
\text{818} && \text{10,236} && \text{67,498} && \text{509,702}
\end{align*}
Note  the $n=3$ case is already an order of magnitude more complex than the square composite  presented explicitly in Example~\ref{ex:natcomp}. The case $n=5$ reaches the limits of our ability to generate and type-check on a typical workstation.

These binary cylinder composites have been generated for $n=3$ by Henry and Lanari, where they play a central role in the construction of a semi-model structure on weak 3\=/groupoids~\cite{henry_homotopy_2023, lanari_semimodel_2018}. For $n \geq 4$, we do not believe these composites have been previously constructed in a weak setting. In the strict case these constructions are known for all $n$, where they are used in the construction of the folk model structure on strict $\omega$\=/categories~\cite{lafont_folk_2010}.
\end{example}

\begin{example}[Conical Composites]\label{ex:cone-nat}
~
\\*[0pt]
{
\footnotesize
\noindent
\verb|./examples/example_5.catt|
}

\vspace{1pt}
\noindent
Similar to the cylindrical case, we can form horizontal composites of $n$\=/dimensional \textit{cones}, with the first two such contexts shown here:
\vspace{0pt}
\begin{calign}
\begin{aligned}
\begin{tikzpicture}[tikzpic, yscale=2, xscale=1.1, scale=1]
      \node (00) at (0,0) {$\cdot$};
      \node (10) at (1,0) {$\cdot$};
      \node (20) at (2,0) {$\cdot$};
      \node (11) at (1,1) {$\top$};
      \def\bd{27}
      \draw [->] (00) to node [below=0pt, pos=0.5] {$f$} (10);
      \draw [->] (10) to node [below=0pt, pos=.5] {$g$} (20);
      \draw [<-] (11) to node [right, pos=.7] {$j$} (10);
      \draw [twoarr] (.2,.5) to node [above=2pt, inner sep=1pt, pos=.5] {$a$} +(.75,0);
      \draw [twoarr, {Implies}-] (1.8,.5) to node [above=2pt, inner sep=1pt, pos=.5] {$b$} +(-.75,0);
      \draw [->] (00) to [out=90, in=-165] node [left, pos=.2] {$h$} (11);
      \draw [->] (20) to [out=90, in=-15] node [right, pos=.2] {$l$} (11);
      \draw [->, white] (00) to [bend right=\bd] node [below, pos=0.8] {$f^\sp$} (10);
    \end{tikzpicture}
    \end{aligned}
&
\begin{aligned}
\begin{tikzpicture}[tikzpic, yscale=2, xscale=1.1, scale=1]
      \node (00) at (0,0) {$\cdot$};
      \node (10) at (1,0) {$\cdot$};
      \node (20) at (2,0) {$\cdot$};
      \node (11) at (1,1) {$\top$};
      \def\bd{27}
      \draw [->] (00) to [bend left=\bd] node [above=1pt, pos=0.86] {$f^\sm$} (10);
      \draw [->] (00) to [bend right=\bd] node [below, pos=0.8] {$f^\sp$} (10);
      \draw [->] (10) to [bend left=\bd] node [above=1pt, pos=.8] {$g^\sm$} (20);
      \draw [->] (10) to [bend right=\bd] node [below, pos=.8] {$g^\sp$} (20);
      \draw [twoarr] (.1,.5) to [bend right=10] node [below=1pt, inner sep=1pt, pos=0.6] {$a^\sp$} +(.9,0);
      \draw [twoarr] (.23,.65) to [bend left=10] node [above=2pt, inner sep=1pt, pos=.6] {$a^{\sm}$} +(.76,0);
\draw [triple] (.6,.62) to node [right=0pt, pos=0.4] {$\fun a$} +(.0,-.1);
      \draw [twoarr, {Implies}-] (1.9,.5) to [bend left=10] node [below=1pt, inner sep=1pt, pos=0.5] {$b^\sp$} +(-.91,0);
      \draw [twoarr, {Implies}-] (1.75,.65) to [bend right=10] node [above=2pt, inner sep=1pt, pos=.5] {$b^{\sm}$} +(-.77,0);
\draw [triple] (1.4,.62) to node [right=0pt, pos=0.45] {$\fun b$} +(.0,-.1);
\draw [twoarr] (.5,.15) to node [right, inner sep=2pt] {$\fun f$} +(0,-.3);
\draw [twoarr] (1.5,.15) to node [right, inner sep=2pt] {$\fun g$} +(0,-.3);
      \draw [-, line width=4pt, white] (00) to [out=90, in=-165] (11);
      \draw [->] (00) to [out=90, in=-165] node [left, pos=.2] {$h$} (11);
      \draw [->] (20) to [out=90, in=-15] node [right, pos=.2] {$l$} (11);
      \draw [-, line width=4pt, white] (11) to (10);
      \draw [<-] (11) to node [right, pos=.7] {$j$} (10);
    \end{tikzpicture}
    \end{aligned}
\nonumber
\\[-4pt]
\Gamma & \Gamma \uparrow \{ f,a,g,b \}
\nonumber
\end{calign}
The importance of conical geometries in higher category theory has been emphasized by Buckley and Garner~\cite{buckley_orientals_2016}, and Ara and Maltsiniotis~\cite{ara_joint_2020}. We give a detailed presentation of conical composites in Section~\ref{sec:cones}. We believe this is the first such construction in the weak $\omega$\-category setting.
\end{example}



\paragraph{Outline} In Sec.~\ref{sec:catt} we recall the presentation and
properties of the theory \catt. In Sec.~\ref{sec:naturality} we present
our naturality construction as a meta-operation, along with a proof of
correctness. In Sec.~\ref{sec:cylinders} we define cylinders and their
composites, and in Sec.~\ref{sec:cones}, we define cones and their composites.
We conclude in  Sec.~\ref{sec:implementation} with a discussion of our implementation.

\subsection*{Acknowledgements}

We would like to express our gratitude to Eric Finster for many insightful discussions. We also thank Dimitri Ara, Fran\c cois M\'etayer and  Alex Rice for helpful comments.


\section{Globular Higher Categories}\label{sec:catt}

\subsection{Introducing the Type Theory}

\noindent
We give a complete but concise presentation of the dependent type theory
\(\catt\) introduced by Finster and Mimram~\cite{finster_typetheoretical_2017}
to model weak \(\omega\)\=/categories. Contexts in this type theory are
generating data for \(\omega\)\-categories, and they correspond precisely to
finite computads~\cite{benjamin_catt_2024} in the sense of Batanin~\cite{batanin_computads_2002}
and Dean et al.~\cite{dean_computads_2024}.

We first choose a countably infinite set
\(\V\) of variables, and define the \emph{raw syntax} for the theory \(\catt\)
by the grammar presented in Fig.~\ref{fig:grammar-catt}.
\begin{figure}
  \centering
  \begin{tabular}{lrclll}
    Variable & \(x\) & \(\Coloneqq \) & \(x \in \V\)  \\
    Context & \(\Gamma,\Delta\) & \(\Coloneqq \) & \(\emptycontext\)
            & \(\vert\) & \((\Gamma,x:A)\) \\
    Type & \(A,B\) & \(\Coloneqq\) & \(\obj\)
            & \(\vert\) & \(\arr[A]{t}{u}\) \\
    Term & \(t,u\) & \(\Coloneqq\) & \(x\)
            & \(\vert\) & \(\coh_{\Gamma,A}[\gamma]\)\\
    Substitution & \(\gamma,\delta\) & \(\Coloneqq\) & \(\sub{}\)
            & \(\vert\) & \(\sub{\gamma, x \mapsto t}\)
  \end{tabular}
  \caption{Grammar for the raw syntax of \(\catt\).}
  \label{fig:grammar-catt}
\end{figure}
The action of a raw substitution \(\gamma\) on raw types, raw terms and raw
substitutions is defined by induction. If \(x\) is a variable, then we view $\gamma$ as an association list, and define \(x[\gamma]\) as the term most recently associated to $x$ in $\gamma$, or \(x\) if that does not exist. The other cases are defined as follows:
\begin{align*}
  \obj[\gamma] &= \obj
  & (\arr[A]{u}{v})[\gamma] &= \arr[{A[\gamma]}]{u[\gamma]}{v[\gamma]} \\
               &
  & \coh_{\Gamma,A}[\delta][\gamma] &= \coh_{\Gamma,A}[\delta\circ \gamma]\\
  \sub{}\circ\gamma &= \sub{}
  & \sub{\delta,x\mapsto t}\circ\gamma &= \sub{\delta\circ\gamma,x\mapsto t[\gamma]}
\end{align*}
We also define the notion of \emph{dimension}:
\begin{align*}
  \dim(\obj) &= -1 & \dim(\arr[A]{u}{v}) &= \dim(A)+1 \\
  \dim(\Gamma) &= \max_{(x:A) \in \Gamma}
                 \dim A + 1
\end{align*}

The raw syntax is subject to the following judgements that characterise valid
elements:
\begin{align*}
  \text{Contexts} &~~ \Gamma\vdash
  & \text{Substitutions} &~~ \Delta\vdash \gamma\colon \Gamma\\
  \text{Types} &~~ \Gamma\vdash A
  & \text{Terms} &~~ \Gamma\vdash u\colon A
\end{align*}
A well typed term \(\Gamma\vdash u\colon A\) is equipped with a dimension
given by \(\dim_{\Gamma}(u) = \dim A + 1\). When \(A = \arr[B]{v}{w}\), we write
\(\partial_{\Gamma}^{-}u = v\) and \(\partial_{\Gamma}^{+}u = w\), and we omit
the context \(\Gamma\) when there is no ambiguity.

For contexts, substitutions, and terms consisting of variables, the rules for those judgements
are the standard ones of dependent type theory (see
Fig.~\ref{fig:rules-catt}.) The arrow type does not represent a function type, but rather
a \textit{directed path type}, which can be considered a directed variant of the
identity type of Martin-L\"of type theory. Where convenient we will drop the type annotation on the path type $\to_A$, writing simply $\to$ where it will not cause confusion. The term formation \ruleref{rule:coh} admits two alternative side conditions, with the \textsc{(comp)} condition yielding composites, and the \textsc{(inv)} condition yielding coherences.

\begin{figure}[t]
  \centering
  {\small
    \begin{mathpar}
      \inferdef{(ec)}{\null}{\emptycontext \vdash} \and
      \inferdef{(cc)}{\Gamma \vdash \\ \Gamma\vdash A \\
        x\notin\Var(\Gamma)}{\Gamma,x:a\vdash} \\
      \inferdef{(obj)}{\Gamma\vdash}{\Gamma\vdash\obj}\label{rule:obj} \and
      \inferdef{(arr)}{\Gamma\vdash t\colon A \\ \Gamma\vdash u\colon A}
      {\Gamma\vdash \arr[A]{t}{u}}\label{rule:arr} \\
      \inferdef{(es)}{\Gamma\vdash}{\Gamma\vdash\sub{}:\emptycontext} \and
      \inferdef{(var)}{\Gamma\vdash \\ (x:A)\in\Gamma}{\Gamma\vdash x:A}
        \\
      \inferdef{(sc)}{\Delta\vdash \gamma:\Gamma \\ \Gamma,x:A\vdash \\
        \Delta\vdash t:A[\gamma]}{\Delta\vdash\sub{\gamma,x\mapsto
          t}:(\Gamma,x:A)}
      \\
      \inferdef{(coh)}{\Gamma\vdashps \\
        \Gamma\vdash u:A \\ \Gamma\vdash v:A \\
        \Delta\vdash \gamma:\Gamma}{\Delta\vdash
        \coh_{\Gamma,\arr{u}{v}}[\gamma]
        : \arr[{A[\gamma]}]{u[\gamma]}{v[\gamma]}}\label{rule:coh}
    \end{mathpar}}
    \\
\begin{minipage}[l]{4cm}\raggedright\textsc{(coh)} alternative \\ side conditions\end{minipage}
\begin{minipage}[l]{3cm}{\small \begin{align*}
      \textsc{(comp)}
      &\begin{cases}
        \Var(\partial^{-}\Gamma) = \Var(u)\cup\Var(A) \\
        \Var(\partial^{+}\Gamma) = \Var(v)\cup\Var(A)
      \end{cases}
      \\
      \textsc{(inv)}
      &\begin{cases}
        \Var(\Gamma) = \Var(u)\cup\Var(A) \\
        \Var(\Gamma) = \Var(v)\cup\Var(A)
      \end{cases}
    \end{align*}}
    \end{minipage}%
        \caption{Derivation rules for \catt}
  \label{fig:rules-catt}
\end{figure}

\begin{figure}
  \centering
  \begin{mathpar}
    \inferrule{\null}{(x:\obj)\vdashps x:\obj} \and
    \inferrule{\Gamma\vdashps x:A}{\Gamma,y:A,f:\arr[A]xy\vdashps f:\arr[A]xy} \\
    \inferrule{\Gamma\vdashps f:\arr[A]xy}{\Gamma\vdashps y:A}
    \and
    \inferrule{\Gamma\vdashps x:\obj}{\Gamma\vdashps}
  \end{mathpar}
  \caption{Derivation rules for pasting contexts}
  \label{fig:rules-ps}
\end{figure}

The \ruleref{rule:coh} makes use of an
auxiliary judgement \(\Gamma\vdashps\), characterising an important class of contexts called \emph{pasting contexts}. The rules for this judgement are given in Fig.~\ref{fig:rules-ps}. The importance of those structures for \(\omega\)\-categories was first pointed out by Batanin~\cite{batanin_monoidal_1998}, who described them in terms of
rooted planar trees; they can also be characterised
as certain colimits of discs~\cite{ara_infty_2010}. We illustrate all these perspectives in Fig.~\ref{fig:pasting-scheme}. The figure demonstrates that for a given rooted planar tree, we obtain a variable in each separate region around a node, which we call a \textit{sector}.

Each pasting context \(\Gamma\) comes equipped with source and target pasting
contexts \(\partial_i^{\pm}\Gamma\) for \(i \in \N\) defined recursively by as follows. For the base case, we define $\partial_i^\pm(x : \obj) = (x : \obj)$. For the recursive case, writing  \(\Gamma' = (\Gamma,y:A,f:\arr[A]xy)\), and writing \(\operatorname{drop}\) for the operator that removes the tail element:
\begin{align*}
    \partial_{i}^{-}\Gamma'
    & =
      \begin{cases}
        \partial_i^{-}\Gamma & \dim A \ge i-1 \\
        (\partial^{-}_{i}\Gamma,y\!:\!A,f\!:\!\arr[A]xy) & \text{otherwise}
      \end{cases} \\
    \partial_{i}^{+}\Gamma'
    & =
      \begin{cases}
        \partial_i^{+}\Gamma & \text{if } \dim A > i-1 \\
        (\operatorname{drop}(\partial_i^{+}\Gamma),y:A) & \text{if } \dim A = i-1 \\
        (\partial^{+}_{i}\Gamma,y:A,f:\arr[A]xy) & \text{otherwise}
      \end{cases}
\end{align*}
In the case that \(i = \dim \Gamma - 1\), we simply write  \(\partial^{\pm}\Gamma\) for the codimension-1 boundaries. The source and target contexts are always \(\alpha\)\=/equivalent, and it is sometimes useful to write \(\partial \Gamma\) for either of them. In that case, we recover the source and target via the weakening
substitutions, which we denote by \(\Gamma\vdash \delta^{\pm}_{\Gamma} : \partial\Gamma\).
 Considered as a planar tree of height $n$, the boundaries $\partial^-(\Gamma)$ resp. $\partial^+(\Gamma)$ are obtained by removing  leaves at height $n$, and at height $n\!-\!1$ retaining the variables in the left- or right-most sectors respectively around each node.
 \begin{figure}
  \begin{calign}
  \Gamma\hspace{-10pt} &
    \adjustbox{valign=c}{
    \begin{tikzpicture}[every node/.style={scale=1}, xscale=.9, yscale=.8, inner sep=2pt, font=\small]
      \node [black,label={left:$ x$},label={above:$ y$},label={right:$ z$}](x) {$\bullet$};
      \node [black,label={above:$ k$}] (x0) at (.5,1) {$\bullet$};
      \node [black,label = {left:$ f$}, label={above:$g$},label={right:$ h$}] (x1) at (-.5,1)  {$\bullet$};
      \node[label={above:$a$}] (x10) at (-1,2)  {$\bullet$};
      \node[label={above:$b$}] (x11) at (0,2)  {$\bullet$};
      \draw[black] (x.center) to (x0.center);
      \draw[black] (x.center) to (x1.center);
      \draw (x1.center) to (x10.center);
      \draw (x1.center) to (x11.center);
    \end{tikzpicture}
    }
    &\hspace{-0.5em}\small
    \begin{tikzcd}[ampersand replacement=\&]
      x
      \ar[r, bend left = 70, "f"{name=A}]
      \ar[r, bend right = 70, "h"{below,name=C}]
      \ar[r, "g"{description,name=B}]
      \& y
      \ar[r, "k"]
      \& z
      \ar[from=A,to=B,"a", Rightarrow]
      \ar[from=B,to=C,"b"{near start}, Rightarrow]
    \end{tikzcd}
    &\hspace{-0.5em}\small
    \left(
      \begin{array}{@{}l@{}}
        x:\obj, y:\obj,\\
        f:\arr[]xy,\\
        g:\arr[]xy\\
        a:\arr[]fg,\\
        h:\arr[]xy\\
        b:\arr[]gh,\\
        z:\obj\\
        k:\arr[]yz
      \end{array}
    \right)
    \nonumber
    \\
    \partial^-(\Gamma)\hspace{-10pt} &
    \adjustbox{valign=c}{
    \begin{tikzpicture}[every node/.style={scale=1}, xscale=.9, yscale=.8, inner sep=2pt, font=\small]
      \node [black,label={left:$ x$},label={above:$ y$},label={right:$ z$}](x) {$\bullet$};
      \node [black,label={above:$ k$}] (x0) at (.5,1) {$\bullet$};
      \node [black, label={above:$f$}] (x1) at (-.5,1)  {$\bullet$};
      \draw[black] (x.center) to (x0.center);
      \draw[black] (x.center) to (x1.center);
    \end{tikzpicture}
    }
    &\hspace{-0.5em}\small
    \begin{tikzcd}[ampersand replacement=\&]
      x \ar[r, bend left = 70, "f"{name=A}] \& y \ar[r, "k"] \& z
    \end{tikzcd}
    &\hspace{-0.5em}\small
    \left(
      \begin{array}{@{}l@{}}
        x:\obj, y:\obj\\
        f : \arr[]xy,\\
        z:\obj\\
        k:\arr[]yz
      \end{array}
    \right)
    \nonumber
    \\
    \partial^+(\Gamma)\hspace{-10pt} &
    \adjustbox{valign=c}{
    \begin{tikzpicture}[every node/.style={scale=1}, xscale=.9, yscale=.8, inner sep=2pt, font=\small]
      \node [black,label={left:$ x$},label={above:$ y$},label={right:$ z$}](x) {$\bullet$};
      \node [black,label={above:$ k$}] (x0) at (.5,1) {$\bullet$};
      \node [black, label={above:$h$}] (x1) at (-.5,1)  {$\bullet$};
      \draw[black] (x.center) to (x0.center);
      \draw[black] (x.center) to (x1.center);
    \end{tikzpicture}
    }
    &\hspace{-0.5em}\small
    \begin{tikzcd}[ampersand replacement=\&]
      x \ar[r, bend right = 70, "h"{below}] \& y \ar[r, "k"] \& z
    \end{tikzcd}
    &\hspace{-0.5em}\small
    \left(
      \begin{array}{@{}l@{}}
        x:\obj, y:\obj\\
        h : \arr[]xy,\\
        z:\obj \\
        k:\arr[]yz
      \end{array}
    \right)
    \nonumber
  \end{calign}
  \caption{A pasting context and its boundaries displayed as trees, pastings of discs, and contexts.}
  \label{fig:pasting-scheme}
\end{figure}

The side-conditions \textsc{(comp)} and \textsc{(inv)} make reference to the variables of
a term \(t\) (resp.~type \(A\), context \(\Gamma\), substitution \(\gamma\)),
denoted \(\Var(t)\) (resp.~\(\Var(A)\), \(\Var(\Gamma)\), \(\Var(\gamma)\).)
These are defined by induction on the syntax as follows:
\begin{align*}
  \Var(\emptycontext) &= \emptyset & \Var(\Gamma,x:A) &= \Var(\Gamma)\!\cup\!\{x\}\\
  \Var(\obj) &= \emptyset & \Var(\arr[A]{u}{v}) &=
                                                      \Var(A)\!\cup\!\Var(u)\!\cup\!\Var(v)
  \\
  \Var(x) &= \{x\} & \Var(\coh_{\Gamma,A}[\gamma]) &= \Var(\gamma)\\
  \Var(\langle \rangle) &= \emptyset & \!\!\!\!\Var(\sub{\gamma,x\mapsto t})
  &= \Var(\gamma)\!\cup\!\Var(t)
\end{align*}
Given a pasting context \(\Gamma\vdashps\) and a type \(\Gamma\vdash \arr[A]{u}{v}\), at most one of the side conditions will be satisfied; if one is satisfied, we say that type is \emph{full} for the side condition. If $u \to_A v$ is full for  \textsc{(comp)}, then \(u\) and \(v\) give composition operations for the source and target of \(\Gamma\) respectively, and \(\coh_{\Gamma,\arr u v}[\id]\) is interpreted as a \textit{composition operation} of all the cells of $\Gamma$. If $u \to_A v$ is full for \textsc{(inv)}, then \(\coh_{\Gamma,\arr u v}[\id]\) is interpreted as a \textit{coherence}, witnessing equivalence of the
operations $u,v$. This coherence cell will be invertible in an appropriate $\omega$\-categorical sense, which explains the name of the side-condition. Taken together, these conditions are directly analogous to the admissibility conditions of Maltsiniotis~\cite{maltsiniotis_grothendieck_2010}.

\subsection{Constructions in \texorpdfstring{\catt}{CATT}}

\noindent
This section aims to present various useful constructions in \catt that are commonly used. We call a context a \emph{disc}, if it is obtained from a
linear tree. For instance, the following gives a representation of the disc
context of dimension \(3\):
\[
  \begin{tikzcd}[ampersand replacement=\&, column sep = large]
    x \ar[r, bend left = 35, shift left, "f",""{below,name=A}]
    \ar[r,bend right=35, shift right, "g"',""{name=B}]
    \ar[r, phantom, "\underset{m}{\Rrightarrow}"]
    \& y
    \ar[from=A, to=B, "b", Rightarrow, bend left, shift left = 2]
    \ar[from=A, to=B, "a"', Rightarrow, bend right, shift right = 2]
  \end{tikzcd}
\]
Pasting contexts give rise to  canonical composition opertions called \emph{unbiased composites}, which we define as follows.
\begin{definition}
  For a pasting context $\Gamma \vdashps$, we define the \emph{unbiased
    composite} $\comp_{\Gamma}$ as follows. If \(\Gamma\) is a disc with
  top-dimensional variable \(x\), we define \(\comp_{\Gamma}= x\). Otherwise we
  define $\comp_\Gamma$ by induction on $\dim \Gamma$:
  \begin{align*}
    \comp_{\Gamma} &= \coh_{\Gamma, \arr[]{\comp_{\partial^{-}\Gamma}}{\comp_{\partial^{+}\Gamma}}}[\id_\Gamma]
  \end{align*}
  This construction uses the \textsc{(comp)} side condition. One can check that the following are derivable:
  \begin{align*}
    &\Gamma\vdash\comp_{\Gamma}:\obj & \quad\text{if \(\dim\Gamma=0\)}\\
    &\Gamma\vdash\comp_{\Gamma}:\arr[]{\comp_{\partial^{-}\Gamma}}{\comp_{\partial^{+}\Gamma}}
                                     & \quad\text{otherwise}
  \end{align*}

\end{definition}

When $\Gamma$ consists of discs glued along successive $k$\=/dimensional faces, we use the notation: \(t_1 *_k \dots *_k t_n\) for \(\comp_{\Gamma}[\sigma]\), where  the $t_i$ are the images of the top-dimensional variables of the discs under $\sigma$. For instance, if $\Gamma$ is the following context, and $f[\sigma] = t_1$, $a[\sigma] = t_2$, $k[\sigma] = t_3$, $l[\sigma] = t_4$, we write $t_1 *_0 t_2 *_0 t_3 *_0 t_4$ for the term $\comp_{\Gamma}[\sigma]$:
\[\begin{tikzcd}[ampersand replacement=\&]
        v \& w \& x \& y \& z
        \arrow["f", from=1-1, to=1-2]
        \arrow[""{name=0, anchor=center, inner sep=0}, "g", curve={height=-12pt}, from=1-2, to=1-3]
        \arrow[""{name=1, anchor=center, inner sep=0}, "h"', curve={height=12pt}, from=1-2, to=1-3]
        \arrow["k", from=1-3, to=1-4]
        \arrow["l", from=1-4, to=1-5]
        \arrow["a", shorten <=3pt, shorten >=3pt, Rightarrow, from=0, to=1]
\end{tikzcd}\]
The other case of the side condition of the introduction rule for coherences permits the
definition of cells that witness relations between other cells, like for
instance the associator of Example~\ref{ex:nat-assoc} which can be defined in
the context \(\Gamma\) illustrated in the same Example as:
\[
  \alpha_{f,g,h} = \coh_{\Gamma, \arr[]{(f\s_{0} g) \s_{0} h}{f\s_{0}(g\s_{0} h)}}[\id_{\Gamma}]
\]

\subsection{Suspension and Opposites.}
\label{sec:suspopp}

\noindent
Several \emph{meta-operations} have been defined for \catt, as operations that
act on the syntax~\cite{benjamin_type_2020, benjamin_hom_2024}: they take a term
in \catt as input, and output a new term. We present a brief overview of those
that we use use in this article.

The first meta-operation is \emph{suspension}, which takes a term \(\Gamma\vdash t:A\), and returns a
new term \(\Sigma\Gamma\vdash\Sigma t:\Sigma A\) obtained by freely adjoining
two new objects (say \(N\) and \(S\)) in the context, and replacing the type
\(\obj\) of objects in the original term by \(\arr N S\). Formally, it is defined
on the raw syntax as follows, and illustrated for contexts in Fig.~\ref{fig:meta-operations}:
\begin{align*}
  \Sigma\emptycontext &= (N:\obj,S:\obj)
  & \Sigma(\Gamma,x:A) &= (\Sigma\Gamma,x:\Sigma A)\\
  \Sigma\obj &= \arr[\obj]NS
  & \Sigma(\arr[A]uv) &= \arr[\Sigma A]{\Sigma u}{\Sigma v} \\
  \Sigma x &= x
  & \Sigma(\coh_{\Gamma,A}[\gamma]) &= \coh_{\Sigma\Gamma,\Sigma
                                      A}[\Sigma\gamma] \\
  \Sigma\sub{} &= \sub{N\mapsto N,S\mapsto S}
  &\Sigma\sub{\gamma,x\mapsto t} &= \sub{\Sigma\gamma,x\mapsto\Sigma t}
\end{align*}

The second meta-operation is the formation of \emph{opposites}.
Given a term \(\Gamma\vdash t:A\) and a subset \(M\subseteq \N_{>0}\), it
returns a new term \(\op_{M}\Gamma\vdash \op_{M}(t):\op_{M}(A)\) by formally
reversing the direction of all variables in \(t\) whose dimension is in
\(M\). Formally, this operation is defined by:
\begin{align*}
  \op_{M}\emptycontext &= \emptycontext
  & \op_{M}(\Gamma,x:A) &= (\op_{M}\Gamma,x:\op_{M}A)\\
  \op_{M}\obj &= \obj\\
  \op_{M}x &= x
  & \op_{M}(\coh_{\Gamma,A}[\gamma]) &= \coh_{\Gamma',\op_{M} A}[\op_{M}^{\Gamma}\circ\op_{M}\gamma] \\
  \op_{M}\sub{} &= \sub{}
  &\op_{M}\sub{\gamma,x\mapsto t} &= \sub{\op_{S}\gamma,x\mapsto\op_{M} t} \\
  \omit\rlap{\(
    \op_{M}(\arr[A]uv)
    \!=\!
    \begin{cases}
      \arr[{\op_{M}A}]{\op_{M}u}{\op_{M}v} \text{ if }\dim A+2 \in M\\
      \arr[{\op_{M}A}]{\op_{M}v}{\op_{M}u} \text{ if }\dim A+2 \notin M
    \end{cases}\)
  }
\end{align*}
where \(\Gamma'\) is the unique pasting context isomorphic to \(\op_{M}\Gamma\),
up to \(\alpha\)\=/equivalence, and \(\op_{M}\Gamma\vdash\op_{M}^{\Gamma}:\Gamma'\) is the isomorphism.
Benjamin and Markakis have established correctness of those meta-operations~\cite{benjamin_type_2020,benjamin_hom_2024},
meaning that the following rules are admissible:
\begin{mathpar}
  \inferrule{\Gamma\vdash}{\Sigma\Gamma\vdash}
  \and \inferrule{\Gamma\vdash A}{\Sigma\Gamma\vdash \Sigma A}
  \and \inferrule{\Gamma\vdash t:A}{\Sigma\Gamma\vdash \Sigma t:\Sigma A} \\
  \and \inferrule{\Delta\vdash\gamma:\Gamma}{\Sigma\Delta\vdash\Sigma\gamma:\Sigma\Gamma}
  \and \inferrule{\Gamma\vdash}{\op_{M}\Gamma\vdash}
  \and \inferrule{\Gamma\vdash A}{\op_{M}\Gamma\vdash \op_{M} A} \\
  \and \inferrule{\Gamma\vdash t:A}{\op_{M}\Gamma\vdash \op_{M} t:\op_{M} A}
  \and \inferrule{\Delta\vdash\gamma:\Gamma}{\op_{M}\Delta\vdash\op_{M}\gamma:\op_{M}\Gamma}
\end{mathpar}

\begin{figure}
  \begin{align*}
    \begin{aligned}
      \Gamma:\ &
      \begin{tikzcd}[ampersand replacement=\&]
        x
        \ar[r, bend left = 35, "f", ""{below, name = A}]
        \ar[r, bend right = 35, "g"', ""{name = B}]
        \ar[from = A, to = B, Rightarrow, "a"]
        \& y\ar[r, "h"]
        \& z
      \end{tikzcd} \\
      \op_1\Gamma:\ &
      \begin{tikzcd}[ampersand replacement=\&]
        z \ar[r, "h"]
        \& y
        \ar[r, bend left = 35, "f", ""{below, name = A}]
        \ar[r, bend right = 35, "g"', ""{name = B}]
        \ar[from = A, to = B, Rightarrow, "a"]
        \& x
      \end{tikzcd}
    \end{aligned} &&
    \begin{aligned}
      \Sigma \Gamma:\ &
      \begin{tikzcd}[column sep = 1cm, row sep = 1.2cm]
        N
        \arrow[dd,""{name=0, anchor=center, inner sep=0}, "{x}"',
        bend right = 90]
        \arrow[dd,""{name=1, anchor=center, inner sep=0}, "{z}", bend
        left = 90, pos=0.49]
        \arrow[dd,""{name=2, anchor=center, inner sep=0},
        "{y}"{description}]
        \\
        \\
        S
        \arrow[""{name=3, anchor=center, inner sep=0}, "{f}", curve={height = -20pt}, shorten <=8pt, shorten >=8pt, Rightarrow, from=0, to=2]
        \arrow[""{name=4, anchor=center, inner sep=0}, "{g}"', curve={height = 20pt}, shorten <=8pt, shorten >=8pt, Rightarrow, from=0, to=2, pos = 0.48]
        \arrow["{h}"', shorten <=6pt, shorten >=6pt, Rightarrow, from=2, to=1]
        \arrow["{a}", shorten <=3pt, shorten >=3pt, Rightarrow, scaling nfold=3, from=3, to=4]
      \end{tikzcd}
    \end{aligned}
  \end{align*}
  \caption{Suspension and opposite of a context.}
  \label{fig:meta-operations}
\end{figure}

\section{Naturality}\label{sec:naturality}

\subsection{Overview}

\noindent
Here we formally introduce our naturality construction. The notion of \emph{depth} plays an important role, which we define for contexts $\Gamma$,  types \({\Gamma \vdash A}\), terms \(\Gamma \vdash t : A\) and substitutions $\Gamma \vdash \sigma : \Delta$ with respect to a set of variables \(X \subseteq \Var \Gamma\) as follows, where by convention we let \(\max \emptyset = -1\):
\begin{align*}
  \depth_X \!t &= \max \{\dim t - \dim x : x\!\in\! (\Var(t)\!\cup\!\Var(A))\!\cap \!X\} \\
  \depth_X \hspace{-3pt} A &= \max \{\dim A - \dim x : x\!\in\! \Var(A)\!\cap\! X\}
\\
  \depth_X \!\sigma &= \max\{ \depth_X x[\sigma] : x\!\in\! \Var\Delta\}
\\
  \depth_X \!\Gamma &= \depth_X(\id_\Gamma).
\end{align*}
We define naturality with respect to a set \(X\) of variables of a context
\(\Gamma\) of depth at most \(1\) that is \emph{up-closed},
meaning that if \(x \in X\) appears in the variables of the type of
some other variable \(y \in \Var(\Gamma)\), then \(y\in X\). We denote by
\(\U(\Gamma)\) the set of up-closed sets of variables of \(\Gamma\).
We also define the \emph{preimage} $\gamma^{-1}X$ of a set of variables $X \subseteq \Var(\Gamma)$ under a substitution
\(\Gamma \vdash \gamma : \Delta\) to be:
\[
  \gamma^{-1}X = \{x\in\Var(\Delta)\,|\, \Var(x[\gamma])\cap X\neq \emptyset\}.
\]

We now present the naturality construction, which operates on well-formed syntactic objects.
We will first define its outputs as raw syntactic objects, before proving they are well-formed.
Our construction proceeds by mutual recursion on the derivation trees of contexts, types, terms and substitutions; this is sound since derivations in \catt are unique~\cite[Lemma~7]{finster_typetheoretical_2017}.
We produce the following objects for all \(d\in \{-1,0,1\}\) and \(k \in \{0,1\}\):

\paragraph{{\normalfont (1)} Naturality of Contexts}
Given a context \(\Gamma\vdash\) and a set \(X\in\U(\Gamma)\)
  of \(\depth_{X}\Gamma \leq d\), we define two contexts \(\Gamma \pmctx X\) and
  \(\Gamma\uparrow X\), together with two substitutions \(\inj^\pm_{\Gamma,X}\) such that:
  \begin{align*}
    \Gamma \pmctx X & \vdash &
    \Gamma \pmctx X & \vdash \inj^{\pm}_{\Gamma,X} : \Gamma \\
    \Gamma\uparrow X & \vdash &
    \Gamma\uparrow X & \vdash \inj^{\pm}_{\Gamma,X} : \Gamma
  \end{align*}

\paragraph{{\normalfont (2)} Naturality of Types for Fresh Variables}
Given a type \mbox{\(\Gamma\vdash A\)}, a
  variable \(x\) that is fresh in \(\Gamma\), and \(X\in \U(\Gamma)\)
  such that \(\depth_X(\Gamma)\le d\) and \(\depth_X(A)\le k-1\), we define a
  type $A \uparrow^x X$ such that:
  \[
    (\Gamma,x:A)\pmctx (X\cup\{x\})\vdash (A\uparrow^{x} X)
  \]

\paragraph{{\normalfont (3)} Naturality of Types for Arbitrary Terms}
Given a type \mbox{\(\Gamma\vdash A\)} and a set
  \(X\in\U(\Gamma)\) such that \(\depth_{X}\Gamma \leq d\), then for all terms
  \(\Gamma\vdash t:A\) such that \(0\leq \depth_{X}t \leq k\), we define a type $A \uparrow ^t X$ such that:
  \[
    (\Gamma\uparrow X) \vdash (A\uparrow^{t}X)
  \]

\paragraph{{\normalfont (4)} Naturality of Terms}
Given a term \(\Gamma\!\vdash\! t\!:\!A\) and a set
  \mbox{\(X\in\U(\Gamma)\)}
  with \(\depth_{X}(\Gamma)\leq d\) and \({0\leq\depth_{X}(t)\leq k}\), we define a term $t \uparrow X$ such that:
  \[
    \Gamma\uparrow X\vdash t\uparrow X : A\uparrow^{t}X.
  \]

\paragraph{{\normalfont (5)} Naturality of Substitutions}
Given a substitution \({\Delta\vdash \gamma:\Gamma}\) and
  \(X\in\U(\Delta)\) such that \(\depth_{X}\Delta \le d\) and \(\depth_{X}\gamma\leq k\), we define a
  substitution $\gamma \uparrow X$ such that:
  \[
    \Delta\uparrow X \vdash (\gamma \uparrow X) : (\Gamma\uparrow
    \gamma^{-1}X)
  \]

\paragraph{{\normalfont (6)} Naturality of Term Constructors}
Given a pasting context \(\Gamma\vdashps\), a type
  \(\Gamma\vdash A\) full in \(\Gamma\), and \(X\in\U(\Gamma)\) such that
  \(0\leq \depth_{X}(\coh_{\Gamma,A}[\id_{\Gamma}])\leq d\), we define a term
  $\coh_{\Gamma,A}\uparrow X$ such that:
  \[
    \Gamma\uparrow X \vdash (\coh_{\Gamma,A}\uparrow X) :
    (A\uparrow^{\coh_{\Gamma,A}[\id]} X)
  \]
For the sake of this definition, we assume that we can
use an oracle picking fresh variables, so that for  any variable \(x\in X\), we may pick
fresh variables \(x^{-},x^{+}\) and \(\fun{x}\). We may freely use the
assumptions that these names do not collide with any others. This can be
formalised by explicitly carrying a choice function everywhere, or with de
Bruijn levels~\cite{debruijn_lambda_1972}. For the sake of this presentation, we
ignore these issues.

We may now state our main theorem, which asserts termination and correctness of the construction. Its proof will be presented after the construction is fully described.
\begin{theorem}\label{thm:correctness}
The recursive  construction of naturality is well\=/founded, and its outputs satisfy judgements (1)-(6).
\end{theorem}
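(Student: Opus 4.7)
The plan is to prove well-foundedness and correctness simultaneously by mutual induction, following the same shape as the mutual recursion that defines the construction. Because \catt has unique derivations, each of the syntactic objects appearing in judgements (1)--(6) has a canonical derivation tree, and I will order recursive calls lexicographically by (i) the size of that derivation, (ii) the depth parameter $d$, and (iii) the depth parameter $k$, with a small fourth component distinguishing the different judgements when all other arguments coincide. The induction hypothesis will assert that whenever a smaller input is fed into the construction, the corresponding output satisfies the appropriate judgement in (1)--(6).

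For well-foundedness, I will verify case by case that every recursive call is strictly smaller in this order. The cases for naturality of contexts, types, and substitutions are essentially structural: the context extension $(\Gamma, x:A)$ recurses on $\Gamma$ and $A$ whose derivations are strict subtrees; the arrow type $u \to_A v$ recurses on $A$, $u$, $v$; and substitution naturality recurses on the term assigned to each variable. The subtle cases are those where we must reduce on $d$ or $k$: for instance, forming $A \uparrow^{t} X$ at depth $k$ will require auxiliary calls at depth $k-1$ or to the fresh-variable variant $A\uparrow^{x}X$, and the coherence case will appeal to (2) and (3) at reduced depth. Here I will carefully tabulate which component of the lex order strictly decreases.

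For correctness, the proof is a long but routine case analysis. The easy part covers the closure clauses: given the inductively constructed outputs for sub-derivations, one assembles the judgements for $\Gamma\uparrow X$, $\Gamma \pmctx X$, $\inj^\pm_{\Gamma,X}$, $A\uparrow^{x}X$, $A\uparrow^{t}X$, and $\gamma\uparrow X$ from the rules of Fig.~\ref{fig:rules-catt}, using admissibility of substitution and standard compatibility lemmas. Along the way I will need a few auxiliary facts — for example, that $\gamma^{-1}X$ is up-closed in $\Gamma$ whenever $X$ is up-closed in $\Delta$, that depth is monotone under substitution, and that the two injections $\inj^\pm$ are compatible with the boundary operators $\partial^\pm$ on pasting contexts — which I will establish as small standalone lemmas preceding the main induction.

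The genuine difficulty is case (6), the naturality of a term constructor $\coh_{\Gamma,A}[\id]$ when $\depth_X(\coh_{\Gamma,A}[\id]) = 1$. Here the resulting cell must fit into a square of the form exhibited in Example~\ref{ex:nat-assoc}: its source and target are full composites in $\Gamma\uparrow X$ obtained by interleaving $\inj^-, \inj^+$, and the newly introduced filler variables, and one must exhibit a pasting context which has exactly this boundary and for which the side condition \textsc{(comp)} or \textsc{(inv)} holds. I expect the bulk of the work to go into producing a suitable pasting context $\Gamma\uparrow X$ as a tree, identifying its two boundaries with the appropriate composites, and checking that the fullness condition carries over from the original $\Gamma$. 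This is where intuitions from the local tensor product with a directed interval must be made fully formal; the functoriality subcase ($\depth_X=0$) will reduce to the prior construction of Benjamin~\cite{benjamin_type_2020}, so only the genuinely depth-$1$ case requires new geometric reasoning.
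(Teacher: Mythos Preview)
Your overall induction scheme and the list of auxiliary lemmas are in the right spirit, and the structural cases for contexts, types, and substitutions are indeed routine. However, your plan for case~(6) at depth~$1$ rests on a false assumption and would not go through.

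You write that the bulk of the work is ``producing a suitable pasting context $\Gamma\uparrow X$ as a tree'' and ``checking that the fullness condition carries over''. But when $\depth_X(\Gamma)=1$, the context $\Gamma\uparrow X$ is \emph{not} a pasting context. For instance, take $\Gamma=(x,y:\obj,\,f:x\to y)$ and $X=\{x,f\}$: then $\Gamma\uparrow X$ contains a triangle with vertices $x^-,x^+,y$, which is not globular pasting data. Consequently $\coh_{\Gamma,A}\uparrow X$ cannot be a single application of \ruleref{rule:coh} over $\Gamma\uparrow X$; it must be assembled as a composite of smaller coherences living over genuine pasting contexts. The paper's construction does exactly this via a three-stage reduction that you have not identified: first \emph{linear} composites $\comp^n_k$, whose naturality is built by hand as a chain of associators and whiskerings (cf.\ Example~\ref{ex:natcomp}); then \emph{reduced} composites, where one passes to the depth-$0$ subset $X^{lm}\subseteq X$, uses the already-established case $\depth_{X^{lm}}(\Gamma)=0$ (where $\Gamma\uparrow X^{lm}$ \emph{is} pasting), and corrects the boundary with explicit interchangers; finally \emph{general} composites, by factoring through the reduction $\rho_\Gamma:\Gamma\to\Gamma^r$ and invoking the reduced case together with naturality of the linear composites appearing in $\rho_\Gamma$.

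This also affects your well-foundedness argument. The recursive call from the reduced case to $\coh_{\Gamma,A}\uparrow X^{lm}$ keeps the \emph{same} derivation of $\coh_{\Gamma,A}[\id]$ but lowers the effective depth of the context to~$0$; the call from the general case to $\coh_{\Gamma^r,A}\uparrow\rho_\Gamma^{-1}X$ changes the pasting context; and the naturality of $\rho_\Gamma$ requires naturality of linear composites over auxiliary contexts $\Psi^{n-1}_{k'}$. A clean way to organise this, which the paper adopts, is to do the outer case split on the pair $(d,k)$ first and only then induct on derivations, rather than putting derivation size first in a single lexicographic order. Within $d=1$, case~(6) is then handled by the internal stratification linear $\to$ reduced $\to$ general rather than by structural descent on the coherence itself.
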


\subsection{Construction of Naturality}\label{sec:nat-constr}

\noindent
Here we give the full description of the naturality construction.
We observe that in the following description, we make no use of the parameters $d$ and $k$.
Nonetheless, they will be crucial in the proof of Theorem \ref{thm:correctness}.

\paragraph{{\normalfont (1)} Naturality of Contexts} For the empty context
\(\emptycontext\vdash\), we make the following definitions:
\begin{align*}
  \emptycontext \pmctx \emptyset &= \emptycontext \uparrow\emptyset =
                                \emptycontext &
  \inj^{\pm}_{\emptycontext,\emptyset} &= \inj_{\emptycontext,\emptyset} = \sub{}
\end{align*}
For the context \((\Gamma,x:A)\) with \(x\notin X\), we define:
\begin{align*}
(\Gamma,x:A)\pmctx X &= (\Gamma,x:A)\uparrow X = (\Gamma\uparrow X,x:A)
\\
  \inj^{\pm}_{(\Gamma,x:A),X} &= \sub{\inj^{\pm}_{\Gamma,X},x\mapsto x}
\end{align*}
For the context \((\Gamma,x:A)\) with \(x\in X\), the set
\(X' = X\setminus\{x\}\) is up-closed in \(\Gamma\), allowing us to define:
\begin{align*}
  (\Gamma,x:A)\pmctx X &= (\Gamma\uparrow X',x^{-}:A[\inj^{-}_{\Gamma,x'}]
                      ,x^{+}:A[\inj^{+}_{\Gamma,x'}]) \\
  (\Gamma,x:A)\uparrow X &= ((\Gamma,x:A)\pmctx X,\fun x : A \uparrow^{x}X) \\
  \inj^{\pm}_{(\Gamma,x:A),X} &= \sub{\inj^{\pm}_{\Gamma,X'},x\mapsto x^{\pm}}
\end{align*}

\paragraph{{\normalfont (2)} Naturality of Types for Fresh Variables}
For \({\Gamma\vdash\obj}\) and a variable \(x \notin \Var(\Gamma)\), define:
\[
  \obj\uparrow^{x} X = \arr[\obj]{x^{-}}{x^{+}}
\]
For \(\Gamma\vdash \arr[A]uv\), we denote \(n=\dim A + 1\) and define:
\begin{calign}
\nonumber
(\arr[A]uv) \uparrow^{x} X = \arr[{\arr[A]{u[\inj^{-}_{\Gamma,X}]}{v[\inj^{+}_{\Gamma,x}]}}]{a}{b}
\\
\nonumber
  a =
      \begin{cases}
       x^{-} & \text{if \(X\cap\Var_{\Gamma} v = \emptyset\)}\\
       x^{-} \s_{n} (v\uparrow X) & \text{otherwise}
     \end{cases}\\
\nonumber
  b =
      \begin{cases}
        x^{+} & \text{if \(X\cap\Var_{\Gamma} u = \emptyset\)}\\
        (u\uparrow X)\s_{n} x^{+} & \text{otherwise}
      \end{cases}
\end{calign}

\paragraph{{\normalfont(3)} Naturality of Types for Arbitrary Terms}
For the type \(\Gamma\vdash \obj\) and the term \(\Gamma\vdash t:\obj\), define:
\[
  \obj\uparrow^{t} X =
  \arr[\obj]{t[\inj^{-}_{\Gamma,X}]}{t[\inj^{+}_{\Gamma,X}]}
\]
For the type \(\Gamma\vdash \arr[A]uv\) and the term
\(\Gamma\vdash t:\arr[A]uv\), we denote \(n = \dim A + 1\) and define:
\begin{calign}
  (\arr[A]uv) \uparrow^{t} X = \arr[{\arr[A]{u[\inj^{-}_{\Gamma,X}]}{v[\inj^{+}_{\Gamma,x}]}}]{a}{b}
  \nonumber
\\
\nonumber
  a =
      \begin{cases}
       t[\inj^{-}_{\Gamma,X}] & \text{if \(X\cap\Var_{\Gamma} v = \emptyset\)}\\
       t[\inj^{-}_{\Gamma,X}] \s_{n} (v\uparrow X) & \text{otherwise}
     \end{cases}\\
\nonumber
  b =
      \begin{cases}
        t[\inj^{+}_{\Gamma,X}] & \text{if \(X\cap\Var_{\Gamma} u = \emptyset\)}\\
        (u\uparrow X)\s_{n} t[\inj^{+}_{\Gamma,X}] & \text{otherwise.}
      \end{cases}
\end{calign}

\paragraph{{\normalfont (4)} Naturality of Terms}
For a variable \(\Gamma\vdash x:A\), necessarily \(x\in X\), and we define:
\[
  x\uparrow X = \fun x
\]
For a term \(\Delta\vdash \coh_{\Gamma,A}[\gamma]:A[\gamma]\) we define:
\[
  (\coh_{\Gamma,A}[\gamma])\uparrow X = (\coh_{\Gamma,A}\uparrow
  X)[\gamma\uparrow X]
\]

\paragraph{{\normalfont(5)} Naturality of Substitutions}
For the empty substitution \({\Gamma\vdash\sub{}:\emptycontext}\), we define:
\[
  \sub{}\uparrow X = \sub{}
\]
For the substitution \(\Delta\vdash\sub{\gamma,x\mapsto t}:(\Gamma,x:A)\), in the
case where \(\Var(t) \cap X = \emptyset\), we define:
\[
  \sub{\gamma,x\mapsto t}\uparrow X = \sub{\gamma\uparrow X,x\mapsto t}
\]
while in the case where \(\Var(t) \cap X \neq \emptyset\), we define:
\begin{align*}
  \sub{&\gamma,x\mapsto t}\uparrow X = \\
       & \sub{\gamma\uparrow X,x^{-}\mapsto
         t[\inj^{-}_{\Delta,x}], x^{+}\mapsto t[\inj^{+}_{\Delta,x}], \fun{x}
         \mapsto t\uparrow X}
\end{align*}

\paragraph{{\normalfont(6)} Naturality of Term Constructors}
Given a pasting context \(\Gamma\vdashps\) and a full type
\(\Gamma\vdash A\), we define the term
\(\coh_{\Gamma,A}\uparrow X\) in two cases. If \(\depth_X(\Gamma) = 0\),
we let:
\[
  \coh_{\Gamma,A}\uparrow X = \coh_{\Gamma\uparrow X,
    A\uparrow^{\coh_{\Gamma,A}[\id]}X}[\id_{\Gamma\uparrow
    X}]
\]
using that \(\Gamma\uparrow X\) is again a pasting context~\cite[Lemma~87]{benjamin_type_2020}.
For the case \(\depth_X(\Gamma) = 1\), which forces
\({\depth_X(\coh_{\Gamma,A}[\id_\Gamma]) = 1}\) as well, before giving the general
formula for this case, we will build the special cases of
\emph{linear} and \emph{reduced composites}.



\paragraph{{\normalfont(6)(i)} Naturality of Linear Composites}
We call a \emph{linear composite} the unbiased composite of a pasting context
of the form \(\Psi^{n}_{k}\) representing the \(k\)\=/ary composite of
\((n\!+\!1)\)\=/cells along their \(n\)\-boundary, illustrated in Fig.~\ref{fig:linear-ps}.
We write \(\comp^{n}_{k}\) for
the unbiased composite of \(\Psi^{n}_{k}\), and \(A^{n}_{k}\) for its type:
\[
  \Psi^{n}_{k} \vdash \comp^{n}_{k} : A^{n}_{k}
\]
We proceed by induction on \(n\).
We define \(\comp^{0}_{k}\uparrow X\) to be
an \(\abs{X}\)\=/ary composite, where every \({x \in X}\) of maximal
dimension gives rise to a whiskering of \(\fun{x}\), and every other \(x\in X\)
gives rise to an associator. The naturality term
\(\comp^{0}_{2} \uparrow \Var(\Psi^{0}_{2})\) is fully described in
Example~\ref{ex:natcomp}.
Appendix~\ref{app:nat-coh} gives a precise account of this construction. For
the inductive step, we use that \(\comp^{n+1}_{k} = \Sigma \comp^{n}_{k}\) and
that suspension commutes with the depth\=/0 naturality, proven
in Lemma~\ref{lemma:funcsusp}, to define:
\[
  \comp^{n+1}_{k}\uparrow X = (\Sigma\comp^{n}_{k}) \uparrow X =
  \Sigma(\comp^{n}_{k} \uparrow X)
\]

\begin{figure}
  \centering
  \[
    \begin{array}{cc}
      \Psi^{0}_{2}
      :
        \begin{tikzcd}[ampersand replacement=\&]
          x \ar[r,"f"]
          \& y \ar[r,"g"]
          \& z
        \end{tikzcd}
      & \multirow{2}*{\(
                \Psi^{1}_{2} :
      \begin{tikzcd}[ampersand replacement=\&]
        x \& y \& {}
        \arrow[""{name=0, anchor=center, inner sep=0}, "f",
        curve={height=-10pt}, shift left, from=1-1, to=1-2]
        \arrow[""{name=1, anchor=center, inner sep=0}, "h"',
        curve={height=10pt}, shift right, from=1-1, to=1-2]
        \arrow[""{name=2, anchor=center, inner sep=0}, "g"{description}, from=1-1, to=1-2]
        \arrow["a", shorten <=2pt, shorten >=2pt, Rightarrow, from=0, to=2]
        \arrow["b", shorten <=2pt, shorten >=2pt, Rightarrow, from=2, to=1]
      \end{tikzcd}
        \)} \\
      \\
      \hspace{10pt}\Psi^{0}_{3}:
        \begin{tikzcd}[ampersand replacement=\&]
          x \ar[r,"f"]
          \& y \ar[r,"g"]
          \& z \ar[r,"h"]
          \& w
        \end{tikzcd}
    \end{array}
  \]
  \caption{Linear pasting contexts.}
  \label{fig:linear-ps}
\end{figure}

\paragraph{{\normalfont (6)(ii)} Naturality of Reduced Composites} We say a pasting context of dimension \(n>0\) is \emph{reduced}~\cite[Section~4.4]{benjamin_invertible_2024} if it does not contain \(n\)\=/dimensional variables glued along an \((n\!-\!1)\)\=/dimensional boundary. We call composites over reduced pasting contexts
\emph{reduced composites}. For a reduced composite
\(\coh_{\Gamma,A}[\id]\), define \(X^{lm} \subseteq X\) as the depth\=/0 variables in $\Gamma$. We then define the following raw terms:
\begin{gather*}
  c_{\Gamma,A,X} = (\coh_{\Gamma,A}\uparrow X^{lm})[\theta_{\Gamma,X}] \\
  \coh_{\Gamma,A}\uparrow X = j^{-}_{\Gamma,X,A} \s_{n} c_{\Gamma,X,A}\s_{n} j^{+}_{\Gamma,X,A}
\end{gather*}
Here, \((\Gamma\uparrow X) \vdash \theta_{\Gamma,X} : (\Gamma\uparrow X^{lm}) \)
is the unique substitution sending \(\fun x\) to \(\fun x\) for \(x\in X\), and
every variable \(y\notin X\) of depth~\(0\) to \(y\).
The correction terms \(j^{\pm}_{\Gamma,X,A}\) are canonical coherences,
interchanging the composites appearing in the type of \(c_{\Gamma,X,A}\) to yield those in the desired type.
A more detailed version of this construction can be found in Appendix~\ref{app:nat-coh}.
We illustrate this construction for the case of the whiskering:
\begin{align*}
  \Gamma
  &=
    \begin{tikzcd}[ampersand replacement=\&]
      x
      \ar[r, bend left=40, ""{below,name=A}, "f"]
      \ar[r, bend right=40, ""{above,name=B}, "g"']
      \ar[from=A, to=B, Rightarrow,"a"]
      \& y\ar[r,"h"]
      \& z
    \end{tikzcd}
  &
    \begin{aligned}
      A  &= \arr[]{f \s_{0} h}{g \s_{0} h} \\
      X  &= \{f,g,a,h\} \\
          X^{lm} &= \{a,h\}
    \end{aligned}
\end{align*}
The contexts \(\Gamma\uparrow X^{lm}\) and
\(\Gamma\uparrow X\) are visualised in Fig.~\ref{fig:reduced-comp}. The
substitution \(\theta_{\Gamma,X}\) sends
\(a^{-}[\theta_{\Gamma,X}] = a^{-}\s_{1}\fun{g}\) and
\({a^{+}[\theta_{\Gamma,X}] = \fun{f}\s_{1}a^{+}}\),
and acts as the identity on the remaining variables.
The term \(\coh_{\Gamma,A}\uparrow X\) is then given as follows:
\settowidth{\arrleng}{\scriptsize{\(c_{\Gamma,X,A}\)}}
\[
  \begin{split}
    (a^- \sj 0 h) \sj 1 (\fun{g} \sj 0 h)
        & \xrightarrow{\makebox[\arrleng]{\scriptsize $j^-_{\Gamma,X,A}$}}
    ((a^- \sj 1 \fun{g}) \sj 0 h^{-}) \sj {1} (g^{+} \sj {0}\fun h) \\
    &\xrightarrow{\makebox[\arrleng]{\scriptsize{\(c_{\Gamma,X,A}\)}}}
    (f^{-} \sj {0}\!\fun h) \sj {1} ((\fun{f} \sj 1 a^+) \sj 0 h^{+})
    \\[-3pt]
    & \xrightarrow{\makebox[\arrleng]{\scriptsize $j^+_{\Gamma,X,A}$}}
    (\fun{f} \sj 0 h) \sj 1 (a^+ \sj 0 h)
  \end{split}
\]

\begin{figure}
  \centering
  \begin{tabularx}{\linewidth}{CC}
    \(\begin{tikzcd}[ampersand replacement=\&, scale=0.85]
      x
      \ar[rr, bend left=40, ""{below,name=A}, "f", shift left = 4, shorten
        <= -8pt, shorten >= -8 pt]
      \ar[rr, bend right=40, ""{above,name=B}, "g"', shift right = 4, shorten
        <= -8pt, shorten >= -8 pt]
      \ar[rr, phantom, "\overset{\small\fun{a}}{\Rrightarrow}"]
      \ar[from=A, to=B, Rightarrow,"a^{+}", bend left, shift left]
      \ar[from=A, to=B, Rightarrow,"a^{-}"', bend right, shift right]
      \&\& y
      \ar[r,"h^{-}", shift left, bend left, ""{below, name=H1}]
      \ar[r,"h^{+}"', shift right, bend right, ""{name=H2}]
      \ar[from = H1, to= H2, "\fun h", Rightarrow]
      \& z
  \end{tikzcd}\)
    &
      \(\begin{tikzcd}[ampersand replacement=\&, scale=0.85]
        x
        \ar[rr, bend left=40, ""{below,name=A}, "f^{-}", shift left = 4, shorten
        <= -8pt, shorten >= -8 pt]
        \ar[rr, bend right=40, ""{above,name=B}, "g^{+}"', shift right = 4, shorten
        <= -8pt, shorten >= -8 pt]
        \ar[rr, phantom, ""{description,name=M1, pos=0.3},
        ""{description,name=M2, pos=0.7}, "\overset{\small\fun{a}}{\Rrightarrow}"]
        \ar[from=A, to=M2, Rightarrow, bend left=12, shift left, shorten >=
        -2.5pt, "\fun{f}"{pos=0.95}]
        \ar[from=A, to=M1, Rightarrow, bend right=12, shift right, shorten >=
        -2.5pt, "a^{-}"'{very near end}]
        \ar[from=M2, to=B, Rightarrow, bend left=12, shift left, shorten <=
        -2.5pt, "a^{+}"{very near start}]
        \ar[from=M1, to=B, Rightarrow, bend right=12, shift right, shorten <=
        -2.5pt, "\fun{g}"'{very near start}]
        \&\& y
        \ar[r,"h^{-}", shift left, bend left, ""{below, name=H1}]
        \ar[r,"h^{+}"', shift right, bend right, ""{name=H2}]
        \ar[from = H1, to= H2, "\fun h", Rightarrow]
        \& z
      \end{tikzcd}\)
  \end{tabularx}

  \caption{The contexts \(\Gamma\uparrow X^{lm}\) and \(\Gamma\uparrow X\).}
  \label{fig:reduced-comp}
\end{figure}

\paragraph{{\normalfont(6)(iii)} Naturality of General Composites} For the
general case, we will use that every pasting context \(\Gamma\) gives rise to a reduced
pasting context \(\Gamma^{r}\) with the same source and target, along with a reduction
substitution \(\Gamma\vdash\rho_{\Gamma}:\Gamma^{r}\). This substitution  acts as the  identity on the boundary, and sends maximal-dimensional
variables of \(\Gamma^{r}\) to linear composites of maximal-dimensional
variables of \(\Gamma\)~\cite[Section~4.4]{benjamin_invertible_2024}. This is
illustrated in Fig.~\ref{fig:ps-reduction}. For the example shown in this figure, the reduction
substitution is determined by
\(c[\rho_\Gamma] = a \s_0 b\) and \(f[\rho_\Gamma] = f\). The equality of the boundaries allows us to view \(A\) as a full type over \(\Gamma^r\), and hence to define the following:
\begin{gather*}
        e_{\Gamma,X,A} = \coh_{\Gamma,\arr[]{\coh_{\Gamma,A}[\id_{\Gamma}]}{\coh_{\Gamma^{r},A}[\rho_{\Gamma}]}}[\id] \\
        e^{-}_{\Gamma,X,A} =
        \begin{cases}
          e_{\Gamma,X,A}[\inj^{-}_{\Gamma,X}] & \text{if \(\Var(v)\cap X = \emptyset\)}\\
          e_{\Gamma,X,A}[\inj^{-}_{\Gamma,X}] \s_{n-1} (v\uparrow X) & \text{otherwise}
        \end{cases} \\
        e^{+}_{\Gamma,X,A} =
        \begin{cases}
          e_{\Gamma,X,A}[\inj^{+}_{\Gamma,X}] & \text{if \(\Var(u)\cap X = \emptyset\)}\\
          (u\uparrow X) \s_{n-1} e_{\Gamma,X,A}[\inj^{+}_{\Gamma,X}] & \text{otherwise}
          \end{cases} \\
        c'_{\Gamma,X,A} = (\coh_{\Gamma^{r},A}\uparrow\rho_{\Gamma}^{-1}(X))[\rho_{\Gamma}\uparrow X]\\
        \coh_{\Gamma,A}\uparrow X = e^{-}_{\Gamma,X,A} \s_n c'_{\Gamma,X,A} \s_n e ^{+}_{\Gamma,X,A}
\end{gather*}
This completes the description of the naturality construction.

\begin{figure}
  \begin{calign}
    \begin{aligned}
      \begin{tikzcd}[ampersand replacement = \&]
        \cdot
        \ar[r, bend left = 60, shift left, ""{below, name = A}]
        \ar[r, bend right = 60, shift right, ""{above,name = D}]
        \ar[r,  ""{above,name = B}, ""{below, name = C}]
        \ar[from = A, to = B, Rightarrow, "a"]
        \ar[from = C, to = D, Rightarrow, "b"]
        \& \cdot
        \ar[r, "f"]
        \& \cdot
      \end{tikzcd}
    \end{aligned} &
    \begin{aligned}
      \begin{tikzcd}[ampersand replacement = \&]
        \cdot
        \ar[r, bend left, shift left, ""{below, name = E}]
        \ar[r, bend right, shift right, ""{above,name = F}]
        \ar[from = E, to = F, Rightarrow, "c"]
        \& \cdot
        \ar[r, "f"]
        \& \cdot
      \end{tikzcd}
    \end{aligned}
    \nonumber \\
    \Gamma & \Gamma^r \nonumber
  \end{calign}

  \caption{The reduction of a pasting context.}
  \label{fig:ps-reduction}
\end{figure}

\subsection{Correctness of Naturality}

\noindent
We now proceed to show correctness of the naturality construction, by providing a proof of Theorem~\ref{thm:correctness}. First we note the following lemmas, which can be
established by structural induction, taken simultaneously with   induction on the depth \(d\). Full proofs
of those lemmas can be found in Appendix~\ref{app:correctness}.

\begin{lemma}\label{lem:empty-intersection}
  Let \(\Gamma\vdash\) and \(X\in\U(\Gamma)\) of depth at most \(1\). Then for
  a term \(\Gamma\vdash t : A\) (resp. type \(\Gamma\vdash A\), substitution
  \(\Gamma\vdash \sigma : \Delta\)) whose variables do not intersect with \(X\):
  \begin{align*}
  t[\inj^\pm_{\Gamma,X}] &= t &
  \sigma\circ \inj^{\pm}_{\Gamma,X} &= \sigma &
  A[\inj^{\pm}_{\Gamma,X}] &= A
  \end{align*}
\end{lemma}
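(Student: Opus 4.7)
The plan is to unfold the hypothesis $\Var(t)\cap X = \emptyset$ (and the analogues for $A$ and $\sigma$) and compare it against the definition of $\inj^\pm_{\Gamma,X}$ from part~(1) of the construction. By that definition, $\inj^\pm_{\Gamma,X}$ is a substitution whose action on any variable $y \in \Var(\Gamma)$ is $y \mapsto y^\pm$ if $y\in X$ and $y\mapsto y$ otherwise. So on every variable disjoint from $X$, $\inj^\pm_{\Gamma,X}$ behaves as the identity, and the base case $t = y$ is immediate since $\Var(y)\cap X = \emptyset$ forces $y\notin X$.

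I would then proceed by simultaneous structural induction on the raw syntax of terms, types, and substitutions, exploiting the compatibility of $\Var(\cdot)$ with the grammar: for every compound syntactic form, the variable set is the union of the variable sets of its immediate subparts, so the disjointness hypothesis propagates freely. Applying this together with the compositional clauses for substitution
\begin{align*}
\obj[\tau] &= \obj, &
(\arr[B] u v)[\tau] &= \arr[B[\tau]]{u[\tau]}{v[\tau]}, \\
\coh_{\Delta,B}[\gamma][\tau] &= \coh_{\Delta,B}[\gamma\circ\tau], &
\sub{\gamma, x\mapsto t}\circ\tau &= \sub{\gamma\circ\tau, x\mapsto t[\tau]},
\end{align*}
each inductive case reduces directly to inductive hypotheses on strictly smaller subparts. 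For instance, in the coherence case $\Var(\coh_{\Delta,B}[\gamma]) = \Var(\gamma)$, so $\gamma\circ\inj^\pm_{\Gamma,X} = \gamma$ by the IH for substitutions, and composing with the clause for coherences yields the result.

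I do not anticipate any genuine obstacle for this lemma in isolation: the argument is essentially a repackaging of the fact that a substitution is determined by its values on variables, together with the defining behaviour of $\inj^\pm_{\Gamma,X}$ on variables outside $X$. The assumption $\depth_X(\Gamma)\le 1$ and the up-closedness of $X$ are used only to guarantee that $\inj^\pm_{\Gamma,X}$ is defined in the first place; they do not enter the inductive step itself. The remark in the paper about taking the induction simultaneously with induction on the depth $d$ presumably refers to the companion lemmas whose statements genuinely depend on depth, rather than to this fixed-point property of the injections.
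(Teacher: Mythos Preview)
Your proposal is correct and follows essentially the same approach as the paper: a mutual structural induction on terms and substitutions (variables by definition of $\inj^\pm_{\Gamma,X}$, coherences via the inductive hypothesis on the substitution, and substitutions componentwise), with the type case then derived from the term case. Your remark that the depth hypothesis serves only to ensure $\inj^\pm_{\Gamma,X}$ is defined, rather than entering the inductive step, is also accurate.
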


\begin{lemma}\label{lem:pullback-stability}
  Let \(\Gamma\vdash\sigma:\Delta\) and \(X\in\U(\Gamma)\) of depth at most \(1\)
  with respect to \(\Gamma\) and \(\sigma\), we have:
  \begin{align*}
    \sigma\circ\inj^{\pm}_{\Gamma,X} &=\inj^{\pm}_{\Delta,\sigma^{-1}X}\circ(\sigma \uparrow X) \\
    \intertext{Moreover, if \(X\) is of depth at most \(1\) with respect to some
      term \(\Delta\vdash t:A\) or substitution \({\Delta\vdash \tau : \Theta}\):}
    t[\sigma] \uparrow X &= (t\uparrow \sigma^{-1}X)[\sigma\uparrow X] \\
    (\tau\circ\sigma) \uparrow X &= (\tau\uparrow \sigma^{-1}X)\circ
      (\sigma\uparrow X) \\
    A[\sigma]\uparrow^{t[\sigma]}X &= (A\uparrow^{t}(\sigma^{-1}
      X))[\sigma\uparrow X]
  \end{align*}
\end{lemma}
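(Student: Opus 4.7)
The plan is to establish the four identities by simultaneous structural induction on the underlying raw syntactic objects, nested in the outer induction on the depth parameter $d$ that drives the proof of Theorem~\ref{thm:correctness}. We prove the first equation independently by induction on $\sigma$, and then treat equations (2), (3), (4) by mutual induction following the term/substitution/type grammar. Throughout, we freely use Lemma~\ref{lem:empty-intersection} together with the combinatorial identities $(\gamma\circ\sigma)^{-1}X=\gamma^{-1}(\sigma^{-1}X)$ and ``$\Var(t)\cap\sigma^{-1}X=\emptyset$ iff $\Var(t[\sigma])\cap X=\emptyset$'', both of which are immediate from unwinding the definition of preimage.

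For equation (1), I would induct on the target context of $\sigma$. The empty case is trivial, and in the extension case $\sigma=\sub{\sigma',x\mapsto t}$ I split along the two branches of construction~(5). When $\Var(t)\cap X=\emptyset$, the variable $x$ is outside $\sigma^{-1}X$, so both $\inj^\pm_{\Delta,\sigma^{-1}X}$ and $\inj^\pm_{\Gamma,X}$ act as the identity on the relevant components; the equality then reduces to the inductive hypothesis on $\sigma'$ plus Lemma~\ref{lem:empty-intersection} applied to $t$. When $\Var(t)\cap X\neq\emptyset$, the variable $x$ enters $\sigma^{-1}X$ and is duplicated; the defining clause of $\sigma\uparrow X$ is precisely what is needed so that $\inj^\pm_{\Delta,\sigma^{-1}X}(x)=x^\pm$ maps through $\sigma\uparrow X$ to $t[\inj^\pm_{\Gamma,X}]$, matching the LHS.

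For equation (3), I would induct on $\tau$; the empty case is trivial. In the extension case $\tau=\sub{\tau',x\mapsto t}$, the case split on $\Var(t[\sigma])\cap X$ on the left and on $\Var(t)\cap\sigma^{-1}X$ on the right are equivalent by the combinatorial identity above, so the case analyses align. The $\tau'$-component is handled by the inductive hypothesis on (3); the $x^\pm$ slots in the non-empty case reduce to the statement $t[\inj^\pm_{\Delta,\sigma^{-1}X}\circ(\sigma\uparrow X)]=t[\sigma\circ\inj^\pm_{\Gamma,X}]$, which is equation (1) applied to $\sigma$; and the $\fun x$ slot is exactly equation (2) for $t$, available by mutual induction since $t$ is a proper subterm of $\tau$. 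Equation (2) itself is handled directly: the variable case is built into the definition of $\sigma\uparrow X$, and the coherence case $t=\coh_{\Psi,A}[\gamma]$ unfolds via construction~(4) to $(\coh_{\Psi,A}\uparrow\gamma^{-1}(\sigma^{-1}X))[\gamma\uparrow\sigma^{-1}X][\sigma\uparrow X]$, and then the inductive hypothesis on (3) for the pair $(\gamma,\sigma)$, combined with associativity of substitution composition, gives the result. Equation (4) is routine by induction on $A$: the base case $A=\obj$ collapses to equation (1) on the two endpoint substitutions, and the arrow case chains together (2) for $u,v$ and (4) on the shorter type $B$.

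The main obstacle I anticipate is not any single case but the bookkeeping of the mutual dependency graph: within a single recursive step, equation (3) invokes both (1) and (2), and (2) invokes (3) on a structurally smaller substitution. One therefore needs to verify that each call is on a strictly smaller syntactic object with respect to a well-founded measure such as total syntactic size, and that the preimage $X$'s propagating through the recursion remain up-closed and respect the depth bounds inherited from the hypotheses. A pleasant feature is that the intricate depth-1 case (6) of the naturality construction never has to be unpacked here: the coherence branch of (2) treats $\coh_{\Psi,A}\uparrow X'$ opaquely, so the proof is insulated from the whiskering corrections $j^\pm$ and $e^\pm$ and from the reduction substitution $\rho_\Psi$.
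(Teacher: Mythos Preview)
Your proposal is correct and follows essentially the same route as the paper: equation (1) by induction on $\sigma$ with the two-way split along $\Var(t)\cap X$, equations (2) and (3) by mutual structural induction (variable and coherence cases for terms, empty and extension cases for substitutions, invoking Lemma~\ref{lemma:varsetpreimcomp} and Lemma~\ref{lem:empty-intersection} exactly as you do), and equation (4) by case analysis on $A$. The only point where you are slightly less precise than the paper is the variable case of (2): the paper handles it by a secondary induction on the length of $\Delta$ to locate the binding of the variable inside $\sigma$, whereas you summarise this as ``built into the definition of $\sigma\uparrow X$''; also note that (4) for an arrow type does not recurse on the subtype but only on the term components, so your phrase ``(4) on the shorter type $B$'' is not quite how the argument goes.
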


\begin{lemma}\label{lemma:nat-variable}
  Suppose that a type \(\Gamma \vdash A\) is given, and we choose \(x\not\in\Var(\Gamma)\).
  For every choice \(X\in \U(\Gamma)\) such that
  \(\depth_{X\cup\{x\}}(\Gamma,x:A)\le 1\), then the naturality of the type of the term \(t = x\) coincides with the naturality of the type of the fresh variable \(x\):
  \[A\uparrow^{t}(X\cup\{x\}) = A\uparrow^{x}X\]
\end{lemma}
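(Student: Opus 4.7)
The plan is to prove the equality by induction on the structure of the type $A$. In the base case $A = \obj$, both sides unfold directly to the raw type $\arr[\obj]{x^-}{x^+}$: the right-hand side is the definition in case~(2), while the left-hand side reduces to the same expression once one computes $x[\inj^\pm_{\Gamma',X'}] = x^\pm$, using the clause of construction~(1) for extending by a variable that lies in the chosen set, with $\Gamma' = (\Gamma,x\colon A)$ and $X' = X \cup \{x\}$.

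For the inductive case $A = \arr[B]{u}{v}$, the formulas produced by cases~(2) and~(3) share the outer shape $\arr[]{a}{b}$, so matching them splits into three sub-goals. First, the inner boundary type $\arr[B]{u[\inj^-]}{v[\inj^+]}$ is the same on both sides because $u$ and $v$ are terms over $\Gamma$ and $\inj^\pm_{\Gamma',X'}$ restricts to $\inj^\pm_{\Gamma,X}$ on $\Var(\Gamma)$, as is immediate from construction~(1). Second, the branching on whether $X$ (resp.~$X'$) meets $\Var u$ or $\Var v$ selects the same case, since $x$ is fresh in $\Gamma$ and therefore lies in neither set. Third, in the nonempty branch one is left to show $v \uparrow X' = v \uparrow X$ and symmetrically $u \uparrow X' = u \uparrow X$ as raw terms.

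This third sub-goal is the main obstacle, and I would isolate it as a sub-claim proved by mutual structural induction on terms and substitutions: for every term $s$ and substitution $\tau$ whose variables lie in $\Gamma$, and every up-closed $Y$ for which the relevant naturality operations are defined, one has $s \uparrow (Y \cup \{x\}) = s \uparrow Y$ and $\tau \uparrow (Y \cup \{x\}) = \tau \uparrow Y$ whenever $x \notin \Var(\Gamma)$. For a variable $y$, up-closure forces $y \in Y$, so both sides reduce to $\fun y$. For a coherence $\coh_{\Psi,B}[\gamma]$, the preimage satisfies $\gamma^{-1}(Y \cup \{x\}) = \gamma^{-1}Y$ because the image of $\gamma$ lies inside $\Gamma$ and cannot contain $x$; the sub-claim applied inductively to $\gamma$ then closes this case. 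The recursion for substitutions goes through because each branching condition $\Var(t) \cap Y'$ agrees with $\Var(t) \cap Y$ by freshness of $x$, and the auxiliary terms $t[\inj^\pm_{\Gamma',Y'}]$ collapse to $t[\inj^\pm_{\Gamma,Y}]$ by the restriction argument used in the previous paragraph. Combining these observations closes the inductive step and yields the lemma.
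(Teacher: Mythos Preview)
Your proof is correct and follows essentially the same route as the paper: a case split on $A$, reducing the arrow case to the fact that the naturality of $u$ and $v$ is unaffected by adjoining the fresh $x$ to the variable set. The only difference is packaging: the paper isolates your ``sub-claim'' (together with the restriction property for $\inj^\pm$) as a separate weakening lemma (Lemma~\ref{lem:weakening}) and cites it, whereas you reprove it inline by the same mutual structural induction on terms and substitutions.
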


\noindent
We now proceed with the main proof.

\begin{proof}[Proof of Theorem~\ref{thm:correctness}]
We prove this result by case analysis on $d \in \{-1,0,1\}$ and $k \in \{0,1\}$.
Each of these cases will be proved by induction on the derivations, establishing both existence (i.e. termination), and correctness.

\paragraph{Case $d=-1$} For this first case, we handle   $k=0$ and $k=1$ together. Since $d=-1$ we must have
  \(X = \emptyset\), and hence the judgements (3), (4) and (6) hold vacuously since the assumptions on $\depth_X(t)$ require $\Var(t) \cap X \neq \emptyset$. Next, the following can be established by induction on the derivations:
  \begin{align*}
    \Gamma \pmctx X &= \Gamma \uparrow X = \Gamma &
    \inc_{\Gamma,X}^\pm &= \id_\Gamma \\ A\uparrow^{x} X &= \arr[A]{x^-}{x^+} &
    \sigma \uparrow X &= \sigma
  \end{align*}
This is sufficient to prove well-foundedness and the judgements of (1), (2) and (5), for both values of $k$.

\paragraph{Case $d=k=0$} This is handled by Benjamin~\cite[Section~3.4]{benjamin_type_2020}, but for completeness we give a full proof here. To begin, we prove the judgements of (1) and (2) by mutual induction.

\paragraph{-- Judgements of (1)} It suffices to consider contexts of the form
  \((\Gamma, x : A)\) such that \(X \not=\emptyset\). The set
  \(X' = X \cap \Var(\Gamma)\) is up-closed and of depth at most \(d\). By the
  inductive hypothesis for \(\Gamma\) and \(X'\), the substitutions
  \(\inc^\pm_{\Gamma,X}\) exist and are valid, so:
  \[\Gamma\uparrow X'\vdash A[\inc^\pm_{\Gamma,X'}]\]
  It follows that \((\Gamma,x:A)\pmctx X\) and \(\inc^\pm_{(\Gamma,x:A),X}\) exist and satisfy
  the claimed judgements. If \(x\notin X\) then by
  up-closure and Lemma~\ref{lem:empty-intersection}, we get that
  \[A[\inl_{\Gamma,X'}] = A[\inr_{\Gamma,X'}] = A\]
  which shows that \((\Gamma,x:A)\uparrow X\) is a valid context. Otherwise, by the inductive hypothesis for $\Gamma$ on the judgements of (2),
  \[
    (\Gamma,x:A) \pmctx X \vdash A \uparrow^x X'
  \]
  establishing again the validity of the context \((\Gamma,x:A)\uparrow X\).

\paragraph{-- Judgement of (2)} We fix a context \(\Gamma\), a type \(\Gamma\vdash A\), and a fresh variable $x$. By the inductive hypothesis for $\Gamma$, we may assume the judgements of (1) on $\Gamma$. Since \(k = 0\) then
  \({\Var A \cap X = \emptyset}\), hence:
  \begin{gather*}
    \Gamma\uparrow X \vdash x^\pm : A[\inc^\pm_{\Gamma,X}] \\
    A[\inl_{\Gamma',X}] = A[\inr_{\Gamma',X}] = A
  \end{gather*}
This shows $A \uparrow^x X$ exists and satisfies the claimed judgement.
This closes the mutual induction.

Next, we prove the judgement of (3). We fix a context \(\Gamma\) and a term \(\Gamma\vdash t : A\). Since \(k = 0\) then
  \(\Var A \cap X = \emptyset\), hence:
  \begin{gather*}
    \Gamma\uparrow X \vdash t[\inc^\pm_{\Gamma,X}] : A[\inc^\pm_{\Gamma,X}] \\
    A[\inl_{\Gamma',X}] = A[\inr_{\Gamma',X}] = A
  \end{gather*}
  This shows $A \uparrow^t X$ exists and satisfies the claimed judgement.

Next, we prove the judgement of (6). Let \(\Gamma\)
  be a pasting context, \(\Gamma \vdash A\) be a full type, and \(t = \coh_{\Gamma,A}[\id]\).  Since $d=0$, it follows that \(\depth_X(\Gamma) = 0\), and hence \(\Gamma \uparrow X\) is a pasting
  context~\cite[Lemma~87]{benjamin_type_2020}. Since $k=0$, it follows that $\depth_X(t) = 0$, hence \(A\uparrow^t X\) is full~\cite[Lemma~90]{benjamin_type_2020}. Hence $\coh_{\Gamma,A} \uparrow X$ exists, and the judgement follows.

  Next, we prove the judgements of (4) and (5) by mutual  induction. First we consider the judgement of (4): when ${t=x}$ is a variable, necessarily \(x\in X\), existence is clear, and the judgement holds
  by Lemma~\ref{lemma:nat-variable}; when \(t = \coh_{\Gamma,A}[\sigma]\), by inductive hypothesis $\sigma\uparrow X$ exists and satisfies the judgement of (5), hence the claimed judgement for \(t\) then holds by
  Lemma~\ref{lem:pullback-stability}. Similarly the judgement in (5) holds
  definitionally for \(\sub{}\), while for \(\sub{\sigma, x\mapsto t}\),
  it holds by the inductive hypothesis on \(\sigma\) and on \(t\).

\newcommand\kcase[2]{\ensuremath{(#1_{k=#2})}}
\paragraph{Case $d=1$} Here it will be convenient to consider certain subcases: for example, we write \kcase 2 0 to denote  the judgement of (2) in the case that $k=0$.

We begin with existence and correctness proofs of the following subcases by mutual induction: (1), \kcase 2 0, \kcase 2 1, \kcase 3 0, \kcase 3 1, \kcase 4 0, \kcase 5 0. For (1) when $\Gamma = \emptyset$, this follows from the case $d=-1$. We now fix a context $\Gamma$, and assume~(1) holds for $\Gamma$. The subcases \kcase 3 0 and \kcase 2 0 both follow from  (1), with an identical proof to the case $d=k=0$, since in this case $\Var(A) \cap X = \emptyset$. This establishes that the type of $t \uparrow X$ exists and is valid. Next, we establish subcases \kcase 5 0 and \kcase 4 0 by a separate mutual induction, again by an identical proof to the case $d=k=0$. To prove \kcase 3 1, we fix a term $\Gamma \vdash t:A$, and suppose that $\depth_X(t) = k = 1$. Then \(A = \arr[A']{u}{v}\),
  and we must consider the possible variable intersections of \(X\) with \(\Var(u)\) and \(\Var(v)\). If they do not intersect, the same argument as \kcase 3 0 applies. If \(\Var(u)\cap X \not=\emptyset\),
  then \(\depth_X(u) = 0\), so by \kcase 4 0 we have:
  \[
    \Gamma\uparrow X \vdash u\uparrow X : \arr{u[\inl_{\Gamma,X}]}{u[\inr_{\Gamma,X}]}
  \]
  If \(\Var(v)\cap X \not=\emptyset\), a similar argument applies. This establishes
  that the type \(A\uparrow^t X\) is well-defined and valid, and hence the subcase \kcase 3 1 holds.
  A similar argument with $x^\pm$ in place of $t[\inc^\pm_{\Gamma,X}]$ shows subcase  $\kcase 2 1$.
  This subcase establishes that (1) holds for $(\Gamma, x:A)$ for any type $\Gamma \vdash A$, and the mutual induction for the subcases is concluded.

  Next, we prove the judgement of (6). For that, let \(\Gamma\)
  be a pasting context, \(\Gamma \vdash A\) be a full type, and denote
  \(n = \dim \Gamma\) and \(t = \coh_{\Gamma,A}[\id]\). Note that by fullness,
  \(0\le \depth_X(\Gamma) \le \depth_X(t) \le 1\).
  We proceed by a case split analogous to the definition of the naturality
  construction. Suppose first that
  \(\depth_X(\Gamma) = 0\), and $\depth_X(t)=1$. Then \(\Gamma \uparrow X\) is a pasting
  context~\cite[Lemma~87]{benjamin_type_2020}, and furthermore we show in Lemma~\ref{lem:fullterm-fullfunct} that $A \uparrow^t X$ is full. Suppose now that \(\depth_X(t) = 1\) and \(\depth_X \Gamma = 1\).
  We consider a case analysis on the structure of $t$.
  \paragraph{-- Linear composites} If \(t = \comp_k^0\), then existence and correctness amounts to
  composability of the associators and whiskerings that \(\coh_{\Gamma,A}\uparrow X\)
  is comprised of. If \(t = \comp_k^{n+1}\), correctness follows from the previous
  case and commutativity of depth\=/0 naturality with suspension. Those facts are established in Lemma~\ref{lemma:lincompnatsusp}.

  \paragraph{-- Reduced composites} When \(\Gamma\)
  is a reduced pasting context, the middle part of the definition of \(\coh_{\Gamma,A}\uparrow X\)
  exists and is valid by the previous cases. Hence, it suffices to show existence of the
  two interchangers. Those facts are established in Lemma \ref{lem:red-comp-correct}.

  \paragraph{-- General composites}  The terms
  \(e^\pm_{\Gamma,X,A}\) are valid by the case $d=0$ and correctness of $e_{\Gamma,X,A}$,
  and \(\coh_{\Gamma^r,A}\uparrow \rho_\Gamma^{-1}(X)\) is valid by the case of reduced composites. Moreover,
  since \(\rho_\Gamma\) sends variables either to variables or to linear
  composites, correctness of the naturality \(\rho_\Gamma\uparrow X\) may be
  established using the same proof of as for arbitrary substitutions. Combining these facts, we conclude that \(\coh_{\Gamma,A}\uparrow X\)
  exists and satisfies the judgement in (6) in this case as well.

  \vspace{5pt}
  \noindent
  This concludes the proof of the judgement of (6).
  It remains to prove existence and correctness for  subcases \kcase 4 1 and \kcase 5 1. These are proven identically to the subcases \kcase 4 0 and \kcase 5 0, making use of the judgements in (6).

\paragraph{Case $d=0,k=1$}
This case is subsumed by the case $d=1$.

\vspace{5pt}
\noindent
This covers all the cases, and so the proof is complete.
\end{proof}

\section{Cylinders, Compositions and Stacking}
\label{sec:cylinders}

\noindent
This section is dedicated to our construction of cylinders and their composites, extending our earlier brief discussion in Example~\ref{ex:cylinder-nat}. We  introduce  cylinder contexts,  then show that  cylinders admit various composites,
as well as an additional operation  we call stacking. As discussed in the introduction, these structures are expected to play an important role in model structure and path object construction for weak $\omega$\-categories. The geometrical structure of cylinder contexts is well-known, and these composites have already been described in the strict case~\cite{lafont_folk_2010}; the new contributions are the explicit composition and stacking  formulas in a weak $\omega$\-category setting.

\paragraph{Cylinder Contexts}
By induction on \(n > 0\), we define contexts \(\partial\cylctx^{n}\) and
\(\cylctx^{n}\) that capture the geometry of cylinders, along with variables
\(\bot_{n}, \top_{n},\filler_{n}\):
\begin{align*}
  \partial\cylctx^{1} &= (\top_{1}:\obj,\bot_{1}:\obj) &
  \filler_{n+1} &= \fun{\filler}_{n} \\
  \cylctx^{1} &= (\partial\cylctx^{1}, \filler_{1}:\arr{\top_{1}}{\bot_{1}}) &
  \top_{n+1} &= \fun{\top}_{n} \\
  \partial\cylctx^{n+1} &= \op_{n+1}(\cylctx^{n}\pmctx\{\top_{n},\bot_{n},\filler_{n}\}) &
  \bot_{n+1} &= \fun{\bot}_{n} \\
  \cylctx^{n+1} &= \op_{n+1}(\func{\cylctx^{n}}{\top_{n},\bot_{n},\filler_{n}})
\end{align*}
Fig.~\ref{fig:cylinder-ctx} illustrates the contexts \(\cylctx^{2}\) and
\(\cylctx^{3}\).

\begin{figure}[b]
  \centering
  \[
    \begin{array}{c@{\qquad}c}
      \cylctx^{2} :
      \begin{tikzcd}[ampersand replacement=\&, column sep=small]
        \top_1^- \ar[d, "\filler_1^-"']\ar[r,"\top_2"]
        \&  \top_1^+ \ar[d,"\filler_{1}^{+}"]\ar[dl, Rightarrow,"\filler_{2}"{description}] \\
        \bot_1^- \ar[r,"\bot_2"']
        \& \bot_1^+
      \end{tikzcd}
      &
        \hspace{10pt}\cylctx^{3} :
      \begin{tikzcd}[ampersand replacement=\&, column sep=small]
        \top_1^- \ar[dd, "\filler_1^-"']
        \ar[rr, bend left=15]
        \ar[rr, bend right=15]
        \ar[rr,phantom,"\rotatebox{30}{\(\Downarrow\)}"]
        \& \& \top_1^+ \ar[dd,"\filler_{1}^{+}"]
        \ar[ddll, Rightarrow, bend left=15, ""{name=A}]
        \ar[ddll, Rightarrow, bend right=15,""'{name=B}]
        \ar[from=A, to=B, phantom,"\rotatebox{-30}{\(\Lleftarrow\)}"]
        \\ \\
        \bot_1^-
        \ar[rr, bend left=15]
        \ar[rr, bend right=15]
        \ar[rr,phantom,"\rotatebox{30}{\(\Downarrow\)}"]
        \& \& \bot_1^+
      \end{tikzcd}
    \end{array}
  \]
  \caption{The cylinder contexts in dimension \(2\) and \(3\).}
  \label{fig:cylinder-ctx}
\end{figure}

By construction, there exists a type \(\partial\cylctx^n \vdash \Cyl^n\) such
that \(\cylctx^n \vdash \filler_n : \Cyl^n\). An
\emph{\(n\)\=/dimensional cylinder} in a context \(\Gamma\) is a term
\(\Gamma\vdash t : \Cyl^n[\gamma]\) for some substitution
\(\Gamma\vdash \gamma:\partial\cylctx^{n}\). Equivalently, it is a substitution
\(\Gamma\vdash \sub{\gamma,\filler_{n}\mapsto t}:\cylctx^{n}\).

For \(n>1\), a substitution \(\Gamma\vdash\gamma:\partial\cylctx^{n}\) is
completely determined by the image of the four variables \(\top_{n}\),
\(\bot_{n}\) \(\filler_{n-1}^{-}\) and \(\filler_{n-1}^{+}\). Denoting those images respectively as
\(a,b,c,d\), we write \(\Cyl^{n}(a,b,c,d)\) a
shorthand for \(\Cyl^{n}[\gamma]\). Given an \(n\)\=/cylinder
\(\Gamma\vdash t : \Cyl^{n}(a,b,c,d)\), we will use the following notation, which we call respectively the \textit{top}, \textit{bottom}, \textit{back} and \textit{front} of the cylinder:
\begin{align*}
  \topface_{\Gamma}(t) &= a & \botface_{\Gamma}(t) &= b & \back_{\Gamma}(t) &= c & \front_{\Gamma}(t) &= d
\end{align*}
Similarly, a substitution \(\Gamma\vdash\gamma:\partial\cylctx^{1}\) is
determined by the image of \(\bot_{1}\) and \(\top_{1}\), and denoting those images as  \(a,b\)
respectively, we write \(\Cyl^{1}(a,b)\) instead of
\(\Cyl^{1}[\gamma]\), and for a 1\=/cylinder
\(\Gamma\vdash t:\Cyl^{1}(a,b)\), we denote as before:
\begin{align*}
  \topface_{\Gamma}(t) &= \partial_{\Gamma}^{-}t = a
  & \botface_{\Gamma}(t) &= \partial_{\Gamma}^{+}t = b
\end{align*}

\begin{proposition} \label{prop:cyl-type}
  Given a context \(\Gamma\), the type \(\Cyl^{1}(a,b)\) is well-defined if and only if these judgements are derivable:
    \begin{align*}
      \Gamma\vdash a:\obj & & \Gamma\vdash b:\obj
    \end{align*}
The type \(\Cyl^{n}(a,b,c,d)\) is well-defined if and only if \(a, b\)
    are \((n\!-\!1)\)\-cells, and \(c, d\) are \((n\!-\!1)\)\-cylinders satisfying the
    following equations, when they are defined:
    \begin{align*}
      \partial_{\Gamma}^{-}a &= \topface_{\Gamma}(c)
      & \partial_{\Gamma}^{-}b &= \botface_{\Gamma}(c) \\
      \partial_{\Gamma}^{+}a &= \topface_{\Gamma}(d)
      & \partial_{\Gamma}^{+}b &= \botface_{\Gamma}(d) \\
      \back_{\Gamma}(c) &= \back_{\Gamma}(d)
      & \front_{\Gamma}(c) &= \front_{\Gamma}(d)
    \end{align*}
\end{proposition}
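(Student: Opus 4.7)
My plan is to proceed by induction on \(n\). The base case \(n = 1\) is immediate: \(\partial\cylctx^1 = (\top_1 : \obj, \bot_1 : \obj)\) contains exactly two variables of type \(\obj\), so by the substitution-formation rule a valid substitution \(\Gamma \vdash \gamma : \partial\cylctx^1\) is precisely a pair of judgements \(\Gamma \vdash a : \obj\) and \(\Gamma \vdash b : \obj\), and \(\Cyl^1(a,b) = \arr[\obj]{a}{b}\) is well-defined under exactly those conditions.

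For the inductive step \(n > 1\), I will unfold the definition \(\partial\cylctx^n = \op_n(\cylctx^{n-1} \pmctx \{\top_{n-1}, \bot_{n-1}, \filler_{n-1}\})\) and read off the types of the four named variables directly from the naturality construction of Section~\ref{sec:nat-constr}: \(\top_n = \fun{\top}_{n-1}\) has type \(\arr{\top_{n-1}^-}{\top_{n-1}^+}\) (and similarly \(\bot_n\)), while the duplicates \(\filler_{n-1}^\pm\) have types of the form \(\Cyl^{n-1}[\inj^\pm]\). By the inductive hypothesis, a term of such a type is precisely an \((n-1)\)-cylinder, and its cylinder faces correspond to the images of \(\top_{n-1}^\pm\), \(\bot_{n-1}^\pm\), and (for \(n \ge 3\)) \(\filler_{n-2}^\pm\) under the composite substitution.

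From there, correctness becomes a matching of constraints. Given a valid \(\gamma\), writing \(a, b, c, d\) for the images of the four named variables, the image of \(\top_{n-1}^-\) is forced to be both \(\partial^- a\) (from the type of \(\top_n\)) and \(\topface(c)\) (from the top face of \(c\), by the inductive hypothesis), yielding \(\partial^- a = \topface(c)\); the other top and bottom conditions follow symmetrically. For \(n \ge 3\), because \(\pmctx\) does not duplicate the lower fillers \(\filler_{n-2}^\pm\), their single image under \(\gamma\) must serve simultaneously as back and front of both \(c\) and \(d\), giving the remaining equations. Conversely, assuming the listed conditions one builds \(\gamma\) by assigning \(a, b, c, d\) at the top and propagating forced images on lower-dimensional variables from the cylinder faces of \(c, d\) as prescribed by the inductive hypothesis, then verifying well-typedness variable-by-variable. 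The main obstacle will be threading the cylinder-face notation \(\topface, \botface, \back, \front\) through the \(\op_n \circ \pmctx\) construction and confirming that the corner-face identification supplied by the inductive hypothesis really lines up with the boundary variables produced by the naturality operation on types; no deeper idea beyond this bookkeeping is needed.
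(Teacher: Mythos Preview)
Your proposal is correct and follows essentially the same route as the paper. The paper's proof (in Appendix~\ref{app:cylinders}) differs only in packaging: it factors your ``read off the types'' step into a separate Lemma~\ref{lemma:equations-cyl}, your ``images under \(\gamma\) satisfy the equations'' step into Lemma~\ref{lemma:img-cylinder}, and---most relevantly---your converse ``build \(\gamma\) by propagating forced images'' step into a clean universal-property statement (Lemma~\ref{lemma:univ-prop-functorialisation}) characterising substitutions into a depth-\(0\) functorialised context \(\Gamma\uparrow X\) as pairs \(\gamma^\pm\) agreeing off \(X\) together with arrows between them on \(X\). That last lemma is exactly the content of your propagation argument, so extracting it would make your converse direction cleaner and avoid re-doing a context induction inline. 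On the \(\op_n\) concern: the paper silently uses that \(\partial\cylctx^{n+1}\) contains no \((n{+}1)\)-cells, so \(\op_{n+1}\) acts trivially there and the decomposition \(\partial\cylctx^{n+1} = (\partial\cylctx^n\uparrow\{\top_n,\bot_n\},\,\filler_n^-,\,\filler_n^+)\) goes through without further work---your instinct that this is pure bookkeeping is right.
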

We write \(\back_{\Gamma}^{k}(t)\) (resp.~\(\front_{\Gamma}^{k}(t)\)) for the
corresponding operation iterated \(n\!-\!k\) times, to obtain a \(k\)\=/dimensional
term. By convention, if \(k\geq n\), we also denote
\(\back_{\Gamma}^{k}(t) = \front_{\Gamma}^{k}(t) = t\). Using this notation,
we give the general cylinder type formula in Fig.~\ref{fig:cyl-type}.

\paragraph{Cylinder Composites}
The operations \(\back\) and \(\front\) give a notion of  source and target
for cylinders. With respect to this notion, the following theorem shows that cylinders can be composed along
their \(k\)\-boundaries, which we illustrate in the first two parts of Figure~\ref{fig:cyl-comp}.

\begin{figure*}
  \[\begin{split}
    \Cyl^{n+1}(a,b,c,d) = &((((a \s_{0} \front^{1}d) \s_{1}\front^{2}d)
    \s_{2} \ldots) \s_{n-2} \front^{n-1}d) \s_{n-1} d \\
      &\to c \s_{n-1} (\back^{n-1}{c}\s_{n-2}(\ldots \s_{2} (\back^{2}c
    \s_{1}(\back^{1}c \s_{0} b))))
  \end{split}\]
  \caption{Formula for the cylinder type for \(n\geq 1\).}
  \label{fig:cyl-type}
\end{figure*}

\begin{theorem}\label{thm:cyl-comp}
  Consider a context \(\Gamma\vdash\), with an \(m\)\=/cylinder \(a\) and an
  \(n\)\=/cylinder \(b\), together with \(k\leq d = \max(m,n)\) such that
  \(\front_{\Gamma}^{k}(a) = \back_{\Gamma}^{k}(b)\). Then there exists a
  \(d\)\=/cylinder \(a\prescript{m}{}{\cyls}_{k}^{n}b\) in \(\Gamma\),
  satisfying the following, where the superscripts have been omitted when there
  is no ambiguity:
  \begin{gather*}
    \begin{aligned}
      \topface_{\Gamma}(a{\prescript{m}{}\cyls}_{k}^{n}b)
      & = \topface_{\Gamma}(a) \s_{k-1} \topface_{\Gamma}(b)\\
      \botface_{\Gamma}(a{\prescript{m}{}\cyls}_{k}^{n}b)
      & = \botface_{\Gamma}(a) \s_{k-1}\botface_{\Gamma}(b)\\
      \back_{\Gamma}(a\prescript{m}{}\cyls_{k}^{n}b)
      &=
        \begin{cases}
          \back_{\Gamma}(a) & \text{if \(m=n=k+1\)}\\
          \back_{\Gamma}^{d-1}(a) \cyls_{k} \back_{\Gamma}^{d-1}(b) & \text{otherwise}
        \end{cases} \\
      \front_{\Gamma}(a\prescript{m}{}\cyls_{k}^{n}b)
      &=
        \begin{cases}
          \front_{\Gamma}(a) & \text{if \(m=n=k+1\)}\\
          \front_{\Gamma}^{d-1}(a) \cyls_{k}  \front_{\Gamma}^{d-1}(b) & \text{otherwise}
        \end{cases}
    \end{aligned}
  \end{gather*}
\end{theorem}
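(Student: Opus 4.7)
The plan is to prove the theorem by induction on $d = \max(m,n)$, following the strategy sketched in Example~\ref{ex:cylinder-nat}: the composite of two $d$-cylinders is produced by applying the naturality meta-operation to the composite of the underlying $(d-1)$-cylinders. The base case $d = 1$ is direct: 1-cylinders are 1-cells, $k = 0$, and $a \cyls_0 b$ is defined as the standard \catt composition $a \s_0 b$, for which the face equations reduce to the definition of $\s_0$.

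For the inductive step, I would split into the equidimensional case $m = n$ and the heterogeneous case $m \neq n$. In the equidimensional case with $k < d-1$, I would exploit the recursive definition $\cylctx^d = \op_d\bigl(\cylctx^{d-1} \uparrow \{\top_{d-1}, \bot_{d-1}, \filler_{d-1}\}\bigr)$. A $d$-cylinder in $\Gamma$ is precisely a substitution out of $\cylctx^d$, and $d$-cylinder composition can be realised by constructing a composition substitution $\mu : \cylctx^d \to \Xi$ out of a suitable "two composable cylinders" context $\Xi$. The key point is that $\mu$ can itself be obtained by applying the naturality meta-operation $\uparrow$ to the analogous composition substitution at dimension $d-1$, which exists by the inductive hypothesis. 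Lemma~\ref{lem:pullback-stability} then governs how this interacts with the substitutions representing $a$ and $b$, yielding the desired face equations. The boundary case $m = n = k+1$ is handled separately: here the cylinders share a full codimension-1 boundary, and the composite is built by whiskering fillers along this shared face, which explains why $\back$ and $\front$ in this subcase simply equal those of $a$ rather than being further composites. The heterogeneous case $m \neq n$ is reduced to the equidimensional one by promoting the lower-dimensional cylinder to dimension $d$ using iterated identity cylinders on its $\front^{d-1}$ or $\back^{d-1}$ face.

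The main obstacle will be verifying the formulas for $\topface$, $\botface$, $\back$, and $\front$ of the composite, particularly the bifurcation between the "$m=n=k+1$" and "otherwise" subcases in the front and back formulas. This requires a careful unfolding of the cylinder type formula of Figure~\ref{fig:cyl-type} against the naturality construction applied to each subterm, while also tracking the opposite meta-operation $\op_d$ used in the definition of $\cylctx^d$. The technical bookkeeping around the iterated boundary operations $\back^k$ and $\front^k$ and their interaction with the associator-heavy structure of $\Cyl^{n+1}(a,b,c,d)$ is where most of the work will lie, and will likely necessitate an auxiliary lemma describing how $\uparrow$ commutes with the back and front projections of cylinder substitutions.
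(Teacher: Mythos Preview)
Your high-level idea---bootstrap cylinder composites via the naturality meta-operation---matches the paper, but your induction scheme differs and leaves a genuine gap in the diagonal case \(m=n=k+1\).

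The paper's construction is a \emph{nested} induction rather than a single induction on \(d\). First it builds the base diagonal \({}^{2}\cyls_{1}^{2}\) directly as the naturality of \(f\s_0 g\) from Example~\ref{ex:natcomp}. Then it builds the diagonal family \({}^{k+1}\cyls_{k}^{k+1}\) by induction on \(k\) using \emph{suspension}, not naturality: the key identity \(\Cyl^{k+1}(a,b,c,d) = (\Sigma\Cyl^{k})(a\s_0\front^1(c),\,\back^1(d)\s_0 b,\,c,\,d)\) lets one apply \(\Sigma({}^{k}\cyls_{k-1}^{k})\) to the two \((k{+}1)\)-cylinders, and the resulting term is then sandwiched between canonical interchangers \(j^{\pm}_{\square,k}\) to correct the boundary. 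Only after the diagonal is in hand does the paper build \({}^{m}\cyls_{k}^{m}\) for \(m>k+1\) by naturality with respect to the six variables \(\{\topface(a),\botface(a),a,\topface(b),\botface(b),b\}\), composed with \(\op_{m+1}\).

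Your proposal says the diagonal case is ``handled separately by whiskering fillers along this shared face'', but this is not a construction. An induction on \(d\) alone cannot produce \({}^{k+1}\cyls_{k}^{k+1}\): the hypothesis gives composites of \(k\)-cylinders along dimensions \(<k\), whereas here you must compose \((k{+}1)\)-cylinders along dimension \(k\). The suspension trick is precisely what bridges this gap, and it is absent from your plan. Without it the inductive step has no base.

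Your heterogeneous case also diverges. You propose promoting the lower-dimensional cylinder to dimension \(d\) via identity cylinders; the paper instead applies naturality with respect to only one cylinder's variables, e.g.\ \({}^{m+1}\cyls_{k}^{n} = \op_{m+1}\bigl((a\,{}^{m}\cyls_{k}^{n}\,b)\uparrow\{\topface(a),\botface(a),a\}\bigr)\). Your route is plausible but requires constructing identity \(d\)-cylinders and proving they are neutral for \(\cyls_k\), which is nontrivial extra machinery the paper's one-sided naturality avoids entirely.
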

\begin{proof}

The operation \({}^2_{}{\cyls}{}_{1}^{2}\) is the naturality of the
composition \(f \s_0 g\) with respect to the set \(\{x,f,y,g,z\}\),
described in Example~\ref{ex:natcomp}.
We then define inductively on \(k\) the family of operations
\(\prescript{k+1}{}{\cyls}_{k}^{k+1}\), via the following equality of types, where $\Sigma$ is the suspension meta-operation of Section~\ref{sec:suspopp}:
\[
  \Cyl^{k+1}(a,b,c,d) =
  (\Sigma\Cyl^{k})(a\s_{0}\front_{1}(c),\back_{1}(d)\s_{0}b,c,d)
\]
This lets us give the following definition:
\[
  m\prescript{k+1}{}{\cyls}{}_{k}^{k+1} \,n = j^{-}_{\square,k} \s_{k}
  (\Sigma\prescript{k}{}{\cyls}{}_{k-1}^{k})(m,n) \s_{k} j^{+}_{\square,k}
\]
Here, \(j^{\pm}_{\square,k}\) stands in for the canonical interchanger
that corrects the boundary to yield the required type. The term presented on the left-hand side of
Fig.~\ref{fig:cyl-comp} is the composite of 3\=/cylinders obtained this way.

We then define by induction on \(m\geq k+1\) the operation
\(\prescript{m}{}{\cyls}_{k}^{m}\). We just defined the base case. For the
inductive case, suppose that the composite \(\prescript{m}{}{\cyls}{}_{k}^{m}\)
is defined, and define
\begin{align*}
  \prescript{m+1}{}{\cyls}_{k}^{m+1}
  &=
    \op_{m+1}((a\prescript{m}{}{\cyls}_{k}^{m} \, b)
    \uparrow X)
\end{align*}
where $X=\{\topface(a),\botface(a),a,\topface(b),\botface(b),b\}$.
The term representing the cylinder composite on the  left-hand side of Fig.~\ref{fig:cyl-comp} is obtained
this way.
We then define inductively the operation
\(\prescript{m}{}{\cyls}_{k}^{n}\) for \(m\geq n\). Again, we have already
defined the base case, and the inductive case is given as follows:
\[
  \prescript{m+1}{}{\cyls}_{k}^{n} =
  \op_{m+1}((a\prescript{m}{}{\cyls}_{k}^{n} \,b)
  \uparrow\{\topface(a),\botface(a),a\})
\]
Symmetrically, the operation \(\prescript{m}{}{\cyls}_{k}^{n}\) is defined for \(m\leq n\) by
induction, where the base case has already been defined, and the inductive case
is given as follows:
\[
  \prescript{m}{}{\cyls}_{k}^{n+1} =
  \op_{n+1}\big((a\prescript{m}{}{\cyls}_{k}^{n} \,b)
  \uparrow\{\topface(b),\botface(b),b\} \big) \qedhere
\]

\end{proof}

\begin{figure}
  \centering
    \begin{calign}
    \begin{tikzcd}[ampersand replacement=\&, column sep = 30pt, row sep = 70pt]
      \cdot\ar[d]
      \ar[r, bend left]
      \ar[r, bend right]
      \ar[r,phantom,"\rotatebox{30}{\(\Downarrow\)}"]
      \& \cdot \ar[d]
      \ar[r, bend left]
      \ar[r, bend right]
      \ar[r,phantom,"\rotatebox{30}{\(\Downarrow\)}"]
      \ar[dl,Rightarrow,bend left=15, ""{name=A}]
      \ar[dl,Rightarrow,bend right=15, ""'{name=B}]
      \ar[from=A, to=B, phantom, "\rotatebox{-30}{\(\Lleftarrow\)}"]
      \ar[from=A, to=B, phantom, "{a}", shift right=2.5]
      \& \cdot \ar[d]
      \ar[dl,Rightarrow,bend left=15, ""{name=C}]
      \ar[dl,Rightarrow,bend right=15, ""'{name=D}]
      \ar[from=C, to=D, phantom, "\rotatebox{-30}{\(\Lleftarrow\)}"]
      \ar[from=C, to=D, phantom, "b", shift right=2.5]
      \\
      \cdot
      \ar[r, bend left]
      \ar[r, bend right]
      \ar[r,phantom,"\rotatebox{30}{\(\Downarrow\)}"]
      \& \cdot
      \ar[r, bend left]
      \ar[r, bend right]
      \ar[r,phantom,"\rotatebox{30}{\(\Downarrow\)}"]
      \& \cdot
    \end{tikzcd}
        &
    \begin{tikzcd}[ampersand replacement=\&, column sep = 40pt, row sep = 70pt]
      \cdot \ar[d,""{name=A}]
      \ar[r, bend left=30]
      \ar[r, bend right=30]
      \ar[r]
      \ar[r,phantom,"\rotatebox{30}{\(\Downarrow\)}", bend left = 15]
      \ar[r,phantom,"\rotatebox{30}{\(\Downarrow\)}", bend right = 15]
      \& \cdot \ar[d,""'{name=B}]
      \\
      \cdot
      \ar[r, bend left=30]
      \ar[r, bend right=30]
      \ar[r]
      \ar[r,phantom,"\rotatebox{30}{\(\Downarrow\)}", bend left = 15]
      \ar[r,phantom,"\rotatebox{30}{\(\Downarrow\)}", bend right = 15]
      \& \cdot
      \ar[from=1-2, to=2-1, Rightarrow, bend left=30, ""{name=A}]
      \ar[from=1-2, to=2-1, Rightarrow, bend right=30, shorten <= 20, ""'{name=C}]
      \ar[from=1-2, to=2-1, Rightarrow, shorten <= 5,""{name=B},""'{name=B'}]
      \ar[from=A, to=B', phantom, "\rotatebox{-30}{\(\Lleftarrow\)}"]
      \ar[from=A, to=B', phantom, "b", shift right=2.5]
      \ar[from=B, to=C, phantom, "\rotatebox{-30}{\(\Lleftarrow\)}"]
      \ar[from=B, to=C, phantom, "a", shift right=2.5]
    \end{tikzcd}
        &
    \begin{tikzcd}[ampersand replacement=\&, column sep = 30pt, row sep = 30pt]
       \cdot
       \ar[r, bend left]
       \ar[r, bend right]
       \ar[r,phantom,"\rotatebox{30}{\(\Downarrow\)}"]
       \ar[d]
       \& \cdot
       \ar[d]\\
       \cdot
       \ar[r, bend left]
       \ar[r, bend right]
       \ar[r,phantom,"\rotatebox{30}{\(\Downarrow\)}"]
       \ar[d]
       \& \cdot\ar[d]
       \\
       \cdot
       \ar[r, bend left]
       \ar[r, bend right]
       \ar[r,phantom,"\rotatebox{30}{\(\Downarrow\)}"]
       \& \cdot
       \ar[from=1-2, to = 2-1 ,Rightarrow,bend left=15,
       ""'{name=A}, shorten <= 2, shorten >= 2, crossing over]
       \ar[from=1-2, to = 2-1, Rightarrow,bend right=15,
       ""{name=B}, shorten <= 15, shorten >= 2]
       \ar[from=A, to=B, phantom,"{\rotatebox{-30}{\(\Lleftarrow\)}}"]
       \ar[from=A, to=B, phantom, ,"{\scriptstyle{a}}", shift left=2.5]
       \ar[from=2-2, to = 3-1 ,Rightarrow,bend left=15,
       ""'{name=C}, shorten <= 2, shorten >= 2, crossing over]
       \ar[from=2-2, to = 3-1, Rightarrow,bend right=15,
       ""{name=D}, shorten <= 15, shorten >= 2]
       \ar[from=C, to=D, phantom,"{\rotatebox{-30}{\(\Lleftarrow\)}}"]
       \ar[from=C, to=D, phantom,"{\scriptstyle{b}}", shift left=2.5]
     \end{tikzcd}
         \nonumber
         \\
         a{\cyls}_{1}b & a{\cyls}_{2} b & a \stack b
         \nonumber
\end{calign}
  \caption{Composites and stacking of 3\=/cylinders.}
  \label{fig:cyl-comp}
\end{figure}

\paragraph{Cylinder Stacking}
We also show that cylinders can be composed vertically, which we illustrate in the third part of Fig.~\ref{fig:cyl-comp}.

\begin{theorem}
  Given \(n\)\=/cylinders \(a,b\) in a context \(\Gamma\) such that
  \(\botface(a) = \topface(b)\), there exists an \(n\)\-cylinder
  \(a\stack b\), called the \emph{stacking} and illustrated in
  Fig.~\ref{fig:cyl-comp}, satisfying the equations $\topface(a\stack b) = \topface(a)$ and $\botface(a\stack b) = \botface(b)$.
  Moreover, if $n\ge 2$
  \begin{align*}
      \back(a\stack b) &= \back(a) \stack \back(b)
    & \front(a\stack b) &= \front(a) \stack \front(b)
\end{align*}
\end{theorem}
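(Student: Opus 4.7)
I would proceed by induction on $n \geq 1$. For the base case $n=1$, a 1-cylinder is simply a 1-cell, and the hypothesis $\botface(a)=\topface(b)$ makes $a\s_0 b$ well-typed; we set $a\stack b := a\s_0 b$, so the required equations reduce to $\partial^-(a\s_0 b)=\partial^- a$ and $\partial^+(a\s_0 b)=\partial^+ b$, which hold definitionally.

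For $n \geq 2$, let $u = \botface(a) = \topface(b)$, an $(n-1)$-cell. The equations of Proposition~\ref{prop:cyl-type} guarantee that the pairs $\back(a),\back(b)$ and $\front(a),\front(b)$ are stackable $(n-1)$-cylinders, so the inductive hypothesis supplies $\back(a)\stack\back(b)$ and $\front(a)\stack\front(b)$; these furnish the back and front of $a\stack b$, making the last two equations of the statement hold by construction. It remains to build an $n$-cell of the type specified by Fig.~\ref{fig:cyl-type} applied to the tuple $(\topface(a),\botface(b),\back(a)\stack\back(b),\front(a)\stack\front(b))$. I would whisker $a$ on the right by the front-coordinate subcomposites appearing in $b$'s source, whisker $b$ on the left by the back-coordinate subcomposites appearing in $a$'s target, then $(n-1)$-compose the two whiskered $n$-cells along the $(n-1)$-cell built around $u$, interleaving the result at start, middle and end with canonical interchangers (each obtained as an \textsc{(inv)}-coherence, since the two sides share the same pasting support) to normalise the overall source and target to the composites required by Fig.~\ref{fig:cyl-type}. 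For $n=2$, writing $t=\topface(a)$, $v=\botface(b)$, $c_a=\back(a)$, $d_a=\front(a)$, $c_b=\back(b)$, $d_b=\front(b)$, this specialises to the five-stage $\s_1$-composite
\[t\s_0(d_a\s_0 d_b) \to (t\s_0 d_a)\s_0 d_b \xrightarrow{a\s_0 d_b} (c_a\s_0 u)\s_0 d_b \to c_a\s_0(u\s_0 d_b) \xrightarrow{c_a\s_0 b} c_a\s_0(c_b\s_0 v) \to (c_a\s_0 c_b)\s_0 v\]
where the three unlabelled arrows are associators.

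The main obstacle is notational bookkeeping rather than conceptual novelty: the iterated composites forming the source and target of an $n$-cylinder are nested to depth $n$, so the types of the whiskerings and the interleaving interchangers grow with $n$. Each such interchanger is a canonical coherence between two pasting composites of identical underlying support, so existence is guaranteed by the \textsc{(inv)} rule, but writing out the types explicitly becomes cumbersome. Once the cell is constructed, the $\topface$ and $\botface$ equations follow because these outermost-dimensional faces are preserved by whiskering and $(n-1)$-composition, completing the induction.
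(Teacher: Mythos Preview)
Your base case agrees with the paper. The inductive step, however, is a genuinely different route. The paper does not build the filler of $a\stack b$ by hand; instead it obtains the entire stacking operation in one stroke from the naturality meta-operation of Section~\ref{sec:naturality}: having constructed $\stack_{n-1}$ as a term over the context of two stackable $(n{-}1)$-cylinders, one sets
\[
  a\stack_n b \;=\; \op_n\bigl((a\stack_{n-1}b)\uparrow\{\topface(a),\botface(a),a,\botface(b),b\}\bigr).
\]
The boundary equations then fall out of the general behaviour of $\uparrow$ (Theorem~\ref{thm:correctness}) together with the recursive definition of $\cylctx^n$, exactly as in the proof of Theorem~\ref{thm:cyl-comp}.

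Your hand-built construction is sound in principle and is essentially what the naturality construction unfolds to; it is more elementary in that it does not invoke Theorem~\ref{thm:correctness}. The price is the step you label ``bookkeeping'': for $n\ge 3$ the boundaries $\back(a)\stack\back(b)$ and $\front(a)\stack\front(b)$ are already composite terms carrying their own interchangers from the previous stage, and verifying that your whiskered composite of $a$ and $b$ lands in the nested type of Fig.~\ref{fig:cyl-type} with \emph{those specific composites} in the $c,d$ slots is a genuine coherence argument, not just notation. You have not indicated how this check is organised uniformly in $n$; the paper sidesteps it entirely by delegating to the correctness theorem for $\uparrow$.
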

\begin{proof}
  We proceed by induction on the dimension. We define the stacking of
  \(1\)\=/cylinders to be their composites as \(1\)\=/cells. We then define
  the stacking of \((n\!+\!1)\)\=/cylinders \(\stack_{n}\) by the naturality
  construction:
  \[
    \stack_{n} =
    \op_{n}\big( (a\stack_{n-1}b)\uparrow\{\topface(a),\botface(a),a,\botface(b),b\} \big)
\qedhere
  \]
\end{proof}



\section{Cones and Conical Compositions}\label{sec:cones}

\noindent
Here we describe the geometry of cones, and give a new construction of cone composites, extending the introductory sketch of Example~\ref{ex:cone-nat}. We first introduce the cone contexts and cone types,  and then show that  cones admit composites. We believe this is the first description of cone composites in the weak $\omega$\-category setting.


\paragraph{Cone Contexts}
We define by induction on \(n\) the contexts \(\conectx^{n}\) and
\(\partial\conectx^{n}\) that capture the geometry of cones. We define these
 together with a base variable \(\base_{n}\), an apex variable \(\apex\),
and a filler variable \(\filler_{n}\), using our naturality
procedure:
\begin{align*}
  \partial\conectx^{1} &\!=\! (\apex : \obj, \base_{1} : \obj)
  &
  \conectx^{1} &\!=\! (\partial\conectx^{1}, \filler_{1} \!:\!
                   \base_{1} \!\!\to\!\! \apex)
                   \\
  \partial\conectx^{n+1} &\!=\! \conectx^{n}\!\pmctx\!\{\base_{n},\filler_{n}\}
  &
  \conectx^{n\!+\!1} &\!=\! {\conectx^{n}\!} \uparrow\! \{ {\base_{n},\filler_{n}} \}
\\[-3pt]
  \filler_{n+1} &\!=\! \fun{\filler_{n}}
  & \hspace{-1cm}\base_{n+1} &\!=\! \fun{\base_{n}}
\end{align*}
Fig.~\ref{fig:cone-ctx} gives a visual representation of
 \(\conectx^{2}\) and \(\conectx^{3}\).

\begin{figure}
  \centering
  \[
    \begin{array}{c@{\quad}c}
      \conectx^{2} :
      \begin{tikzcd}[ampersand replacement=\&, column sep=tiny]
        \& \top \& \\
        \base_1^- \ar[ru, "\filler_1^-"]\ar[rr,"\base_2"',""{above, name=A}]
        \& \& \base_1^+ \ar[lu,"\filler_{1}^{+}"']
        \ar[from=1-2, to=A, phantom, "\overset{\filler_{2}}{\Rightarrow}"]
      \end{tikzcd} &
      \conectx^{3} :
      \begin{tikzcd}[ampersand replacement=\&, column sep=tiny]
        \& \top \& \\ \\
        \base_1^- \ar[ruu, "\filler_1^-",""'{name=A}]
        \ar[rr, bend left=15]
        \ar[rr, bend right=15]
        \ar[rr,phantom,"\Downarrow"]
        \& \& \base_1^+ \ar[luu,"\filler_{1}^{+}"',""{name=B}]
        \ar[from=A, to=B,bend left = 15, Rightarrow, shift left=2, shorten >=
        2pt, shorten <= 3pt]
        \ar[from=A, to=B,bend right = 15, Rightarrow, shift right=2, shorten >=
        2pt, shorten <= 3pt]
        \ar[from=A, to=B,phantom,"\rotatebox{-90}{\(\Rrightarrow\)}"]
      \end{tikzcd}
    \end{array}
  \]
  \caption{The first cone contexts.}
  \label{fig:cone-ctx}
\end{figure}

By construction, there exists a type \(\partial\conectx^n \vdash \Cone^n\) such
that \(\conectx^n \vdash \filler_n : \Cone^n\). An
\emph{\(n\)\=/dimensional cone} in a context \(\Gamma\) is a term
\(\Gamma\vdash t : \Cone^n[\gamma]\) for some substitution
\(\Gamma\vdash \gamma:\partial\conectx^{n}\). Equivalently, it is a substitution
\(\Gamma\vdash \sub{\gamma,\filler_{n}\mapsto t}:\conectx^{n}\).

A substitution \(\Gamma\vdash\gamma:\partial\conectx^{n}\) for \(n\geq 2\) is
completely determined by the image of the  variables \(\base_{n}\),
\(\filler_{n-1}^{-}\) and \(\filler_{n-1}^{+}\). Denoting those images as \(a,b,c\) respectively, we write \(\Cone^{n}(a,b,c)\) as a shorthand for
\(\Cone^{n}[\gamma]\). Moreover, given such a cone
\(\Gamma\vdash t:\Cone^{n}(a,b,c)\), we call the following the \textit{base}, \textit{back} and \textit{front} of the cone:
\begin{align*}
  \basecone_{\Gamma}(t) & = a
  & \backcone_{\Gamma}(t) & = b
  & \frontcone_{\Gamma}(t) &= c
\end{align*}
Similarly, a
substitution \(\Gamma\vdash \gamma:\partial\conectx^{1}\) is determined by the
images of \(\base_{1}\) and \(\top\), and if \(a\) and \(b\) are these images,
we write \(\Cone^{1}(a,b)\) instead of \(\Cone^{1}[\gamma]\), and for
\(\Gamma\vdash t : \Cone^{1}(a,b)\), we write $\basecone_{\Gamma}(t) = a$ and $\backcone_{\Gamma}(t) = \frontcone_{\Gamma}(t) = b$.

\begin{figure*}
  \[
    \Cone^{n+1}(a,b,c) =
    \begin{cases}
      b \to (\backcone^{n-1}(b)\s_{n-2}((\ldots \s_3(\backcone^{2}(b)\s_{1} \\
      (a\s_{0}\frontcone^{1}(c)))\s_2\ldots) \s_{n-3}\frontcone^{n-2}(c)))\s_{n-1}c
      & \text{if \(n\) is odd}\\
      b\s_{n-1}((\frontcone^{n-2}(b)\s_{n-3}(\ldots \s_3(\backcone^{2}(b)\s_{1}\\
      (a\s_{0}\frontcone^{1}(c)))\s_2\ldots))\s_{n-2}\backcone^{n-1}(c)) \to c
      & \text{if \(n\) is even}
    \end{cases}
  \]
  \caption{Formula for the cone type for \(n \geq 1\).}
  \label{fig:cone-type}
\end{figure*}

\begin{proposition}~\label{prop:cone-type}
  Given a context  \(\Gamma\), the type \(\Cone^{1}(a,b)\) is well-formed iff $\Gamma\vdash a,b:\obj$. Furthermore, for \(n\geq2\), the type \(Cone^{n}(a,b,c)\) is well-defined iff  \(a\) is an
    \((n\!-\!1)\)\=/cell, and \(b\) and \(c\) are \((n\!-\!1)\)\=/cones, satisfying the cone
    equations:
    \begin{align*}
      \partial^{-}a &= \basecone(b) & \partial^{+}a &= \basecone(c)\\
      \backcone(b) &= \backcone(c) & \frontcone(b) &= \frontcone(c)
    \end{align*}
\end{proposition}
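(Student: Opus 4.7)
My plan is to proceed by induction on $n$. The base case $n=1$ is immediate, since $\partial\conectx^{1} = (\apex : \obj, \base_{1} : \obj)$, so a substitution into it is well-typed iff both images are typed as $\obj$, which matches the claim for $\Cone^{1}(a,b)$.

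For the inductive step (where $n+1 \geq 2$), first unfold $\partial\conectx^{n+1} = \conectx^{n} \pmctx \{\base_{n}, \filler_{n}\}$ using the mutually recursive definition of $\pmctx$ and $\uparrow$. Since $\{\base_{n}, \filler_{n}\}$ is up-closed and both variables sit at the top of $\conectx^{n}$, the construction produces a context in which $\base_{n}$ has been replaced by the triple $(\base_{n}^{-}, \base_{n}^{+}, \base_{n+1} = \fun{\base_{n}})$, the variable $\filler_{n}$ has been replaced by the pair $(\filler_{n}^{-}, \filler_{n}^{+})$, and every other variable of $\conectx^{n}$ is left untouched. Reading off case (2) of the naturality construction, $\base_{n+1}$ receives the $n$-cell type $\base_{n}^{-} \to \base_{n}^{+}$, while $\filler_{n}^{\pm}$ inherit $\Cone^{n}[\inj^{\pm}]$ from $\filler_{n}$.

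A substitution $\gamma : \partial\conectx^{n+1} \to \Gamma$ is then determined by the three images $a = \gamma(\base_{n+1})$, $b = \gamma(\filler_{n}^{-})$, and $c = \gamma(\filler_{n}^{+})$, because all other variables of $\partial\conectx^{n+1}$ appear as iterated source/target components inside the types of $a$, $b$, or $c$. Applying the inductive hypothesis to the types of $b$ and $c$ identifies both as $n$-cones, with components $\basecone(b) = \gamma(\base_{n}^{-})$, $\basecone(c) = \gamma(\base_{n}^{+})$, $\backcone(b) = \backcone(c) = \gamma(\filler_{n-1}^{-})$, and $\frontcone(b) = \frontcone(c) = \gamma(\filler_{n-1}^{+})$, using the fact that $\inj^{\pm}$ fix all variables of $\partial\conectx^{n}$ other than $\base_{n}$ (and reading this under the $1$-cone convention $\backcone = \frontcone$ when $n = 1$). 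Combining these identifications with the typing of $a$ as a cell $\gamma(\base_{n}^{-}) \to \gamma(\base_{n}^{+})$ recovers the four stated equations as a necessary condition. Conversely, any triple $(a,b,c)$ satisfying the equations assembles into a well-typed $\gamma$ by reading the remaining images off the shared boundary data of $b$ and $c$.

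The main obstacle will be the careful bookkeeping required when unfolding $\conectx^{n} \pmctx \{\base_{n}, \filler_{n}\}$: one has to verify that the types assigned to $\base_{n}^{\pm}, \base_{n+1}, \filler_{n}^{\pm}$ by the injection substitutions precisely encode the four equations, and that the shared back/front of $b$ and $c$ is the only compatibility required beyond the individual well-formedness of $b$ and $c$ as $n$-cones.
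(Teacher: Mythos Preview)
Your proposal is correct and follows essentially the same strategy as the paper's proof. Both arguments unfold $\partial\conectx^{n+1} = \conectx^n \pmctx \{\base_n,\filler_n\}$, identify the types of $\base_{n+1}$ and $\filler_n^\pm$, and read off the four equations from the fact that $\inj^\pm$ agree on every variable of $\partial\conectx^n$ other than $\base_n$.

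The organizational difference is that the paper factors these steps through three named lemmas: one recording the judgements $\partial\conectx^{n+1}\vdash \filler_n^\pm : \Cone^n(\base_n^\pm,\filler_{n-1}^-,\filler_{n-1}^+)$ as a consequence of the correctness theorem for naturality, one stating that cone types are stable under substitution, and a universal-property lemma for contexts of the form $\Gamma\uparrow X$ (with $X$ depth~$0$) which handles the converse direction. Your converse sketch, ``assemble $\gamma$ by reading the remaining images off the shared boundary data of $b$ and $c$'', is exactly an appeal to this universal property; the paper just isolates it as a reusable lemma. One small remark: your phrase ``applying the inductive hypothesis'' is slightly off --- the facts you need about the types of $\filler_n^\pm$ are instances of correctness of the naturality construction, not consequences of the proposition at level $n$, so the argument is really a case analysis rather than an induction.
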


\paragraph{Cone Composites}
We denote by \(\frontcone^{k}\) and \(\backcone^{k}\) the respective operations
iterated \(({n\!-\!k})\) times producing a cone of dimension \(k\).
Fig.~\ref{fig:cone-type} gives an explicit formula for the type
\(\Cone^{n}(a,b,c)\), recovering that of Buckley and Garner~\cite{buckley_orientals_2016}. With this, we give a definition of cone composites as follows, as shown in Fig.~\ref{fig:cone-comp}.

\begin{theorem}\label{thm:cone-comp}
  Given a context \(\Gamma\) with an \(m\)\=/cone \(a\) and an \(n\)\=/cone \(b\)
  such that \(\frontcone_{\Gamma}^{k}a = \backcone_{\Gamma}^{k} b\) for some
  \(k < d=\max(m,n)\), there exists a \(d\)\=/cone
  \(a\prescript{m}{}\cs_{k}^{n}b\) such that:
  \begin{gather*}
    \begin{aligned}
      \basecone(a\prescript{m}{}\cs_{k}^{n}b)
      &= \basecone(a) \s_{k-1} \basecone(b) \\
      \backcone(a\prescript{m}{}\cs_{k}^{n}b)
      &=
        \begin{cases}
          \backcone(a) & \text{if \(m=n=k+1\)}\\
          \backcone^{d-1}(a) \cs_{k} \backcone^{d-1}(b) & \text{otherwise}
        \end{cases} \\
      \frontcone(a\prescript{m}{}\cs_{k}^{n}b)
      &=
        \begin{cases}
          \frontcone(b) & \text{if \(m=n=k+1\)}\\
          \frontcone^{d-1}(a) \cs_{k} \frontcone^{d-1}(b) & \text{otherwise}
        \end{cases}
    \end{aligned}
  \end{gather*}
\end{theorem}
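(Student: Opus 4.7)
The plan is to follow the strategy used for Theorem~\ref{thm:cyl-comp}, building the family of operations $\prescript{m}{}\cs_k^n$ by a hierarchy of inductive definitions driven by the naturality construction, mirroring the inductive definition of the cone contexts $\conectx^n$ themselves.

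For the base case $\prescript{2}{}\cs_1^2$, given two $2$\=/cones $a$ and $b$ in $\Gamma$ with $\frontcone(a) = \backcone(b)$, I would define the composite explicitly as the $1$\=/composite in $\Gamma$ of $a$, the whiskering $\basecone(a)\s_0 b$, and an associator reassociating $\basecone(a)\s_0(\basecone(b)\s_0\frontcone(b))$ to $(\basecone(a)\s_0\basecone(b))\s_0\frontcone(b)$. Its type is $\backcone(a)\to(\basecone(a)\s_0\basecone(b))\s_0\frontcone(b)$, which matches Fig.~\ref{fig:cone-type} in dimension $2$, and the stated boundary equations hold by construction. This is the cone-shaped analogue of the composite appearing in Example~\ref{ex:natcomp}.

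For the inductive step at matching dimensions, I would define $\prescript{k+1}{}\cs_k^{k+1}$ by applying the naturality construction to $\prescript{k}{}\cs_{k-1}^k$ with an up-closed set consisting of the base edges and fillers of both cone arguments. This is the cone-theoretic counterpart of the suspension step used for cylinders, and it is natural given that $\conectx^{n+1} = \conectx^n\uparrow\{\base_n,\filler_n\}$. The raw output has a type differing from the desired one only by the placement of associators and whiskerings in the boundary, so I would pre- and post-compose with interchanger coherences $j^\pm_{\triangle,k}$ playing the role of $j^\pm_{\square,k}$ in the cylinder proof. For unequal dimensions, I would proceed by two symmetric inductions, defining
\[
\prescript{m+1}{}\cs_k^n = (a\prescript{m}{}\cs_k^n b)\uparrow\{\basecone(a),a\}
\]
when $m\geq n$, and symmetrically with $\{\basecone(b),b\}$ when $n\geq m$. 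No opposite meta-operation is required, reflecting the fact that $\conectx^{n+1}$ is defined without opposites, unlike $\cylctx^{n+1}$.

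The claimed boundary equations would then follow by direct computation from the definitions of $\inj^\pm_{\Gamma,X}$: since $X$ contains only base and filler variables, the apex and all apex-directed edges are preserved, so the $\basecone$, $\backcone$, and $\frontcone$ of the composite are given exactly by the formulas in the statement. The main obstacle will be pinning down the interchangers $j^\pm_{\triangle,k}$ and carefully tracking the parity-dependence of the cone type in Fig.~\ref{fig:cone-type} across even and odd dimensions, together with checking at each inductive step that the chosen up-closed set has depth at most $1$ so that Theorem~\ref{thm:correctness} applies. Both of these are bookkeeping issues that follow from the inductive description of $\conectx^n$ and the behaviour of the naturality construction on types.
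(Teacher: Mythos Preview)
There is a genuine gap in your inductive scheme for the diagonal step from $\prescript{k}{}\cs_{k-1}^k$ to $\prescript{k+1}{}\cs_k^{k+1}$. Applying the naturality construction to $\prescript{k}{}\cs_{k-1}^k$ with respect to $\{\basecone(a),a,\basecone(b),b\}$ does not yield $\prescript{k+1}{}\cs_k^{k+1}$; it yields $\prescript{k+1}{}\cs_{k-1}^{k+1}$. The reason is that naturality raises the dimension of the cones but preserves the gluing level: the shared boundary $\frontcone^{k-1}(a)=\backcone^{k-1}(b)$ lies outside $X$ and is untouched, so in the naturalised context the new $(k+1)$\=/cones $\fun a,\fun b$ still meet only along a $(k-1)$\=/cone, not along a $k$\=/cone. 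The interchangers $j^\pm_{\triangle,k}$ you propose cannot repair this, because the discrepancy is in the ambient context, not merely in the boundary type.

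The paper handles this diagonal step by a completely different mechanism: it exploits the type equation
\[
\Cone^{k+1}(a,b,c) = \big(\Sigma(\op_{\bar k}\Cone^{k})\big)\big(a\s_0\frontcone^1(c),\,c,\,b\big),
\]
where $\bar k=\{1,\dots,k\}$, and defines $\prescript{k+1}{}\cs_k^{k+1}$ by applying $\Sigma\circ\op_{\bar k}$ to $\prescript{k}{}\cs_{k-1}^k$ and then correcting with a single interchanger $j^\pm_{\triangle,k}$ (the sign depending on the parity of $k$). So the opposite meta-operation \emph{is} required here, precisely because of the direction reversals visible in the cone type formula of Fig.~\ref{fig:cone-type}. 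Your observation that $\conectx^{n+1}$ is defined without opposites is correct, and indeed the paper's \emph{subsequent} naturality step $\prescript{m}{}\cs_k^m\mapsto\prescript{m+1}{}\cs_k^{m+1}$ for $m\ge k+1$ uses no opposite; but that step keeps $k$ fixed, and your proposal is missing it as a separate ingredient. In summary, the paper's hierarchy has four layers (explicit base, suspension-with-opposite to increment $k$, naturality to increment $m=n$ with $k$ fixed, naturality for unequal $m,n$), while your proposal collapses the second and third layers into one naturality step that does not do what you need.
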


\begin{proof}
  We work over the context \(\Gamma\) of
  Example~\ref{ex:cone-nat}, and define the operation \(a \,{}^{2}{} \hspace{-1pt} {\cs}{}_{1}^{2} \, b\) directly as
  a term of type \({\Cone^{2}(f\s_{0}g,h,l)}\), where $\alpha'$ is the appropriate associator:
\[
  a \,{}^{2}{} \hspace{-1pt} {\cs}{}_{1}^{2} \, b =
  a\s_{1}(f\s_{0}b)\s_{1}\alpha'_{f,g,l}
\]
We then define inductively on \(k\) the family of terms
\({}^{k+1}_{\vphantom k} \hspace{-1pt} {\cs} {}_{k}^{k+1}\). To approach this, we notice the following
equality on types, where \(\bar{k} = \{1,\ldots,k\}\):
\[
  \Cone^{k+1}(a,b,c) =
  (\Sigma(\op_{\bar{k}}\Cone^{k}))((a\s_{0}\frontcone^{1}(c)),c,b)
\]
 We  define inductively the
terms \({}^{k+1}{} \hspace{-1pt} {\cs}{}_{k}^{k+1}\) as follows, where \(j^\pm_{\triangle,k}\) are canonical interchangers giving the desired type:
\[
  m \, {}^{k+1} \hspace{-1pt} {\cs}{}_{k}^{k+1} \,n =
  \begin{cases}
    (\Sigma(\op_{\bar{k}}{}^{k} \hspace{-1pt} {\cs}{}_{k-1}^{k}))(m,n)\s_{k} j^+_{\triangle,k}
    & \text{\(k\) is odd} \\
    j^-_{\triangle,k}\s_{k}(\Sigma(\op_{\bar{k}}{}^{k} \hspace{-1pt}{\cs}{}_{k-1}^{k}))(m,n)
    & \text{\(k\) is even}
  \end{cases}
\]
The operations \({}^{m}{} \hspace{-1pt} {\cs} {}_{k}^{m}\) are defined by induction, for
\({m\geq k+1}\). We have already defined the base case, and the inductive case is
given as follows:
\[
  {}^{m+1}{}{\cs}_{k}^{m+1} = (a\,{}^{m}{}\hspace{-1pt}{\cs}_{k}^{m}\,b)
  \uparrow\{\basecone(a),a,\basecone(b),b\}
\]
We then define \({}^{m}_{\vphantom k} \hspace{-1pt}{\cs}{}_{k}^{n}\) for \(m > n\) by the inductive
formula:
\[
  \prescript{m+1}{}{\cs}_{k}^{n} = (a \, {}^{m}_{}\hspace{-1pt}{\cs}_{k}^{n} \, b)
  \uparrow\{\basecone(a),a\}
\]
Likewise we define
\({}^{m} \hspace{-1pt} {\cs}{}_{k}^{n}\) for \(n > m\) by the inductive
formula
\[
  {}^{m}{} \hspace{-1pt} {\cs} {}_{k}^{n+1} = (a \, {}^{m}{} \hspace{-1pt}{\cs}_{k}^{n} \, b)
  \uparrow\{\basecone(b),b\} \qedhere
\]
\end{proof}
\begin{figure}
  \centering
 \begin{calign}
\begin{aligned}
\begin{tikzpicture}[tikzpic, yscale=2, xscale=1.1, scale=1]
      \node (00) at (0,0) {$\cdot$};
      \node (10) at (1,0) {$\cdot$};
      \node (20) at (2,0) {$\cdot$};
      \node (11) at (1,1) {$\top$};
      \def\bd{27}
      \draw [->] (00) to [bend left=\bd] (10);
      \draw [->] (00) to [bend right=\bd] (10);
      \draw [->] (10) to [bend left=\bd] (20);
      \draw [->] (10) to [bend right=\bd](20);
      \draw [twoarr] (.1,.5) to [bend right=10] +(.9,0);
      \draw [twoarr] (.23,.65) to [bend left=10] +(.76,0);
      \draw [triple] (.6,.62) to node [right=0pt, pos=0.4] {$a$} +(.0,-.1);
      \draw [twoarr, {Implies}-] (1.9,.5) to [bend left=10] +(-.91,0);
      \draw [twoarr, {Implies}-] (1.75,.65) to [bend right=10] +(-.77,0);
      \draw [triple] (1.4,.62) to node [right=0pt, pos=0.45] {$b$} +(.0,-.1);
      \draw [twoarr] (.5,.15) to +(0,-.3);
      \draw [twoarr] (1.5,.15) to +(0,-.3);
      \draw [-, line width=4pt, white] (00) to [out=90, in=-165] (11);
      \draw [->] (00) to [out=90, in=-165] (11);
      \draw [->] (20) to [out=90, in=-15] (11);
      \draw [-, line width=4pt, white] (11) to (10);
      \draw [<-] (11) to (10);
    \end{tikzpicture}
\end{aligned}
&
\begin{aligned}
\begin{tikzpicture}[tikzpic, yscale=2, xscale=1.1, scale=1]
      \node (00) at (0,0) {$\cdot$};
      \node (20) at (2,0) {$\cdot$};
      \node (11) at (1,1) {$\top$};
      \def\bd{17}
      \draw [->] (00) to [bend left=\bd] (20);
      \draw [->] (00) to [bend right=\bd] (20);
      \draw [->] (00) to (20);
      \draw [twoarr] (.50,.4) to [bend right=\bd] +(1,0);
      \draw [twoarr] (.6,.5) to +(0.8,0);
      \draw [twoarr] (.70,.65) to [bend left=\bd] +(.6,0);
      \draw [triple] (1,.65) to node [right=0pt, pos=0.4] {$a$} +(.0,-.13);
      \draw [triple] (1,.48) to node [right=0pt, pos=0.4] {$b$} +(.0,-.13);
      \draw [twoarr] (1,.19) to +(0,-.18);
      \draw [twoarr] (1,-.01) to +(0,-.18);
      \draw [->] (00) to (11);
      \draw [->] (20) to (11);
    \end{tikzpicture}
    \end{aligned}
\nonumber
\\[0pt]
a \, {\cs}{}_{1} \, b & a \, {\cs}_{2} \, b
\nonumber
\end{calign}
  \caption{The two composites of $3$-cones}
  \label{fig:cone-comp}
\end{figure}



\section{Implementation}\label{sec:implementation}

\noindent
We have implemented our naturality construction, and in the supplementary material we include the following resources.
\begin{itemize}
\item The proof-assistant \catt, implemented in OCaml, patched with an implementation of naturality as a meta-operation, as well
as built-ins for computing cylinder composites, cylinder stacking, and cone
composites.
\item A readme file with full details on building the proof assistant. This works on both Linux and Mac, and on Windows in the WSL subsystem.
\item \catt files encoding  the examples from the introduction.
\item Artifacts storing a  range of pre-computed cylinder composites, cylinder stackings, and cone composites.
\end{itemize}

\paragraph{Syntax for Naturality}
For our implementation of naturality, we use a simple syntax which decorates the notation for substitution. We illustrate this with the naturality construction of Example~\ref{ex:nat-assoc}. Assuming the associator has already been defined, the following command constructs naturality of the associator with respect to the first argument~$f$:

{\small\begin{lstlisting}[language=catt]
let assoc_nat (x y z w : *) (f_minus : x -> y)
  (f_plus : x -> y)
  (f_arrow : f_minus -> f_plus)
  (g : y -> z) (h : z -> w)
  = assoc [f_arrow] g h
\end{lstlisting}}

\vspace{-5pt}
\noindent
Here \verb|let| is a keyword declaring a new variable, and \verb|assoc_nat| is our chosen variable name. We then write out the functorialised context, which agrees with $\Gamma \uparrow \{ f \}$ of Example~\ref{ex:nat-assoc}. The naturality term $\alpha_{f,g,h} \uparrow \{ f \}$ is then computed with the command \verb|assoc [f_arrow] g h|, with the notation \verb|[|...\verb|]| indicating the variable chosen for the subset $X$. Some more advanced syntax features are explained in the file \verb|./examples/syntax.catt|.

\paragraph{Examples}
Once the proof assistant has been compiled (see the readme), all of the examples from the introduction can be directly executed, with a command such as:

{\small\begin{lstlisting}
dune exec catt ./examples/example_1.catt
\end{lstlisting}}

\noindent
Each such file directly encodes the relevant example, using the naturality syntax described above. The proof assistant will execute the instructions, automatically type-checking any terms which are constructed. If this completes without error, a success message will be printed to the console.

\paragraph{Built-Ins} The proof assistant has built-in syntactic wrappers to evaluate our cylinder and cone constructions, with the syntax
\lstinline[language=catt]|cylcomp(_,_,_)|, \lstinline[language=catt]|cylstack(_)| and \lstinline[language=catt]|conecomp(_,_,_)|. For example, \verb|cylcone(3,1,3)| computes the cylinder composite \(a \,{}^{3}_{\vphantom{k}}{\cyls}{}_{1}^{3} \,b\). These can be used applied to arguments like any other terms, or used with the keyword \verb|check| which typesets and print them. For example, the following code
prints the term corresponding to the cone composite ${}^{2}_{} \hspace{-1pt}{\cs} {}^2_1$
defined in Section~\ref{sec:cones}: \verb|check conecomp(2,1,2)|.

\paragraph{Pre-Computed Artifacts}
Beyond the first few cases,  the built-ins can yield large output artifacts, and memory required for the type-checker can easily exceed the capacity of a typical workstation. Pre-computed artifacts are available under the directories \verb|cylinder-comp|, \verb|cylinder-stack| and \verb|cone-comp| within the \verb|results| directory, which the reader may like to inspect.

In Example~\ref{ex:cylinder-nat} we give the artifact sizes for the cylinder horizontal composites \({}^{n}_{\vphantom{k}}{\cyls}{}_{1}^{n}\), and we have been able to compute some even larger examples; for example, \({}^{6}_{\vphantom{k}}{\cyls}{}_{2}^{6}\) has size 1,015,770. For the cylinder vertical stackings \(\stack_{n}\), the artifact sizes in bytes likewise grow rapidly:
\begin{align*}
n=2&&n=3&&n=4 && n=5
\\
\text{830} && \text{9,866} && \text{66,226} && \text{509,902}
\end{align*}
We observe a similar pattern for the cone  composites
${}^{n}_{} \hspace{-1pt}{\cs} \hspace{-1pt}{}^n_1$:
\begin{align*}
n=2&&n=3&&n=4 && n=5 && n=6
\\
\text{501} && \text{3,211} && \text{16,530} && \text{87,673} && \text{745,617}
\end{align*}
Further optimisation of the code base may allow larger instances to be computed. Such artifacts can enable further computational investigation of these rich mathematical structures, both in \catt and in \hott.



\printbibliography[]

\clearpage

\appendix


\section{Naturality of Coherences}\label{app:nat-coh}

In this appendix, we provide the
full definition of the naturality of linear compositions of dimension
\(1\), and reduced compositions. We first treat the case of linear
compositions, defining formally the contexts over which they live, and
then the \emph{phases} of the naturality of linear compositions, which are given by
appropriate associators and whiskerings.

\begin{definition}
    For every $n,k \in \N$, we define the \emph{linear context}
    $\Psi^n_k$ by recursion via the following
    \begin{align*}
        \Psi^0_0 &= (x_0 : \obj) \\
                \Psi^0_{k+1} &= (\Psi^0_k,x_{k+1}:\obj, f_k :x_k \to x_{k+1}) \\
        \Psi^{n+1}_k &= \Sigma \Psi^n_k
    \end{align*}
\end{definition}

\noindent Those contexts for small values of \(k\) and \(n\) are illustrated in
Fig.~\ref{fig:linear-ps}. We now introduce the necessary associators.

\begin{definition}~\label{def:assoc-lincomp}
    For $k > 0$ and $0\le j \le k+1$, we define the following \emph{biased
    compositions} over the context \(\Psi^0_{k+1}\):
    \begin{align*}
      \bcomp_{k,0} &= f_0 \s_0 (f_1 \s_0 \cdots \s_0 f_k) \\
      \bcomp_{k,j} &= f_0 \s_0 \cdots \s_0 (f_{j-1} \s_0 f_{j}) \s_0 \cdots \s_0 f_k \\
      \bcomp_{k,k+1} &= (f_0 \s_0 \cdots \s_0 f_{k-1}) \s_0 f_k \\
      \intertext{and for \(j \leq k\),  the associator:}
      \assoc_{k,j} &= \coh_{\Psi^0_{k+1},\arr{\bcomp_{k,j+1}}{\bcomp_{k,j}}}[\id]
    \end{align*}
\end{definition}

We then define the substitutions
$\Psi^0_k \uparrow X \vdash \psi_{k,j}^X : \Psi^0_{k+1}$
whenever $x_j \in X$, which will be used to apply the associators to the right
arguments. An instance of such substitution is described in
Fig.~\ref{fig:scan-sub}.

\begin{definition}\label{def:scan-subst}
  Let $k \in \N$, $0 \le i,j \le k+1$ and $X \in \U(\Psi^0_k)$ such that $x_j \in X$,
  writing \({\inc^{\pm} = \inc^{\pm}_{\Psi^{0}_{k},X}}\),
  we define the raw substitution $\psi^X_{k,j,i}$ by recursion
\begin{align*}
        \psi_{k,j,0}^X &= \sub{x_0 \mapsto x_0[\inl]} \\
    \psi_{k,j,i+1}^X &= \sub{\psi_{k,j,i}^X, x_{i+1} \mapsto x_{i+1}[\inl], f_{i} \mapsto f_{i}[\inl]} & \text{if } i < j \\
        \psi_{k,j,i+1}^X &= \sub{\psi_{k,j,i}^X, x_{i+1} \mapsto x_{i}[\inr], f_{i} \mapsto \fun{x_j}} & \text{if } i = j \\
    \psi_{k,j,i+1}^X &= \sub{\psi_{k,j,i}^X, x_{i+1} \mapsto x_{i}[\inr], f_{i} \mapsto f_{i-1}[\inr]} & \text{if } i > j
\end{align*}
        We then define the substitution $\psi^X_{k,j}$ to be $\psi^X_{k,j,k+1}$.
\end{definition}

We then define the whiskering phases, which are parameterised by 1\=/cells
of~\(X\).

\begin{definition}\label{def:whisk-lincomp}
        Let $0 \le j < k$ and $X \in \U(\Psi^0_k)$ such that $f_j \in X$.
    We define the \emph{whiskering phase} \(w_{k,j}^X\) to be the unbiased composition
    \begin{align*}
                w_{k,0}^X &= \fun{f_0} \s_0 f_{1}[\inr_{\Psi^0_k,X}] \s_0 \cdots \s_0 f_{k-1}[\inr_{\Psi^0_k,X}] \\
                w_{k,j}^X &= f_0[\inl_{\Psi^0_k,X}] \s_0 \cdots \s_0 \fun{f_j} \s_0 \cdots \s_0 f_{k-1}[\inr_{\Psi^0_k,X}] \\
                w_{k,k}^X &= f_0[\inl_{\Psi^0_k,X}] \s_0 \cdots \s_0 f_{k-2}[\inl_{\Psi^0_k,X}] \s_0 \fun{f_{k-1}}
    \end{align*}
\end{definition}

Finally, we assemble the phases together to define the naturality of linear compositions.
For that, we will use that the variables of \(\Psi^0_k\) are linearly ordered by
the transitive closure of the relation \(x_j \prec f_j \prec x_{j+1}\). This is a
special case of the order of the variables in  pasting context described by
Street~\cite{street_petit_2000}, and Finster and Mimram~\cite{finster_typetheoretical_2017}.

\begin{definition}\label{def:nat-lincomp}
    Let \(k > 1\) and \(X = \{v_0\prec \dots\prec v_n\}\in\U(\Psi^0_k)\). The
    naturality of the linear composition \(\comp^0_k\) with respect to \(X\) is
    the unbiased composite
    \begin{align*}
        \comp^0_k \uparrow X &= h_{k,v_n}^X \s_0 \cdots \s_0 h_{k,v_0}^X \\
    \intertext{where the phases $h_{k,v}^X$ are defined by}
        h_{k,v}^X &=
        \begin{cases}
                \assoc_{k,j}[\psi_{k,j}^X] &\text{ if } v = x_j \\
                w_{k,j}^X &\text{ if } v = f_j
        \end{cases}
    \end{align*}
\end{definition}

\begin{figure}
  \[\begin{tikzcd}[row sep = large]
    {x_0^-} & {x_1^- } & {x_2} \\
    {x_0^+} & {x_1^+}
    \arrow["{f_0^-}", from=1-1, to=1-2]
    \arrow["{\fun{x_0}}"', dotted, from=1-1, to=2-1]
    \arrow["{\fun{f_0}}", Rightarrow, dotted, from=1-1, to=2-2]
    \arrow["{f_1^-}", dotted, from=1-2, to=1-3]
    \arrow["{\fun{x_1}}", from=1-2, to=2-2]
    \arrow["{f_0^+}"', dotted, from=2-1, to=2-2]
    \arrow[""{name=0, anchor=center, inner sep=0}, "{f_1^+}"', from=2-2, to=1-3]
    \arrow["{\fun{f_1}}", dotted, shorten >=3pt, Rightarrow, from=1-2, to=0]
  \end{tikzcd}\]
\caption{The substitution \(\psi^{\{x_0,x_1,f_0,f_1\}}_{2,1}\).}
\label{fig:scan-sub}
\end{figure}

\noindent This completes the definition of the naturality of linear compositions
of dimension \(1\). The definition for higher dimensions is obtained by suspension,
as explained in Section~\ref{sec:naturality}.

We now proceed to define the naturality of reduced
compositions. For that, suppose that we are given a reduced pasting context
\(\Gamma \vdashps\) of dimension \(n>0\), a full type \(A = \arr[B]{u}{v}\) of
dimension \((n-1)\), and a set \(X\in \U(\Gamma)\) such that
\(\depth_X(\Gamma) = \depth_X(\coh_{\Gamma,A}[\id]) = 1\). We will denote by
\(X^m\) the set of variables of \(X\) of dimension \(n\), by \(X^{lm}\) the set
of variables of \(X\) of depth \(0\), by \(X^l = X^{lm} \setminus X^m\) the set
of variables of depth \(0\) and dimension less than \(n\), and finally by
\(X^b = X \setminus X^{lm}\) the set of variables of \(X\) of depth \(1\) in
\(X\). Due to the depth and the fullness condition, we can infer
that \(\partial^-\Gamma \vdash u : B\) and \(\partial^+\Gamma \vdash v : B\),
which will be used implicitly in the construction.

Recall that the naturality of a reduced composition is given by the formula
\[
  \coh_{\Gamma,A}\uparrow X = j^-_{\Gamma,X,A}
  \s_n (\coh_{\Gamma,A}\uparrow X^{lm})[\theta_{\Gamma,X}]
  \s_n j^+_{\Gamma,X,A}
\]
for a substitution \(\theta_X\) and a pair of interchangers
\(j^\pm_{\Gamma,X,A}\), which we will define below.

We first define the substitution
\(\Gamma\uparrow X \vdash \theta_{\Gamma,X}:\Gamma\uparrow X^{lm}\). This
substitution sends the functorialised variables in \(\Gamma\uparrow X^{lm}\) to
their counterpart in \(\Gamma\uparrow X\), as shown in
Fig.~\ref{fig:reduced-comp}. The subtlety is that the type of said variables
in the two contexts may differ.

\begin{definition}\label{def:theta-subs}
    The substitution $\theta_{\Gamma,X}$ is defined by
    \begin{align*}
      \fun x[\theta_{\Gamma,X}] &= \fun x &\text{if } x \in X^{lm} \\
      y[\theta_{\Gamma,X}] &= y &\text{if } y \in \Var(\Gamma)\setminus X \\
      x^\pm[\theta_{\Gamma,X}] &= \partial^\pm(\fun x) &\text{if } x \in X^{lm} \\
      y[\theta_{\Gamma,X}] &= \partial^\varepsilon(\fun y) &\text{if } y \in X^b
    \end{align*}
    where \(\varepsilon =\pm \) when \(y \in \partial^\pm\Gamma\).
\end{definition}

\noindent We will now define the interchangers \(j^\pm_{\Gamma,X,A}\). In the
case that
\(\Var(v)\cap X = \Var(\partial^+\Gamma) \cap X = \emptyset\),
the interchanger \(j^-_{\Gamma,X,A}\) is required to have type
\[
    \coh_{\Gamma,A}[\inl_{\Gamma,X}] \to
    \coh_{\Gamma,A}[\inl_{\Gamma,X^{lm}} \circ\ \theta_{\Gamma,X}]
\]
in \(\Gamma\uparrow X\). The source and target of the interchanger are equal by
Lemma~\ref{lem:interchanger-trivial}. Therefore, we take it to be the coherence
given by
\[
  j_{\Gamma,X,A}^- = \coh_{\Gamma,\coh_{\Gamma,A}[\id] \to \coh_{\Gamma,A}[\id]}[\inl_{\Gamma,X}]
\]
The case where \(\Var(v)\cap X \neq \emptyset\) is more involved. In this case,
the interchanger \(j^-_{\Gamma,X,A}\) needs to have the following type:
\begin{align*}
  \coh_{\Gamma,A}[\inl_{\Gamma,X}] &\s_{n-1} (v\uparrow X) \to \\
  &(\coh_{\Gamma,A}[\inl_{\Gamma,X^{lm}}] \s_{n-1} (v\uparrow X^{lm})) [\theta_{\Gamma,X}]
\end{align*}
To define the context that it lives over, we use the composition of pasting
contexts \(\otimes_n\) introduced for trees by Batanin~\cite{batanin_monoidal_1998}
and described in detail by Weber~\cite{weber_generic_2004}. We let
\(Y = (\delta^+_\Gamma)^{-1}X = \Var(\partial^+\Gamma)\cap X\) and similarly
let \(Y^l\) and \(Y^b\) be the intersections of \(X^l\) and \(X^b\) with the
variables of \(\partial^+\Gamma\). This allows us to define the following
pasting contexts:
\begin{align*}
  \Phi &= \Gamma \otimes_{n-1} (\partial^+\Gamma \uparrow Y) \\
  \Psi &= \Gamma \otimes_{n-1} (\partial^+\Gamma \uparrow Y^b)
\end{align*}
using Lemma \ref{lem:psctx-func} to show pasting contexts are closed under
functorialisation. Those pasting contexts fit into pullback squares in the syntactic category of the type theory \catt{}, as depicted in Figure \ref{fig:intch-ctx}.
This is a consequence of the definition of composition of pasting contexts, and
of Lemma \ref{lem:psctx-func} which shows that the substitutions \(\inl_{\partial^+\Gamma,Y^b}\) and \(\inl_{\partial^+\Gamma,Y}\)
are source inclusions, since the variables in $Y$ are of maximal dimension. The triangle at the bottom
commutes by Lemma~\ref{lem:func-of-func}.
Moreover, by Lemma \ref{lem:pullback-stability}, the other square commutes.
This allows us to define the
substitutions \(\phi\) and \(\psi\) using the universal property of the pullbacks.
The interchanger \(j^-_{\Gamma,X,A}\) is then defined as the coherence
\begin{gather*}
  p = \coh_{\Gamma,A}[\proj_1]
    \s_{n-1} (v\uparrow Y)[\proj_2] \\
  q = \coh_{\Gamma,A}[\rho_\Psi\circ \psi]
    \s_{n-1} (v\uparrow Y^l)[\inr_{\partial^+\Gamma \uparrow Y^l,Y^b}\circ \proj_2]\\
  j^-_{\Gamma,X,A} = \coh_{\Phi,\arr p q}[\phi]
\end{gather*}
where \(\Psi \vdash \rho_\Psi : \Psi^r\) is the reduction substitution. For the
definition of \(q\) we further use that \(\Gamma = \Psi^r\) up to
\(\alpha\)\=/equivalence, which will be shown in Lemma~\ref{lem:psi-r-is-gamma}.

The interchanger \(j^+_{\Gamma,X,A}\) is defined dually. It is given when
\(\Var(u)\cap X = \emptyset\) by
\[
  j_{\Gamma,X,A}^+ =
  \coh_{\Gamma,\coh_{\Gamma,A}[\id] \to \coh_{\Gamma,A}[\id]}[\inr_{\Gamma,X}]
\]
and otherwise, it is defined as the following coherence:
\begin{gather*}
  Y = \Var(\partial^-\Gamma)\cap X \\
  \Phi = (\partial^-\Gamma \uparrow Y) \otimes_{n-1}\Gamma \\
  \Psi = (\partial^-\Gamma \uparrow Y^b) \otimes_{n-1}\Gamma \\
  p = (u\uparrow Y)[\proj_1] \s_{n-1} \coh_{\Gamma,A}[\proj_2] \\
  q = (u\uparrow Y^b)[\inr_{\partial^-\Gamma \uparrow Y^l,Y^b}\circ \proj_1]
    \s_{n-1}\coh_{\Gamma,A}[\rho_\Psi\circ \psi] \\
  j^+_{\Gamma,X,A} = \coh_{\Phi,\arr q p}[\phi]
\end{gather*}
where the morphism \(\phi\) and \(\psi\) are defined again by the universal
property of the pullbacks defining \(\Phi\) and \(\Psi\).

\begin{figure}
\centering
\[\begin{tikzcd}[column sep = 35]
        {\Gamma \uparrow X} \\
        &[-20] \Phi & \Psi & \Gamma \\
        & {\partial^+\Gamma \uparrow Y} & {\partial^+\Gamma \uparrow Y^b} & {\partial^+\Gamma}
        \arrow["\phi"{description}, dashed, from=1-1, to=2-2]
        \arrow["{\inl_{\Gamma,X}}", shift left=2, curve={height=-18pt}, from=1-1, to=2-4]
        \arrow["{\delta^+_\Gamma \uparrow X}"', shift right=2, curve={height=12pt}, from=1-1, to=3-2]
        \arrow["\psi"{description}, dashed, from=2-2, to=2-3]
        \arrow["{\proj_1}", shift left, curve={height=-12pt}, from=2-2, to=2-4]
        \arrow["{\proj_2}"', from=2-2, to=3-2]
        \arrow["\lrcorner"{anchor=center, pos=0.125}, draw=none, from=2-2, to=3-3]
        \arrow["{\overline{\proj_1}}"', from=2-3, to=2-4]
        \arrow["{\overline{\proj_2}}", from=2-3, to=3-3]
        \arrow["\lrcorner"{anchor=center, pos=0.125}, draw=none, from=2-3, to=3-4]
        \arrow["{\delta^+_\Gamma}", from=2-4, to=3-4]
        \arrow["{\inl_{\partial^+\Gamma \uparrow Y^b, Y^l}}", from=3-2, to=3-3]
        \arrow["{\inl_{\partial^+ \Gamma,Y}}"', curve={height=18pt}, from=3-2, to=3-4]
        \arrow["{\inl_{\partial^+\Gamma, Y^b}}", from=3-3, to=3-4]
\end{tikzcd}\]
        \caption{The construction of $\Phi$ and $\Psi$ in $j_{\Gamma,X,A}^-$.}
        \label{fig:intch-ctx}
\end{figure}

\section{Correctness of Naturality}\label{app:correctness}

In this appendix, we prove a sequence of lemmas that are used for the correctness
theorem~\ref{thm:correctness}. Those lemmas can be separated in two groups: ones
that are used to prove the correctness of the inductive scheme, and ones that
are used to prove the correctness for coherences. The way that those proofs fit
together is explained in the proof of Theorem~\ref{thm:correctness}.

\begin{proof}[Proof of Lemma~\ref{lem:empty-intersection}]
  The first two points are proven by mutual induction. The first one is
  true for variables by definition of the substitutions \(\inc_{\Gamma,X}^\pm\).
  It is true on coherence terms \(t = \coh_{\Delta,A}[\sigma]\) by the inductive
  hypothesis on \(\sigma\). The second one is definitionally true when
  \(\sigma = \sub{}\). When \(\sigma = \sub{\sigma',y\mapsto t}\), it follows
  from the inductive hypothesis on \(\sigma'\) and \(t\). Finally, the third
  point is definitionally true when \(A = \obj\), and it follows from the
  first one when \({A = \arr[B]{u}{v}}\).
\end{proof}



\begin{lemma}\label{lemma:vartmsubunion}
    Given a substitution $\Gamma \vdash \sigma:\Delta$ and a term $\Delta\vdash t:A$,
    we have that
        \[\Var(t[\sigma]) = \bigcup_{y \in \Var(t)} \Var(y[\gamma])\]
\end{lemma}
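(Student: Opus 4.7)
The plan is to prove the statement by structural induction on the raw syntax of $t$, mutually with the analogous statement for substitutions, namely that for $\Gamma \vdash \sigma : \Delta$ and $\Delta \vdash \tau : \Theta$,
\[
  \Var(\tau \circ \sigma) = \bigcup_{y \in \Var(\tau)} \Var(y[\sigma]).
\]
Since the grammar of Fig.~\ref{fig:grammar-catt} intertwines terms and substitutions, one cannot dispense with either half.

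For the term statement, the base case is $t = x$, a variable, where both sides reduce definitionally to $\Var(x[\sigma])$. In the coherence case $t = \coh_{\Gamma',A'}[\delta]$ the action of substitution gives $t[\sigma] = \coh_{\Gamma',A'}[\delta \circ \sigma]$, so by the variable clauses $\Var(t[\sigma]) = \Var(\delta \circ \sigma)$ and $\Var(t) = \Var(\delta)$, and the desired equality follows immediately from the substitution statement applied to~$\delta$.

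For the substitution statement, the empty case $\tau = \sub{}$ is trivial, since both sides are empty. For $\tau = \sub{\tau', x \mapsto u}$, one unfolds the definitions of composition and of $\Var$ on substitutions, splits both the left- and right-hand sides along the union $\Var(\tau) = \Var(\tau') \cup \Var(u)$, and applies the inductive hypothesis once to $\tau'$ (a smaller substitution) and once to $u$ (a strictly smaller term).

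This is entirely routine; the only real subtlety is organising the mutual induction so that in each inductive step we appeal to the hypothesis on a syntactically smaller component. No step is expected to present a substantive obstacle: the computation of $t[\sigma]$ and $\tau \circ \sigma$ commutes symbolically with $\Var(-)$ in exactly the way the statement records.
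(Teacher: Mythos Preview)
Your proposal is correct and follows essentially the same approach as the paper. The only cosmetic difference is that the paper inlines your substitution statement: in the coherence case $t = \coh_{\Phi,A'}[\gamma]$, it writes $\Var(t[\sigma]) = \bigcup_{x\in\Var(\Phi)}\Var(x[\gamma][\sigma])$ directly and then applies the term-level inductive hypothesis to each component $x[\gamma]$ of $\gamma$, rather than packaging this step as a separate claim about $\Var(\tau\circ\sigma)$; so your remark that one ``cannot dispense with'' the substitution half is slightly overstated, but the underlying induction is identical.
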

\begin{proof}
        We prove this by induction on the structure of $t$.
        If $t = x$ is a variable, $\Var(t) = \{x\}$ and the result is trivially true.
        Assume now $t = \coh_{\Phi,A}[\gamma]$, then by structural induction:
\begin{align*}
        \Var(t[\sigma]) &= \bigcup_{x \in \Var(\Phi)} \Var(x[\gamma][\sigma]) \\
        &\stackrel{i.h.}{=} \bigcup_{x \in \Var(\Phi)} \bigcup_{y \in \Var(x[\gamma])} \Var(y[\sigma]) \\
        &= \bigcup_{y \in \Var(t)} \Var(y[\sigma]) \qedhere
\end{align*}
\end{proof}

\begin{lemma}\label{lem:weakening}
  Consider a valid context \((\Gamma,x:A)\vdash\) and a set
  \(X\in\U(\Gamma,x:A)\) such that \(\depth_{X}(\Gamma,x:A)\leq 1\). Denote
  \(X' = X\cap\Var (\Gamma)\), the following hold
  \begin{itemize}
  \item For \(\Gamma\vdash t:B\),
    \[
      t[\inj^{\pm}_{(\Gamma,x:A),X}] = t[\inj^{\pm}_{\Gamma,X'}].
    \]
  \item For \(\Gamma\vdash \sigma:\Delta\),
    \[
      \sigma\circ\inj^{\pm}_{(\Gamma,x:A),X} = \sigma\circ\inj^{\pm}_{\Gamma,X}.
    \]
  \item For
    \(\Gamma\vdash B\),
    \[
      B[\inj^{\pm}_{(\Gamma,x:A),X}] = B[\inj^{\pm}_{\Gamma,X'}].
    \]
  \item For \(\Gamma\vdash t:B\) such that
    \(0\leq \depth_{X}(t) \leq 1\),
    \[
      t\uparrow X = t \uparrow X'.
    \]
  \item for
    \(\Gamma\vdash \sigma:\Delta\) such that
    \(0\leq \depth_{X}(\sigma) \leq 1\),
    \[
      \sigma\uparrow X = \sigma \uparrow X'.
    \]
  \item For \(\Gamma\vdash t:B\) such
    that \(0\leq \depth_{X}(t) \leq 1\),
    \[
      B\uparrow^{t} X = B \uparrow^{t} X'.
    \]
  \end{itemize}
\end{lemma}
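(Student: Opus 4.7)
The plan is to establish all six statements by simultaneous structural induction, exploiting the fact that each claim is a weakening principle: attaching an unused variable $x$ to the ambient context should affect neither the injection substitutions nor the naturality construction when applied to syntax that lives over $\Gamma$.

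The linchpin observation is that $\inj^\pm_{(\Gamma,x:A),X}$ and $\inj^\pm_{\Gamma,X'}$ send every $y\in\Var(\Gamma)$ to the same target. Unpacking the definition of naturality of contexts, the prefix of $\inj^\pm_{(\Gamma,x:A),X}$ acting on $\Var(\Gamma)$ is $\inj^\pm_{\Gamma,X}$ when $x\notin X$ (so $X=X'$) and $\inj^\pm_{\Gamma,X'}$ when $x\in X$; in both cases the restrictions coincide. Bullets (1), (2) and (3) then follow by a routine induction on the raw syntax: for a term variable the observation applies directly, for a coherence term $\coh_{\Phi,B}[\gamma]$ the action factors through $\gamma$ so one appeals to the substitution case, extended substitutions are handled componentwise, and types reduce to their constituent terms.

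Next I would handle the three naturality bullets by a second mutual induction over the derivations of $t$ and $\sigma$. For a variable $y\in X\cap\Var(\Gamma)$ necessarily $y\in X'$ and $y\uparrow X=\fun y=y\uparrow X'$. For a coherence $\coh_{\Phi,B}[\gamma]$ the construction unwinds to $(\coh_{\Phi,B}\uparrow \gamma^{-1}X)[\gamma\uparrow X]$, whose first factor depends only on $\Phi$ and $\gamma^{-1}X$ (which equals $\gamma^{-1}X'$ since $x\notin\Var(\gamma)$), while the second factor is handled by the inductive hypothesis on substitutions. For the empty substitution both sides are $\sub{}$; for an extended substitution $\sub{\sigma',y\mapsto t}$ we split on whether $\Var(t)\cap X=\emptyset$, noting that this condition is equivalent to $\Var(t)\cap X'=\emptyset$ since $x\notin\Var(t)$, and then apply the inductive hypothesis on $\sigma'$ and $t$, using the first three bullets to rewrite the injection substitutions that appear in the non-empty branch. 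Bullet (6) reduces by case analysis on $B$ (either $\obj$ or $\arr[B']{u}{v}$) to bullets (1) and (4) applied to the constituents $u\uparrow X$, $v\uparrow X$ and the type data.

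I do not anticipate any serious obstacle, since the whole lemma expresses the intuitive fact that the constructions are stable under vacuous context extension. The only delicate point is arranging the mutual induction so that every recursive appeal strictly decreases the derivation height; this is immediate since the naturality construction is itself defined by structural recursion, and the inductive hypothesis for a syntactic constituent of $t$ or $\sigma$ over $\Gamma$ suffices at each step. The rest of the argument is mechanical bookkeeping.
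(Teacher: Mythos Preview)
Your proposal is correct and essentially mirrors the paper's own proof: both arguments split the six bullets into two mutual inductions (first the three injection statements, then the three naturality statements), handle variables by definition, reduce coherences to the substitution case, and use the key observation that $\gamma^{-1}X=\gamma^{-1}X'$ because $x\notin\Var(\gamma)$. The only cosmetic difference is that you spell out the case split on $\Var(t)\cap X$ in bullet~(5) and the reduction of bullet~(6) to bullets~(1) and~(4) in slightly more detail than the paper does.
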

\begin{proof}
  The first two points are proven by mutual induction. The first one holds
  when \(t\) is a variable by definition. When \(t = \coh_{\Delta,B}[\sigma]\)
  is a coherence term, it follows from the inductive hypothesis on \(\sigma\).
  Similarly, the second point is definitionally true when
  \(\sigma = \sub{}\), and when \(\sigma = \sub{\sigma',y\mapsto t}\), it
  follows by the inductive hypothesis on \(\sigma'\) and \(t\). The third
  point is true trivially when \(B = \obj\) and follows from the first
  one when \(B\) is an arrow type.

  The fourth and fifth points are similarly proven by mutual induction.
  The fourth one is definitionally true for variables, and it holds when
  \(t = \coh_{\Delta,B}[\sigma]\) by the inductive hypothesis on \(\sigma\),
  using that
  \[\sigma^{-1} X = \sigma^{-1}X'.\]
  This last equality follows from \(t\) being a term in context \(\Gamma\), and
  hence \(x \not\in\Var(\sigma)\). The fifth point follows vacuously
  when \(\sigma = \sub{}\). When \(\sigma = \sub{\sigma',y\mapsto t}\), it
  follows by the inductive hypothesis on \(\sigma'\) and \(t\), and by the
  first point. Finally, the last point follows from the previous
  ones easily, treating the cases \(B = \obj\) and \(B = \arr[C]{u}{v}\)
  separately.
\end{proof}

\begin{proof}[Proof of Lemma~\ref{lemma:nat-variable}]
  We have that
  \[t[\inc^\pm_{(\Gamma,x:A),X\cup\{x\}}] = x^\pm. \]
  This shows the equality of the two types when \(A = \obj\). In the case that
  \(A = \arr[B]{u}{v}\) then we further have that the naturality of \(u\) and
  \(v\) with respect to \(X\) and \(X\cup\{x\}\) coincide by
  Lemma~\ref{lem:weakening}. Hence, the two types are equal in this case as
  well.
\end{proof}

\begin{lemma}\label{lemma:varsetpreimcomp}
        For substitutions $\Delta \vdash \gamma: \Gamma$ and $\Theta \vdash \sigma: \Delta$
        and every $X \subseteq \Var(\Theta)$, we have:
        \[{\gamma}^{-1}({\sigma}^{-1}X) = {(\gamma  \circ \sigma)}^{-1}X\]
\end{lemma}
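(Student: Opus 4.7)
The plan is to prove the set equality by showing both sides---which are subsets of $\Var(\Gamma)$---contain the same elements, by unfolding the definition of the preimage operation on each side for a fixed variable $x \in \Var(\Gamma)$.

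On the right-hand side, combining the definition of preimage with the substitution composition rule $x[\gamma \circ \sigma] = x[\gamma][\sigma]$ immediately yields the equivalence $x \in (\gamma \circ \sigma)^{-1}X \iff \Var(x[\gamma][\sigma]) \cap X \neq \emptyset$. On the left-hand side, unfolding the preimage twice gives that $x \in \gamma^{-1}(\sigma^{-1}X)$ iff there exists $y \in \Var(x[\gamma])$ such that $\Var(y[\sigma]) \cap X \neq \emptyset$.

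The key step that bridges the two conditions is Lemma~\ref{lemma:vartmsubunion}, which yields the identity $\Var(x[\gamma][\sigma]) = \bigcup_{y \in \Var(x[\gamma])} \Var(y[\sigma])$. Intersecting with $X$ shows that $\Var(x[\gamma][\sigma]) \cap X$ is non-empty precisely when some $y \in \Var(x[\gamma])$ satisfies $\Var(y[\sigma]) \cap X \neq \emptyset$, matching the two unfoldings and concluding the proof.

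I do not anticipate a serious obstacle: the proof is essentially a bookkeeping exercise, and all of the content is carried by Lemma~\ref{lemma:vartmsubunion} together with the associativity of substitution composition on variables. The only point needing minor care is tracking the convention for the direction of substitution composition, so as not to confuse which substitution is applied first when unfolding $x[\gamma \circ \sigma]$.
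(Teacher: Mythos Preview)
Your proposal is correct and follows essentially the same approach as the paper: fix a variable of $\Var(\Gamma)$, unfold the preimage conditions on both sides, and bridge them via Lemma~\ref{lemma:vartmsubunion} applied to $x[\gamma]$ under $\sigma$. Your explicit attention to the order in $x[\gamma\circ\sigma] = x[\gamma][\sigma]$ is well placed; the paper's own proof contains minor typos on exactly this point (writing $x[\sigma][\gamma]$ and $x\in\Var(\Theta)$), so your version is in fact cleaner.
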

\begin{proof}
        Fix some $x \in \Var(\Theta)$ and let us assume that \hbox{$x \in {(\gamma \circ \sigma)}^{-1}X$}.
        By unpacking definitions, this means we must have some \hbox{$z \in \Var(x[\sigma][\gamma])$} such that $z \in X$.
        By Lemma~\ref{lemma:vartmsubunion}, we have the equality
\begin{align*}
        \Var(x[\sigma][\gamma]) = \bigcup_{y \in \Var(x[\sigma])} \Var(y[\gamma])
.\end{align*}
Hence $x \in {(\gamma \circ \sigma)}^{-1}X$ iff there exists some $y \in \Var(x[\sigma])$ and $z \in \Var(y[\gamma])$ such that $z \in X$, which is exactly the same as $x \in {\sigma}^{-1}({\gamma}^{-1}X)$.
\end{proof}

\begin{proof}[Proof of Lemma~\ref{lem:pullback-stability}]
  The first point holds by definition when \(\sigma = \sub{}\) and
  \(\Delta = \emptycontext\). Suppose that \(\Delta = (\Delta', y:A)\)
  and \(\sigma = \sub{\sigma',y\mapsto t}\). In the case that
  \(\Var(t)\cap X \not= \emptyset\), we compute by the inductive hypothesis on
  \(\sigma'\) that:
  \[\begin{split}
    \inc^{\pm}_{\Delta,\sigma^{-1}X}&\circ(\sigma \uparrow X) \\
      &= \sub{\inc^{\pm}_{\Delta',(\sigma')^{-1}X}
        \circ (\sigma' \uparrow X) , y \mapsto y^\pm [\sigma \uparrow X]} \\
      &= \sub{\sigma' \circ \inc_{\Gamma,X}^\pm ,
        y \mapsto t[\inc^\pm_{\Gamma,X}]} \\
      &= \sigma \circ \inc_{\Gamma,X}^\pm
  \end{split}\]
  The case \(\Var(t)\cap X = \emptyset\) follows similarly using the first part
  of Lemma~\ref{lem:empty-intersection}.

  For the remaining points, we observe that both sides of the equations are
  well-defined by Lemma~\ref{lem:preimage-bound}.
  The second and third points are proven by mutual induction. For the case
  where \(t = y\) is a variable, we may assume that \(\Delta = (\Delta',x:B)\)
  and \(\sigma = \sub{\sigma',x\mapsto t}\). If \(x \not=y\), then the equality
  holds by induction on the length of \(\Delta\). if \(x = y\), then the
  equality holds from the definition of \(\sigma\uparrow X\). For
  the case where \(t = \coh_{\Theta,B}[\tau]\) and \(\Delta,\sigma\) are
  arbitrary, we have by the inductive hypothesis that:
  \[\begin{split}
    t[\sigma]\uparrow X
    &= (\coh_{\Theta,B}\uparrow (\tau\circ\sigma)^{-1}X)
      [(\tau\circ \sigma)\uparrow X] \\
    &= (\coh_{\Theta,B}\uparrow \tau^{-1}(\sigma^{-1}X))
      [\tau\uparrow \sigma^{-1}X][\sigma\uparrow X] \\
    &= (t\uparrow \sigma^{-1}X)[\sigma\uparrow X]
  \end{split}\]
  The third point holds for \(\tau = \sub{}\) definitionally,
  so suppose that \(\Theta = (\Theta',x : A)\) and
  \(\tau = \sub{\tau',x\mapsto t}\). If \(x \in (\tau\circ\sigma)^{-1}X\), then
  by Lemma~\ref{lemma:varsetpreimcomp},
  \[\begin{split}
    (\tau&\uparrow \sigma^{-1}X)\circ (\sigma\uparrow X) \\
    &= \sub{\tau' \uparrow \sigma^{-1}X, x^\pm \mapsto t[\inc^\pm_{\Delta,\sigma^{-1}X}], \fun x \mapsto t\uparrow \sigma^{-1}X} \\
    &\qquad\circ (\sigma\uparrow X) \\
    &= \sub{(\tau'\circ \sigma) \uparrow X, x^\pm \mapsto t[\sigma][\inc^\pm_{\Gamma,X}], \fun x \mapsto t[\sigma] \uparrow X} \\
    &= (\tau\circ \sigma)\uparrow X
  \end{split}\]
  The case where \(x \notin (\tau\circ\sigma)^{-1}X\) follows similarly using
  Lemma~\ref{lem:empty-intersection}:
  \[\begin{split}
    (\tau&\uparrow \sigma^{-1}X)\circ (\sigma\uparrow X) \\
    &= \sub{\tau' \uparrow \sigma^{-1}X, x \mapsto t} \circ (\sigma\uparrow X)\\
    &= \sub{\tau' \uparrow \sigma^{-1}X, x \mapsto t[\inl_{\Delta,\sigma^{-1}X}]} \circ (\sigma\uparrow X)\\
    &= \sub{(\tau'\circ \sigma) \uparrow X, x \mapsto t[\sigma][\inc^\pm_{\Gamma,X}]} \\
    &= \sub{(\tau'\circ \sigma) \uparrow X, x \mapsto t[\sigma]} \\
    &= (\tau\circ \sigma)\uparrow X
  \end{split}\]
  Finally, the last statement follows easily from the previous ones by treating
  the cases \(A = \obj\) and \(A = \arr[B]{u}{v}\) separately.
\end{proof}

\begin{lemma}\label{lem:preimage-bound}
  For every \(\Gamma\vdash\sigma:\Delta\) and \(X\in \U(\Gamma)\):
  \begin{itemize}
      \item \(\sigma^{-1}X \subseteq \Var\Delta\) is up-closed.
      \item \(\depth_{\sigma^{-1}X}t \le \depth_X(t[\sigma])\)
          for every \(\Delta\vdash t:A\)
      \item \(\depth_{\sigma^{-1}X}\tau \le \depth_X(\tau\sigma)\) for every
          \(\Delta\vdash\tau:\Theta\)
      \item \(\depth_{\sigma^{-1}X} \Delta \le \depth_X \sigma\)
  \end{itemize}
\end{lemma}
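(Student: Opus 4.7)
The plan is to dispatch the four parts in order, noting that (2), (3), and (4) form a chain of routine dimension-counting consequences, while (1) requires a separate structural observation about well-typed terms.

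For (1), the main preparatory claim is that for any $\Gamma \vdash t : A$, every $z \in \Var(A)$ either already lies in $\Var(t)$ or appears in the type (in $\Gamma$) of some $w \in \Var(t)$. This is a short induction on $t$: when $t$ is a variable, its type is $A$ and $w = t$ works; when $t = \coh_{\Gamma', B}[\gamma]$, the type-level analogue of Lemma~\ref{lemma:vartmsubunion} gives $\Var(A) = \Var(B[\gamma]) \subseteq \Var(\gamma) = \Var(t)$, so $z \in \Var(t)$ directly. With this in hand, to show $\sigma^{-1}X$ is up-closed I would take $y \in \Var(\Delta)$ with type $A_y$ containing some $x \in \sigma^{-1}X$, and pick $z \in X \cap \Var(x[\sigma])$. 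Since $x \in \Var(A_y)$, the same extended lemma gives $z \in \Var(A_y[\sigma])$, which is the type of $y[\sigma]$ in $\Gamma$. Applying the claim to $y[\sigma]$, either $z \in \Var(y[\sigma])$---so $y \in \sigma^{-1}X$ immediately---or $z$ sits in the type of some $w \in \Var(y[\sigma])$, in which case up-closure of $X$ in $\Gamma$ forces $w \in X$, again giving $y \in \sigma^{-1}X$.

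For (2), fix $\Delta \vdash t : A$ and choose $x \in (\Var(t) \cup \Var(A)) \cap \sigma^{-1}X$ realising $\depth_{\sigma^{-1}X} t$, together with some $z \in X \cap \Var(x[\sigma])$. Two standard facts do the work: any variable appearing in a term has dimension at most that of the term, so $\dim z \leq \dim x[\sigma] = \dim x$; and Lemma~\ref{lemma:vartmsubunion} places $z$ in $\Var(t[\sigma])$ when $x \in \Var(t)$ and in $\Var(A[\sigma])$ when $x \in \Var(A)$. Either way $z$ is an element of $(\Var(t[\sigma]) \cup \Var(A[\sigma])) \cap X$, so it witnesses
\[
  \depth_X(t[\sigma]) \;\geq\; \dim t[\sigma] - \dim z \;\geq\; \dim t - \dim x \;=\; \depth_{\sigma^{-1}X} t.
\]

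Parts (3) and (4) are then immediate corollaries. For (3), applying (2) to each $x[\tau]$ for $x \in \Var(\Theta)$ and using $(\tau \circ \sigma)(x) = x[\tau][\sigma]$ gives $\depth_{\sigma^{-1}X}(x[\tau]) \leq \depth_X(x[\tau \circ \sigma])$; taking the max over $x$ yields the bound. Part (4) is the specialisation of (3) to $\tau = \id_\Delta$, using $\depth_X \Delta = \depth_X(\id_\Delta)$ from the definition. The only genuine subtlety lies in the structural claim underpinning (1); once it is noted that the type of a coherence term has its variables contained in the term itself, everything else is routine dimension bookkeeping.
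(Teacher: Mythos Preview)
Your proof is correct and follows essentially the same approach as the paper: part~(1) by case-splitting on whether the relevant term is a variable or a coherence (you package this as a preparatory claim, the paper does it inline on $y[\sigma]$), part~(2) by dimension counting through a witness in $\Var(x[\sigma])\cap X$, and parts~(3) and~(4) as immediate corollaries. Your treatment of~(2) is in fact slightly more careful than the paper's, since you explicitly handle the case $x\in\Var(A)\setminus\Var(t)$ whereas the paper's written argument only mentions $z\in\Var(t)\cap\sigma^{-1}X$; the missing case goes through identically, but you spell it out.
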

\begin{proof}
  For the first part, let \(\Delta \vdash y : A\) and suppose that \(
  x\in \Var(A)\cap \sigma^{-1}X\) to show that \(y\in \sigma^{-1}X\). By
  definition and Lemma~\ref{lemma:vartmsubunion},
  \[\emptyset\neq \Var(x[\sigma])\cap X \subseteq \Var(A[\sigma])\cap X.\]
  If \(y[\sigma] = z\) is a variable, then it must have type \(A[\sigma]\), so
  by up-closure of \(X\), we must have that \(z\in X\) or equivalently
  \(y\in\sigma^{-1}X\). If \(y[\sigma] = \coh_{\Theta,B}[\tau]\) is a coherence,
  then \(B[\tau] = A[\sigma]\) and
  \[
    \Var(A[\sigma]) = \Var(B[\tau]) \subseteq \Var(\tau) = \Var(y[\sigma])
  \]
  so again \(y\in \sigma^{-1}X\).

  For the second part, let \(\Delta \vdash t : A\) a term. Then for every variable
  \(z\in \Var(t)\cap \sigma^{-1}X\), there exists some other variable
  \({w\in \Var(z[\sigma])\cap X\subseteq \Var(t[\sigma])\cap X}\)
  such that
  \[\begin{split}
      \dim t - \dim z
          &= \dim t[\sigma] - \dim z[\sigma] \\
          &\le \dim t[\sigma] - \dim w \\
          &\le \depth_X t[\sigma]
  \end{split}\]
  Taking the maximum over all \(z\), we get the second part. The third part
  follows immediately from the second one, while the last one is obtained by
  the third one by setting \(\tau = \id_\Delta\).
\end{proof}

This concludes the first family of lemmas needed for Theorem~\ref{thm:correctness}.
We now embark on the proof of correctness of the naturality of coherences. The
depth \(d = 0\) case is covered in Benjamin's thesis~\cite[Section~3.4]{benjamin_type_2020},
so we need to show the case \(d = 1\). This is done in four cases: the case of
naturality with respect to variables of depth 0 in the context, that of linear compositions,
that of reduced compositions, and that of arbitrary compositions. The first
case is covered by Lemma \ref{lem:corr-nat-coh-lmax}, which relies on the following results.

\begin{lemma}\label{lem:lmaxnonmax-bdry}
        For any pasting context $\Gamma \vdashps$ and variable $x \in \Var(\Gamma)$ of depth~\(0\) with $\dim(x) < \dim(\Gamma)$, we have ${x \in \Var(\partial^\pm \Gamma)}$.
\end{lemma}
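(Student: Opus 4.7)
The plan is to induct on the derivation of $\Gamma \vdashps$. The base case $\Gamma = (x:\obj)$ has $\dim \Gamma = 0$ and $\dim x = 0$, so the hypothesis $\dim x < \dim \Gamma$ is vacuous.

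For the inductive step, suppose the last extension of the derivation produces $\Gamma = (\Gamma', y:A, f: z \to y)$ from $\Gamma' \vdashps z:A$. Both $y$ and the previous focus $z$ occur in the type of $f$, so neither is depth-0 in $\Gamma$. Hence any depth-0 variable $v$ with $\dim v < n = \dim \Gamma$ is either the newly added $f$ or lies in $\Var(\Gamma') \setminus \{z\}$ and remains depth-0 in $\Gamma'$. I would then split on $\dim f$: when $\dim f \leq n - 1$ we have $\dim A \leq n - 3 < n - 2$, so the recursion for $\partial_{n-1}^{\pm}$ is in its ``keep both'' branch and both $y$ and $f$ are appended, and combining with the inductive hypothesis on $\Gamma'$ (or the degenerate identity $\partial_{n-1}^{\pm}\Gamma' = \Gamma'$ when $\dim \Gamma' = n - 1$) closes this case; when $\dim f = n$, the source reduces to $\partial_{n-1}^{-}\Gamma = \partial_{n-1}^{-}\Gamma'$ and the result follows similarly.

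The main obstacle is the target boundary in the case $\dim f = n$, where the rule yields $\partial_{n-1}^{+} \Gamma = (\operatorname{drop}(\partial_{n-1}^{+} \Gamma'), y)$, and I must show that $v$ is not the tail element discarded by $\operatorname{drop}$. To handle this, I plan to establish in parallel the auxiliary invariant that for any $\Gamma \vdashps z:B$ with $d = \dim z \geq 0$, the tail variable of $\partial_d^{+} \Gamma$ is precisely the focus $z$. This invariant propagates through the derivation rules: an extension changes focus to a new $f$ at dimension $\dim A + 2$, where the recursion at $i = \dim f$ falls in the ``keep both'' branch and places $f$ at the tail; a pop changing focus from $f : x \to y$ to $y$ triggers the ``replace focus'' branch of $\partial_{\dim y}^{+}$ (since $\dim A = \dim y - 1$), installing $y$ as the new tail. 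Granting this invariant, the element deleted by $\operatorname{drop}$ in the remaining case is exactly $z$, which is excluded by our hypothesis, so $v$ persists and lies in $\partial_{n-1}^{+} \Gamma$, completing the argument.
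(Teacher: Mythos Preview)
Your main induction is sound and the auxiliary invariant you identify is both true and exactly what is needed; the paper's own proof instead simply cites the characterisation of boundary positions from Dean et al.\ (their Definition~2.9 and Proposition~2.10), so your argument is more self-contained. The case analysis on $\dim f$ and the handling of the source boundary are correct.

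There is, however, a genuine gap in your verification of the invariant for the pop rule. When you pop from $\Gamma \vdashps f : x \to_A y$ to $\Gamma \vdashps y$, the context does not change, so nothing ``triggers'' any branch of the recursion defining $\partial^+$; your inductive hypothesis concerns the tail of $\partial_{\dim f}^+ \Gamma$, which says nothing directly about $\partial_{\dim y}^+ \Gamma$. Your justification implicitly assumes the pop immediately follows the extension that introduced $y$, but after several consecutive pops this fails: for instance with $\Gamma = (x,y,f,f',a,a',\alpha)$ and two pops from the $3$-cell $\alpha$ down to $f'$, the middle branch installing $f'$ in $\partial_1^+\Gamma$ occurs at the $(f',a)$ extension, not at the final $(a',\alpha)$ extension, so your phrasing does not apply.

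The fix is to strengthen the invariant: for any $\Gamma \vdashps z:B$ and every $0 \le d \le \dim z$, the tail of $\partial_d^+ \Gamma$ is the iterated $d$-dimensional target $\partial_d^+ z$. The pop case is then immediate, since $\partial_d^+ y = \partial_d^+ f$ for $d \le \dim y$. For the extension case you split on $d = \dim f$, $d = \dim f - 1$, and $d < \dim f - 1$; the first two are as you wrote, and the last uses that $x$ and $y$ share the type $A$, so their lower targets agree, together with the inductive hypothesis for $\Gamma \vdashps x$. Specialising to $d = \dim z$ gives exactly the statement you use.
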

\begin{proof}
        Since $x$ is of depth\=/0 in \(\Gamma\), it is a \emph{source and target boundary} position in the sense of Dean et al.~\cite[Definition 2.9]{dean_computads_2024},
  meaning that it is not the target or source of any variable respectively.
        Hence by their characterisation of $\Var(\delta^\pm_\Gamma)$~\cite[Proposition 2.10]{dean_computads_2024},
        since we have $\dim(x) < \dim(\Gamma)$, we deduce that ${x \in \Var(\delta^\pm_\Gamma) = \Var(\partial^\pm \Gamma)}$.
\end{proof}

\begin{lemma}\label{lem:psctx-func}
        Let $\Gamma \vdashps$ be a pasting context and $X \in \U(\Gamma)$ a set of depth\=/0 variables in $\Gamma$.
        Then $\Gamma \uparrow X$ is also a pasting context.
        Moreover, if $X$ is a non-empty set of maximal-dimensional variables of $\Gamma$, then $\partial(\Gamma \uparrow X) = \Gamma$ and the following substitutions coincide
\begin{align*}
        \Gamma \uparrow X \vdash \inc^\pm_{\Gamma,X} = \delta^\pm_{\Gamma \uparrow X} : \Gamma
\end{align*}
\end{lemma}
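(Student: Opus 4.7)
The plan is to prove both claims by induction on the derivation of $\Gamma \vdashps$, strengthening the inductive hypothesis to preserve the focus of the pasting derivation: whenever a step establishes $\Gamma \vdashps x : A$, the corresponding step in the derivation of $\Gamma \uparrow X$ gives $\Gamma \uparrow X \vdashps x[\inc^\pm_{\Gamma,X}] : A[\inc^\pm_{\Gamma,X}]$ for both signs.

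In the base case $\Gamma = (x : \obj)$, the only possibilities are $X = \emptyset$ (giving $\Gamma \uparrow X = \Gamma$) or $X = \{x\}$ (giving $\Gamma \uparrow X = (x^{-}:\obj, x^{+}:\obj, \fun x:\arr{x^{-}}{x^{+}})$, derivable by one application of the extension rule); both claims are checked directly. For the inductive step on the extension rule $\Gamma' = (\Gamma, y:A, f:\arr[A]{x}{y})$ with $X' = X \cap \Var(\Gamma)$, a depth analysis shows that $x, y \notin X$ in $\Gamma'$ (both lie strictly below $f$ in the boundary poset), reducing the case distinction to whether $f \in X$. When $f \notin X$, Lemma~\ref{lem:empty-intersection} yields $A[\inc^\pm_{\Gamma,X'}] = A$, and a single extension step applied to the inductively derived $\Gamma \uparrow X'$ produces $\Gamma' \uparrow X$. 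When $f \in X$, the naturality construction requires three successive extensions: first $(y, f^{-})$, then, after target-inclusion back to $y$, $(y', f^{+})$, and finally the filler $\fun f$ whose type $A \uparrow^{f} X = \arr[A]{f^{-}}{f^{+}}$ matches exactly what the extension rule demands. The target-inclusion step is handled by tracking how the focus moves.

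For the second part, assume $X$ is a non-empty subset of the maximal-dimensional variables of $\Gamma$, and let $n = \dim \Gamma$. For each $x:A$ in $X$, the variables of $A$ all have dimension strictly less than $n$ and hence lie outside $X$, so Lemma~\ref{lem:empty-intersection} gives $A[\inc^\pm_{\Gamma,X'}] = A$ and the filler $\fun x : \arr[A]{x^{-}}{x^{+}}$ has dimension $n+1$. Hence $\dim(\Gamma \uparrow X) = n+1$ and the top-dimensional cells are exactly the fillers. A direct computation along the pasting derivation using the recursive formula for $\partial_n^\pm$ shows that each filler extension is absorbed, leaving either $x^{-}$ (via direct stripping in $\partial^{-}$) or $x^{+}$ (via the $\operatorname{drop}$ case of $\partial^{+}$) in its place, while every other extension passes through unchanged, yielding $\partial^\pm(\Gamma \uparrow X) = \Gamma$; Lemma~\ref{lem:lmaxnonmax-bdry} handles the remaining $n$-cells of $\Gamma$ outside $X$. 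Comparing the resulting weakening $\delta^\pm_{\Gamma \uparrow X}$ generator-by-generator with $\inc^\pm_{\Gamma,X}$ shows they agree: both send $x \in X$ to $x^\pm$ and act as the identity elsewhere.

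The main obstacle is the inductive case $f \in X$, where one must verify that the three successive pasting extensions faithfully encode the naturality construction $\Gamma' \uparrow X$ with the correct types throughout, in particular matching the type of $\fun f$ produced by $A \uparrow^{f} X$ against the arrow type required by the extension rule. This bookkeeping relies on repeated application of Lemmas~\ref{lem:empty-intersection} and \ref{lem:pullback-stability} to simplify the action of $\inc^\pm$ on the types involved.
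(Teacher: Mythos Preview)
The paper does not give its own proof of this lemma; it simply cites Benjamin's thesis (Lemmas~87 and~88). Your inductive approach along the pasting derivation, tracking the focus, is the natural strategy and is almost certainly what the cited reference does.

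However, your handling of the key case $f\in X$ is incorrect. You describe ``three successive extensions: first $(y,f^{-})$, then, after target-inclusion back to $y$, $(y',f^{+})$, and finally the filler $\fun f$.'' There is no variable $y'$ in $\Gamma'\uparrow X$, and going back to $y$ via the target rule would force the next extension to add a variable of type $A$ (the type of $y$), not one parallel to $f^{-}$. The correct derivation uses exactly \emph{two} applications of the extension rule: from focus $x$ one adds $(y,f^{-})$, reaching focus $f^{-}:\arr[A]{x}{y}$; then, since $f^{+}$ has the same type $\arr[A]{x}{y}$ as the current focus $f^{-}$, one extends again with $(f^{+},\fun f)$, landing at focus $\fun f:\arr{f^{-}}{f^{+}}$. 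One target step then brings the focus to $f^{+}$. This yields precisely $\Gamma'\uparrow X=(\Gamma\uparrow X',\,y{:}A,\,f^{-}{:}\arr{x}{y},\,f^{+}{:}\arr{x}{y},\,\fun f{:}\arr{f^{-}}{f^{+}})$.

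A smaller point: your strengthened hypothesis ``for both signs'' cannot hold verbatim at a step where the original focus $f$ lies in $X$, since the new derivation reaches $f^{+}$ but never $f^{-}$ as focus. This is harmless: $f$ having depth~$0$ forces the next original step to be a target rule (an extension from $f$ would put $f$ at depth $\ge 1$), and from focus $f^{+}$ the target rule recovers $y=y[\inc^{\pm}]$. So it suffices to track the focus under $\inc^{+}$ alone, observing that whenever the original focus is a variable from which a further extension is taken, that variable lies outside $X$ and $\inc^{-}$, $\inc^{+}$ agree on it.
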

\begin{proof}
        This is proven by Benjamin~\cite[Lemmas~87,~88]{benjamin_type_2020}.
\end{proof}

\begin{lemma}\label{lem:func-of-func}
  Let \(\Gamma\) be an arbitrary context and \(X,Y \in \U(\Gamma)\) disjoint sets of
  variables of depth \(0\) in \(\Gamma\). Then for every \(\alpha,\beta \in \{+,-\}\),
  \begin{itemize}
    \item \((\Gamma \uparrow X) \uparrow Y = (\Gamma \uparrow Y) \uparrow X = \Gamma \uparrow (X\cup Y)\)
    \item \(\inc_{\Delta,X}^\alpha\circ \inc_{\Delta\uparrow X,Y}^\beta = \inc_{\Delta,Y}^\beta \circ \inc_{\Delta\uparrow Y,X}^\alpha\)
    \item \(\inc_{\Delta,X}^\alpha\circ\inc_{\Delta\uparrow X,Y}^\alpha = \inc_{\Delta,X\cup Y}^\alpha\)
  \end{itemize}
\end{lemma}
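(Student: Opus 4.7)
The plan is to prove all three claims simultaneously by structural induction on $\Gamma$. The base case $\Gamma = \emptycontext$ forces $X = Y = \emptyset$, and every claim is immediate from the construction. For the inductive step I would write $\Gamma = (\Gamma', x : A)$ and set $X' = X \cap \Var(\Gamma')$, $Y' = Y \cap \Var(\Gamma')$, splitting on whether $x$ lies in $X$, in $Y$, or in neither. Standard manipulations show that $X'$ and $Y'$ remain up-closed and disjoint, and depth $0$ in $\Gamma'$, so the inductive hypothesis applies.

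The crucial observation, used throughout, is that a variable of depth $0$ in $\Gamma$ does not appear in the type of any other variable; in particular, if $x \in X$ then $\Var(A) \cap Y = \emptyset$, and symmetrically. In the case $x \notin X \cup Y$, all three contexts simply extend by $(x : A)$, and the claims reduce directly to the inductive hypothesis on $\Gamma', X', Y'$. In the case $x \in X$, the context $\Gamma \uparrow X$ extends $\Gamma' \uparrow X'$ by the three declarations $x^-, x^+, \fun x$ with types $A[\inc^\pm_{\Gamma', X'}]$ and $A \uparrow^x X'$; since none of $x^{\pm}, \fun x$ nor the variables of $A$ lie in $Y$, the subsequent $\uparrow Y$ leaves these three extra declarations unchanged. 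To match the result with $\Gamma \uparrow (X \cup Y)$, I would combine the inductive context equality $(\Gamma' \uparrow X') \uparrow Y' = \Gamma' \uparrow (X' \cup Y')$ with the inductive third bullet and Lemma~\ref{lem:empty-intersection}, applied to the $Y'$-free type $A$, to deduce
\[
A[\inc^\pm_{\Gamma', X' \cup Y'}] = A[\inc^\pm_{\Gamma', X'}][\inc^\pm_{\Gamma' \uparrow X', Y'}] = A[\inc^\pm_{\Gamma', X'}],
\]
and an analogous case-by-case unfolding of $\uparrow^x$ on $A$ yields $A \uparrow^x X' = A \uparrow^x (X' \cup Y')$. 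The case $x \in Y$ is symmetric.

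With the context equality in hand, the two substitution identities reduce to inspecting their action on each variable of $\Gamma$: on $y \notin X \cup Y$ every side is the identity, while on $x \in X$ the images are precisely the tokens $x^{\pm}$ or $\fun x$ selected by the sign annotations, matching the corresponding images in $\inc^{\alpha}_{\Gamma, X \cup Y}$ for the third bullet and in the rival composition for the second bullet. I expect the main obstacle to be purely bookkeeping rather than conceptual: carefully tracking how the nested duplications behave under interleaved applications of $\uparrow X$ and $\uparrow Y$ across all four sign combinations, and systematically invoking the depth-$0$ disjointness via Lemma~\ref{lem:empty-intersection} to suppress the otherwise-intricate interactions between a variable's type and the opposite-side duplication.
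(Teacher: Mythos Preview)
Your proposal is correct and aligns with the paper's approach. The paper handles the first bullet by citing an external reference (Benjamin's thesis, Lemma~83) rather than giving the structural induction you outline, and for the two substitution identities it does exactly what you do: check the action on each variable of $\Gamma$, observing that variables outside $X\cup Y$ are fixed while those in $X$ (resp.\ $Y$) are sent to $x^\alpha$ (resp.\ $y^\beta$) by both sides. Your direct structural-induction argument for the context equality is thus more self-contained than the paper's, and your key observation---that depth-$0$ variables never occur in the type of another variable, so $\Var(A)$ is disjoint from both $X$ and $Y$---is precisely what makes all the bookkeeping collapse; in particular it forces $A\uparrow^x X' = \arr[A]{x^-}{x^+} = A\uparrow^x(X'\cup Y')$ outright, so the ``case-by-case unfolding'' you anticipate is in fact trivial.
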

\begin{proof}
  The equality between the contexts has been proven in Benjamin's
  thesis~\cite[Lemma~83]{benjamin_invertible_2024}. The first equality of
  substitutions can be checked easily: both sides preserve variables that are
  not in \(X\cup Y\), they send variables of \(y\in Y\) to \(y^\beta\), and they
  send variables \(x\in X\) to \(x^\alpha\). The second equality can be checked
  similarly.
\end{proof}

\begin{lemma}\label{lem:var-func}
  For any term \(\Gamma \vdash t : A\) and any \(X\in \U(\Gamma)\) such that
  \(\depth_X(t) = \depth_X(\Gamma) = 0\),
  \[\Var(t\uparrow X) \supseteq \{ \fun x : x\in \Var(t)\cap X\}\]
  Similarly, for any substitution \(\Gamma \vdash \sigma : \Delta\) such that
  \(\depth_X(\sigma) = \depth_X(\Gamma) = 0\),
  \[\Var(\sigma\uparrow X) \supseteq \{ \fun x : x\in \Var(\sigma)\cap X\}\]
\end{lemma}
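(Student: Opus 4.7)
The plan is to prove both statements simultaneously by mutual structural induction on the term $t$ and the substitution $\sigma$, exploiting the fact that, in the depth-$0$ case, the naturality construction on contexts and substitutions is essentially a straightforward duplication together with the insertion of $\fun x$ variables.

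For the term case, the variable subcase is immediate: if $t = x$, then by $\depth_X(t) = 0$ and up-closure we must have $x \in X$, and $t \uparrow X = \fun x$ by definition, so $\Var(t \uparrow X) = \{\fun x\}$, which contains the required singleton. For the coherence subcase $t = \coh_{\Delta,A}[\sigma]$, the construction yields $t \uparrow X = (\coh_{\Delta,A} \uparrow \sigma^{-1}X)[\sigma \uparrow X]$. Since $\depth_X(\Gamma) = \depth_X(t) = 0$, Lemma~\ref{lem:preimage-bound} gives $\depth_X(\sigma) \le 0$ and $\depth_{\sigma^{-1}X}(\Delta) \le 0$, so the inner naturality is produced by the depth-$0$ clause and has the form $\coh_{\Delta\uparrow\sigma^{-1}X,\,A\uparrow^{\coh_{\Delta,A}[\id]}\sigma^{-1}X}[\id]$. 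Because this is a coherence applied to the identity on its context, its variable set is exactly $\Var(\Delta \uparrow \sigma^{-1}X)$, and hence by Lemma~\ref{lemma:vartmsubunion} we obtain $\Var(t \uparrow X) = \Var(\sigma \uparrow X)$. Since $\Var(t) = \Var(\sigma)$, the inductive hypothesis on $\sigma$ immediately delivers the required inclusion.

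For the substitution case, the empty substitution is vacuous. For $\sigma = \sub{\sigma', y \mapsto u}$, we split according to whether $\Var(u) \cap X$ is empty. If it is, then $\sigma \uparrow X = \sub{\sigma' \uparrow X, y \mapsto u}$ and $\Var(\sigma) \cap X = \Var(\sigma') \cap X$, so the inductive hypothesis on $\sigma'$ suffices. Otherwise, $\sigma \uparrow X$ explicitly contains the entry $\fun y \mapsto u \uparrow X$, and so $\Var(\sigma \uparrow X) \supseteq \Var(\sigma' \uparrow X) \cup \Var(u \uparrow X)$; every $x \in \Var(\sigma) \cap X$ lies in either $\Var(\sigma') \cap X$ or $\Var(u) \cap X$, and in each case the inductive hypothesis on $\sigma'$ or on $u$ yields $\fun x \in \Var(\sigma \uparrow X)$.

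The main subtlety is ensuring the depth hypotheses propagate correctly into the recursive calls: in the coherence subcase we must know that the substitution $\sigma$ and the pasting context $\Delta$ remain of depth at most $0$ with respect to the relevant variable set, so that the depth-$0$ clauses of the construction are the ones actually applied, and this relies on Lemma~\ref{lem:preimage-bound}. Once these bounds are in place, the structural induction goes through cleanly; no delicate analysis of the inner types $A \uparrow^t X$ is needed because the coherence always uses the full set of variables of its base context.
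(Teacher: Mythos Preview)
Your proof takes the same mutual structural induction as the paper (whose proof is a four-line sketch), and your added detail is correct in spirit. One small correction: Lemma~\ref{lem:preimage-bound} does not give $\depth_X(\sigma)\le 0$; it bounds $\depth_{\sigma^{-1}X}(\Delta)$ by $\depth_X(\sigma)$, not the reverse. The bound you need comes instead directly from $\depth_X(t)=0$: every $z\in\Var(t)\cap X$ then has $\dim z=\dim t$, while for any $y\in\Var\Delta$ one has $\dim y[\sigma]=\dim y\le\dim\Delta\le\dim t=\dim z$, so $\depth_X(y[\sigma])\le 0$. With that in hand, your appeal to Lemma~\ref{lem:preimage-bound} for $\depth_{\sigma^{-1}X}(\Delta)\le 0$ is legitimate, and the remainder of the argument goes through.
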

\begin{proof}
  We prove the two statements by mutual induction. The case where \(t = x \in X\)
  follows trivially. The case \(t = \coh_{\Delta,A}[\sigma]\) follows by the
  inductive hypothesis for \(\sigma\). Similarly the case where \(\sigma = \sub{}\)
  is vacuously true. The case where \(\sigma = \sub{\sigma', y\mapsto t}\) follows
  by the inductive hypothesis for \(\sigma'\) and for \(t\).
\end{proof}

\begin{lemma}\label{lem:fullterm-fullfunct}
  For any term \(\Gamma \vdash t : A\) and \({X\in \U(\Gamma)}\) such that
  \(\depth_X(t) = \depth_X(\Gamma) = 0\) and ${\Var(t) \cup \Var(A) = \Var(\Gamma)}$, we have:
  \[\Var(t\uparrow X) \cup \Var(A\uparrow^t X) = \Var(\Gamma \uparrow X)\]
\end{lemma}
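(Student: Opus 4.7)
The plan is to decompose $\Var(\Gamma\uparrow X)$ into three disjoint groups — the functorialised variables $\fun x$ and the duplicates $x^-,x^+$ for each $x\in X$, and the untouched variables $y \in \Var(\Gamma)\setminus X$ — and verify that each group lies in $\Var(t\uparrow X) \cup \Var(A\uparrow^t X)$. The reverse inclusion is automatic from the well-typed judgement $\Gamma\uparrow X \vdash t\uparrow X : A\uparrow^t X$.

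I would first extract structural consequences of the hypotheses. Every $x\in \Var(A)$ satisfies $\dim x \le \dim A = \dim t - 1$, so any hypothetical $x\in X\cap \Var(A)$ would give $\dim t - \dim x\ge 1$, contradicting $\depth_X(t) = 0$; hence $X\cap \Var(A) = \emptyset$. Combined with fullness $\Var(t)\cup \Var(A) = \Var(\Gamma)$, this forces $X \subseteq \Var(t)$. Writing $A = \arr[B]{u}{v}$ when $A \neq \obj$, the same disjointness also gives $X\cap \Var(u) = X\cap \Var(v) = \emptyset$, so by Lemma~\ref{lem:empty-intersection} the substitutions $\inj^\pm$ act as the identity on $u$, $v$ and $B$. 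In particular, both conditional branches in the definition of $A\uparrow^t X$ collapse to their non-whiskered form, and in both the $A = \obj$ and the $A = \arr[B]{u}{v}$ cases one directly reads off the inclusion $\Var(A) \cup \Var(t[\inj^-]) \cup \Var(t[\inj^+]) \subseteq \Var(A\uparrow^t X)$.

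Next I would check each of the three groups in turn. For each $x\in X$, the fact that $x\in \Var(t)$ together with Lemma~\ref{lem:var-func} gives $\fun x \in \Var(t\uparrow X)$; likewise Lemma~\ref{lemma:vartmsubunion} applied to $x\in \Var(t)\cap X$ places $x^\pm \in \Var(t[\inj^\pm]) \subseteq \Var(A\uparrow^t X)$. For $y \in \Var(\Gamma)\setminus X$, fullness produces two sub-cases: if $y\in \Var(t)$ then $y[\inj^\pm] = y$ (since $y\notin X$), so $y\in \Var(t[\inj^\pm]) \subseteq \Var(A\uparrow^t X)$; if $y\in \Var(A)$ then $y\in \Var(A) \subseteq \Var(A\uparrow^t X)$ by the inclusion noted at the end of the previous paragraph.

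The argument is routine once the depth-$0$ simplification is in place. The only real bookkeeping is the case split $A = \obj$ versus $A = \arr[B]{u}{v}$ in unpacking $A\uparrow^t X$, which is mild since in both branches the simplified definition exhibits $t[\inj^-]$ and $t[\inj^+]$ directly as subterms, immediately supplying the duplicates $x^\pm$ needed to complete the coverage.
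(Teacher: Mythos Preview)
Your proposal is correct and follows essentially the same approach as the paper's proof: both decompose $\Var(\Gamma\uparrow X)$ into the untouched variables and the triples $x^-,x^+,\fun x$, use $\depth_X(t)=0$ to force $X\cap\Var(A)=\emptyset$ and hence $X\subseteq\Var(t)$, invoke Lemma~\ref{lem:var-func} for the $\fun x$'s, and then read off the remaining inclusions from the simplified form of $A\uparrow^t X$ via Lemma~\ref{lemma:vartmsubunion}. Your write-up is slightly more explicit about the case split on $y\in\Var(t)$ versus $y\in\Var(A)$ and about the reverse inclusion, but the argument is the same.
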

\begin{proof}
        It is easy to show by induction that:
\begin{align*}
        \Var(\Gamma \uparrow X) = (\Var(\Gamma) \setminus X) \cup \{x^-, x^+, \fun{x} : x \in X\}
\end{align*}
        Since $\depth_X t = 0$, we know that $X \cap \Var(A) = \emptyset$, hence we must have $X \subseteq \Var(t)$.
        By Lemma \ref{lem:var-func}, for all $x \in X$, we have $\fun{x} \in \Var(t\uparrow X)$.
        Therefore it suffices to show
\begin{align*}
        \Var(A \uparrow^t X) \supseteq \Var(A) \cup (\Var(t) \setminus X) \cup \{x^\pm : x \in X\}
\end{align*}
        But, by construction, we have:
\begin{align*}
        \Var(A \uparrow^t X) = \Var(A) \cup \Var(t[\inl_X]) \cup \Var(t[\inr_X])
\end{align*}
        Hence the result follows by applying Lemma \ref{lemma:vartmsubunion} to $t[\inl_X]$ and $t[\inr_X]$.
\end{proof}

\begin{lemma}\label{lem:corr-nat-coh-lmax}
  Let \(\Gamma\vdashps\) be a pasting context, \(A\) be a full type and
  \(t = \coh_{\Gamma,A}[\id]\). For any set \(X\in \U(\Gamma)\) such that
  \(\depth_X {\Gamma} = 0\) and \(\depth_X t = 1\), the context
  \(\Gamma\uparrow X\) is a pasting context and \(A\uparrow^t X\) is a full
  type over it. In particular, \( \coh_{\Gamma,A}\uparrow X\) is a well-defined
  term of type \(A\uparrow^t X\).
\end{lemma}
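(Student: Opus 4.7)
My plan is to establish the two claims in turn. The first, that \(\Gamma \uparrow X\) is a pasting context, is an immediate application of Lemma~\ref{lem:psctx-func}, since the hypothesis \(\depth_X \Gamma = 0\) identifies \(X\) as a set of depth-\(0\) variables of \(\Gamma\).

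For fullness of \(A \uparrow^t X\), I will first extract structural information from the depth hypotheses. Since \(t = \coh_{\Gamma,A}[\id]\) has \(\Var(t) = \Var(\Gamma)\), the conditions \(\depth_X t = 1\) and \(\depth_X \Gamma = 0\) together with up-closure of \(X\) force every \(x \in X\) to have dimension \(n\) or \(n-1\), where \(n = \dim \Gamma\), with at least one \(x \in X\) of dimension \(n-1\). Write \(X = X_n \sqcup X_{n-1}\) for the partition by dimension, \(A = \arr[A']{u}{v}\), and \(A \uparrow^t X = \arr[B]{a}{b}\). The proof then splits according to whether \(X_n\) is empty, corresponding to the two side conditions of the \(\coh\) rule.

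In the case \(X_n = \emptyset\), we have \(\dim(\Gamma \uparrow X) = n\), so \(t \uparrow X\) has dimension \(n+1\) exceeding the context dimension, and fullness must hold via the \textsc{(inv)} condition, requiring \(\Var(a) \cup \Var(B) = \Var(\Gamma \uparrow X) = \Var(b) \cup \Var(B)\). By Lemma~\ref{lemma:vartmsubunion}, \(\Var(t[\inj^\pm_{\Gamma,X}]) = (\Var(\Gamma) \setminus X) \cup \{x^\pm : x \in X\}\), which covers everything in \(\Var(\Gamma \uparrow X)\) except the fillers \(\{\fun x : x \in X\}\). These I will obtain from the \(v \uparrow X\) (resp.\ \(u \uparrow X\)) summand of \(a\) (resp.\ \(b\)) via Lemma~\ref{lem:var-func}, provided that \(X \subseteq \Var(u) \cap \Var(v)\). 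This inclusion holds because each \(x \in X\) has dimension \(n-1\), hence lies in both \(\Var(\partial^-\Gamma)\) and \(\Var(\partial^+\Gamma)\) by Lemma~\ref{lem:lmaxnonmax-bdry}, while \(\dim A' = n-2\) prevents \(x\) from lying in \(\Var(A')\), so fullness of \(A\) forces \(x \in \Var(u) \cap \Var(v)\). In the complementary case \(X_n \neq \emptyset\), we have \(\dim(\Gamma \uparrow X) = n+1\), the cell \(t \uparrow X\) is top-dimensional, and fullness must hold via \textsc{(comp)}. Here I plan to decompose \(\Gamma \uparrow X = (\Gamma \uparrow X_n) \uparrow X_{n-1}\) by Lemma~\ref{lem:func-of-func}, and invoke the second half of Lemma~\ref{lem:psctx-func} to conclude \(\partial(\Gamma \uparrow X_n) = \Gamma\) with the boundary inclusions realised by \(\inj^\pm_{\Gamma, X_n}\). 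A variable count analogous to the previous case then shows \(\Var(a) \cup \Var(B)\) coincides with \(\Var(\partial^-(\Gamma \uparrow X))\), which differs from \(\Var(\Gamma \uparrow X)\) precisely by removing the top-dimensional fillers \(\fun y\) and their wrong-side duplicates \(y^+\) for \(y \in X_n\) (noting that no \(y \in X_n\) lies in \(\Var(v)\) or \(\Var(u)\) since \(\dim y = n > \dim v\)).

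The main obstacle I anticipate lies in the \textsc{(comp)} case, specifically in the boundary bookkeeping required to verify that \(\Var(a) \cup \Var(B)\) hits the source boundary \(\Var(\partial^-(\Gamma \uparrow X))\) exactly, neither missing any variable nor including one from the opposite side. The decomposition via Lemma~\ref{lem:func-of-func} together with the explicit boundary description in Lemma~\ref{lem:psctx-func} reduces this to a careful variable enumeration, which should be tractable though tedious.
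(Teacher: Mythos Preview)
Your proof has a genuine gap in the initial structural extraction. The claim that \(\depth_X t = 1\) forces ``at least one \(x \in X\) of dimension \(n-1\)'' conflates \(\dim t\) with \(\dim \Gamma\). What \(\depth_X t = 1\) actually gives is some \(x \in X\) with \(\dim x = \dim t - 1 = \dim A\), and while fullness together with the depth bound forces \(\dim A \in \{n-1, n\}\), the case \(\dim A = n\) does occur---precisely when \(A\) satisfies the \textsc{(inv)} side condition. In that case every \(x \in X\) has \(\dim x \geq n\), hence \(X = X_n\) and \(X_{n-1} = \emptyset\). Your \(X_n \neq \emptyset\) branch then fires, but your assertion that \(t \uparrow X\) is top-dimensional fails (\(\dim(t\uparrow X) = n+2 > n+1 = \dim(\Gamma\uparrow X)\)), and your key observation ``no \(y \in X_n\) lies in \(\Var(v)\) since \(\dim y = n > \dim v\)'' fails because here \(\dim v = n\) and in fact \(X_n \subseteq \Var(v)\) by \textsc{(inv)}-fullness. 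The resulting type \(A\uparrow^t X\) is \textsc{(inv)}-full, not \textsc{(comp)}-full, and your variable count targeting \(\Var(\partial^-(\Gamma\uparrow X))\) cannot succeed since \(\Var(a) \supseteq \Var(v\uparrow X) \ni \fun y\) for \(y \in X_n\). The paper treats the \(\dim A = n\) case separately, dispatching it cleanly with Lemma~\ref{lem:fullterm-fullfunct}.

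A minor point on the \(\dim A = n-1\) case: you decompose \(\Gamma\uparrow X = (\Gamma\uparrow X_n)\uparrow X_{n-1}\) and then invoke Lemma~\ref{lem:psctx-func} to conclude \(\partial(\Gamma\uparrow X_n) = \Gamma\), but the fullness condition concerns \(\partial(\Gamma\uparrow X)\), not \(\partial(\Gamma\uparrow X_n)\). The paper orders the decomposition the other way, \((\Gamma\uparrow X_{n-1})\uparrow X_n\), so that the outer layer consists of maximal-dimensional variables and Lemma~\ref{lem:psctx-func} yields \(\partial(\Gamma\uparrow X) = \Gamma\uparrow X_{n-1}\) directly. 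Your eventual description of \(\Var(\partial^-(\Gamma\uparrow X))\) is correct, so this is a presentational rather than a logical gap, but the intermediate step as written does not justify it.
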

\begin{proof}
        Since $\depth_X \Gamma = 0$, $\Gamma \uparrow X$ is a pasting context~\cite[Lemma 86]{benjamin_type_2020}.
        Hence it suffices to prove that $A \uparrow^t X$ is full.

        Let $n = \dim \Gamma > 0$.
        By the assumptions on depth, \({A = \arr[B]{u}{v}}\) and either \(\dim A = n\) or \(\dim A = n-1\), depending on which side condition it satisfies.    Let us assume $\dim A = n$. Then:
        \[
                \Var(\Gamma) = \Var(u) \cup \Var(A) = \Var(v) \cup \Var(A)
        \]
        Hence by Lemma~\ref{lem:fullterm-fullfunct} we have:
        \begin{align*}
                \Var(\Gamma \uparrow X) =&\Var(u \uparrow X) \cup \Var(B \uparrow^u X)  \\
                = &\Var(v \uparrow X) \cup \Var(B \uparrow^v X)
        \end{align*}
        This shows $A \uparrow^t X$ is full.
        Assume now $\dim A = n$ and thus:
        \begin{align*}
                \Var(\partial^- \Gamma) &= \Var(u) \cup \Var(A) \\
                \Var(\partial^+ \Gamma) &= \Var(v) \cup \Var(A)
        \end{align*}
        Let us write $X = X^m \sqcup X^l$ where $X^m$ is the set of variables of dimension $n$ in $X$, and recall that since $\depth_X t = 1$ we must have $X^l \neq \emptyset$.
        Moreover, by Lemma~\ref{lem:lmaxnonmax-bdry} $X^l \subseteq \Var(\partial^\pm \Gamma)$, and thus $X^l \subseteq \Var(u) \cap \Var(v)$.
        This shows that:
        \begin{align*}
                A \uparrow^t X = t[\inl_{\Gamma,X}] * (v \uparrow X) \to (u \uparrow X) * t[\inr_{\Gamma,X}]
        \end{align*}
        We now observe that by dimension constraints we also have
        $X^m \cap \Var(u) = \emptyset$, hence $u \uparrow X = u \uparrow X^l$ by Lemma~\ref{lem:weakening}.
        Moreover, by Lemma~\ref{lem:func-of-func}, $\inc^\pm_{\Gamma,X} = \inc^\pm_{\Gamma,X^l} \inc^\pm_{\Gamma \uparrow X^l,X^m}$.
        We note that by Lemma~\ref{lem:fullterm-fullfunct}:
        \begin{align*}
                &\Var(v \uparrow X) \cup \Var(B \uparrow^v X^l) \\
                =& \Var(\partial^+ \Gamma \uparrow X^l) \supseteq \{x^-, x^+, \fun{x} : x \in X^l\}
        \end{align*}
        Moreover, $\Var(t[\inl_{\Gamma,X^l}]) \subseteq \Var(t) \setminus X^l$ by Lemma~\ref{lemma:vartmsubunion}, hence by putting these results together and applying a similar argument for $u \uparrow X$, we have:
        \begin{align*}
                \Var(\Gamma \uparrow X^l) &= \Var(t[\inl_{\Gamma,X^l}] * (v \uparrow X^l)) \\
                &= \Var((u \uparrow X^l) * t[\inr_{\Gamma,X^l}])
        \end{align*}
        If $X^m = \emptyset$, then $\Gamma \uparrow X = \Gamma \uparrow X^l$, hence we have shown that $A \uparrow^t X$ is full.
        Suppose then that $X^m \neq \emptyset$. In this case, we use that by Lemma~\ref{lem:func-of-func}:
        \[
                \Gamma \uparrow X = (\Gamma \uparrow X^l) \uparrow X^m
        \]
  Since the variables in $X^m$ are of maximal dimension in $\Gamma \uparrow X^l$, by Lemma \ref{lem:psctx-func}, we deduce that:
  \[
    \partial(\Gamma \uparrow X) = \Gamma \uparrow X^l
  \]
  Moreover, the source and target inclusions are given by:
  \[
    \inc^\pm_{\Gamma \uparrow X^l,X^m} = \delta^\pm_{\Gamma \uparrow X}
  \]
  From that, we may then conclude that:
        \begin{align*}
                \Var(\partial^\pm (\Gamma \uparrow X)) = \Var(\inc^\pm_{\Gamma \uparrow X^l,X^m})
        \end{align*}
        Finally by Lemma~\ref{lemma:vartmsubunion} we get that:
        \begin{align*}
                (t[\inl_{\Gamma,X^l}] * (v \uparrow X^l))[\inl_{\Gamma \uparrow X^l,X^m}] &= (t[\inl_{\Gamma,X}] * (v \uparrow X)) \\
                ((u \uparrow X^l) * t[\inr_{\Gamma,X^l}])[\inr_{\Gamma \uparrow X^l,X^m}] &= ((u \uparrow X) * t[\inr_{\Gamma,X}])
        \end{align*}
  This shows that
  \begin{align*}
    \Var(\partial^- (\Gamma \uparrow X)) &= \Var(t[\inl_{\Gamma,X}] * (v \uparrow X)) \\
    \Var(\partial^+ (\Gamma \uparrow X)) &= \Var((u \uparrow X) * t[\inr_{\Gamma,X}])
  \end{align*}
  so the type \(A\uparrow^t X\) is again full.
\end{proof}

The second case is that of linear compositions. For that, we need to show first that
the associator and the whiskering phases defined in Appendix~\ref{app:nat-coh}
are well-defined and composable, as well as show that suspension commutes with
naturality of depth \(0\). Correctness of linear composites will be established
in Lemma~\ref{lemma:lincompnatsusp}.

\begin{lemma}\label{lem:assoc-lincomp-correct}
  For $k > 0$ and $0 \le j \le k$, the terms \(\assoc_{k,j}\) of
  Definition~\ref{def:assoc-lincomp} satisfy the judgements:
  \[\Psi^0_{k+1} \vdash \assoc_{k,j} : \bcomp_{k,j+1} \to \bcomp_{k,j}\]
\end{lemma}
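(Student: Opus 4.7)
The goal is to show that $\assoc_{k,j} = \coh_{\Psi^{0}_{k+1},\arr{\bcomp_{k,j+1}}{\bcomp_{k,j}}}[\id]$ satisfies the claimed typing. Unwinding the \textsc{(coh)} rule applied to the identity substitution, this reduces to verifying three conditions: (i) $\Psi^{0}_{k+1} \vdashps$, (ii) both biased composites are well-typed terms of a common type $A$, and (iii) the \textsc{(inv)} fullness condition $\Var(\Psi^{0}_{k+1}) = \Var(\bcomp_{k,j}) \cup \Var(A) = \Var(\bcomp_{k,j+1}) \cup \Var(A)$. My plan is to prove each in turn.

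Condition (i) is immediate from the inductive definition of $\Psi^{0}_{k+1}$ as an iterated application of the second pasting-context rule in Fig.~\ref{fig:rules-ps} starting from $(x_0 : \obj)$, giving the linear tree of $k+1$ composable $1$\=/cells. For (ii), I would argue that the common type is $A = \arr[\obj]{x_0}{x_{k+1}}$. Each $\bcomp_{k,j}$ is a specific binary parenthesization of the composable chain $f_0, \ldots, f_k$, built using iterated $*_0$ composition; the formalism introduced after the definition of $\comp_{\Gamma}$ in Section~\ref{sec:catt} gives exactly this, realising each parenthesization as $\comp_{\Delta}[\sigma]$ for a $\sigma$ sending the top-dimensional variables of a linear disc-gluing context $\Delta$ to the appropriate $f_i$. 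Any such binary parenthesization of composable $1$\=/cells is well-typed with source $x_0$ and target $x_{k+1}$, a claim I would establish by a short induction on $k$ using the typing rule for binary composition.

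For (iii), I would observe that every variable $f_i$ (for $0 \le i \le k$) appears explicitly in each $\bcomp_{k,j}$, and by Lemma~\ref{lemma:vartmsubunion} applied to the underlying unbiased composites, every $x_i$ (for $0 \le i \le k+1$) appears as well, since the type boundary variables of the $f_i$'s propagate through. Thus $\Var(\bcomp_{k,j}) = \Var(\Psi^{0}_{k+1})$ for every admissible $j$. Since $\Var(A) = \{x_0, x_{k+1}\} \subseteq \Var(\Psi^{0}_{k+1})$, the equalities required by the \textsc{(inv)} condition hold. Applying \ruleref{rule:coh} with $\gamma = \id_{\Psi^{0}_{k+1}}$ yields the claimed judgement, using that $\bcomp_{k,j}[\id] = \bcomp_{k,j}$ and $A[\id] = A$.

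The main obstacle is really only notational rather than mathematical: the biased composites are written as nested binary $*_0$ composites, so some care is required to spell out the parenthesization formally (for instance by induction on the position of the biased pair $(f_{j-1} \s_0 f_{j})$ within the chain) before invoking the general fact that composable $1$\=/cells admit well-typed binary composites. Once this bookkeeping is done, the lemma reduces to a direct application of the coherence formation rule.
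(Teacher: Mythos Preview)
Your proposal is correct and follows essentially the same approach as the paper: reduce to the \textsc{(coh)} rule with the \textsc{(inv)} side condition, then verify that each $\bcomp_{k,j}$ has type $\arr[\obj]{x_0}{x_{k+1}}$ and contains every variable of $\Psi^{0}_{k+1}$. The paper's proof is just a terser version of exactly this argument.
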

\begin{proof}
  By application of \ruleref{rule:coh}, it suffices to prove that we have
  $\Var(\bcomp_{k,j}) = \Var(\Psi^0_{k+1})$ and the following judgement is
  derivable:
  \[
    \Psi^0_{k+1} \vdash \bcomp_{k,j} : x_0 \to_* x_{k+1}
  \]
  This follows from the definition of the context \(\Psi^{0}_{k+1}\), in which
  the variables \(f_{0},\ldots, f_{k}\) all satisfy:
  \[
    \Psi^{0}_{k+1} \vdash f_{i} : \arr{x_{i}}{x_{i+1}} \qedhere
  \]
\end{proof}

\begin{lemma}\label{lem:scan-subst-correct}
  For every $k\in \N$, $0 \le i,j \le k+1$, $X \in \U(\Psi^0_k)$ such that $x_j \in X$,
  the substitution \(\psi^{X}_{k,j,i}\) of Definition~\ref{def:scan-subst} satisfies the following judgement:
  \begin{align*}
        \Psi^0_k &\uparrow X \vdash \psi_{k,j,i}^X: \Psi^0_{i}
  \end{align*}
\end{lemma}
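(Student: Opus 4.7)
The plan is to proceed by induction on $i$, with the base case being immediate from the definition. The key intermediate observation, which I would state as a sub-claim within the induction, is an explicit description of what $\psi^X_{k,j,i}$ does to each variable of $\Psi^0_i$. Specifically, one can verify by unfolding the recursion that:
\begin{itemize}
\item for $0 \le \ell \le \min(i,j)$, the variable $x_\ell$ is mapped to $x_\ell[\inl]$, and for $\ell < \min(i,j)$, $f_\ell$ is mapped to $f_\ell[\inl]$;
\item if $i > j$, then $x_{j+1}$ is mapped to $x_j[\inr]$, $f_j$ is mapped to $\fun{x_j}$, and for $j < \ell \le i$ we have $x_\ell \mapsto x_{\ell-1}[\inr]$, while $f_\ell \mapsto f_{\ell-1}[\inr]$ for $j < \ell < i$.
\end{itemize}
This bookkeeping is what controls whether the newly added image has the correct type at each inductive step.

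For the inductive step from $i$ to $i+1$, I would consider the three cases in the definition of $\psi_{k,j,i+1}^X$. In each case, the newly added entry for $x_{i+1}$ is straightforward since its type in $\Psi^0_{i+1}$ is $\obj$, and applying $\inl$ or $\inr$ yields a valid $0$\-cell in $\Psi^0_k \uparrow X$ via the well-typedness of $\inl^\pm_{\Psi^0_k,X}$ granted by Theorem~\ref{thm:correctness}. The interesting part is the new entry for $f_i$, whose type $\arr[]{x_i}{x_{i+1}}$ must be pulled back correctly. For $i<j$, the sub-claim shows that $x_i$ and $x_{i+1}$ are both sent to their $\inl$ images, so $f_i[\inl]$ fits. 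For $i=j$, the sub-claim shows that $x_j$ is sent to $x_j^-$ and $x_{j+1}$ to $x_j^+$; since $x_j \in X$, the variable $\fun{x_j}$ exists in $\Psi^0_k \uparrow X$ and by the definition of naturality of types for fresh variables ($\obj \uparrow^{x_j} X = \arr{x_j^-}{x_j^+}$), it has precisely the required type. For $i>j$, the sub-claim gives $x_i \mapsto x_{i-1}[\inr]$ and $x_{i+1} \mapsto x_i[\inr]$, so $f_{i-1}[\inr]$ has the correct pulled-back type $\arr{x_{i-1}[\inr]}{x_i[\inr]}$.

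The main obstacle is not conceptual but notational: tracking the image of each variable through the recursion without ambiguity, especially around the transition point $i = j$, where the substitution switches from the $\inl$\-side to the $\inr$\-side and the indices of $x$ and $f$ become mismatched. This is compounded by the fact that $\inl$ and $\inr$ themselves treat variables differently depending on whether they lie in $X$. Once the sub-claim on images is proved (itself a straightforward induction on $i$ with the same case split), the type-checking of the substitution reduces to a direct syntactic verification in each of the three cases, invoking Theorem~\ref{thm:correctness} only for the validity of $\inl^\pm_{\Psi^0_k,X}$ and the existence of $\fun{x_j}$ with the stated type.
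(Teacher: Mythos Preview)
Your proposal is correct and follows the same induction-on-$i$ strategy as the paper's proof, with the same three-way case split on $i<j$, $i=j$, $i>j$. The paper's proof is terser—it simply asserts that each case follows from correctness of $\inl$, $\inr$, or the existence of $\fun{x_j}$—whereas you make the bookkeeping explicit via your sub-claim on images, but this is elaboration rather than a different argument.
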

\begin{proof}
        We proceed by induction on $i$.
        For $i=0$, this is immediate from the definition of $\psi^X_{k,j,0}$.
        For the inductive case, we can assume $\Psi^0_k \uparrow X \vdash \psi_{k,j,i}^X: \Psi^0_{i}$
        and prove the result for $i+1$.
        If $i < j$, this follows by correctness of $\inl$, and
        if $i > j$, this follows by correctness of $\inr$.
        Finally, if $i=j$, since $x_j \in X$, the term $\fun{x_j}$ exists and has the correct type.
\end{proof}

\begin{lemma}\label{lem:whisk-lincomp-correct}
  For \(0\le j < k\) and $X \in \U(\Psi^0_k)$ such that \(f_j\in X\), the
  whiskering phase \(w^{X}_{k,j}\) of Definition~\ref{def:whisk-lincomp} is
  well-defined in context \(\Psi^{0}_{k}\uparrow X\). Denoting
  \(\inc^{\pm} = \inc^{\pm}_{\Psi^{0}_{k},X}\), the source of this term is given
  as follows:
  \begin{align*}
    f_{0}[\inl] \!\s_{0}\! \cdots\!\s_{0}\! f_{j}^{\sm} \!\s_{0}\! \cdots
    \s_{0} \! f_{k-1}[\inr] & \text{ if } x_{j+1} \notin X \\
    f_{0}[\inl] \!\s_{0}\! \cdots\!\s_{0}\! (f_{j}^{\sm}\!\s_{0}\!\fun{x_{j+1}}) \!\s_{0}\! \cdots
    \!\s_{0}\! f_{k-1}[\inr] & \text{ if } x_{j+1} \in X \\
  \intertext{Similarly, the target of this term is given as follows:}
    f_{0}[\inl] \!\s_{0}\! \cdots \!\s_{0}\! f_{j}^{\sp} \!\s_{0}\! \cdots
    \!\s_{0}\! f_{k-1}[\inr] & \text{ if } x_{j} \notin X \\
    f_{0}[\inl] \!\s_{0}\! \cdots\!\s_{0}\! (\fun{x_{j}}\!\s_{0}\! f_{j}^{\sp}) \!\s_{0}\! \cdots
    \!\s_{0}\! f_{k-1}[\inr] & \text{ if } x_{j} \in X
  \end{align*}
\end{lemma}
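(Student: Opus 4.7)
\emph{Proof proposal.} The plan is to exhibit $w^X_{k,j}$ as an unbiased composite $\comp_\Phi[\sigma]$ over a simple linear pasting context $\Phi$, and then read off the source and target by computing the unbiased composites over the boundary pasting contexts $\partial^\pm\Phi$ and pushing forward through $\sigma$.

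First, I would unfold the type of $\fun{f_j}$ using the naturality-of-types construction. Since $f_j$ has type $A = x_j \to x_{j+1}$ in $\Psi^0_k$, the naturality rule for arrow types produces $A \uparrow^{f_j} X$ of the form $a \to b$ sitting over $x_j[\inl] \to x_{j+1}[\inr]$, with $a = f_j^-$ if $x_{j+1} \notin X$ and $a = f_j^- \s_0 \fun{x_{j+1}}$ otherwise, and dually $b = f_j^+$ or $b = \fun{x_j} \s_0 f_j^+$. Hence $\fun{f_j}$ is a $2$-cell in $\Psi^0_k \uparrow X$ whose source and target are parallel $1$-cells from $x_j[\inl]$ to $x_{j+1}[\inr]$.

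Next, I would describe $w^X_{k,j}$ as the unbiased composite $\comp_\Phi[\sigma]$ for the linear pasting context $\Phi$ made of $j$ 1-cell slots, then a single 2-cell slot between two parallel 1-cells, then $k-1-j$ further 1-cell slots, all composed along the 0-boundary. The substitution $\sigma$ sends the leading 1-cell slots to $f_0[\inl], \ldots, f_{j-1}[\inl]$, the two parallel 1-cells around the 2-cell slot to $a$ and $b$, the 2-cell slot itself to $\fun{f_j}$, and the trailing 1-cell slots to $f_{j+1}[\inr], \ldots, f_{k-1}[\inr]$. Well-typedness reduces to checking that adjacent $0$-boundaries agree in $\Psi^0_k \uparrow X$: the $\inl$- and $\inr$-chains are composable by inspection, while at the interfaces the target of $f_{j-1}[\inl]$ is $x_j[\inl]$, which matches the 0-source of $\fun{f_j}$, and symmetrically on the right.

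Finally, the source $\partial^- w^X_{k,j}$ equals the image under $\sigma$ of the unbiased composite over $\partial^-\Phi$, which is obtained from $\Phi$ by replacing the 2-cell slot with its 1-cell source. This yields the $k$-fold unbiased composite $f_0[\inl] \s_0 \cdots \s_0 f_{j-1}[\inl] \s_0 a \s_0 f_{j+1}[\inr] \s_0 \cdots \s_0 f_{k-1}[\inr]$, with $a$ occupying a single entry of the chain; substituting the two values of $a$ gives the two stated source formulas, and the target formulas follow symmetrically from $\partial^+\Phi$ and the two values of $b$. The main obstacle will be the bookkeeping across the four subcases determined by the memberships $x_j \in X$ and $x_{j+1} \in X$, and checking that the parenthesised subterms $(f_j^- \s_0 \fun{x_{j+1}})$ and $(\fun{x_j} \s_0 f_j^+)$ in the claimed formulas correspond precisely to the images of single variable slots in the unbiased composites over $\partial^\pm\Phi$, rather than being absorbed into the surrounding chain.
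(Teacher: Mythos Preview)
Your proposal is correct and follows the same approach as the paper: the paper's proof is simply ``This can be checked explicitly by the formulae given for \(w^{X}_{k,j}\) and the definition of naturality of contexts,'' and your argument carries out exactly that explicit check, unfolding the type of \(\fun{f_j}\) via the naturality-of-types clause and reading off the source and target of the unbiased composite. Your discussion of the parenthesisation issue is the right thing to be careful about, and your resolution---that \(a\) and \(b\) each occupy a single 1-cell slot in \(\partial^\pm\Phi\)---is correct.
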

\begin{proof}
  This can be checked explicitly by the formulae given for \(w^{X}_{k,j}\) and
  the definition of naturality of contexts.
\end{proof}

\begin{lemma}
\label{lemma:lincompnat}
Let $k > 1$ and $X \in \U(\Psi^0_k)$ be of depth 1 in $\Psi^0_k$. Then the term
\(\comp^{0}_{k}\uparrow X\) of Definition~\ref{def:nat-lincomp} satisfies:
\[
  \Psi^0_k \uparrow X \vdash \comp^{0}_{k} \uparrow X : (x_0 \to x_k) \uparrow^{\comp^{0}_{k}} X
\]
\end{lemma}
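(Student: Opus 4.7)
The plan is to verify, in three stages, that the composite of phases
\[
\comp^0_k \uparrow X = h^X_{k,v_n} \s_0 \cdots \s_0 h^X_{k,v_0}
\]
from Definition~\ref{def:nat-lincomp} is well-formed in $\Psi^0_k \uparrow X$ and types as $(x_0 \to x_k) \uparrow^{\comp^0_k} X$.

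First, I would confirm that every individual phase $h^X_{k,v}$ is well-typed. For $v = x_j$, combining Lemma~\ref{lem:assoc-lincomp-correct} and Lemma~\ref{lem:scan-subst-correct} gives a cell of type $\bcomp_{k,j+1}[\psi^X_{k,j}] \to \bcomp_{k,j}[\psi^X_{k,j}]$. For $v = f_j$, Lemma~\ref{lem:whisk-lincomp-correct} supplies the explicit source and target, with four cases according to whether $x_j$ and $x_{j+1}$ lie in $X$.

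Second, I would verify boundary matching of consecutive phases. Unfolding Definition~\ref{def:scan-subst}, the source $\bcomp_{k,j+1}[\psi^X_{k,j}]$ of an associator phase $h^X_{k,x_j}$ is an unbiased composite with factors $f_i[\inl]$ for $i < j$, a grouped factor involving $\fun{x_j}$ at position $j{-}1$, then $f_i[\inr]$ for $i > j$; the target $\bcomp_{k,j}[\psi^X_{k,j}]$ differs only by moving the $\fun{x_j}$ grouping one step to the right. Comparing with the forms given by Lemma~\ref{lem:whisk-lincomp-correct}, I would then perform a case analysis on the pair $(v_i, v_{i-1})$ of consecutive elements of $X$ in the order $\prec$, distinguishing the adjacent cases $(f_j, x_{j+1})$ and $(x_j, f_j)$ from pairs separated by intermediate variables absent from $X$, and in each configuration check that the target of the upper phase matches the source of the lower phase as raw syntax.

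Third, I would compute the outer boundaries. The overall source is the source of $h^X_{k,v_n}$: if $x_k \in X$ then by up-closure $f_{k-1} \in X$, and this source simplifies to $\comp^0_k[\inl_{\Psi^0_k,X}] \s_0 \fun{x_k}$, otherwise to $\comp^0_k[\inl_{\Psi^0_k,X}]$; these match the two branches of the component $a$ in $(x_0 \to x_k) \uparrow^{\comp^0_k} X$ from part~(3) of Section~\ref{sec:nat-constr}. The target is handled symmetrically from $h^X_{k,v_0}$.

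The main obstacle will be the bookkeeping in the second step. In each configuration one must check that the scan substitution $\psi^X_{k,j}$ deposits $\fun{x_j}$ at exactly the position where the neighbouring whiskering phase produces a compatible grouping; the up-closure hypothesis is essential here, since it forces both $f_{j-1}$ and $f_j$ to lie in $X$ whenever $x_j \in X$ is interior, which is what guarantees that the adjacent whiskering phases actually produce the bracketed factors $(\fun{x_j} \s_0 f_j[\inr])$ and $(f_{j-1}[\inl] \s_0 \fun{x_j})$ that the associator's target and source require.
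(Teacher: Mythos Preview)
Your proposal is correct and follows essentially the same three-stage approach as the paper's proof: validity of each phase via Lemmas~\ref{lem:assoc-lincomp-correct}, \ref{lem:scan-subst-correct}, and~\ref{lem:whisk-lincomp-correct}; composability of consecutive phases by case analysis on the form of adjacent elements of $X$; and computation of the outer source and target by a case split on whether $x_0$ and $x_k$ belong to $X$. The paper presents the outer-boundary computation first and the composability check second, but the content is the same; your remark about ``pairs separated by intermediate variables absent from $X$'' is in fact slightly more careful than the paper's enumeration, which lists only the strictly adjacent case $(f_j,f_{j+1})$ though the argument extends verbatim.
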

\begin{proof}
  Denote \(\inc^{\pm} = \inc^{\pm}_{\Psi^{0}_{k},X}\), and first remark that if
  \(x_{0}\in X\), then the last phase is given by
  \(h^{X}_{k,x_{0}} = \assoc_{k,0}[\psi^{X}_{k,0}]\), which is derivable by
  Lemma~\ref{lem:scan-subst-correct}, and whose target computes by
  Definition~\ref{def:scan-subst} and Lemmas~\ref{lem:assoc-lincomp-correct} to:
  \[
    \fun{x_{0}} \s_{0}(f_{0}[\inr] \s_{0} \ldots \s_{0} f_{k-1}[\inr]) =
    \fun{x_{0}} \s_{0} (\comp^{0}_{k}[\inr])
  \]
  This is exactly the target of the type
  \((x_0 \to x_k) \uparrow^{\comp^{0}_{k}} X\) when \(x_{0}\in X\). If
  \(x_{0}\notin X\), then by up-closure, the last phase is necessarily of
  the form \(h^{X}_{k,f_{i}} = w^{X}_{k,j}\) for a variable \(f_{j}\) such that
  \(x_{j+1}\notin X\). The target of this term is then given by
  Lemma~\ref{lem:whisk-lincomp-correct} by:
  \[
    f_{0}[\inl] \s_{0} \ldots\s_{0} f^{+}_{j}\s_{0}\ldots \s_{0} f_{k-1}[\inr]
  \]
  Since \(j\) is the last phase, none of the \(f_{i}\) for \(i < j\) can be in
  \(X\), thus, for \(i < j\), we have \(f_{i}[\inl] = f_{i} = f_{i}[\inr]\).
  This shows that the target of \(w^{X}_{k,j}\) is \(\comp^{0}_{k}[\inr]\), which
  is again exactly the type of \((x_0 \to x_k) \uparrow^{\comp^{0}_{k}} X\) when
  \(x_{0}\notin X\).

  Similarly, if \(x_{k}\in X\) then the first phase is
  \(h^{X}_{k,x_{k}} = \assoc_{k,k}[\psi^{X}_{k,k}]\), whose source computes to:
  \[
    (f_{0}[\inl] \s_{0} \ldots \s_{0} f_{k-1}[\inl])\s_{0}\fun{x_{k}} =
    (\comp^{0}_{k}[\inr]) \s_{0} \fun{x_{k}}
  \]
  Furthermore, if \(x_{k}\notin X\), then the first phase is of the form
  \(h^{X}_{k,f_{j}} = w_{k,0}^{X}\) for some \(j\) such that \(x_{j} \notin X\),
  and in that case, the source computes to:
  \[
    f_{0}[\inl] \s_{0} \ldots \s_{0} f_j^{-} \s_{0}\ldots \s_{0} f_{k-1}[\inl] =
    \comp^{0}_{k}[\inl]
  \]
  In both cases, this is exactly the source of the type
  \({(x_0 \to x_k) \uparrow^{\comp^{0}_{k}} X}\). This shows that if the term
  is well defined, then it has the correct type, and it suffices to show that it
  is well-defined, that is that the phases compose.

  We observe that it suffices to prove that any two consecutive phases
  \(h^{X}_{k,y}\) and \(h^{X}_{k,z}\) compose. Because \(X\) is up-closed,
  this reduces to the following case split:
  \begin{enumerate}
  \item \(z = f_{j+1} \in X\) and \(y = f_{j}\)
  \item \(z = x_{j+1} \in X\) and \(y = f_{j} \in X\)
  \item \(z = f_{j}\in X\) and \(y = x_{j}\in X\).
  \end{enumerate}
  In the first case, \(x_{j+1}\notin X\), hence the phases are respectively of
  the form \(w^{X}_{k,j+1}\) and \(w^{X}_{k,j}\). By
  Lemma~\ref{lem:whisk-lincomp-correct}, the target of the former and the source
  of the latter both compute to:
  \[
    f_{0}[\inl] \s_{0} \ldots \s_{0} f_{j}^{-} \s_{0} f_{j+1}^{+} \s_{0} \ldots
    \s_{0} f[\inr]
  \]
  In the second case, the phases are respectively of the form
  \(\assoc_{k,j+1}[\psi^{X}_{k,j+1}]\) and \(w^{X}_{k,j}\). By
  Lemma~\ref{lem:scan-subst-correct}, the latter is derivable, and by
  Definition~\ref{def:scan-subst} and Lemmas~\ref{lem:assoc-lincomp-correct}
  and~\ref{lem:whisk-lincomp-correct}, the target of the former and the source
  of the latter then both compute to:
  \[
    f_{0}[\inl] \s_{0} \ldots \s_{0} (f_{j}^{-} \s_{0} \fun{x_{j+1}}) \s_{0}
    \ldots \s_{0} f[\inr]
  \]
  Similarly, in the third case, the phases are respectively of the form
  \(w^{X}_{k,j}\) and \(\assoc_{k,j}[\psi^{X}_{k,j}]\). By
  Lemma~\ref{lem:scan-subst-correct}, the latter is derivable, and by
  Definition~\ref{def:scan-subst} and Lemmas~\ref{lem:assoc-lincomp-correct},
  and, the target of the former and the source of the latter then both compute
  to:
  \[
    f_{0}[\inl] \s_{0} \ldots \s_{0} ( \fun{x_{j}}\s_{0}f_{j}^{-}) \s_{0} \ldots
    \s_{0} f[\inr] \qedhere
  \]
\end{proof}

\begin{lemma}\label{lemma:suspincludefunc}
  For every context \(\Gamma\vdash\) and every \(X\in\U(\Gamma)\) such that \({\depth_{X}(\Gamma)=0}\):
    \begin{align*}
      \Sigma(\Gamma \uparrow X) &= (\Sigma \Gamma) \uparrow X \\
      \Sigma \inj^\pm_{\Gamma, X} &= \inj^{\pm}_{\Sigma \Gamma, X}
    \end{align*}
\end{lemma}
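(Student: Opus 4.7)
}

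I would proceed by structural induction on the context $\Gamma$, establishing the two displayed equalities simultaneously. The base case $\Gamma = \emptycontext$ is immediate: by definition $\emptycontext \uparrow \emptyset = \emptycontext$ and $\Sigma \emptycontext = (N:\obj, S:\obj)$, which then has $\uparrow \emptyset$ act as the identity since $\emptyset \in \U(\Sigma \emptycontext)$; both inclusions collapse to $\sub{N \mapsto N, S \mapsto S}$.

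For the inductive step with $\Gamma = (\Gamma', x:A)$, I would split on whether $x\in X$. Write $X' = X \setminus \{x\}$ (which is still up-closed in $\Gamma'$ and still of $\depth_{X'}(\Gamma') = 0$, since dropping the maximal variable can only decrease depths). In the easy case $x \notin X$ both sides unfold to $(\Sigma \Gamma' \uparrow X, x : \Sigma A)$ directly by the definitions and the IH. In the harder case $x \in X$, I would first observe that $\depth_X(\Gamma) = 0$ forces $X' \cap \Var(A) = \emptyset$, so by Lemma~\ref{lem:empty-intersection} we have $A[\inj^\pm_{\Gamma',X'}] = A$ and $A \uparrow^x X' = \arr[A]{x^-}{x^+}$; the same computation on the suspended side yields $\Sigma A \uparrow^x X' = \arr[\Sigma A]{x^-}{x^+}$ (noting $N,S \notin X'$). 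Combining these with the IH for $\Gamma'$ and $X'$, both $\Sigma((\Gamma',x:A) \uparrow X)$ and $(\Sigma(\Gamma',x:A)) \uparrow X$ collapse to $(\Sigma \Gamma' \uparrow X',\, x^-:\Sigma A,\, x^+:\Sigma A,\, \fun x : \arr[\Sigma A]{x^-}{x^+})$.

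For the inclusions, the defining equation $\inj^\pm_{(\Gamma',x:A),X} = \sub{\inj^\pm_{\Gamma',X'}, x \mapsto x^\pm}$ gives by definition of $\Sigma$ on substitutions that $\Sigma \inj^\pm_{(\Gamma',x:A),X} = \sub{\Sigma\inj^\pm_{\Gamma',X'}, x \mapsto x^\pm}$, since suspension fixes variables. The target side unfolds to $\sub{\inj^\pm_{\Sigma \Gamma', X'}, x \mapsto x^\pm}$, which agrees by the IH. The case $x \notin X$ is again immediate.

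The main obstacle, if any, is purely bookkeeping: one must silently invoke the standard commutation of $\Sigma$ with substitution, $(\Sigma A)[\Sigma \sigma] = \Sigma(A[\sigma])$, and verify that the names freshly chosen by the naturality oracle ($x^-, x^+, \fun x$) can be taken to coincide on the two sides of the equation. Both are routine consequences of the definitions of $\Sigma$ in Section~\ref{sec:suspopp} and of naturality in Section~\ref{sec:nat-constr}, respectively, but the statement truly is an equality (not just isomorphism) only once such a compatible choice of fresh names is fixed throughout the induction.
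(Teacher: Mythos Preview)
Your proposal is correct and follows essentially the same approach as the paper: structural induction on $\Gamma$ with a case split on $x\in X$, using the depth-$0$ hypothesis to simplify $A\uparrow^x X'$ to $\arr[A]{x^-}{x^+}$. The only cosmetic difference is that for the second equality the paper verifies $\Sigma\inj^\pm_{\Gamma,X} = \inj^\pm_{\Sigma\Gamma,X}$ by a direct check on all variables (those of $\Gamma$, split on membership in $X$, plus $N$ and $S$) rather than carrying the substitution equality through the same induction; both arguments are routine and interchangeable.
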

\begin{proof}
  We proceed by structural induction on $\Gamma$. For the empty context
  $\emptycontext$, we have:
  \[
    \Sigma(\emptycontext \uparrow \emptyset) = (\Sigma \emptycontext)\uparrow \emptyset
    = (N : \obj, S : \obj)
  \]
  For the context $(\Gamma,x: A)$, denote \(X' = X\setminus\{x\}\). Then, if
  $x \in X$ we have:
  \begin{align*}
    \Sigma ((\Gamma,x:A)\uparrow X)
          & =\Sigma (\Gamma \uparrow X', x^\pm : A, \fun{x} : x^- \to_{A} x^+) \\
          & = (\Sigma(\Gamma\uparrow X'), x^\pm : \Sigma{A}, \fun{x} : x^-
            \to_{\Sigma A} x^+) \\
          & = ((\Sigma\Gamma)\uparrow X', x^\pm : \Sigma{A}, \fun{x} : x^-
            \to_{\Sigma A} x^+) \\
          & = \Sigma(\Gamma, x : A) \uparrow \Sigma X
  \end{align*}
  On the other hand, if $x \notin X$, we have:
  \begin{align*}
    \Sigma((\Gamma, x : A) \uparrow X)
    &= \Sigma(\Gamma \uparrow X', x : A) \\
    &= (\Sigma(\Gamma \uparrow X'), x : \Sigma A)\\
    &= ((\Sigma \Gamma) \uparrow X', x : \Sigma A)\\
    &= \Sigma(\Gamma, x : A) \uparrow \Sigma X
  \end{align*}
  For the second statement, consider a variable \(x\) of \(\Gamma\).
  If \(x\notin X\), then we have:
  \[
    x[\inc^{\pm}_{\Sigma\Gamma,X}] = x = \Sigma(x[\inc^{\pm}_{\Gamma,X}])
  \]
  If \(x\in X\), then:
   \[
    x[\inc^{\pm}_{\Sigma\Gamma,X}] = x^{\pm} = x[\Sigma\inc^{\pm}_{\Gamma,X}]
  \]
  Finally, since \(N,S \notin X\), we have:
  \begin{align*}
    N[\inc^{\pm}_{\Sigma\Gamma,X}] &= N = N[\Sigma\inc^{\pm}_{\Gamma,X}]\\
    S[\inc^{\pm}_{\Sigma\Gamma,X}] &= S = S[\Sigma\inc^{\pm}_{\Gamma,X}]
  \end{align*}
  The two substitutions thus coincide on all variables and therefore are equal.
\end{proof}

\begin{lemma}\label{lemma:funcsusp}
  For every context \(\Gamma\) and every \(X\in\U(\Gamma)\) such that \(\depth_{X}(\Gamma)=0\),
  the following hold:
  \begin{itemize}
  \item For any term \(\Gamma\vdash t:A\) such that \(\depth_{X}(t) = 0\), we
    have:
    \[
      \Sigma(t \uparrow X) = (\Sigma t) \uparrow X
    \]
  \item For any substitution $\Gamma \vdash \sigma : \Delta$ such that
    \(\depth_{X}(\sigma) = 0\), we have:
    \[
      \Sigma(\sigma \uparrow X) = (\Sigma \sigma) \uparrow X
    \]
  \item For any term \(\Gamma\vdash t:A\) such that \(\depth_{X}(t) = 1\), we
    have:
    \[
      \Sigma(A \uparrow^{t} X) = (\Sigma A) \uparrow^{\Sigma t} X
    \]
  \end{itemize}
\end{lemma}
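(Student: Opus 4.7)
My plan is to prove the three statements by mutual structural induction on the underlying raw syntax. Two preliminary facts will be used freely throughout: (a) suspension commutes strictly with substitution application on raw syntax, i.e.\ $\Sigma(t[\sigma]) = (\Sigma t)[\Sigma\sigma]$, $\Sigma(A[\sigma]) = (\Sigma A)[\Sigma\sigma]$ and $\Sigma(\tau\circ\sigma) = \Sigma\tau \circ \Sigma\sigma$, each checked by a routine induction on the grammar; and (b) suspension preserves depth, because it uniformly shifts every dimension by one, so $\depth_X \Sigma t = \depth_X t$, and similarly for types and substitutions. Fact (b) ensures that every recursive call continues to satisfy the depth bounds required for the $\uparrow$ operation to be defined. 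The backbone of the whole argument is Lemma \ref{lemma:suspincludefunc}, which already establishes the analogous commutation at the level of contexts and of the injection substitutions $\inj^\pm$.

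For the second bullet (substitutions), the empty case is definitional. For an extension $\sigma = \sub{\sigma', x \mapsto t}$, I split on whether $\Var(t) \cap X$ is empty: in both subcases, the two sides coincide entry-by-entry after invoking the IH on $\sigma'$ and on $t$, together with Lemma \ref{lemma:suspincludefunc} to identify $\Sigma \inj^\pm_{\Gamma, X}$ with $\inj^\pm_{\Sigma\Gamma, X}$. For the first bullet (terms), the variable case is immediate from $\Sigma \fun x = \fun x$. For a coherence $t = \coh_{\Delta, B}[\gamma]$, Lemma \ref{lem:pullback-stability} rewrites $t \uparrow X$ as $(\coh_{\Delta, B} \uparrow \gamma^{-1}X)[\gamma \uparrow X]$; since $\gamma$ and $\Sigma \gamma$ act identically on the variables of $\Delta$ we have $(\Sigma\gamma)^{-1}X = \gamma^{-1}X$, so combining the IH on $\gamma$, the subcase analysis for the bare coherence $\coh_{\Delta, B}$ (handled below), and fact (a) closes the step.

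For the third bullet (types), I split on the form of $A$. When $A = \obj$, the two fresh variables $N, S$ adjoined by suspension lie outside $X$, hence $\inj^\pm$ leaves them fixed, and the defining formula for $(\arr[\obj]{N}{S}) \uparrow^{\Sigma t} X$ collapses to $\arr[\Sigma \obj]{\Sigma t[\inj^-_{\Sigma\Gamma,X}]}{\Sigma t[\inj^+_{\Sigma\Gamma,X}]}$, matching $\Sigma(\obj \uparrow^t X)$ via fact (a) and Lemma \ref{lemma:suspincludefunc}. When $A = \arr[B]{u}{v}$ I split further on whether $\Var(u) \cap X$ and $\Var(v) \cap X$ are empty, invoking the first bullet's IH on $u$ or $v$ to push $\Sigma$ past the inner $\uparrow X$ in the non-trivial subcases. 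The coherence-constructor step at $\depth_X(\Gamma) = 0$ then uses the formula $\coh_{\Gamma, A} \uparrow X = \coh_{\Gamma \uparrow X, A \uparrow^{\coh_{\Gamma, A}[\id]} X}[\id_{\Gamma \uparrow X}]$; applying $\Sigma$ commutes it past the outer $\coh$ and the identity substitution, after which Lemma \ref{lemma:suspincludefunc} rewrites the context and the IH on the third bullet rewrites the type.

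The main obstacle I anticipate is that the coherence-constructor step internally requires a \emph{depth-$0$} instance of the third bullet, which is not stated in the lemma. I would extract it as an auxiliary claim carried along in the mutual induction: when $\depth_X(t) = 0$, no variable of $X$ can appear in the type $A$, so the type $A \uparrow^t X$ degenerates to $\arr[A]{t[\inj^-]}{t[\inj^+]}$ and the required commutation with suspension is immediate from fact (a) and Lemma \ref{lemma:suspincludefunc}. The remainder is essentially bookkeeping: verifying that the depth conditions transfer correctly across $\Sigma$ and across preimages $\gamma^{-1}X$ so that every inductive invocation is legal, and that the recursive unfolding of $\uparrow$ on each syntactic constructor can be interleaved with the corresponding unfolding of $\Sigma$ without disagreement.
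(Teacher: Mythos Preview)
Your proposal is correct and follows essentially the same approach as the paper: mutual structural induction for the first two bullets, a case split on the shape of $A$ for the third, with Lemma~\ref{lemma:suspincludefunc} and the observation $N,S\notin X$ doing the real work. Two minor remarks: the rewriting you attribute to Lemma~\ref{lem:pullback-stability} in the coherence case is in fact just the definition of $(\coh_{\Delta,B}[\gamma])\uparrow X$, so no lemma is needed there; and your $A=\obj$ subcase in the third bullet is vacuous since $\depth_X(t)=1$ forces $\dim t\ge 1$, so the paper simply writes $A=\arr{u}{v}$ from the outset.
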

\begin{proof}
  We prove the first two statements together by mutual induction. If $t = x$ is
  a variable in \(X\), then:
  \[
    \Sigma(x \uparrow X) = \fun{x} = (\Sigma x) \uparrow \Sigma X
  \]
  If \(t = \coh_{\Delta,B}[\sigma]\) and $\sigma^{-1} X = \emptycontext$,
  then:
 \[
   \Sigma(t \uparrow X) = \Sigma t = (\Sigma t) \uparrow \Sigma X
 \]
 If $\sigma^{-1}X \neq \emptycontext$, since \(N,S\notin X\), we have that
 \(\sigma^{-1} X = (\Sigma\sigma)^{-1} X\). Denoting \(Y = \sigma^{-1} X\) and
 \(u = \coh_{\Delta,B}[\id]\), we have by Lemma~\ref{lemma:suspincludefunc}:
 \begin{align*}
   \Sigma(t\uparrow X)
   &= \coh_{\Sigma (\Delta \uparrow Y), (\Sigma
     u)[\Sigma\inj_{\Delta,Y}^-] \to (\Sigma u)[\Sigma\inj_{\Delta,Y}^+]}
     [\Sigma(\sigma \uparrow X)] \\
   &= (\Sigma t) \uparrow \Sigma X
 \end{align*}
 For the second statement, for the empty substitution $\sub{}$, we have, since
 \({N,S\notin X}\):
 \[
   \Sigma(\sub{}\uparrow X) = \sub{N\mapsto N, S\mapsto S} = (\Sigma \sub{})
   \uparrow X.
 \]
 For the substitution $\sub{\sigma, x \mapsto t}$, if $x \notin X$, we have:
 \begin{align*}
   \Sigma(\sub{\sigma, x\mapsto t} \uparrow X)
   &=\sub{\Sigma(\sigma\uparrow X), x \mapsto \Sigma t}\\
   &= \sub{(\Sigma \sigma)\uparrow  X, x \mapsto \Sigma t}\\
   &= (\Sigma \sub{\sigma,x\mapsto t}) \uparrow  X
 \end{align*}
 On the other hand, if $x \in X$, then bythe inductive hypothesis, by
 Lemma~\ref{lemma:suspincludefunc}, and by the following equation~\cite[Lemma~71]{benjamin_type_2020}:
 \[\Sigma(t[\inj_{\Gamma,X}^{\pm}]) = (\Sigma
 t)[\Sigma\inj_{\Gamma,X}^{\pm}]\]
 We may therefore compute that:
 \begin{align*}
   \Sigma&(\sub{\sigma, x\mapsto t} \uparrow X)\\
   &= \sub{\Sigma(\sigma \uparrow X), x^\pm \mapsto
     \Sigma(t[\inj_{\Gamma,X}^{\pm}]), \fun{x}\mapsto \Sigma(t \uparrow X)}\\
   & = \sub{(\Sigma \sigma)\uparrow X, x^\pm \mapsto \Sigma(t[\inj_{\Gamma,X}^\pm]),
     \fun x \mapsto (\Sigma t)\uparrow X} \\
   &=\sub{(\Sigma \sigma)\uparrow \Sigma X, x^\pm \mapsto (\Sigma t)
     [\inj_{\Sigma \Gamma,X}^\pm], \fun{x} \mapsto (\Sigma t)\uparrow
     X}\\
   &= (\Sigma \sub{\sigma,x\mapsto t}) \uparrow X
 \end{align*}

 Finally, for the last statement, write \(A = \arr[]{u}{v}\) and \(n = \dim A\).
 If
 \(\Var(v)\cap X = \emptyset\) the the source of \(\Sigma (A\uparrow^{t} X)\) is
 given by \(\Sigma (t[\inl_{\Gamma,X}])\). On the other hand, the source of
 \((\Sigma A)\uparrow^{\Sigma t} X\) is \((\Sigma t)[\inl_{\Sigma\Gamma,X}]\).
 Again by Lemma~\ref{lemma:suspincludefunc} and the same equality as above, we
 may deduce that the two sources agree. If
 \(\Var{(v)}\cap X = \emptyset\), then the source of \(\Sigma (A\uparrow^{t} X)\)
 is \(\Sigma ((t[\inl_{\Gamma,X}]) \s_{n} (v\uparrow X))\), while the source of
 \((\Sigma A)\uparrow^{\Sigma t} X\) is
 \((\Sigma t)[\inl_{\Sigma\Gamma,X}] \s_{n+1} ((\Sigma v)\uparrow X)\). By the
 first part of the lemma and the same reasoning as in the previous case, we see
 that the two sources agree. A similar argument shows that the target are also
 equal, proving that the two types coincide.
\end{proof}

\begin{lemma}
  \label{lemma:lincompnatsusp}
  Let $n \in \N$, $k > 1$ and $X \in \U(\Psi^n_k)$ of depth 1 in $\Psi^n_k$.
  Then the following judgement holds:
  \[
    \Psi^n_k \uparrow X \vdash \comp^{n}_{k} \uparrow X : (x_0 \to x_k)
    \uparrow^{\comp^{n}_{k}} X
  \]
\end{lemma}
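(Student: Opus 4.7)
The plan is to proceed by induction on $n$. The base case $n = 0$ is precisely Lemma~\ref{lemma:lincompnat}, so only the inductive step requires work. Assume the statement holds in dimension $n$, and fix an up-closed $X \in \U(\Psi^{n+1}_k)$ of depth at most $1$. Since $\Psi^{n+1}_k = \Sigma \Psi^n_k$ differs from $\Psi^n_k$ only by the two freshly adjoined suspension poles $N, S$, and these poles have depth $n+1 \ge 2$ in $\Psi^{n+1}_k$ (the case $n = 0$ being the already-established base step), the depth hypothesis together with up-closure forces $N, S \notin X$. Hence $X$ is a legitimate up-closed subset of $\Psi^n_k$, still of depth at most $1$, and the inductive hypothesis delivers the judgement
\[
    \Psi^n_k \uparrow X \vdash \comp^n_k \uparrow X : (x_0 \to x_k) \uparrow^{\comp^n_k} X.
\]

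Applying suspension to this judgement, invoking the admissibility rules of Section~\ref{sec:suspopp}, yields
\[
    \Sigma(\Psi^n_k \uparrow X) \vdash \Sigma(\comp^n_k \uparrow X) : \Sigma\bigl((x_0 \to x_k) \uparrow^{\comp^n_k} X\bigr).
\]
All three syntactic objects must then be identified with their expected counterparts. The context agrees by Lemma~\ref{lemma:suspincludefunc}, which gives $\Sigma(\Psi^n_k \uparrow X) = \Psi^{n+1}_k \uparrow X$. The term agrees by the very definition $\comp^{n+1}_k \uparrow X \coloneqq \Sigma(\comp^n_k \uparrow X)$ adopted in Section~\ref{sec:nat-constr}, using also that $\comp^{n+1}_k = \Sigma \comp^n_k$. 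The type agrees by Lemma~\ref{lemma:funcsusp}(3), which gives $\Sigma\bigl((x_0 \to x_k) \uparrow^{\comp^n_k} X\bigr) = (x_0 \to x_k) \uparrow^{\comp^{n+1}_k} X$, where on the right the variables denote the top-dimensional 1\=/cells of $\Psi^{n+1}_k$.

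The principal subtlety — and the main obstacle of the proof — is that Lemmas~\ref{lemma:suspincludefunc} and~\ref{lemma:funcsusp} are stated under the hypothesis $\depth_X(\Gamma) = 0$, whereas here we invoke them with $\depth_X(\Psi^n_k)$ possibly equal to $1$. Tracing through their proofs, however, shows that the depth-$0$ assumption serves only to guarantee existence of the naturality terms appearing on the right-hand sides; the stated equalities themselves are purely syntactic and hold by structural induction whenever both sides are defined. In our situation the inductive hypothesis already supplies the existence of $\comp^n_k \uparrow X$, so these commutativities carry over verbatim. Verifying this by revisiting the structural inductions of those two lemmas is the last remaining step; once complete, the chain of identifications above closes the inductive step.
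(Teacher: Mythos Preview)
Your approach is exactly the paper's: induction on $n$, with Lemma~\ref{lemma:lincompnat} as base case and the suspension commutativity lemmas for the step. You are also right to flag the depth hypothesis mismatch—the paper's own proof cites Lemma~\ref{lemma:funcsusp} without comment, even though that lemma is stated only for $\depth_X(\Gamma)=0$.

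Your diagnosis of the fix is slightly off, though. It is not true that the depth-$0$ assumption in Lemmas~\ref{lemma:suspincludefunc} and~\ref{lemma:funcsusp} is used only for existence: the written proofs genuinely exploit the simplification $A\uparrow^x X' = \arr[A]{x^-}{x^+}$, which fails at depth $1$. What makes the extension go through is a different observation. In the inductive step of Lemma~\ref{lemma:funcsusp} for a depth-$0$ term $t = \coh_{\Delta,B}[\sigma]$, the preimage $Y = \sigma^{-1}X$ always satisfies $\depth_Y(\Delta) = 0$: every $y\in Y$ has some $w\in\Var(y[\sigma])\cap X$ with $\dim w \ge \dim t \ge \dim\Delta \ge \dim y \ge \dim w$, forcing $\dim y = \dim\Delta$. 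Hence the inner invocation of Lemma~\ref{lemma:suspincludefunc} is always at depth $0$, and the second part of that lemma (equality of $\Sigma\inc^\pm$ with $\inc^\pm_{\Sigma}$) does not use the depth hypothesis at all. With this in hand, the mutual structural induction for parts~1 and~2 of Lemma~\ref{lemma:funcsusp} goes through verbatim with $\depth_X(\Gamma)\le 1$, and part~3 together with the context equality of Lemma~\ref{lemma:suspincludefunc} then follow. That closes the gap you identified and completes your argument.
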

\begin{proof}
  We recall that the term \(\comp^{n+1}_{k} \uparrow X\) is defined as the
  suspension \(\Sigma (\comp^{n}_{k} \uparrow X)\). This result is thus
  immediate by induction, the base case being given by
  Lemma~\ref{lemma:lincompnat}, and the inductive case by
  the equation
  \[
    \comp^{n+1}_{k} = \Sigma \comp^{n}_{k}
  \]
  together with Lemma~\ref{lemma:funcsusp}.
\end{proof}

Now, we proceed to show correctness for the case of reduced compositions, culminating in Lemma \ref{lem:red-comp-correct}.
To this end, in what follows, let \(\Gamma \vdashps\) be a reduced pasting context of dimension
\(n > 0\) and let \(A = \arr[B]{u}{v}\) be a type of dimension \(n-1\). Suppose
also that a set \(X\in \U(\Gamma)\) is given such that
\(\depth_X(\Gamma) = \depth_X(\coh_{\Gamma,A}[\id]) = 1\) and define
\(X^{lm}\), \(X^m\) and \(X^b\) as in Appendix~\ref{app:nat-coh}. We will first
show correctness of the substitution \(\theta_{\Gamma,X}\) of
Definition~\ref{def:theta-subs}.

\begin{lemma}\label{lem:theta-subs-correctness}
  The following judgement is derivable:
  \[\Gamma\uparrow X \vdash \theta_{\Gamma,X} : \Gamma \uparrow X^{lm}\]
\end{lemma}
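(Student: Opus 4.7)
The plan is to verify the substitution directly by checking, for each variable $w \in \Var(\Gamma \uparrow X^{lm})$ with type $T$ in that context, that $w[\theta_{\Gamma,X}]$ has type $T[\theta_{\Gamma,X}]$ in $\Gamma \uparrow X$. I would organise this as a structural induction on $\Gamma$, splitting the inductive step on a context $(\Gamma', z:C)$ into the three subcases $z \in X^{lm}$, $z \in X^b$, and $z \in \Var(\Gamma') \setminus X$.

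Two observations do almost all of the typing work. First, since $X^{lm}$ consists of variables of depth $0$ in $\Gamma$, no variable of $\Gamma$ has any $X^{lm}$-variable in its type. By Lemma~\ref{lem:empty-intersection} this gives $B[\inc^\pm_{\Gamma, X^{lm}}] = B$ for every type $B$ appearing in $\Gamma$; in particular the types of $z^\pm$ in $\Gamma \uparrow X^{lm}$ simplify to $C$ itself, and similarly any $y \in X^b$ keeps its original type $C_y$ in $\Gamma \uparrow X^{lm}$. Second, for any $w \in X^b$ with $w \in \partial^\varepsilon \Gamma$, unwinding the definition of $(\text{type of } w) \uparrow^w (X \setminus \{w\})$ shows that $\partial^\varepsilon(\fun w)$ has type $C_w[\theta_{\Gamma,X}]$ in $\Gamma \uparrow X$, since that boundary is built from $w^\varepsilon$ composed with naturalisation witnesses of exactly the sub-terms which $\theta$ modifies. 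Combined, these observations handle the cases $z \notin X$ (by up-closure, $C[\theta] = C$ and $z \mapsto z$ is trivially well-typed) and $z \in X^b$ (using the second observation at $w = z$).

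The interesting case is $z \in X^{lm}$, where three assignments must be checked: $z^\pm \mapsto \partial^\pm(\fun z)$ and $\fun z \mapsto \fun z$. The type of $z^\pm$ in $\Gamma \uparrow X^{lm}$ is $C$ by the first observation, so we must show $\partial^\pm(\fun z) : C[\theta_{\Gamma,X}]$ in $\Gamma \uparrow X$. Now $\fun z$ has type $C \uparrow^z (X \setminus \{z\})$ by construction, and writing $C = \arr[C_0]{u}{v}$, the formula for $\uparrow^z$ gives an arrow type whose base is $\arr[C_0]{u[\inc^-_{\Gamma',X\setminus\{z\}}]}{v[\inc^+_{\Gamma',X\setminus\{z\}}]}$. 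The source and target of $\fun z$'s type are then $z^-$ or $z^-\!\s_n(v \uparrow (X \setminus \{z\}))$, and symmetrically on the target side. The key subsidiary claim is therefore the identity
\[
    u[\inc^-_{\Gamma', X \setminus \{z\}}] = u[\theta_{\Gamma,X}], \qquad v[\inc^+_{\Gamma', X \setminus \{z\}}] = v[\theta_{\Gamma,X}],
\]
and the analogous fact for the source/target corrections. This holds because every variable $w \in \Var(u) \cap X$ is forced into $X^b$ (since $X^{lm}$-variables cannot appear in $C$), lies in $\partial^-\Gamma$, and the second observation above gives $w[\theta] = \partial^-\fun w = w^-\!\s\dots$, matching what $\inc^-$ computes at $w$. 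The typing of $\fun z \mapsto \fun z$ then follows from these boundary matches together with Lemma~\ref{lem:pullback-stability}, which allows us to commute $\uparrow$ with the substitution $\theta$ on the recursive sub-terms.

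The main obstacle is the subsidiary claim in the last paragraph: showing that for every term $t$ whose variables lie in $(\Var(\Gamma') \setminus X) \cup X^b$ with the $X^b$-part sitting in $\partial^\varepsilon \Gamma$, one has $t[\inc^\varepsilon_{\Gamma', X \setminus \{z\}}] = t[\theta_{\Gamma,X}]$. This is an extra induction on $t$, carried alongside the main induction; the coherence case uses Lemma~\ref{lem:pullback-stability} and the variable case reduces to the second observation. Once this identity is in hand, every required typing judgement falls out immediately.
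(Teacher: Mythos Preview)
Your plan is essentially the paper's proof: both verify type-preservation variable-by-variable, splitting on whether the variable of $\Gamma\uparrow X^{lm}$ comes from $\Var(\Gamma)\setminus X$, from $X^b$, or from $X^{lm}$ (as $x^\pm$ or $\fun x$). A few small corrections are in order.

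First, since $\Gamma$ is a \emph{pasting} context, the boundary terms $u,v$ in any variable's type are themselves variables, so your subsidiary claim needs no induction on terms and certainly no coherence case---it is a single-variable check, which is how the paper handles it. Second, the typing of $\fun z \mapsto \fun z$ does not use Lemma~\ref{lem:pullback-stability}; it follows immediately once $z^\pm \mapsto \partial^\pm(\fun z)$ is known to preserve types, since the type of $\fun z$ in $\Gamma\uparrow X^{lm}$ is simply $z^- \to_{C_z} z^+$ and applying $\theta$ to this yields exactly the type of $\fun z$ in $\Gamma\uparrow X$. Third, your ``second observation'' misdescribes the $X^b$ case: for $w\in X^b$ the depth bound forces $\Var(C_w)\cap X = \emptyset$, so $\partial^\varepsilon(\fun w)$ is just $w^\varepsilon$, with no composition with naturalisation witnesses---the composite form you describe appears only in the $X^{lm}$ case. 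Finally, you should make explicit use of the reduced-pasting-context hypothesis: the paper invokes it at the outset to ensure every variable in $X^b$ lies in $\partial^-\Gamma$ or $\partial^+\Gamma$, which is what makes the $\varepsilon$ in the definition of $\theta$ well-determined.
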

\begin{proof}
  Since \(\Gamma\) is reduced, every variable of dimension at most \(n-1\) belongs
  either in the source or target of \(\Gamma\), which justifies the last case of
  Definition~\ref{def:theta-subs}. Since \(X\) is up-closed, we can easily
  deduce that \(\theta_{\Gamma,X}\) preserves the type of every variable in
  \(\Var(\Gamma)\setminus X\). Preservation of types for variables of the form
  \(\fun x\) where \(x\in X^{lm}\) holds definitionally by the third case.
  For \(y\in X^b\) of type \(\obj\), preservation of typing is automatic. For
  \(y \in X^b\) of type \(\arr u v\), we have that the boundary of \(y\) in
  \(\Gamma\uparrow X^{lm}\) is again \(\arr{u}{v}\), while
  \[ \partial^\pm_{\Gamma\uparrow X}(y) = y^\pm \]
  which also has type \(\arr u v\). By the second case,
  we see that \(\theta_{\Gamma,X}\) preserves the type of \(y\).
  Finally, we need to check preservation of types for variables of the form
  \(x^\pm\) where \(x\in X^{lm}\). When \(x\) is of type \(\obj\), preservation
  of types is again automatic, so suppose again that \(x\) has type \(\arr u v\)
  in \(\Gamma\). Then the type of \(x^\pm\) in \(\Gamma\uparrow X^{lm}\) is
  again \(\arr u v\). On the other hand:
  \[
    \partial^-\partial^\pm(\fun x) = \begin{cases}
      u^- &\text{if } u \in X^{r} \\
      u &\text{if } u \not\in X^b
    \end{cases}
  \]
  Therefore the source of \(x^\pm\) is preserved by \(\theta_{\Gamma,X}\) by the
  last case of the definition  when \(u \in X^b\) and by the second case
  otherwise. The case for the target of \(x^\pm\) is symmetric.
\end{proof}

Combining Lemmas~\ref{lem:corr-nat-coh-lmax} and~\ref{lem:theta-subs-correctness}, we see that the middle term in the definition
of \(\coh_{\Gamma,A}\uparrow X\) is well-defined. We will now show correctness
for the first interchanger \(j^-_{\Gamma,X,A}\) appearing in the definition.
Correctness for the second interchanger \(j^+_{\Gamma,X,A}\) is completely
symmetric. For that, we will assume first that \(\Var(v) \cap X = \emptyset\).
The fullness condition on \(A\) implies that
\(\Var(u)\cup\Var(B) = \Var(\partial^+\Gamma)\). By up-closure of \(X\),
this implies that the case split can be equivalently be done on
\(\Var(\partial^+\Gamma)\cap X = \emptyset\). In this case, the desired source
of \(\coh_{\Gamma,A}\uparrow X\) is \(\coh_{\Gamma,A}[\inl_{\Gamma,X}]\), while
the actual source of the middle term is
\(\coh_{\Gamma,A}[\inl_{\Gamma,X^{lm}} \circ\ \theta_{\Gamma,X}]\). The
following lemma ensures that the interchanger \(j^-_{\Gamma,X,A}\) defined
in Appendix~\ref{app:nat-coh} is a well-defined term from the claimed source of
the naturality to the source of the middle term.

\begin{lemma}\label{lem:interchanger-trivial}
  When \(\Var(\partial^\pm\Gamma)\cap X = \emptyset\) then:
  \[
    \inc^\mp_{\Gamma,X} = \inc^\mp_{\Gamma,X^{lm}} \circ\ \theta_{\Gamma,X}
  \]
\end{lemma}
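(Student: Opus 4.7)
The plan is to verify the equality of the two substitutions pointwise on each variable $y \in \Var(\Gamma)$, focusing on the case $\Var(\partial^+\Gamma) \cap X = \emptyset$ (yielding $\inc^-_{\Gamma,X} = \inc^-_{\Gamma,X^{lm}} \circ \theta_{\Gamma,X}$); the other sign is entirely symmetric. The case $y \notin X$ is immediate, as both substitutions send $y$ to itself by definition.

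The crux is a preliminary claim: under the hypothesis, every $y \in X$ with $\dim y \ge 1$ and type $u_y \to v_y$ satisfies $v_y \notin X$. First, $X^l = \emptyset$: by Lemma \ref{lem:lmaxnonmax-bdry}, any $\Gamma$-depth-$0$ variable of dimension less than $n = \dim \Gamma$ lies in $\Var(\partial^+\Gamma)$, contradicting the hypothesis, so $X^{lm} = X^m$ consists of $n$-dimensional cells. For $y \in X^{lm}$, the target $v_y$ is dim-$(n-1)$ and lies in $\Var(\partial^+\Gamma)$ as the target of a top-dimensional cell, so $v_y \notin X$. For $y \in X^b$, suppose for contradiction $v_y \in X$. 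Since $y$ has $\Gamma$-depth $\ge 1$, up-closure produces some $z \in X^{lm}$ (so $\dim z = n$) with $y \in \Var(A_z)$; by the recursive definition of types, $\Var(A_y) \subseteq \Var(A_z)$, whence $v_y \in \Var(A_z)$. This forces $\depth_X z \ge \dim z - \dim v_y = n - \dim y + 1 \ge 2$, contradicting $\depth_X \Gamma \le 1$.

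The pointwise verification now concludes. For $y \in X^{lm}$, the left-hand side gives $y^-$, while $y[\inc^-_{\Gamma,X^{lm}}][\theta_{\Gamma,X}] = y^-[\theta_{\Gamma,X}] = \partial^-(\fun y)$ by Definition \ref{def:theta-subs}; the claim $v_y \notin X$ makes the defining formula for $A_y \uparrow^y (X \setminus \{y\})$ give source $y^-$, so $\partial^-(\fun y) = y^-$. For $y \in X^b$, the left-hand side gives $y^-$, and $y[\theta_{\Gamma,X}] = \partial^\varepsilon(\fun y)$; the hypothesis forbids $y \in \partial^+\Gamma$, and since $\Gamma$ is reduced every variable of dimension $< n$ lies in $\Var(\partial^-\Gamma) \cup \Var(\partial^+\Gamma)$ (as used in the proof of Lemma \ref{lem:theta-subs-correctness}), so $y \in \partial^-\Gamma$, giving $\varepsilon = -$; then $\partial^-(\fun y) = y^-$ by the claim (or directly from the $\obj$ clause when $\dim y = 0$).

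The main subtle point is the sign determination in $\theta_{\Gamma,X}$ on $X^b$ variables, which the hypothesis resolves uniformly to $\varepsilon = -$; the rest of the proof is a careful unwinding of definitions. The symmetric case with $\Var(\partial^-\Gamma) \cap X = \emptyset$ proceeds by an identical argument with $\pm$ swapped throughout.
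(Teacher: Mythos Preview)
Your proof is correct and follows essentially the same pointwise strategy as the paper's: check the equation on each variable class (outside $X$, in $X^{lm}$, in $X^b$), reducing in each case to $\partial^-(\fun y) = y^-$. The paper dispatches this last step tersely (``by the assumption on depth'' for $X^b$, ``by $X$ not intersecting the boundary'' for $X^{lm}$), whereas you spell out the underlying reason as a preliminary claim that $v_y \notin X$ for all $y \in X$, which is a cleaner and more uniform way to package the same observations.
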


\begin{proof}
  It suffices to show that the two substitutions agree on every variable of
  \(\Gamma\). Variables \(y\in \Var(\Gamma)\setminus X\) are
  preserved by both substitutions. For variables \(y\in X^b\), we compute that
  \[
    y[\inc^\mp_{\Gamma,X}\theta_{\Gamma,X}] = y[\theta_{\Gamma,X}] = \partial^\mp(\fun y) = y^\mp = y[\inc^\mp_{\Gamma,X}]
  \]
  where the second to last equality follows by the assumption on depth. Finally,
  for variables \(y\in X^{lm}\), we compute similarly that
  \[
    y[\inc^\mp_{\Gamma,X}\theta_{\Gamma,X}] = y^-[\theta_{\Gamma,X}] = \partial^\mp(\fun y) = y^\mp = y[\inc^\mp_{\Gamma,X}]
  \]
  where the second to last equality follows by \(X\) not intersecting the
  variables of the boundary of \(\Gamma\).
\end{proof}

To complete the proof of correctness for the first interchanger, it remains to
check the case when \(\Var(\partial^+\Gamma)\cap X \not= \emptyset\). In this
case, the claimed source of \(\coh_{\Gamma,A}[\inl_{\Gamma,X}]\) is given by
\(\coh_{\Gamma,A}\uparrow X \s_{n-1} (v\uparrow X)\), while the source of the
middle term is given by \(\coh_{\Gamma,A}[\inl_{\Gamma,X^{lm}}\circ\
\theta_{\Gamma,X}] \s_{n-1} (v\uparrow X^{lm})[\theta_{\Gamma,X}]\). The
following lemmas will be used to show that \(j^-_{\Gamma,X,A}\) is again
well-defined in this case, and goes from the claimed source of the naturality to
the source of the middle term. This is established in Lemma~\ref{lem:interchanger-correct}.
Building towards this lemma, we will show that the terms \(p\), \(q\) appearing in the
definition of the interchanger are valid, parallel and define a full type. We will then
show that \(j^-_{\Gamma,X,A}\) as the correct type.

\begin{lemma}\label{lem:p-correct}
  The following judgement is derivable:
  \[\Phi \vdash p : \arr{u[\proj_1]}{v[\inr_{\partial^+\Gamma,Y}\proj_2]}\]
\end{lemma}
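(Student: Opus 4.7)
The plan is to verify that $p = \coh_{\Gamma,A}[\proj_1] \s_{n-1} (v\uparrow Y)[\proj_2]$ is a well-formed composition of two $n$-cells whose $(n-1)$-dimensional faces match, and that the resulting type coincides with the claimed $\arr{u[\proj_1]}{v[\inr_{\partial^+\Gamma,Y}\proj_2]}$. I would handle the two operands separately and then check their compatibility.

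The first operand is easy: $\Phi \vdash \proj_1 : \Gamma$ is part of the pullback data, and $A = \arr[B]{u}{v}$ is full over the pasting context $\Gamma$, so the rule \ruleref{rule:coh} applied to $\coh_{\Gamma,A}[\id]$ together with substitution by $\proj_1$ gives $\Phi \vdash \coh_{\Gamma,A}[\proj_1] : \arr{u[\proj_1]}{v[\proj_1]}$. The crucial step will be to analyse the type of $v\uparrow Y$. A priori, if $B = \arr[C]{s}{t'}$, the naturality-of-types construction could produce endpoints of the form $v[\inl] \s_{n-2} (t' \uparrow Y)$ and $(s \uparrow Y) \s_{n-2} v[\inr]$. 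The key observation is that the hypothesis $\depth_X (\coh_{\Gamma,A}[\id]) \leq 1$ forces every variable of $X$ to have dimension at least $n-1$; combined with $Y \subseteq \Var(\partial^+\Gamma)$, which contains only variables of dimension at most $n-1$, every variable of $Y$ has dimension exactly $n-1$. Since $s$ and $t'$ are $(n-2)$-cells, all their variables have dimension at most $n-2$, hence $Y \cap \Var(s) = Y \cap \Var(t') = \emptyset$. By the construction of naturality of types in judgement~(3), both endpoints of $B\uparrow^v Y$ therefore reduce to the simple forms $v[\inl_{\partial^+\Gamma,Y}]$ and $v[\inr_{\partial^+\Gamma,Y}]$, and by Theorem~\ref{thm:correctness} applied to $v$ (whose $Y$-depth is $0$) we obtain $\partial^+\Gamma \uparrow Y \vdash v \uparrow Y : \arr{v[\inl]}{v[\inr]}$.

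To conclude I would apply $\proj_2$ to this, giving a term of type $\arr{v[\inl \circ \proj_2]}{v[\inr \circ \proj_2]}$, and invoke the defining compatibility of the pullback square, namely $\delta^+_\Gamma \circ \proj_1 = \inl_{\partial^+\Gamma,Y}\circ \proj_2$, to identify $v[\inl \circ \proj_2] = v[\delta^+_\Gamma \circ \proj_1] = v[\proj_1]$. This matches the target of the first operand, so the $\s_{n-1}$ composition is well-defined and its type is the claimed $\arr{u[\proj_1]}{v[\inr_{\partial^+\Gamma,Y}\proj_2]}$. The main subtlety lies in the dimensional argument that trivialises the type of $v \uparrow Y$; the remainder is routine bookkeeping using the pullback relation and the substitution lemma.
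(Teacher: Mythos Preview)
Your proof is correct and follows essentially the same approach as the paper: verify the types of the two operands, then use the pullback relation $\delta^+_\Gamma \circ \proj_1 = \inl_{\partial^+\Gamma,Y}\circ \proj_2$ together with the fact that $v$ lives over $\partial^+\Gamma$ to match the middle faces. The only difference is that you spell out the dimensional argument explicitly (every variable of $Y$ has dimension exactly $n-1$, hence $\depth_Y(v)=0$ and the type of $v\uparrow Y$ collapses to $v[\inl]\to v[\inr]$), whereas the paper simply invokes ``correctness of the naturality with respect to depth-$0$ variables'' and leaves this verification implicit.
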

\begin{proof}
  By correctness of the naturality with respect to depth\=/\(0\) variables, we
  have that:
  \begin{align*}
    \Phi &\vdash \coh_{\Gamma,A}[\proj_1] : \arr{u[\proj_1]}{v[\proj_1]} \\
    \Phi &\vdash (v \uparrow Y)[\proj_2] : \arr
    {v[\inl_{\partial^+\Gamma,Y}\proj_2]}{v[\inr_{\partial^+\Gamma,Y}\proj_2]}
  \end{align*}
  Moreover, since \(v\) is well-typed over \(\partial^+\Gamma\) and \(\delta_\Gamma^+\)
  is a weakening substitution,
  \[v[\proj_1] = v[\delta_\Gamma^+\proj_1] = v[\inl_{\partial^+\Gamma,Y}\proj_2]\]
  where the last equality follows by the definition of the projections as given in Figure \ref{fig:intch-ctx}. It follows
  that the two terms are composable, so \(p\) satisfies the judgement above.
\end{proof}

\begin{lemma}\label{lem:psi-r-is-gamma}
  We have that \(\Psi^r = \Gamma\) up to \(\alpha\)\-equivalence.
\end{lemma}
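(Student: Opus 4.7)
The plan is to unpack the construction $\Psi = \Gamma \otimes_{n-1} (\partial^+\Gamma \uparrow Y^b)$, identify the non-reduced structure it contains, and check that the reduction produces $\Gamma$ up to $\alpha$-renaming.

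First, I would describe $\partial^+\Gamma \uparrow Y^b$ explicitly. Since $Y^b \subseteq \Var(\partial^+\Gamma)$ consists of variables of depth $1$ in $\Gamma$, they have dimension $n-1$, which makes them depth-$0$ variables in the pasting context $\partial^+\Gamma$ of dimension $n-1$. Hence Lemma~\ref{lem:psctx-func} applies, producing a pasting context in which each $y \in Y^b$ is replaced by $y^-, y^+$ together with a new $n$-cell $\fun y : y^- \to y^+$. The gluing $\otimes_{n-1}$ then identifies $\partial^+\Gamma$ (the source of the functorialisation via $\inl$) with the target of $\Gamma$, so $\Psi$ consists of all variables of $\Gamma$ plus $y^+$ and $\fun y$ for each $y \in Y^b$, with the original $y$ playing the role of $y^-$.

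Next I would identify the non-reduced structure. Using that $\Gamma$ is reduced and each $y \in Y^b$ is an $(n-1)$-cell lying in $\partial^+\Gamma$, a routine argument on pasting context geometry shows that $y$ is the target of a unique top-dimensional $n$-cell $a_y$ of $\Gamma$: if $y$ were the source of an $n$-cell instead, it would fail to be target-most at its node and thus could not lie in $\partial^+\Gamma$; and depth-$1$ ensures some $n$-cell is attached at $y$. Hence in $\Psi$, each $\fun y$ is composable with its unique $a_y$ along the $(n-1)$-cell $y$, forming a reducible pair, and these are the only such pairs since $\Gamma$ is already reduced and distinct $\fun y$ are disjointly situated.

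Finally, I would invoke the reduction procedure of Benjamin~\cite{benjamin_invertible_2024}: it merges each pair $(a_y, \fun y)$ into a single $n$-cell $a_y'$ with source $\partial^- a_y$ and target $y^+$, leaving all unmerged cells unchanged. The resulting $\Psi^r$ is then placed in $\alpha$-bijection with $\Gamma$ via the renaming $y^+ \mapsto y$ and $a_y' \mapsto a_y$, acting as identity on all other variables; preservation of types follows from the computed source and target of $a_y'$ together with the fact that $\partial^- a_y$ lies in $\partial^-\Gamma$ and is therefore untouched by the construction.

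The main obstacle is the identification step: rigorously establishing that the reducible pairs in $\Psi$ are exactly the $(a_y, \fun y)$ and no others, which requires a careful reading of the definitions of $\otimes_{n-1}$ and of the reduction procedure from the cited prior work, together with the tree-theoretic facts about pasting contexts used to pin down the uniqueness of $a_y$. Once that bookkeeping is in place, the final $\alpha$-equivalence is a purely structural verification.
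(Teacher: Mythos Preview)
Your approach is correct but takes a different route from the paper. You work concretely: identify the reducible pairs $(a_y,\fun y)$ in $\Psi$, verify these are the only ones (using that $\Gamma$ is reduced and that distinct $\fun y$'s are never adjacent), merge them via the reduction procedure, and exhibit an explicit $\alpha$-bijection with $\Gamma$. The paper instead invokes the characterisation $\Delta^r = (\partial^-\Delta)\uparrow Z_\Delta$ (with $Z_\Delta$ the set of sources of top-dimensional cells lying in $\partial^-\Delta$) for both $\Psi$ and $\Gamma$, then reduces the statement to $\partial^-\Psi \cong \partial^-\Gamma$ (via the strict $\omega$-category laws for Batanin tree composition) and $Z_\Psi = Z_\Gamma$ (by observing that the extra top-cells $\fun y$ have source $y\in\partial^+\Gamma$, which cannot meet $\partial^-\Psi$). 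Your route yields an explicit bijection at the cost of the tree-level bookkeeping you flag; the paper's route avoids that bookkeeping by leaning on pre-established machinery, and in particular never needs to name the cells $a_y$.

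One small gap: the inference ``depth $1$ in $\Gamma$ implies dimension $n-1$'' does not follow from the poset-depth condition $\depth_X\Gamma = 1$ alone, since a depth-$1$ variable may sit below a locally maximal cell of dimension strictly less than $n$. What forces $\dim y = n-1$ for $y\in Y^b$ is the other standing hypothesis $\depth_X(\coh_{\Gamma,A}[\id]) = 1$, which bounds the codimension of every $x\in X$ by $1$. With that clarification, the rest of your argument goes through.
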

\begin{proof}
  Let \(Z_\Psi = \{\partial^-x : \dim x = \dim \Psi\}\cap \partial^-\Psi\)
  and define \(Z_\Gamma\) analogously. Then, by the definition of the reduction of a pasting context~\cite[Section~4.4]{benjamin_invertible_2024}, we can deduce that
  \begin{align*}
    \Psi^r &= (\partial^-\Psi) \uparrow Z_\Psi &
    \Gamma^r &= (\partial^-\Gamma) \uparrow Z_\Gamma
  \end{align*}
up to \(\alpha\)\-equivalence. Moreover, since composition of Batanin trees satisfies the rules of strict \(\omega\)\=/categories~\cite{batanin_monoidal_1998}, we get that
\[
  \partial^-\Psi \cong \partial^-\Gamma
\]
with the isomorphism given by restricting \(\overline{\proj}_1\) on the source of both pasting contexts. Since pasting contexts have no non-trivial isomorphisms~\cite[Lemma~1.7]{berger_cellular_2002}, this isomorphism must be an equality up to \(\alpha\)\-equivalence. Under this equality, the sets \(Z_\Psi\) and \(Z_\Gamma\) coincide: indeed, the maximal dimensional variables of \(\Psi\) are the maximal dimensional variables of \(\Gamma\), included via \(\proj_1\), or those of the
form \(\fun y\) for \(y \in Y^b\) included via \(\proj_2\). The source of the latter can never intersect \(\partial^-\Psi\) though. Finally, since \(\Gamma\) was assumed to be reduced, we have that \(\Gamma^r = \Gamma\), completing the proof.
\end{proof}

\begin{lemma}\label{lem:q-correct}
  The following judgement is derivable:
  \[\Phi \vdash q : \arr{u[\rho_{\Psi} \circ \psi]}{v[\inr_{\partial^+\Gamma,Y}\proj_2]}\]
\end{lemma}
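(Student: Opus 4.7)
The plan is to read off the type of $q$ by verifying that its two factors are individually valid and composable in dimension $n-1$, and that the resulting source and target agree with the claimed type.

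For the left factor $\coh_{\Gamma,A}[\rho_\Psi \circ \psi]$, validity reduces to checking that $\rho_\Psi \circ \psi$ is a well-formed substitution from $\Phi$ to $\Gamma$. This is immediate: by Lemma~\ref{lem:psi-r-is-gamma} we have $\Psi \vdash \rho_\Psi : \Gamma$, and $\psi : \Phi \to \Psi$ was constructed via the universal property of the pullback defining $\Psi$. Hence by \ruleref{rule:coh}, the left factor has type $\arr{u[\rho_\Psi \circ \psi]}{v[\rho_\Psi \circ \psi]}$. For the right factor $(v \uparrow Y^l)[\inr_{\partial^+\Gamma \uparrow Y^l, Y^b} \circ \proj_2]$, I would first establish validity of $v \uparrow Y^l$ by invoking Theorem~\ref{thm:correctness}: every variable of $Y^l$ is of depth $0$ in $\Gamma$ and dimension strictly less than $n$, so it is of depth $0$ in $\partial^+\Gamma$, making $Y^l$ up-closed there with $\depth_{Y^l}(\partial^+\Gamma) \le 1$; the theorem applied to the lower-dimensional term $v$ then supplies $\partial^+\Gamma \uparrow Y^l \vdash v \uparrow Y^l$. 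The whiskering substitution is valid because by Lemma~\ref{lem:func-of-func} we have $(\partial^+\Gamma \uparrow Y^l) \uparrow Y^b = \partial^+\Gamma \uparrow Y$, which is the codomain of $\proj_2$.

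The hard part will be checking composability: the target $v[\rho_\Psi \circ \psi]$ of the left factor must match the source of $(v\uparrow Y^l)[\inr_{\partial^+\Gamma \uparrow Y^l,Y^b}\circ \proj_2]$. The core identity needed is that, when restricted to the variables of $v$, the substitution $\rho_\Psi \circ \psi$ factors through $\inl_{\partial^+\Gamma, Y^l} \circ \inr_{\partial^+\Gamma \uparrow Y^l, Y^b} \circ \proj_2$; this follows from the fact that $\rho_\Psi$ is the identity on the boundary of $\Psi$ (so in particular on the variables of $\partial^+\Gamma$), together with the commutativity of the lower right square in Figure~\ref{fig:intch-ctx} and the characterisation of $\psi$ by its projections. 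In the generic case where $Y^l$ meets a lower boundary of $v$, the source of $v \uparrow Y^l$ additionally involves a whiskering by the naturality of a subterm of $v$, and one must check that this extra piece likewise arises from unfolding $v[\rho_\Psi \circ \psi]$ through the reduction.

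Once composability is settled, the source of $q$ is $u[\rho_\Psi \circ \psi]$ by construction of the left factor, and the target is read off from the right factor as $v[\inr_{\partial^+\Gamma, Y^l} \circ \inr_{\partial^+\Gamma \uparrow Y^l, Y^b} \circ \proj_2]$. By Lemma~\ref{lem:func-of-func}, the composite $\inr_{\partial^+\Gamma, Y^l} \circ \inr_{\partial^+\Gamma \uparrow Y^l, Y^b}$ collapses to $\inr_{\partial^+\Gamma, Y}$, yielding the claimed target $v[\inr_{\partial^+\Gamma, Y} \circ \proj_2]$.
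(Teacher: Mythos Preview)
Your overall strategy matches the paper's: verify the two factors separately, then check composability, then read off the type. The validity of the two factors and the use of Lemma~\ref{lem:func-of-func} for the target are exactly as in the paper.

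Two points deserve comment. First, your ``generic case'' worry is unnecessary. Since \(\depth_X(\coh_{\Gamma,A}[\id]) = 1\) and \(\dim(\coh_{\Gamma,A}[\id]) = n\), every variable of \(X\) has dimension at least \(n-1\); hence \(Y^l\) consists only of variables of dimension exactly \(n-1\). As \(\dim v = n-1\), this forces \(\depth_{Y^l}(v) \le 0\), so the type of \(v\uparrow Y^l\) is simply \(v[\inl_{\partial^+\Gamma,Y^l}] \to v[\inr_{\partial^+\Gamma,Y^l}]\) with no extra whiskering terms.

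Second, your composability argument is where the real work lies, and your sketch does not pin it down. You cite that \(\rho_\Psi\) is the identity on the boundary, commutativity of the pullback square, and the definition of \(\psi\), but these ingredients alone do not assemble into the needed equality \(\delta^+_\Gamma\, \rho_\Psi\, \psi = \inl_{\partial^+\Gamma,Y^l}\,\inr_{\partial^+\Gamma\uparrow Y^l,Y^b}\,\proj_2\). The paper carries this out explicitly: from \(\Psi^r = \Gamma\) (Lemma~\ref{lem:psi-r-is-gamma}) and \(\rho_\Psi\) being the identity on boundaries one gets \(\delta^+_\Gamma\,\rho_\Psi = \delta^+_\Psi\); then, using that composition of Batanin trees satisfies the strict \(\omega\)-category laws, one identifies \(\partial^+\Psi\) with \(\partial^+(\partial^+\Gamma\uparrow Y^b)\) via \(\overline{\proj}_2\), and finally invokes Lemma~\ref{lem:psctx-func} to replace the target inclusion of \(\partial^+\Gamma\uparrow Y^b\) by the appropriate \(\inc^+\). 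The step through the target boundary of \(\Psi\) and the use of Lemma~\ref{lem:psctx-func} are the missing ingredients in your account; the commutativity of the pullback square concerns the \(\inl\) side and does not by itself deliver this.
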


\begin{proof}
  By correctness of the naturality with respect to depth\=/0 variables, we
  have that:
  \begin{align*}
    \Phi \vdash &\coh_{\Gamma,A}[\rho_\Psi \psi] : \arr{u[\rho_\Psi \psi]}{v[\rho_\Psi \psi]} \\
    \Phi \vdash &(v \uparrow Y^l)[\inr_{\partial^+\Gamma \uparrow Y^l, Y^b}\proj_2] :
    {v[\inl_{\partial^+\Gamma,Y^l}\inr_{\partial^+\Gamma \uparrow Y^l, Y^b}\proj_2]} \\
    &\to {v[\inr_{\partial^+\Gamma,Y^l}\inr_{\partial^+\Gamma \uparrow Y^l, Y^b}\proj_2]}
  \end{align*}
  The target of the second term is the claimed target of \(q\) by Lemma~\ref{lem:func-of-func}. We want to show the two terms are composable and thus \(q\) is well-defined. Since \(v\) lives over \(\partial^+\Gamma\), it suffices to show that:
  \[ \delta^+_\Gamma \rho_\Psi \psi = \inl_{\partial^+\Gamma,Y^l}\inr_{\partial^+\Gamma \uparrow Y^l, Y^b}\proj_2\]
  As explained in Section~\ref{sec:naturality}, the reduction substitution \(\rho_\Psi\) is the identity on the boundary, so
  combined with Lemma~\ref{lem:psi-r-is-gamma}, we see that:
  \[
    \delta^+_\Gamma \rho_\Psi = \delta^+_{\Psi^r} \rho_\Psi = \delta^+_{\Psi}
  \]
  Using again that composition of trees satisfies the rules of strict \(\omega\)\=/categories, we get that
  \(\overline{\proj}_2\) is an isomorphism between the target of \(\Psi\) and that of \(\partial^+\Gamma \uparrow Y^b\). The latter can be identified with \(\partial^+\Gamma\) via \(\inr_{\partial^+\Gamma,Y^r}\) by Lemma~\ref{lem:psctx-func}.

  In other words, we have that:
  \[
    \delta_\Psi^+ = \delta^+_{\partial^+\Gamma \uparrow Y^l} \overline{\proj}_2 = \inr_{\partial^+\Gamma, Y^l} \overline{\proj}_2
  \]
  Combining those equations, we compute that:
  \begin{align*}
    \delta^+_\Gamma \rho_\Psi \psi
      &= \delta^+_{\Psi} \psi
      = \inr_{\partial^+\Gamma, Y^l} \overline{\proj}_2 \psi \\
      &= \inr_{\partial^+\Gamma, Y^l}\inr_{\partial^+\Gamma \uparrow Y^l, Y^b}\proj_2
  \end{align*}
  Here the last equality follows by the definition of \(\psi\).
\end{proof}

\begin{lemma}\label{lem:pq-correct}
  The following judgement is derivable:
  \[\Phi \vdash \arr p q\]
  Moreover, this type is full.
\end{lemma}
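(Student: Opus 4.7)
The plan is to prove the two conclusions separately. By Lemmas~\ref{lem:p-correct} and~\ref{lem:q-correct}, the terms $p$ and $q$ already share a common target $v[\inr_{\partial^+\Gamma,Y}\proj_2]$, so for derivability of $\Phi \vdash \arr p q$ it remains to verify the equality of sources $u[\proj_1] = u[\rho_\Psi \circ \psi]$. Since $u$ is well-typed over $\partial^-\Gamma$, this reduces to showing that the composite substitutions $\delta^-_\Gamma \circ \proj_1$ and $\delta^-_\Gamma \circ \rho_\Psi \circ \psi$ from $\Phi$ to $\partial^-\Gamma$ coincide. I would argue dually to the target-side calculation in Lemma~\ref{lem:q-correct}: since $\rho_\Psi$ is the identity on boundaries and $\Psi^r = \Gamma$ by Lemma~\ref{lem:psi-r-is-gamma}, we get $\delta^-_\Gamma \circ \rho_\Psi = \delta^-_\Psi$; since $\Psi = \Gamma \otimes_{n-1}(\partial^+\Gamma \uparrow Y^b)$ is glued along the positive boundary, its source coincides with that of $\Gamma$, giving $\delta^-_\Psi = \delta^-_\Gamma \circ \overline{\proj}_1$; and the commuting triangle in Figure~\ref{fig:intch-ctx} defining $\psi$ through the pullback universal property yields $\overline{\proj}_1 \circ \psi = \proj_1$. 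Chaining these equalities gives the source equality.

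For the fullness claim, write $C$ for the common type of $p$ and $q$. I must verify $\Var(\Phi) = \Var(p) \cup \Var(C) = \Var(q) \cup \Var(C)$. The $\supseteq$ inclusions are immediate since all terms are built by substitution into $\Phi$. For the reverse direction in the $p$ case, I would decompose $\Var(\Phi)$ along the pullback $\Phi = \Gamma \otimes_{n-1}(\partial^+\Gamma \uparrow Y)$ into a $\Gamma$-part reached via $\proj_1$ and a $(\partial^+\Gamma \uparrow Y)$-part reached via $\proj_2$. The $\Gamma$-part is covered by the first factor $\coh_{\Gamma,A}[\proj_1]$ of $p$, since $\Var(\coh_{\Gamma,A}[\proj_1]) = \Var(\proj_1)$ exhausts the $\proj_1$-image of $\Var(\Gamma)$. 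For the $(\partial^+\Gamma \uparrow Y)$-part, I would invoke a depth-1 generalisation of Lemma~\ref{lem:fullterm-fullfunct}, exploiting the fullness relation $\Var(v) \cup \Var(B) = \Var(\partial^+\Gamma)$ coming from fullness of $A$ in $\Gamma$, to show that $\Var((v \uparrow Y)[\proj_2])$ together with $\Var(v[\inr_{\partial^+\Gamma,Y}\proj_2])$ from $C$ exhausts the $\proj_2$-image. The argument for $q$ is parallel, routing through $\Psi$ via the factorisation $\rho_\Psi \circ \psi$ and the additional injection $\inr_{\partial^+\Gamma \uparrow Y^l, Y^b}$.

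The main obstacle I anticipate is the fullness part, specifically the depth-1 extension of Lemma~\ref{lem:fullterm-fullfunct}. The depth-0 version gives immediately that $\Var(t \uparrow X)$ contains every $\fun x$ with $x \in X \cap \Var(t)$, which makes coverage of $\Var(\Gamma \uparrow X)$ straightforward. In the present depth-1 setting, the variables $\fun y$ for $y \in Y^b$ appear only inside larger composite sub-expressions involving interchangers and associators, so their coverage must be established by an induction on the syntax of the functorialisation construction itself, keeping careful track of how the two layers of functorialisation in $\partial^+\Gamma \uparrow Y = (\partial^+\Gamma \uparrow Y^l) \uparrow Y^b$ (via Lemma~\ref{lem:func-of-func}) interact with the pullback squares of Figure~\ref{fig:intch-ctx}. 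Once this coverage lemma is in hand, matching it against the pullback decomposition of $\Var(\Phi)$ will close both the $p$ and $q$ cases.
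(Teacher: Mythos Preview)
Your derivability argument matches the paper's exactly. For fullness, your decomposition of $\Var(\Phi)$ along $\proj_1$ and $\proj_2$ is also the paper's, but the ``main obstacle'' you flag is not real. You expect to need a depth-$1$ extension of Lemma~\ref{lem:fullterm-fullfunct} because $Y$ might have depth $1$ in $\partial^+\Gamma$. However, the standing hypothesis $\depth_X(\coh_{\Gamma,A}[\id]) = 1$ forces every $x \in X$ to satisfy $\dim x \ge n-1$ (the coherence has dimension $n$), so $Y = X \cap \Var(\partial^+\Gamma)$ consists entirely of $(n{-}1)$-cells. These are top-dimensional in $\partial^+\Gamma$, giving $\depth_Y(\partial^+\Gamma) = \depth_Y(v) = 0$, and the depth-$0$ lemma applies as stated---no generalisation is needed. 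For the $q$ case the paper also offers a shortcut you omit: once $p$ is shown to use every variable of $\Phi$, any term parallel to it in a pasting context is automatically full by \cite[Corollary~103]{benjamin_type_2020}, so there is no need to rerun the variable-coverage argument through $\rho_\Psi$ and $\psi$.
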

\begin{proof}
  To show that this type is derivable, we need that the source and target of \(p\) and \(q\) coincide.
  Lemmas~\ref{lem:p-correct} and~\ref{lem:q-correct} show that the targets agree, so it remains to show that:
  \[ u[\proj_1] = u[\rho_\Psi \psi] \]
  Since \(u\) lives over the source of \(\Gamma\), this amounts to:
  \[\delta^-_\Gamma \proj_1 = \delta_\Gamma^-\rho_\Psi \psi\]
  As in the proof of Lemma~\ref{lem:q-correct}, we can compute that
  \begin{align*}
    \delta_\Gamma^-\rho_\Psi \psi
      = \delta^-_{\Psi} \psi
      = \delta^-_\Gamma \overline{\proj}_1 \psi
      = \delta_\Gamma^-\proj_1
  \end{align*}
  where the last equality follows by definition of \(\psi\) in Figure~\ref{fig:intch-ctx}. This shows that the judgement is derivable.

  It remains to show that the type \(\arr p q\) is full. For that, we will use
  that variables of \(\Phi\) are either of the form \(x[\proj_1]\) for some \(x\in \Var(\Gamma)\),
  or of the form \(y[\proj_2]\) for some \({y \in \Var(\partial^+\Gamma \uparrow Y)}\). The former are
  in \(\Var(p)\) due to its first component, while the latter are there by the second
  component in conjunction with Lemma~\ref{lem:fullterm-fullfunct}. A similar argument
  can be used to show that \(\Var(q)\) contains every variable of \(\Phi\).
  Alternatively, this is guaranteed by \(q\) being parallel to a term containing
  all variables of the context~\cite[Corollary~103]{benjamin_type_2020}.
\end{proof}

\begin{lemma}\label{lem:interchanger-correct}
  The following judgement is derivable:
  \begin{align*}
    \Phi \vdash j^-_{\Gamma,X,A} : &\coh_{\Gamma,A}[\inl_{\Gamma,X}] \s_{n-1} (v\uparrow X) \to \\
    &\coh_{\Gamma,A}[\inl_{\Gamma,X^{lm}}\theta_{\Gamma,X}] \s_{n-1} (v\uparrow X^{lm})[\theta_{\Gamma,X}]
  \end{align*}
\end{lemma}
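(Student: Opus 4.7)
The overall strategy is to reduce everything to Lemma~\ref{lem:pq-correct} and the universal property of the pullback. By Lemma~\ref{lem:pq-correct} the type $\arr{p}{q}$ is full over $\Phi$, so $\coh_{\Phi,\arr{p}{q}}$ is a valid term constructor. It therefore suffices to establish two things: (i) that $\phi$ is a valid substitution $\Gamma\uparrow X \vdash \phi:\Phi$, and (ii) that $p[\phi]$ and $q[\phi]$ equal the claimed source and target, respectively, of $j^-_{\Gamma,X,A}$. For (i), we use the universal property of the pullback square defining $\Phi = \Gamma \otimes_{n-1}(\partial^+\Gamma \uparrow Y)$ (Figure~\ref{fig:intch-ctx}): the substitutions $\inl_{\Gamma,X}$ and $\delta^+_\Gamma \uparrow X$ out of $\Gamma\uparrow X$ are valid by the $d=k=0$ case of Theorem~\ref{thm:correctness} and by naturality of substitutions, and their composites with $\delta^+_\Gamma$ and $\inl_{\partial^+\Gamma,Y}$ coincide by Lemma~\ref{lem:pullback-stability}; this yields a unique valid $\phi$ with $\proj_1\phi = \inl_{\Gamma,X}$ and $\proj_2\phi = \delta^+_\Gamma \uparrow X$.

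For the source, I would compute $p[\phi] = \coh_{\Gamma,A}[\inl_{\Gamma,X}] \s_{n-1} (v\uparrow Y)[\delta^+_\Gamma \uparrow X]$. Since $v$ lives over $\partial^+\Gamma$ and $\delta^+_\Gamma$ is a weakening, $v[\delta^+_\Gamma] = v$, so applying Lemma~\ref{lem:pullback-stability} to $v$ and $\delta^+_\Gamma$ identifies $(v\uparrow Y)[\delta^+_\Gamma \uparrow X]$ with $v\uparrow X$, matching the claimed source.

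For the target, I would first show the substitution identity $\rho_\Psi \circ \psi \circ \phi = \inl_{\Gamma,X^{lm}} \circ \theta_{\Gamma,X}$ by a case split on each variable $x \in \Var(\Gamma)$. Variables outside $X$ are preserved on both sides (using that $\rho_\Psi$ acts as the identity on the boundary and that $\theta_{\Gamma,X}$ fixes variables outside $X$). For $y \in X^b$, the right-hand side gives $\partial^\varepsilon(\fun y)$ by Definition~\ref{def:theta-subs}, and unfolding $\phi$, $\psi$ and the action of $\rho_\Psi$ on maximal variables of $\Psi^r \cong \Gamma$ (using Lemma~\ref{lem:psi-r-is-gamma}) yields the same image. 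For $x \in X^{lm}$, the computation parallels that in Lemmas~\ref{lem:interchanger-trivial} and \ref{lem:theta-subs-correctness}. Combined with an analogous identification of the second component, obtained from Lemma~\ref{lem:func-of-func} and a second application of Lemma~\ref{lem:pullback-stability} to rewrite $(v\uparrow Y^l)[\inr_{\partial^+\Gamma \uparrow Y^l, Y^b} \circ \proj_2 \circ \phi]$ as $(v\uparrow X^{lm})[\theta_{\Gamma,X}]$, this delivers the desired target of $j^-_{\Gamma,X,A}$.

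The main obstacle is the substitution identity $\rho_\Psi \circ \psi \circ \phi = \inl_{\Gamma,X^{lm}} \circ \theta_{\Gamma,X}$: this is where the non-trivial content of the interchanger sits, and where the three pullback squares of Figure~\ref{fig:intch-ctx} must be threaded together with the explicit description of $\rho_\Psi$ on $\Psi^r$ via Lemma~\ref{lem:psi-r-is-gamma}. The remaining work is careful bookkeeping among $Y$, $Y^l$, $Y^b$, $X^{lm}$ and $X^b$, all of which is routine once that identity is in hand.
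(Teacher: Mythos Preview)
Your proposal is correct and follows essentially the same route as the paper: reduce to Lemma~\ref{lem:pq-correct}, then compute $p[\phi]$ and $q[\phi]$ using the defining equations $\proj_1\phi = \inl_{\Gamma,X}$ and $\proj_2\phi = \delta^+_\Gamma\uparrow X$, with the crux being the substitution identity $\rho_\Psi\psi\phi = \inl_{\Gamma,X^{lm}}\theta_{\Gamma,X}$ verified variable-by-variable. One small refinement: the paper tests that identity only on \emph{depth-0} variables $y\in\Var(\Gamma)$ (which suffices, since lower-dimensional variables are determined by typing), and then case-splits on whether $y\in X$ and whether $\partial^+(y)\in X$; your split into $X^b$ versus $X^{lm}$ is slightly misaligned, since $X^b$-variables are not depth-0 in $\Gamma$ and the interesting case is really a depth-0 $y\in X^{lm}$ whose target lies in $X^b$---that is where $\rho_\Psi$ produces the composite $y^- *_n \fun{z}$ that must be matched against $\partial^-(\fun y)$.
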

\begin{proof}
  Lemma~\ref{lem:pq-correct} shows that \(j^-_{\Gamma,X,A}\) is a
  well-defined term over \(\Phi\) of type \(p[\phi]\to q[\phi]\). To show the
  claimed judgement, we need to show therefore the following for the two types to coincide:
  \begin{gather*}
    \proj_1\phi = \inl_{\Gamma,X} \\
    (v\uparrow Y)[\proj_2\phi] = (v\uparrow X) \\
    \rho_\Psi\psi\phi = \inl_{\Gamma,X^{lm}}\theta_{\Gamma,X} \\
    (v\uparrow Y^l)[\inr_{\partial^+\Gamma \uparrow Y^l, Y^b}\proj_2\phi] = (v\uparrow X^{lm})[\theta_{\Gamma,X}]
  \end{gather*}
  The first equation holds by definition of \(\phi\).
  The second one holds again by definition of \(\phi\) in Figure \ref{fig:intch-ctx} and Lemma~\ref{lem:pullback-stability}.

  We prove the last two equations by testing the substitutions on depth\=/0 variables ${y \in \Var(\Gamma)}$.
  If $y \notin X$, then we can immediately check that:
  \begin{align*}
          y[\rho_\Psi\psi\phi] &= y[\overline{\proj_1} \psi\phi] = y[\inl_{\Gamma,X}] = y = y[\inl_{\Gamma,X^{lm}}\theta_{\Gamma,X}]
  \end{align*}
  Assume now $y \in X$ and recall that over $\Gamma \uparrow X$:
  \begin{align*}
        \partial^-(\fun{y}) =
        \begin{cases}
                y^- &\text{if } \partial^+(y) \not\in X \\
                y^- \s_{n} \fun{\partial^+(y)} &\text{if } \partial^+(y) \in X
        \end{cases}
  \end{align*}
  where $n = \dim \Gamma-1$.
  Hence in the case that $\partial^+(y) \not\in X$:
  \begin{align*}
          y[\rho_\Psi\psi\phi] &= y[\overline{\proj_1}\psi\phi] = y[\inl_{\Gamma,X}] = y^- = y[\inl_{\Gamma,X^{lm}}\theta_{\Gamma,X}]
  \end{align*}
  Finally, let us assume $z = \partial^+(y) \in X$ and verify:
  \begin{align*}
          y[\rho_\Psi\psi\phi] &= (y[\overline{\proj_1}] *_n \fun{z}[\overline{\proj_2}])[\psi\phi] \\
          &= y[\inl_{\Gamma,X}] *_n \fun{z}[\inl_{\partial^+\Gamma\uparrow Y^b,Y^l} (\delta^+_{\Gamma} \uparrow X)] \\
          &= y^- *_n \fun{z} = y[\inl_{\Gamma,X^{lm}}\theta_{\Gamma,X}]
  \end{align*}
  This concludes the proof of the third equation.
  For the fourth equation, we use Lemma~\ref{lem:pullback-stability} to show:
  \begin{align*}
          v \uparrow X^{lm} = v[\delta^+_\Gamma] \uparrow X^{lm} = (v \uparrow Y^l)[\delta^+_\Gamma \uparrow X^{lm}]
  \end{align*}
  Hence by using $\proj_2\phi = \delta^+_\Gamma \uparrow X$, it suffices to show the following substitutions from $\Gamma \uparrow X$ to $\partial^+ \Gamma \uparrow Y^l$ coincide:
  \begin{align*}
        \inr_{\partial^+\Gamma \uparrow Y^l, Y^b} (\delta^+_\Gamma \uparrow X) = (\delta^+_\Gamma \uparrow X^{lm}) \theta_{\Gamma,X}
  \end{align*}
  Let $y \in \Var(\delta^+ \Gamma)$ be a depth\=/0 variable in $\partial^+ \Gamma$.
  If $y \notin Y$, then it is immediate to see:
  \begin{align*}
        y[\inr_{\partial^+\Gamma \uparrow Y^l, Y^b}][\delta^+_\Gamma \uparrow X] = y = y[\delta^+_\Gamma \uparrow X^{lm}][\theta_{\Gamma,X}]
  \end{align*}
  If $y \in Y^b$, then we have:
  \begin{align*}
        y[\inr_{\partial^+\Gamma \uparrow Y^l, Y^b}][\delta^+_\Gamma \uparrow X] &= y^+[\delta^+_\Gamma \uparrow X] \\
        &= y^+ = \partial^+(\fun{y}) \\
        &= y[\theta_{\Gamma,X}] = y[\delta^+_\Gamma \uparrow X^{lm}][\theta_{\Gamma,X}]
  \end{align*}
  Finally, if $y \in Y^l$, we have:
  \begin{align*}
          \fun{y}[\inr_{\partial^+\Gamma \uparrow Y^l, Y^b}][\delta^+_\Gamma \uparrow X] = \fun{y} = \fun{y}[\delta^+_\Gamma \uparrow X^{lm}][\theta_{\Gamma,X}]
  \end{align*}
  Since all cases are exhausted, this concludes the proof.
\end{proof}

\begin{lemma}\label{lem:red-comp-correct}
	Let $\Gamma \vdashps$ be a reduced pasting context, ${\Gamma \vdash A}$ a full type and $X \in \U(\Gamma)$.
	Writing $t = \coh_{\Gamma,A}[\id]$, if $\depth_X \Gamma = \depth_X t = 1$,
	then the following judgement is derivable:
\[
	\Gamma \uparrow X \vdash \coh_{\Gamma,A} \uparrow X : A \uparrow^{t} X
\]
\end{lemma}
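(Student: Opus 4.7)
The plan is to verify that each of the three components of
\[
  \coh_{\Gamma,A}\uparrow X = j^{-}_{\Gamma,X,A} \s_{n-1} c_{\Gamma,X,A}\s_{n-1} j^{+}_{\Gamma,X,A}
\]
is individually well-typed, and then to check that they compose along their $(n-1)$-dimensional boundaries and that the resulting composite has the claimed type $A\uparrow^{t} X$. Throughout I would write $A = \arr[B]{u}{v}$, and split on whether $\Var(u)\cap X$ and $\Var(v)\cap X$ are empty, mirroring the case split used in the definitions of $j^\pm$ and of $A\uparrow^{t}X$.

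For the middle term, I would observe that by construction $X^{lm}$ consists of depth-$0$ variables of $\Gamma$, so $\depth_{X^{lm}}\Gamma = 0$, while $\depth_{X^{lm}}t \le \depth_{X}t = 1$. Hence Lemma \ref{lem:corr-nat-coh-lmax} applies and gives that $\coh_{\Gamma,A}\uparrow X^{lm}$ is well-typed in the pasting context $\Gamma\uparrow X^{lm}$ with type $A\uparrow^{t} X^{lm}$. Combining this with Lemma \ref{lem:theta-subs-correctness}, which ensures $\Gamma\uparrow X \vdash \theta_{\Gamma,X}:\Gamma\uparrow X^{lm}$, shows that $c_{\Gamma,X,A} = (\coh_{\Gamma,A}\uparrow X^{lm})[\theta_{\Gamma,X}]$ is well-typed with type $(A\uparrow^{t} X^{lm})[\theta_{\Gamma,X}]$.

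For the interchangers, I would handle $j^-_{\Gamma,X,A}$ by a case split on whether $\Var(v)\cap X$ is empty. In the empty case, Lemma \ref{lem:interchanger-trivial} directly yields $\inl_{\Gamma,X} = \inl_{\Gamma,X^{lm}}\circ\theta_{\Gamma,X}$, making $j^-$ a well-formed endo-coherence going from the desired source to the source of $c_{\Gamma,X,A}$. In the non-empty case, Lemma \ref{lem:interchanger-correct} directly shows $j^-$ has the required type, going from $\coh_{\Gamma,A}[\inl_{\Gamma,X}]\s_{n-1}(v\uparrow X)$ to $\coh_{\Gamma,A}[\inl_{\Gamma,X^{lm}}\theta_{\Gamma,X}]\s_{n-1}(v\uparrow X^{lm})[\theta_{\Gamma,X}]$. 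The argument for $j^+_{\Gamma,X,A}$ is entirely symmetric, exchanging the roles of $u$ and $v$, of $\inl$ and $\inr$, and of source and target.

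It then remains to verify that the three terms are composable at their $(n-1)$-dimensional boundaries and that the overall type of the composite agrees with $A\uparrow^{t}X$. By construction, the target of $j^-$ is the source of $c_{\Gamma,X,A}$ (obtained from the type $(A\uparrow^{t}X^{lm})[\theta_{\Gamma,X}]$ and Lemma \ref{lem:pullback-stability}), and dually for $c_{\Gamma,X,A}$ and $j^+$; the overall source and target then match the definition of $A\uparrow^{t}X$ by inspection of the relevant case split. The main obstacle is ensuring the bookkeeping on boundaries is coherent: specifically, matching the target of $c_{\Gamma,X,A}$ against the source of $j^+_{\Gamma,X,A}$ in the case where both $\Var(u)\cap X$ and $\Var(v)\cap X$ are non-empty, since here the middle term's type has both a prefix $(\coh_{\Gamma,A}[\inl_{\Gamma,X^{lm}}]\s_{n-1}(v\uparrow X^{lm}))[\theta_{\Gamma,X}]$ and a suffix $((u\uparrow X^{lm})\s_{n-1}\coh_{\Gamma,A}[\inr_{\Gamma,X^{lm}}])[\theta_{\Gamma,X}]$, each of which has to be shown to agree with the corresponding boundary from Lemma \ref{lem:interchanger-correct} and its dual by unwinding $\theta_{\Gamma,X}$ via the same variable-wise verification used in that lemma's proof.
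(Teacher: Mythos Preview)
Your proposal is correct and follows essentially the same approach as the paper: verify the middle term via Lemmas~\ref{lem:corr-nat-coh-lmax} and~\ref{lem:theta-subs-correctness}, handle each interchanger by the case split on $\Var(v)\cap X$ (resp.\ $\Var(u)\cap X$) using Lemmas~\ref{lem:interchanger-trivial} and~\ref{lem:interchanger-correct}, and then observe that the types line up. One small slip: the composition should be $j^{-}_{\Gamma,X,A} \s_{n} c_{\Gamma,X,A}\s_{n} j^{+}_{\Gamma,X,A}$ with $n=\dim\Gamma$, not $\s_{n-1}$, since all three pieces are $(n{+}1)$-cells.
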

\begin{proof}
	Let us write $A = \arr[B]{u}{v}$, and recall that we have:
\[
  \coh_{\Gamma,A}\uparrow X = j^-_{\Gamma,X,A}
  \s_n (\coh_{\Gamma,A}\uparrow X^{lm})[\theta_{\Gamma,X}]
  \s_n j^+_{\Gamma,X,A}
\]
Validity of the middle term is given by Lemmas~\ref{lem:corr-nat-coh-lmax} and~\ref{lem:theta-subs-correctness}.
Validity of $j^-_{\Gamma,X,A}$ is immediate if $X \cap \Var(v) = \emptyset$ and proven by Lemma \ref{lem:pq-correct} otherwise.
Moreover, by Lemma \ref{lem:interchanger-trivial} and Lemma \ref{lem:interchanger-correct} respectively, we can show $j^-_{\Gamma,X,A}$ has the expected source and target in each case.
Since $j^+_{\Gamma,X,A}$ is completely symmetric, similar arguments show that it is well-defined and has the correct type.
\end{proof}
This concludes the proof of correctness for reduced composites.
Since the general case is described in the proof of Theorem \ref{thm:correctness}, the proof the main Theorem is now complete.



\section{Cylinders, Composition and Stacking}\label{app:cylinders}

This appendix is a complement to Sec.~\ref{sec:cylinders}, presenting detailed
proofs of the results in this section. We start with
Fig.~\ref{fig:cyl-ctx-computation} giving an explicit and full descriptions of
the cylinder contexts \(\cylctx^{2}\) and \(\cylctx^{3}\) presented in
Fig.~\ref{fig:cylinder-ctx}.

\begin{figure*}
  \begin{gather*}
    \cylctx^{2} =\left(
    \makecell[l]{
    \bot_{1}^{-}:\obj, \bot_{1}^{+}:\obj, \bot_{2} : \arr{\bot_{1}^{-}}{\bot_{1}^{+}},\\
    \top_{1}^{-}:\obj, \top_{1}^{+}:\obj, \top_{2} :
    \arr{\top_{1}^{-}}{\top_{1}^{+}}, \\
    \filler_{1}^{-}:\arr{\bot_{1}^{-}}{\top_{1}^{-}},
    \filler_{1}^{+}:\arr{\bot_{1}^{+}}{\top_{1}^{+}},\\
    \filler_{2}:\arr{\bot_{2}\s\filler_{1}^{+}}{\filler_{1}^{-}\s\top_{2}}
    }
    \right)
    \\
    \cylctx^{3} =
      \left(
      \makecell[l]{
      \bot_{1}^{-}:\obj, \bot_{1}^{+}:\obj,
      \bot_{2}^{-} : \arr{\bot_{1}^{-}}{\bot_{1}^{+}},
      \bot_{2}^{+} : \arr{\bot_{1}^{-}}{\bot_{1}^{+}},
      \bot_{3} : \arr{\bot_{2}^{-}}{\bot_{2}^{+}},\\
    \top_{1}^{-}:\obj, \top_{1}^{+}:\obj,
    \top_{2}^{-} : \arr{\top_{1}^{-}}{\top_{1}^{+}},
    \top_{2}^{+} : \arr{\top_{1}^{-}}{\top_{1}^{+}},
    \top_{3} : \arr{\top_{2}^{-}}{\top_{2}^{+}}, \\
    \filler_{1}^{-}:\arr{\bot_{1}^{-}}{\top_{1}^{-}},
    \filler_{1}^{+}:\arr{\bot_{1}^{+}}{\top_{1}^{+}},\\
    \filler_{2}^{-}:\arr{\bot_{2}^{-}\s\filler_{1}^{+}}{\filler_{1}^{-}\s\top_{2}^{-}},
    \filler_{2}^{+}:\arr{\bot_{2}^{+}\s\filler_{1}^{+}}{\filler_{1}^{-}\s\top_{2}^{+}},\\
    \filler_{3} : \arr{(\bot_{3}\s_0\filler_{1}^{+})\s_1\filler_{2}^{+}}{\filler_2^{-}\s_1(\filler_1^-\s_0\top_{3})}
    }
    \right)
\end{gather*}

\caption{The cylinder contexts of dimension \(2\) and \(3\) of
  Fig.~\ref{fig:cylinder-ctx}.}
 \label{fig:cyl-ctx-computation}
\end{figure*}



Our aim is now to give a proof of Proposition~\ref{prop:cyl-type}. We
first prove that the equations postulated in this lemma hold in the contexts
\(\partial\cylctx^{n}\).
\begin{lemma}\label{lemma:equations-cyl}
  The following judgements are derivable:
  \begin{mathpar}
    \hspace{-1cm}(i)\, \partial\cylctx^{1} \!\vdash\! \top_{1}\!:\!\obj\!\!\!\!\!\! \and \!\!\!\!\!\!(ii)\,\partial\cylctx^{1} \!\vdash\!
    \bot_{1}\!:\!\obj\!\!\!\!\!\! \and \!\!\!\!\!\!
    \!\!\!\!(iii)\,\cylctx^1 \!\vdash\! \filler_1 \!:\! \!{\top_1}\!\to\!{\bot_1}\hspace{-1cm}
    \\
    \mbox{$\hspace{0cm}(iv)\,\partial\cylctx^{2} \vdash \filler_{1}^- : \Cyl^1(\top_1^-,\bot_1^-)\hspace{0pt}$}
\and \hspace{0pt}
    (v)\,\partial\cylctx^{2} \vdash \filler_{1}^+ : \Cyl^1(\top_n^+,\bot_n^+)\hspace{0cm} \\
    (vi)\,\partial\cylctx^{n+1}\vdash \top_{n+1}:\arr{\top_n^-}{\top_{n}^+} \and
    (vii)\,\partial\cylctx^{n+1}\vdash \bot_{n+1}:\arr{\bot_n^-}{\bot_{n}^+}\\
    (viii)\,\cylctx^{n+1} \vdash \filler_{n+1}:
    \Cyl^{n+1}(\top_{n+1},\bot_{n+1},\filler_n^-,\filler_n^+)\\
    (ix)\,\partial\cylctx^{n+2} \vdash \filler_{n+1}^-:
    \Cyl^{n+1}(\top_{n+1}^-,\bot_{n+1}^+,\filler_n^-,\filler_n^+)\and
    (x)\,\partial\cylctx^{n+2} \vdash \filler_{n+1}^+:
    \Cyl^{n+1}(\top_{n+1}^+,\bot_{n+1}^+,\filler_n^-,\filler_n^+)
    \end{mathpar}
\end{lemma}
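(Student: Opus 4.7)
The proof will proceed by induction on $n$, establishing the ten judgements together. The essential idea is that each higher cylinder is built by applying the naturality construction followed by an opposite, so correctness follows from Theorem~\ref{thm:correctness} together with admissibility of the opposite meta-operation recalled in Section~\ref{sec:suspopp}, provided we can match the type produced by the naturality construction against the expected shape.

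For the base case $n=1$, judgements (i)--(iii) are immediate by inspection of $\partial\cylctx^1$ and $\cylctx^1$. For (iv)--(v), unfolding $\partial\cylctx^2 = \op_2(\cylctx^1\pmctx\{\top_1,\bot_1,\filler_1\})$ and using clause (1) of the naturality construction, one sees that $\cylctx^1 \pmctx \{\top_1,\bot_1,\filler_1\}$ contains the variables $\filler_1^\pm$ with type $\arr{\top_1^\pm}{\bot_1^\pm}$, which equals $\Cyl^1(\top_1^\pm,\bot_1^\pm)$ by (iii). The opposite $\op_2$ acts trivially on dimensions $\leq 1$, so these judgements survive.

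For the inductive step, assume (i)--(x) for dimensions up to $n$. The set $X = \{\top_n, \bot_n, \filler_n\}$ is up-closed in $\cylctx^n$ (the only variable whose type mentions these is $\filler_n$ itself, which is in $X$), and has depth at most $1$ in $\cylctx^n$ since $\filler_n$ has depth $0$ and $\top_n,\bot_n$ have depth $1$. Theorem~\ref{thm:correctness} therefore produces valid judgements
\begin{align*}
  \cylctx^n \uparrow X &\vdash \fun{\top_n}: (\arr{\top_{n-1}^-}{\top_{n-1}^+})\uparrow^{\top_n} X,\\
  \cylctx^n \uparrow X &\vdash \fun{\bot_n}: (\arr{\bot_{n-1}^-}{\bot_{n-1}^+})\uparrow^{\bot_n} X,\\
  \cylctx^n \uparrow X &\vdash \fun{\filler_n}: \Cyl^n \uparrow^{\filler_n}\! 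X,
\end{align*}
and correctness of opposites then transports them through $\op_{n+1}$ to yield the desired judgements in $\cylctx^{n+1}$. Judgements (vi), (vii) are then immediate. For (viii), it remains to verify that $\op_{n+1}\bigl(\Cyl^n \uparrow^{\filler_n} X\bigr)$ coincides with $\Cyl^{n+1}(\top_{n+1},\bot_{n+1},\filler_n^-,\filler_n^+)$. Judgements (ix) and (x) then follow by applying the same reasoning one dimension higher and tracing how the $\pmctx$ construction restricts types from $\cylctx^{n+1}$ to $\partial\cylctx^{n+2}$, using Lemma~\ref{lem:empty-intersection} to simplify the substitutions $\inj^\pm$ on subterms disjoint from the duplicated set.

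The main obstacle is precisely the matching of types in (viii): unfolding the definition of $(\arr[A]{u}{v})\uparrow^t X$ from Section~\ref{sec:nat-constr} layer by layer, one obtains a nested whiskering pattern in which every occurrence of $\top$ or $\bot$ in an intermediate boundary is replaced by the appropriate $\fun{\top_i}$ or $\fun{\bot_i}$, with the composition order governed by whether the current boundary is a source or a target. After applying $\op_{n+1}$, which flips the orientation of the top-dimensional arrow type without touching the whiskerings, this pattern must agree with the closed-form formula of Fig.~\ref{fig:cyl-type}. This identification is carried out by an inner induction on $n$, where the inductive hypothesis provides precisely the correct expressions for $\front^k$ and $\back^k$ in the dimension below, and where the asymmetric use of $\inj^-$ on sources and $\inj^+$ on targets in the naturality construction matches the asymmetry between $\front$ and $\back$ in the formula.
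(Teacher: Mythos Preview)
Your overall strategy aligns with the paper's: (i)--(iii) are immediate, and the higher judgements follow from Theorem~\ref{thm:correctness} together with the invariance of the relevant types under the opposite meta-operation. However, you have misidentified where the content lies.

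You treat (viii) as the main obstacle, requiring an inner induction to match $\op_{n+1}(\Cyl^n\uparrow^{\filler_n}X)$ against the closed-form formula of Fig.~\ref{fig:cyl-type}. But in the paper's setup, $\Cyl^{n+1}$ is \emph{defined} to be the type of $\filler_{n+1}$ in $\cylctx^{n+1}$, and the notation $\Cyl^{n+1}(\top_{n+1},\bot_{n+1},\filler_n^-,\filler_n^+)$ simply means $\Cyl^{n+1}[\gamma]$ for the substitution $\gamma$ determined by sending those four distinguished variables to themselves---that is, the identity substitution on $\partial\cylctx^{n+1}$. Hence (viii) holds by definition, with nothing to verify. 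The formula in Fig.~\ref{fig:cyl-type} is a \emph{derived} description of this type, not its definition, and Lemma~\ref{lemma:equations-cyl} does not depend on it. The paper's proof of (viii) is accordingly one line: ``Judgement (viii) is the definition of the type $\Cyl^n$.''

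With (viii) in hand, (ix) and (x) follow directly: in $\cylctx^{n+1}\pmctx\{\top_{n+1},\bot_{n+1},\filler_{n+1}\}$ the variables $\filler_{n+1}^\pm$ receive type $\Cyl^{n+1}[\inj^\pm]$, and since $\inj^\pm$ sends $\top_{n+1}\mapsto\top_{n+1}^\pm$, $\bot_{n+1}\mapsto\bot_{n+1}^\pm$ and fixes $\filler_n^\pm$, this is exactly $\Cyl^{n+1}(\top_{n+1}^\pm,\bot_{n+1}^\pm,\filler_n^-,\filler_n^+)$. The outer $\op_{n+2}$ is trivial on types of dimension at most $n$. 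So your appeal to Theorem~\ref{thm:correctness} is right, but the elaborate matching you propose is unnecessary: the whole lemma collapses once you recognise that $\Cyl^{n+1}$ is definitionally whatever type the naturality construction produces for $\filler_{n+1}$.
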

\begin{proof}
  The derivation of the judgements \textit{(i)-(iii)} is immediate. Judgements \textit{(iv)} and \textit{(v)} follow from Theorem~\ref{thm:correctness} and the judgement \textit{(iii)}. Judgements \textit{(vi)} and \textit{(vii)} follow from
  Theorem~\ref{thm:correctness}. Judgement \textit{(viii)} is the definition of the
  type \(\Cyl^{n}\), and judgements \textit{(ix)} and \textit{(x)} follow from
  Theorem~\ref{thm:correctness} and judgement \textit{(viii)}.
\end{proof}

Then, we prove that cylinder types are preserved under the action of
substitutions.
\begin{lemma}\label{lemma:img-cylinder}
  Let \(\Gamma\vdash t:\Cyl^{n}(a,b,c,d)\) be a cylinder and
  \(\Delta\vdash\gamma:\Gamma\) a substitution, then \(t[\gamma]\) is a cylinder
  satisfying:
  \[
    \Delta\vdash t[\gamma]:\Cyl^{n}(a[\gamma],b[\gamma],c[\gamma],d[\gamma])
  \]
\end{lemma}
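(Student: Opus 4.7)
The plan is to unpack the abbreviation $\Cyl^n(a,b,c,d)$ in terms of an explicit substitution into $\partial\cylctx^n$, and then appeal to compatibility of typing with substitution composition.

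First, I would observe that by definition $\Cyl^n(a,b,c,d)$ is shorthand for $\Cyl^n[\sigma]$, where $\Gamma \vdash \sigma : \partial\cylctx^n$ is the unique substitution determined by $\top_n[\sigma] = a$, $\bot_n[\sigma] = b$, $\filler_{n-1}^-[\sigma] = c$, $\filler_{n-1}^+[\sigma] = d$ (in the case $n \geq 2$; for $n=1$ only the images of $\top_1$ and $\bot_1$ matter). By hypothesis $\Gamma \vdash t : \Cyl^n[\sigma]$, and by the substitution rule applied with $\Delta \vdash \gamma : \Gamma$ we obtain $\Delta \vdash t[\gamma] : \Cyl^n[\sigma][\gamma]$. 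Using the standard equation $\Cyl^n[\sigma][\gamma] = \Cyl^n[\sigma \circ \gamma]$, which is a direct consequence of the definition of substitution action on types, the claim reduces to identifying the composite $\sigma \circ \gamma$ as the substitution that encodes the tuple $(a[\gamma], b[\gamma], c[\gamma], d[\gamma])$.

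The second step is the identification itself. By the definition of composition of substitutions, for any variable $x \in \Var(\partial\cylctx^n)$ we have $x[\sigma \circ \gamma] = x[\sigma][\gamma]$. Applying this to $\top_n, \bot_n, \filler_{n-1}^-, \filler_{n-1}^+$ yields respectively $a[\gamma], b[\gamma], c[\gamma], d[\gamma]$. Hence $\sigma \circ \gamma$ is exactly the substitution that $\Cyl^n(a[\gamma],b[\gamma],c[\gamma],d[\gamma])$ abbreviates, which completes the proof.

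There is no real obstacle here: the lemma is a straightforward unfolding of the notational convention for $\Cyl^n(\cdots)$ together with functoriality of substitution, and does not require any separate induction on $n$. The $n=1$ case is handled identically, with only the two boundary components $a,b$ to track.
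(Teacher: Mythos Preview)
Your proof is correct and follows essentially the same approach as the paper: unpack the shorthand $\Cyl^n(a,b,c,d)$ as $\Cyl^n[\sigma]$ for a substitution $\sigma$ into $\partial\cylctx^n$, then use that substitution on types is compatible with composition to conclude. If anything, your version is slightly more explicit than the paper's in spelling out why $\sigma\circ\gamma$ encodes the tuple $(a[\gamma],b[\gamma],c[\gamma],d[\gamma])$.
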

\begin{proof}
  A cylinder \(\Gamma\vdash t : \Cyl^{n}(a,b,c,d)\) is defined by the existence
  of a substitution \(\Gamma\vdash \gamma_{0} : \partial\cylctx^{n}\) such that
  \(\Gamma\vdash t:\Cyl^{n}[\gamma_{0}]\). Given a substitution
  \(\Delta\vdash \gamma:\Gamma\), we get the judgement:
  \[
    \Delta\vdash t[\gamma]:\Cyl^{n}[\gamma_{0}\circ\gamma]
  \]
  Hence \(t[\gamma]\) is a cylinder satisfying the desired equality.
\end{proof}

We also need the following result, characterising the substitutions whose target
is a context obtained by functorialisation. We expect a generalisation of this
result to hold for naturality as well, but to be significantly harder to prove.
\begin{lemma}\label{lemma:univ-prop-functorialisation}
  Consider a context \(\Gamma\vdash\) and a set \(X\in \U(\Gamma)\) of variables
  of depth 0. Given two substitutions
  \({\Delta\vdash \gamma^{-}:\Gamma}\) and \(\Delta\vdash \gamma^{+}:\Gamma\) that
  coincide on every variable not in \(X\), together with, for every \(x\in X\),
  a family of terms
  \[
    \Delta\vdash t_{x} : \arr[]{x[\gamma^{-}]}{x[\gamma^{+}]}
  \]
  there exists a unique substitution \(\Delta\vdash \gamma:\Gamma \uparrow X\)
  such that:
  \begin{align*}
    \gamma\circ \inj^{-}_{\Gamma,X} &= \gamma^{-}
    & \gamma\circ \inj^{+}_{\Gamma,X} &= \gamma^{+}
                                        \intertext{Furthermore, for every \(x\in X\):}
                                        x[\gamma] & =t_{x}
  \end{align*}
\end{lemma}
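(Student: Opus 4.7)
The plan is to construct $\gamma$ explicitly, by induction on the list structure of $\Gamma$, specifying its action on each generating variable of $\Gamma \uparrow X$. Uniqueness will then be essentially immediate, since the prescribed equations pin down the value of $\gamma$ on every variable of the domain.

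The key preliminary observation is that because $\depth_X \Gamma = 0$ and $X \in \U(\Gamma)$, no variable of $X$ may appear in the type of any variable of $\Gamma$. Indeed, if $x \in X$ had type $A$ and $y \in \Var(A) \cap X$, then $\dim y < \dim x$, so $\depth_X x \ge \dim x - \dim y > 0$, contradicting $\depth_X \Gamma = 0$. Consequently, for every $x \in X$ with type $A$, we have $\Var(A) \cap X = \emptyset$, so Lemma~\ref{lem:empty-intersection} yields $A[\inj^\pm_{\Gamma,X}] = A$ and, since $\gamma^-$ and $\gamma^+$ agree outside $X$, also $A[\gamma^-] = A[\gamma^+]$. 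Moreover, unwinding the construction of naturality of types for fresh variables shows that in this situation $A \uparrow^x X' = \arr[A]{x^-}{x^+}$, where $X' = X \setminus \{x\}$.

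Given this, I proceed by induction on $\Gamma$. For $\Gamma = \emptycontext$, the substitution is empty and the claim is vacuous. For $\Gamma = (\Gamma', x : A)$, let $X' = X \cap \Var(\Gamma')$, which is up-closed in $\Gamma'$ and still has depth $0$. By restricting the data along $\Gamma' \hookrightarrow \Gamma$, the inductive hypothesis produces a unique substitution $\gamma': \Gamma' \uparrow X' \to \Delta$ satisfying the analogous equations. There are two cases. If $x \notin X$, then $(\Gamma',x:A) \uparrow X = (\Gamma' \uparrow X', x : A)$, and I extend $\gamma'$ by $x \mapsto x[\gamma^-]$ (which equals $x[\gamma^+]$ by hypothesis). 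Type-correctness reduces to checking $A[\gamma'] = A[\gamma^-]$, which follows from the inductive equations $\gamma' \circ \inj^\pm_{\Gamma',X'} = \gamma^\pm|_{\Gamma'}$ together with Lemma~\ref{lem:empty-intersection}. If $x \in X$, then $(\Gamma',x:A) \uparrow X = (\Gamma' \uparrow X', x^- : A, x^+ : A, \fun x : \arr[A]{x^-}{x^+})$ by the preliminary observation; I extend $\gamma'$ by $x^- \mapsto x[\gamma^-]$, $x^+ \mapsto x[\gamma^+]$ and $\fun x \mapsto t_x$. Type-correctness for $x^\pm$ is the same computation as before, while for $\fun x$ it says exactly that $t_x$ has type $\arr[A[\gamma^\pm]]{x[\gamma^-]}{x[\gamma^+]}$, which is precisely the hypothesis on $t_x$.

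The required equations then hold by construction: testing on variables, $\gamma \circ \inj^\pm_{(\Gamma',x:A),X}$ agrees with $\gamma^\pm$ on $\Var(\Gamma')$ by induction, and on $x$ it sends $x \mapsto x^\pm[\gamma] = x[\gamma^\pm]$ (or $x \mapsto x[\gamma^\pm]$ directly if $x \notin X$). The equation $\fun x[\gamma] = t_x$ is immediate. For uniqueness, any $\gamma$ satisfying the equations is forced on the variables $y \notin X$ (by $\gamma^\pm$), on the variables $x^\pm$ (by $\gamma^\pm$ composed with $\inj^\pm$), and on $\fun x$ (by the last equation), exhausting the variables of $\Gamma \uparrow X$. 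The only conceptual obstacle is the type-matching for $\fun x$, and this is handled uniformly by the depth-0 observation above; the remainder is bookkeeping on the structure of the context.
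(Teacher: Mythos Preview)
Your proof is correct and follows the same inductive strategy as the paper's own proof: induction on the length of $\Gamma$, with the same case split on whether the last variable lies in $X$ and the same explicit extension of the inductively obtained substitution. Your version is in fact more thorough than the paper's, since you spell out the depth-$0$ preliminary observation and the type-correctness checks for $x^\pm$ and $\fun x$, which the paper's proof leaves implicit.
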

\begin{proof}
  We prove this result by induction on the context \(\Gamma\). For the empty
  context \(\emptycontext\), necessarily \(X = \emptyset\), and there is a
  unique substitution \(\Delta\vdash \sub{}:\emptycontext\), so the result
  trivialises. For the context \((\Gamma,x:A)\) when \(x \notin X\), the two
  substitutions decompose as follow: \(\sub{\gamma^{-},x\mapsto t}\) and
  \(\sub{\gamma^{+},x\mapsto t}\). Then by induction, there is a unique
  substitution \(\Delta\vdash \gamma:\Gamma \uparrow X\) satisfying the
  appropriate conditions, and thus a unique substitution satisfying
  the desired conditions:
  \[
    \Delta\vdash \sub{\gamma,x\mapsto t} : (\Gamma,x:A) \uparrow X
  \]
  For the context \((\Gamma,x:A)\) with \(x\in X\), the
  two substitution decompose as \(\sub{\gamma^{-},x\mapsto t_{0}}\) and
  \(\sub{\gamma^{+},x\mapsto t_{1}}\). By induction, there is a unique
  substitution \(\Delta\vdash \gamma:\Gamma\uparrow X\setminus\{x\}\) satisfying
  the adequate conditions. This gives a unique substitution satisfying the
  desired conditions:
  \[
    \Delta\vdash \sub{\gamma,x^{-}\mapsto t_{0},x^{+}\mapsto
      t_{1},\fun{x}\mapsto t_{x}}
      \qedhere
  \]
\end{proof}

\begin{proof}[Proof of Proposition~\ref{prop:cyl-type}]
  We consider a context \(\Gamma\), and we prove this result by case disjunction
  on the dimension \(n\) of the cylinder type in \(\Gamma\) that we consider.

  The type \(\Cyl^{1}(a,b)\) is well defined if we have a substitution
  \(\Gamma\vdash\gamma:\partial\cylctx^{1}\), and \(a = \top_{1}[\gamma]\),
  \(b =\bot_{1}[\gamma]\). This is equivalent to:
  \begin{align*}
    \Gamma\vdash a:\obj && \Gamma\vdash b:\obj
  \end{align*}

  The type \(\Cyl^{n+1}(a,b,c,d)\) is well defined if and only if we have a
  substitution \(\Gamma\vdash\gamma:\partial\cylctx^{n+1}\), such that:
    \begin{align*}
      \top_{n}[\gamma] &= a & \bot_{n}[\gamma]&= b & \filler_{n-1}^{-}[\gamma]
      &= c & \filler_{n-1}^{+}[\gamma] &= d
    \end{align*}
    By Lemma~\ref{lemma:equations-cyl}, in the context
    \(\partial\cylctx^{n+1}\), the terms \(\filler_{n-1}^{-}\) and
    \(\filler_{n-1}^{+}\) are cylinders satisfying the equations:
    \begin{align*}
      \topface(\filler_{n-1}^{-}) &= \partial^{-}\top_{n}
      & \topface(\filler_{n-1}^{+}) &= \partial^{+}\top_{n} \\
      \botface(\filler_{n-1}^{-}) &= \partial^{-}\bot_{n}
      & \botface(\filler_{n-1}^{+}) &= \partial^{+}\bot_{n} \\
      \back(\filler_{n-1}^{-}) &= \back(\filler_{n-1}^{+})
      & \front(\filler_{n-1}^{-}) &= \front(\filler_{n-1}^{+})
    \end{align*}
    And the desired equations are the images of these by \(\gamma\), which hold
    by Lemma~\ref{lemma:img-cylinder}. Conversely, by definition, we have the
    equality:
    \[
      \partial\cylctx^{n+1} =
      \left(
        \begin{array}{l}
          \partial\cylctx^{n}\uparrow\{\top_{n},\bot_{n}\}, \\
          \filler_{n}^{-}:
          \Cyl^{n}(\top_{n}^{-},\bot_{n}^{-},\filler_{n-1}^{-},\filler_{n-1}^{+}),\\
          \filler_{n}^{+}
          : \Cyl^{n}(\top_{n}^{+},\bot_{n}^{+},\filler_{n-1}^{-},\filler_{n-1}^{+})
        \end{array}
      \right)
    \]
    Then, consider two cylinders \(\Gamma \vdash c:\Cyl^{n}[\gamma_{c}]\) and
    \({\Gamma \vdash d:\Cyl^{n}[\gamma_{d}]}\) such that:
    \begin{align*}
      \back_{\Gamma}(c) &= \back_{\Gamma}(d) & \front_{\Gamma}(c) &= \front_{\Gamma}(d)
    \end{align*}
    This reformulates as \(\gamma_{c}\) and \(\gamma_{d}\) agreeing on every
    variable that is not in \(X\). Together with terms
    \begin{align*}
      \Gamma & \vdash a : \arr[]{\topface_{\Gamma}(c)}{\topface_{\Gamma}(d)}\\
      \Gamma & \vdash b : \arr[]{\botface_{\Gamma}(c)}{\botface_{\Gamma}(d)}
    \end{align*}
    these define by Lemma~\ref{lemma:univ-prop-functorialisation} a unique
    substitution
	\({\Gamma\vdash \gamma : \partial\cylctx^{n}\uparrow\{\top_{n},\bot_{n}\}}\)
    such that:
    \begin{align*}
      \top_{n+1}[\gamma] &= a & \bot_{n}[\gamma] &= b \\
      \gamma\circ\inj^{-}_{\partial\cylctx^{n},\{\top_{n},\bot_{n}\}}
                         &=\gamma_{c}
      & \gamma\circ\inj^{+}_{\partial\cylctx^{n},\{\top_{n},\bot_{n}\}}
      &=\gamma_{d}
    \end{align*}
    Thus, we can complete this uniquely into a substitution
    \[
      \Gamma\vdash \sub{\gamma,\filler_{n}^{-} \mapsto c,\filler_{n}^{+}\mapsto
        d} : \partial\cylctx^{n+1}. \qedhere
    \]
\end{proof}

We conclude this appendix with an explicit construction of the interchangers
that appear in the proof of Theorem~\ref{thm:cyl-comp}. We recall that our goal
is to produce the cylindrical composition
\(\prescript{k+1}{}\cyls^{k+1}_{k}\), for cylinders in a context \(\Gamma\),
assuming that the composition \(\prescript{k}{}{\cyls}^{k}_{k-1}\) has
already been defined. Our key remark is that the cylinder type
\(\Cyl^{k+1}(a,b,c,d)\) can be obtained as a suspension of a cylinder type of
the previous dimension, in which we substitute a composition, as follows
\[
  \Cyl^{k+1}(a,b,c,d) = (\Sigma\Cyl^{k})(a\s_{0}\front^{1}(c),\back^{1}(d)\s_{0}b,c,d)
\]
This can be observed directly on the formula for cylinder types presented in
Fig.~\ref{fig:cyl-type}. Given two cylinders
\begin{calign}
  \nonumber
  {\Gamma\vdash m:\Cyl^{k+1}(a_{0},b_{0},c,d)}
  &
  \nonumber
  \Gamma\vdash n:\Cyl^{k+1}(a_{1},b_{1},d,e)
  \end{calign}
we consider the following term:
\[
  u = \Sigma(\prescript{k}{}\cyls^{k}_{k-1}) (m,n)
\]
By induction, this term satisfies the following judgement, which is not exactly the desired type:
\[
  \Gamma \vdash u : \Cyl^{k+1} \left(
  \begin{array}{l}
    (a_{0}\s_{0}\front^{1}(c)) \s_{k} (a_{1}\s_{0}\front^{1}(d)), \\
    (\back^{1}(d)\s_{0}b_{0}) \s_{k}(\back^{1}(e)\s_{0}b_{1}), \\
    c,e
  \end{array}\right)
\]
The equalities from
Proposition~\ref{prop:cyl-type} imply that we have:
\begin{align*}
  \front^{1}(d) &= \front^{1}(c) & \back^{1}(d) &= \back^{1}(e)
\end{align*}
To account for these we introduce interchangers. We introduce the interchangers
\(j^\pm\) as coherences of the following types:
\begin{align*}
  j^- &: (a\s_{k} b) \s_{0} c \to (a \s_{0} c) \s_{k} (b \s_{0} c) \\
  j^+ &:  (a \s_{0} b) \s_{k} (a \s_{0} c) \to a \s_{0} (b\s_{k} c)
\end{align*}
We then define \(s\) and \(t\) to be respectively the source and target of the
type \(\Sigma\Cyl^{k}(a,b,c,d)\), and we define:
\begin{align*}
  j_{\square,k}^- & = (s \uparrow\{a\})[j^-(a_{0},a_{1},\front^{1}(c))] \\
  j_{\square,k}^+ &= (t\uparrow \{b\}) [j^+(\back^{1}(e),b_{0},b_{1})]\\
  m \prescript{k+1}{}{\cyls}^{k+1}_{k} n & = j_{\square,k}^- \s u \s j_{\square,k}^+
\end{align*}
The composition defined this way has type:
\begin{align*}
  & \Sigma(\Cyl^{k})((a_{0}\s_{k}a_{1})\s_{0}\front^{1}(c), \back^{1}(d)\s_{0}(b_{0}\s_{k}b_{1}),c,d) \\
  & =\Cyl^{k+1}(a_{0}\s_{k}a_{1},b_{0}\s_{k}b_{1},c,d)
\end{align*}

\section{Cones and Conical Compositions}
This appendix is a complement to Sec.~\ref{sec:cones}, and follows the same
structure than Appendix~\ref{app:cylinders}. We start with an explicit
computation of the cone contexts of dimension \(2\) and \(3\) presented in
Fig.~\ref{fig:cone-ctx-computation}.
\begin{figure*}
  \begin{gather*}
    \conectx^{2} = \left(\makecell[l]{ \apex:\obj, \base_{1}^{-}:\obj,
    \base_{1}^{+}:\obj,\\
    \base_{2}:\arr{\base_{1}^{-}}{\base_{1}^{+}},\\
    \filler_{1}^{-}:\arr{\base_{1}^{-}}{\apex},
    \filler_{1}^{+}:\arr{\base_{1}^{+}}{\apex},\\
    \filler_{2}:\arr{\filler_{1}^{-}}{\base_{2}\s\filler_{1}^{+}}} \right)
    \\
    \conectx^{2} = \left( \makecell[l]{
    \apex:\obj, \base_{1}^{-}:\obj,
    \base_{1}^{+}:\obj,
    \base^{-}_{2}:\arr{\base_{1}^{-}}{\base_{1}^{+}},
    \base^{+}_{2}:\arr{\base_{1}^{-}}{\base_{1}^{+}},\\
    \base_{3}:\arr{\base_{2}^{-}}{\base_{2}^{+}},\\
    \filler_{1}^{-}:\arr{\base_{1}^{-}}{\apex},
    \filler_{1}^{+}:\arr{\base_{1}^{+}}{\apex},\\
    \filler_{2}^{-}:\arr{\filler_{1}^{-}}{\base_{2}^{-}\s_{0}\filler_{1}^{+}},
    \filler_{2}^{+}:\arr{\filler_{1}^{-}}{\base_{2}^{+}\s_{0}\filler_{1}^{+}},\\
    \filler_{3} : \arr{\filler_2^-\s_{1}(\base_3 \s_0\filler_1^+)}{\filler_2^+}}
    \right)
  \end{gather*}
  \caption{The Cone contexts of dimension $2$ and $3$ of
    Fig.~\ref{fig:cone-ctx}.}
  \label{fig:cone-ctx-computation}
\end{figure*}

\begin{lemma}\label{lemma:equations-cone}
  The following judgements are derivable:
  \begin{mathpar}
    \partial\conectx^1\vdash \top : \obj \and \partial\conectx^1 \vdash
    \base_1:\obj \conectx^1\vdash \filler_1 : \Cone^1(\base_1,\top)
    \\
    \partial\conectx^2 \vdash \filler_1^- :\Cone^1(\base_1^-,\top) \and
    \partial\conectx^2 \vdash \filler_1^+ :\Cone^1(\base_1^+,\top) \\
    \partial\conectx^{n+1} \vdash \base_{n+1} : \arr[]{\base_n^-}{\base_n^+} \\
    \conectx^{n+1} \vdash \filler_{n+1} :
    \Cone^{n+1}(\base_{n+1},\filler_n^-,\filler_n^+) \\
    \partial\conectx^{n+2} \vdash \filler_{n+1}^- :
    \Cone^{n+1}(\base_{n+1}^-,\filler_n^-,\filler_n^+) \and
    \partial\conectx^{n+2} \vdash \filler_{n+1}^+ :
    \Cone^{n+1}(\base_{n+1}^+,\filler_n^-,\filler_n^+)
  \end{mathpar}
\end{lemma}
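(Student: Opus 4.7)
The plan is to mirror the proof of Lemma \ref{lemma:equations-cyl} almost verbatim, since the cone contexts are defined using the same naturality and duplication meta-operations as the cylinder contexts, and the desired judgements have the same shape.

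First I would dispose of the base cases: the judgements $\partial\conectx^1 \vdash \top : \obj$ and $\partial\conectx^1 \vdash \base_1 : \obj$ follow immediately from the definition $\partial\conectx^1 = (\apex:\obj,\base_1:\obj)$, and the judgement $\conectx^1\vdash \filler_1 : \Cone^1(\base_1,\top)$ is immediate from the definition of $\conectx^1$ together with the convention $\Cone^1(\base_1,\top) = \arr[\obj]{\base_1}{\top}$.

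Next, for the judgements on $\filler_1^\pm$, I would apply Theorem \ref{thm:correctness} to the judgement on $\filler_1$ with respect to the up-closed set $X = \{\base_1,\filler_1\}$, which has depth $1$ in $\conectx^1$. The theorem produces a term $\filler_1 \uparrow X = \fun{\filler_1} = \filler_2$ (here I would observe that this term is duplicated into $\filler_1^{\sm}$ and $\filler_1^{\sp}$ by the definition of $(\Gamma,x:A)\pmctx X$ when $x\in X$), and reading off the source/target of the functorialised type gives the two judgements about $\filler_1^\pm$.

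For the inductive step, I would fix $n\ge 1$ and use the inductive hypothesis giving the judgements for $\base_n$ and $\filler_n$. Applying Theorem \ref{thm:correctness} to the judgement on $\base_n$ with respect to $X = \{\base_n,\filler_n\}$ yields, by the definition of naturality of types, the judgement $\partial\conectx^{n+1} \vdash \base_{n+1} = \fun{\base_n}: \arr{\base_n^-}{\base_n^+}$. The judgement for $\filler_{n+1}$ is then immediate from the definition of $\conectx^{n+1} = \conectx^n \uparrow \{\base_n,\filler_n\}$ combined with the characterisation of $\Cone^{n+1}$ in Proposition \ref{prop:cone-type}: indeed the source and target of the naturalised filler match $\filler_n^-$ and $\filler_n^+$ respectively, and the base is $\base_{n+1}$. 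Finally, the $\filler_{n+1}^\pm$ judgements follow by one more application of Theorem \ref{thm:correctness}, exactly as in the base case.

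The only delicate point is checking that the type $\Cone^{n+1}(\base_{n+1}, \filler_n^-, \filler_n^+)$ produced by naturality literally matches the cone type as formulated in Proposition \ref{prop:cone-type}, since the parity-dependent formula in Fig.~\ref{fig:cone-type} alternates source and target. I expect this to reduce to a routine unfolding of the definition of $A\uparrow^t X$ in Section \ref{sec:nat-constr} together with the observation that the opposite-free definition of $\conectx^n$ (unlike cylinders, which involve $\op_{n+1}$) means no extra reversal bookkeeping is needed beyond what is already encoded in $\Cone^{n+1}$.
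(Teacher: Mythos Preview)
Your overall strategy matches the paper's proof: the base judgements are immediate, and all the remaining ones are obtained by applying Theorem~\ref{thm:correctness} to a previously established judgement. There is, however, one point where your argument goes astray.

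For the judgement $\conectx^{n+1}\vdash \filler_{n+1} : \Cone^{n+1}(\base_{n+1},\filler_n^-,\filler_n^+)$ you invoke Proposition~\ref{prop:cone-type}. This is circular: the proof of Proposition~\ref{prop:cone-type} explicitly uses Lemma~\ref{lemma:equations-cone}. Fortunately no such appeal is needed. By definition, $\Cone^{n+1}$ is \emph{the} type of $\filler_{n+1}$ in $\conectx^{n+1}$, and the notation $\Cone^{n+1}(\base_{n+1},\filler_n^-,\filler_n^+)$ is just $\Cone^{n+1}[\id_{\partial\conectx^{n+1}}]$, since the substitution sending $\base_{n+1}\mapsto\base_{n+1}$, $\filler_n^\pm\mapsto\filler_n^\pm$ is the identity. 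So this judgement is literally definitional, and your ``delicate point'' about matching the parity-dependent formula in Fig.~\ref{fig:cone-type} evaporates: that formula is a \emph{consequence} of the present lemma together with the naturality construction, not something you need to check here. Once you replace the appeal to Proposition~\ref{prop:cone-type} by ``by definition of $\Cone^{n+1}$'', your proof is complete and coincides with the paper's.
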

\begin{proof}
  The derivation of the first \(3\) judgements is immediate. The fourth and
  fifth judgements follow from Theorem~\ref{thm:correctness} and the third
  judgement. The sixth and seventh judgements follow from
  Theorem~\ref{thm:correctness}. The eighth judgement is the definition of the
  type \(\Cyl^{n}\) and the last two judgements follow from
  Theorem~\ref{thm:correctness} and the eighth one.
\end{proof}

\begin{lemma}\label{lemma:img-cone}
  Let \(\Gamma\vdash t:\Cone^{n}(a,b,c)\) be a cone and
  \({\Delta\vdash\gamma:\Gamma}\) a substitution, then \(t[\gamma]\) is a cone
  satisfying:
  \[
    \Delta\vdash t[\gamma]:\Cone^{n}(a[\gamma],b[\gamma],c[\gamma])
  \]
\end{lemma}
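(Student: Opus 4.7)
The plan is to mirror exactly the proof of Lemma~\ref{lemma:img-cylinder}, since the structure of cone types is directly analogous to that of cylinder types: both are defined as the type of a distinguished filler variable in a specific context, parametrised by a substitution into the boundary context.

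First I would unfold the definition of a cone in \(\Gamma\): by Proposition~\ref{prop:cone-type} (more concretely, by the definition of cones given at the start of Section~\ref{sec:cones}), the judgement \(\Gamma\vdash t:\Cone^{n}(a,b,c)\) amounts to the existence of a substitution \(\Gamma\vdash \gamma_{0}:\partial\conectx^{n}\) with \(\base_{n}[\gamma_{0}]=a\), \(\filler_{n-1}^{-}[\gamma_{0}]=b\), \(\filler_{n-1}^{+}[\gamma_{0}]=c\) (or \(\base_{1}[\gamma_{0}]=a\), \(\apex[\gamma_{0}]=b\) in the base case \(n=1\)), such that \(\Gamma\vdash t:\Cone^{n}[\gamma_{0}]\).

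Next I would compose with \(\gamma\): by the admissibility of substitution composition for \catt, \(\Delta\vdash \gamma_{0}\circ\gamma:\partial\conectx^{n}\), and by functoriality of the action of substitutions on terms and types,
\[
    \Delta\vdash t[\gamma]:\Cone^{n}[\gamma_{0}\circ\gamma].
\]
This already exhibits \(t[\gamma]\) as a cone. It then remains to identify the parameters: since \(\base_{n}[\gamma_{0}\circ\gamma] = a[\gamma]\), \(\filler_{n-1}^{-}[\gamma_{0}\circ\gamma]=b[\gamma]\) and \(\filler_{n-1}^{+}[\gamma_{0}\circ\gamma]=c[\gamma]\) by direct computation (and similarly in the base case), we obtain \(\Cone^{n}[\gamma_{0}\circ\gamma] = \Cone^{n}(a[\gamma],b[\gamma],c[\gamma])\), as required.

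There is no real obstacle here; the only very minor point of care is the split between \(n=1\), where the boundary substitution is determined by the images of \(\apex\) and \(\base_{1}\), and \(n\geq 2\), where it is determined by the images of \(\base_{n}\), \(\filler_{n-1}^{-}\) and \(\filler_{n-1}^{+}\). Both cases are handled identically by the composition argument above.
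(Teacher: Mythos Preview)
Your proposal is correct and follows exactly the approach of the paper, which simply states that the proof is the same as that of Lemma~\ref{lemma:img-cylinder}. If anything, you have spelled out slightly more detail than the paper does, including the harmless case split on \(n\).
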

\begin{proof}
  The proof is the same than that of Lemma~\ref{lemma:img-cylinder}.
\end{proof}

\begin{proof}[Proof of Proposition~\ref{prop:cone-type}]
  We proceed the same way as for the proof of Proposition~\ref{prop:cyl-type},
  considering a context \(\Gamma\), and proceeding by case disjunction on the
  dimension of the cone type.

  The type \(\Cone^{1}(a,b)\) is well defined if we have a substitution
  \(\Gamma\vdash\gamma:\partial\conectx^{1}\), and \(a = \base_{1}[\gamma]\), \(b =\top[\gamma]\). This is equivalent to:
  \begin{align*}
    \Gamma\vdash a:\obj && \Gamma\vdash b:\obj
  \end{align*}

  The type \(\Cone^{n+1}(a,b,c)\) is well defined if and only if we have a
  substitution \(\Gamma\vdash\gamma:\partial\conectx^{n+1}\), such that:
  \begin{align*}
    \base_{n}[\gamma] &= a & \filler_{n-1}^{-}[\gamma]&= b & \filler_{n-1}^{+}[\gamma]
    &= c
  \end{align*}
  By Lemma~\ref{lemma:equations-cone}, in the context
  \(\partial\conectx^{n+1}\), the terms \(\filler_{n-1}^{-}\) and
  \(\filler_{n-1}^{+}\) are cones satisfying the equations:
  \begin{align*}
    \basecone(\filler_{n-1}^{-}) &= \partial^{-}\base_{n}
    & \basecone(\filler_{n-1}^{+}) &= \partial^{+}\base_{n} \\
    \backcone(\filler_{n-1}^{-}) &= \backcone(\filler_{n-1}^{+})
    & \frontcone(\filler_{n-1}^{-}) &= \frontcone(\filler_{n-1}^{+})
  \end{align*}
  And the desired equations are the images of these by \(\gamma\), which hold by
  Lemma~\ref{lemma:img-cone}. Conversely, two cylinders
  \(\Gamma \vdash b:\Cone^{n}[\gamma_{b}]\) and
  \(\Gamma \vdash c:\Cone^{n}[\gamma_{c}]\) such that:
  \begin{align*}
    \backcone_{\Gamma}(b) &= \backcone_{\Gamma}(c)
    & \frontcone_{\Gamma}(b) &= \frontcone_{\Gamma}(c)
  \end{align*}
  Thus, \(\gamma_{b}\) and \(\gamma_{c}\) agree on every variable that is not in
  \(X\). Together with a \(\Gamma\vdash a : \arr[]{\basecone(b)}{\basecone(c)}\)
  these define by Lemma~\ref{lemma:univ-prop-functorialisation} a unique
  substitution
  \[
    \Gamma\vdash \gamma : \partial\conectx^{n} \uparrow\{\base_{n}\}
  \]
  such that
the following hold:  \begin{align*}
    \base_{n+1}[\gamma] &= a \\
    \gamma\circ\inj^{-}_{\partial\conectx^{n},\{\base_{n}\}}
                        &=\gamma_{b}
                        & \gamma\circ\inj^{+}_{\partial\conectx^{n},\{\base_{n}\}}
                        &=\gamma_{c}
  \end{align*}
  Thus, we can complete this uniquely into a substitution:
  \[
    \Gamma\vdash \sub{\gamma,\filler_{n}^{-} \mapsto b,\filler_{n}^{+}\mapsto c}
    : \partial\conectx^{n+1} \qedhere
  \]
\end{proof}

Like in the previous section, we conclude with a description of the
interchangers that appear in the proof of Theorem~\ref{thm:cone-comp}. Recall
that we are trying to produce, in a context \(\Gamma\), the conical composition
\(\prescript{k+1}{}\cs^{k+1}_{k}\), assuming that the composition
\(\prescript{k}{}{\cs}^{k}_{k-1}\) has already been defined. For this, we use
the following equation that holds on cone types, and which can be observed on
the formula describing them in Fig.~\ref{fig:cone-type}, where \(\bar{k} = \{1,\ldots,k\}\):
\[
  \Cone^{k+1}(a,b,c) =
  (\Sigma(\op_{\bar{k}}\Cone^{k}))((a\s_{0}\frontcone^{1}(b)),c,b),
\]
 This leads us to consider, given
\begin{align*}
  \Gamma & \vdash m:\Cone^{k+1}(a_{0},b,c)\\
  \Gamma & \vdash n:\Cone^{k+1}(a_{1},c,d)
\end{align*}
 the term:
\[
  u = \Sigma(\op_{\bar{k}}\prescript{k}{}\cyls^{k}_{k-1}) (m,n)
\]
By induction, this term satisfies the following judgement:
\[
  \Gamma \vdash u : \Sigma(\op_{\bar k} \Cone^{k}) \left(
    (a_{0}\s_{0}\frontcone_{1}(b)) \s_{k} (a_{1}\s_{0}\frontcone_{1}(c)),d,b
  \right),
\]
The equalities from Proposition~\ref{prop:cone-type} imply that we have:
\begin{align*}
  \frontcone_{1}(c) &= \frontcone_{1}(d)
\end{align*}
Suppose that \(k\) is even, we introduce the interchanger \(j^-\) as the
coherence with type:
\[
  j^-(a,b,c) : (a\s_{k} b)\s_{0} c \to (a \s_{0} c) \s_{k} (b \s_{0} c)
\]
Define \(s\) to be the source of of the type
\(\Sigma(\op_{\bar{k}}\Cyl^{k})(a,b,c)\), so that we can define
\begin{align*}
  j_{\triangle,k}^- & = (s \uparrow\{a\})[j^-(a_{0},a_{1},\frontcone^{1}(b))] \\
  m \prescript{k+1}{}{\cs}^{k+1}_{k} n &= j^-_{\triangle,k} \s_{k+1} u
\end{align*}
which has the following type:
\begin{align*}
  & \Sigma(\op_{\bar k} \Cone^{k}) \left( (a_{0}\s_{k} a_{1}) \s_{0}
    \frontcone_{1}(b),d,b \right) \\
  & = \Cone^{k+1}(a_{0}\s_{k}a_{1},b,d)
\end{align*}
If \(k\) is odd, we introduce the interchanger \(j^+\) as the coherence with
type:
\[
  j^+(a,b,c) : (a \s_{0} c) \s_{k} (b \s_{0} c) \to (a\s_{k} b)\s_{0} c
\]
We define \(t\) to be the target of the type
\(\Sigma(\op_{\bar{k}}\Cyl^{k})(a,b,c)\), so that we can define
\begin{align*}
  j_{\triangle,k}^+ & = (t\uparrow
  \{a\})[j^-(a_{0},a_{1},\frontcone^{1}(b))]\\
  m \prescript{k+1}{}{\cs}^{k+1}_{k} n &= u \s_{k+1} j_{\triangle,k}^+
\end{align*}
which has type the following type:
\begin{align*}
  & \Sigma(\op_{\bar k} \Cone^{k}) \left( (a_{0}\s_{k} a_{1}) \s_{0}
    \frontcone_{1}(b),b,d \right) \\
  & = \Cone^{k+1}(a_{0}\s_{k}a_{1},b,c)
\end{align*}


\end{document}